\theoremstyle{plain}
\newtheorem{thm}{Theorem}
\newtheorem{lem}[thm]{Lemma}
\newtheorem{prop}[thm]{Proposition}
\newtheorem{cor}[thm]{Corollary}
\newtheorem{definition}[thm]{Definition}
\theoremstyle{remark}
\numberwithin{thm}{section}
\numberwithin{equation}{section}
\newcommand{\esp}{\mathbb{E}}
\newcommand{\proba}{\mathbb{P}}
\newcommand{\nmath}{\mathbb{N}}
\newcommand{\rmath}{\mathbb{R}}
\newcommand{\acal}{\mathcal{A}}
\newcommand{\fcal}{\mathcal{F}}
\newcommand{\hcal}{\mathcal{H}}
\newcommand{\lcal}{\mathcal{L}}
\newcommand{\pcal}{\mathcal{P}}
\newcommand{\bcal}{\mathcal{B}}
\newcommand{\ecal}{\mathcal{E}}
\newcommand{\mcal}{\mathcal{M}}
\newcommand{\gcal}{\mathcal{G}}
\newcommand{\ccal}{\mathcal{C}}
\newcommand{\tcal}{\mathcal{T}}
\newcommand{\ucal}{\mathcal{U}}
\newcommand{\rtwo}{\mathbb{R}^{2}}
\newcommand{\un}[1]{\mathds{1}_{\{#1\}}}
\newcommand{\hit}[2]{\tau^{#1}(#2)}
\newcommand{\bulk}[2]{\sigma^{#1}(#2)}
\newcommand{\ancx}{(x)}
\newcommand{\Vol}{\text{Vol}}
\renewcommand{\P}{\mathbb{P}}
\newcommand{\E}{\mathbb{E}}
\newcommand{\eps}{\varepsilon}
\newcommand{\ind}{\mathbbm{1}}
\newcommand{\bzr}{\bcal(z,r)}
\title{Asymptotics for the growth of the infinite-parent Spatial Lambda-Fleming-Viot model}
\author{Apolline Louvet\thanks{Department of Mathematical Sciences, University of Bath, Bath BA2 7AY, UK. Email: \texttt{apolline.louvet@polytechnique.edu}}, Matthew I.~Roberts\thanks{Department of Mathematical Sciences, University of Bath, Bath BA2 7AY, UK. Email: \texttt{mattiroberts@gmail.com}}}
\begin{document}

\maketitle

\begin{abstract}
The $\infty$-parent spatial Lambda-Fleming-Viot (SLFV) process is a model of random growth, in which a set evolves by the addition of balls according to points of an underlying Poisson point process, and which was recently introduced to study genetic diversity in spatially expanding populations. In this article, we give asymptotics for the location and depth of the moving interface, and identify the exact asymptotic scale of the transverse fluctuations of geodesics. Our proofs are based on a new representation of the $\infty$-parent SLFV in terms of chains of reproduction events, and on the study of the properties of a typical geodesic. Moreover, we show that our representation coincides with the alternative definitions of the process considered in the literature, subject to a simple condition on the initial state. Our results represent a novel development in the study of stochastic growth models, and also have consequences for the study of genetic diversity in expanding populations.
\end{abstract}

\tableofcontents

\section{Introduction}
The \emph{spatial Lambda-Fleming-Viot (SLFV)} process was introduced in \cite{barton2010new,etheridge2008drift} as a new framework in which to model populations and their genealogies in a spatial continuum. This framework incorporates many variants---see~\cite{etheridge2008drift} for an introduction---and has been the subject of a substantial body of subsequent research: see e.g.~\cite{berestycki2013large, biswas:SLFV_fluctuating_selection, etheridge:BBM_mean_curvature,  forien2022stochastic, klimek:SLFV_random_environment} for just a subset of the wide variety of approaches and applications. One of the purposes of introducing this new collection of models was to avoid ``clumping and extinction'' effects seen in more traditional models~\cite{felsenstein1975pain}, by controlling local reproduction rates through the use of a Poisson point process of reproduction events. However, one of the assumptions behind this key principle of the standard SLFV framework is that the population is already in equilibrium, with an infinite population density across the whole space. Thus the otherwise flexible and adaptable SLFV models cannot be used to study, for example, the genetic diversity of growing bacterial colonies, e.g.~seen in \cite{hallatschek2007genetic}.

The $\infty$-parent SLFV process was introduced in \cite{louvet2023stochastic} as a way of adapting the SLFV framework to expanding populations. It also serves as a continuum analogue of models of growth or aggregation, such as the Eden model, first-passage percolation, or diffusion-limited aggregation, where progress is often extremely difficult and pronounced lattice effects can be seen even in scaling limits \cite{grebenkov2017anisotropy}.

The $\infty$-parent SLFV process can be non-rigorously described as follows. We begin with some measurable set $E\subset\rtwo$ that represents the initial occupied area. Reproduction events occur as a space-time Poisson point process of unit intensity, and each reproduction event attempts to populate a ball of positive radius around its spatial position at the specified time. However, the event is only successful if the ball intersects the current occupied area at that time, and if so, the parent of that reproduction event is sampled uniformly at random from the intersection of the occupied area and the reproduction event. Thus, if the initial occupied area is a compact set, then there is a first event whose ball intersects it, and causes the occupied area to grow; then there is a second event whose ball intersects the new occupied area, and so on. When the initial occupied area is not compact, the rigorous definition is somewhat more nuanced, but this intuitive description is still useful.

The idea behind the construction of the $\infty$-parent SLFV is to model a population expansion as the spread of a selectively advantageous mutant type (sometimes described as ``real'' individuals) in an established population (of ``ghost'' individuals). This approach has been used in population genetics in
\cite{hallatschek2008gene} and \cite{durrett2016genealogies}
(see also \cite{fan2021stochastic}), but can be traced back to interacting particle systems such as the contact process. The $\infty$-parent SLFV is then the limiting process obtained when letting the selective advantage of the mutation grow to~$+ \infty$ \cite{louvet2023stochastic}.

Louvet and V\'eber showed in \cite{louvet2023measurevalued} that, roughly speaking, the front of the $\infty$-parent SLFV grows at linear rate, and gave a bound on the speed. In this article we further investigate the $\infty$-parent SLFV process, and show that---in a sense that we will make precise later---geodesics in this model fluctuate spatially in the transverse direction on the scale $x^{1/2}$. This is in dramatic contrast with the $x^{2/3}$-order transverse spatial fluctuations expected for many two-dimensional models of random interface growth, including first-passage percolation models, that fall---or are conjectured to fall---within the Kardar-Parisi-Zhang (KPZ) universality class. See Section \ref{sec:related_work} for more discussion of related models.

We will also show that what we call the \emph{bulk} of the process---which can be thought of as the region of space that is completely covered without holes---grows at the same linear speed as the front. This gives a shape theorem in the spirit of the Cox-Durrett theorem \cite{cox1981some} for first passage percolation, and very similar to a result of Deijfen \cite{deijfen2003asymptotic}, who considered a similar growth model to the $\infty$-parent SLFV process, but with less pronounced activity at the expansion front. We are also able to provide substantially finer bounds on the difference between the front and the bulk, even in the more difficult case of starting from a half-plane. To be more precise, we show that the difference between the first hitting time of a point $(x,y)$ and the first time that the whole interval $\{(x',y) : x'\le x\}$ is covered, is of order at most $x^{1/2}$ with high probability.

\begin{figure}[h]
\centering
\captionsetup{justification=centering}
\includegraphics[width=\textwidth]{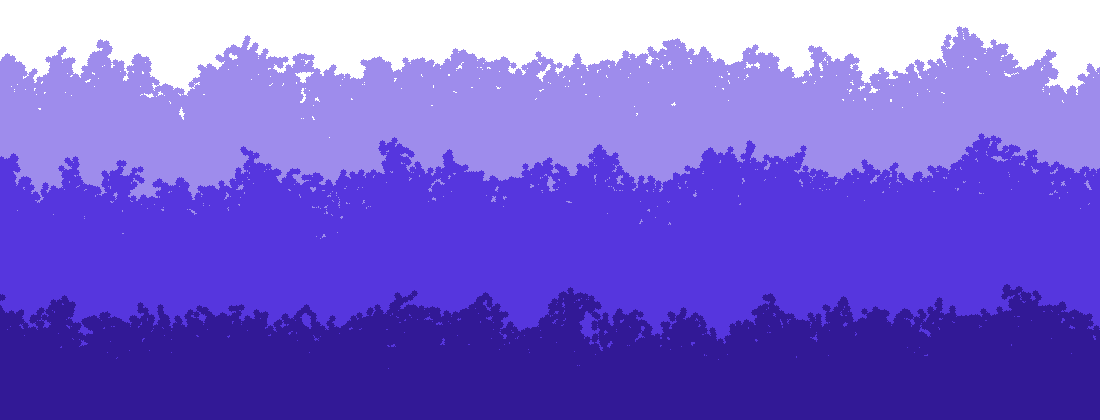}%
\caption{A section of the growing interface of an \protect{$\infty$}-parent SLFV process started from the lower half-plane. The paler regions show the process at later times.}
\label{fig:interface}
\end{figure}

We view both of these advances---identification of the order of transverse geodesic fluctuations, and control of the difference between the front and the bulk---in the context of attempts to provide rigorous mathematical underpinning for results on genetic diversity in expanding populations, as seen e.g.~in~\cite{hallatschek2007genetic,hallatschek2010life} (see also \cite{excoffier2009genetic}). Our results provide progress towards showing that fluctuations of genetic diversity patterns within the bulk of the process are diffusive in scale, and thus can be considered as frozen on the timescale of the expansion. This leads to the spontaneous emergence of a finite number of ``well defined, sector-like regions with fractal boundaries'' \cite{hallatschek2007genetic} from even a well-mixed population of equally fit organisms, observed experimentally as well as in simulations, illustrated in Figure~\ref{fig:stripes}. 
We hope to provide a fully rigorous mathematical justification of this picture in future work.

\begin{figure}[h]
\centering
\captionsetup{justification=centering}
\includegraphics[scale=0.15]{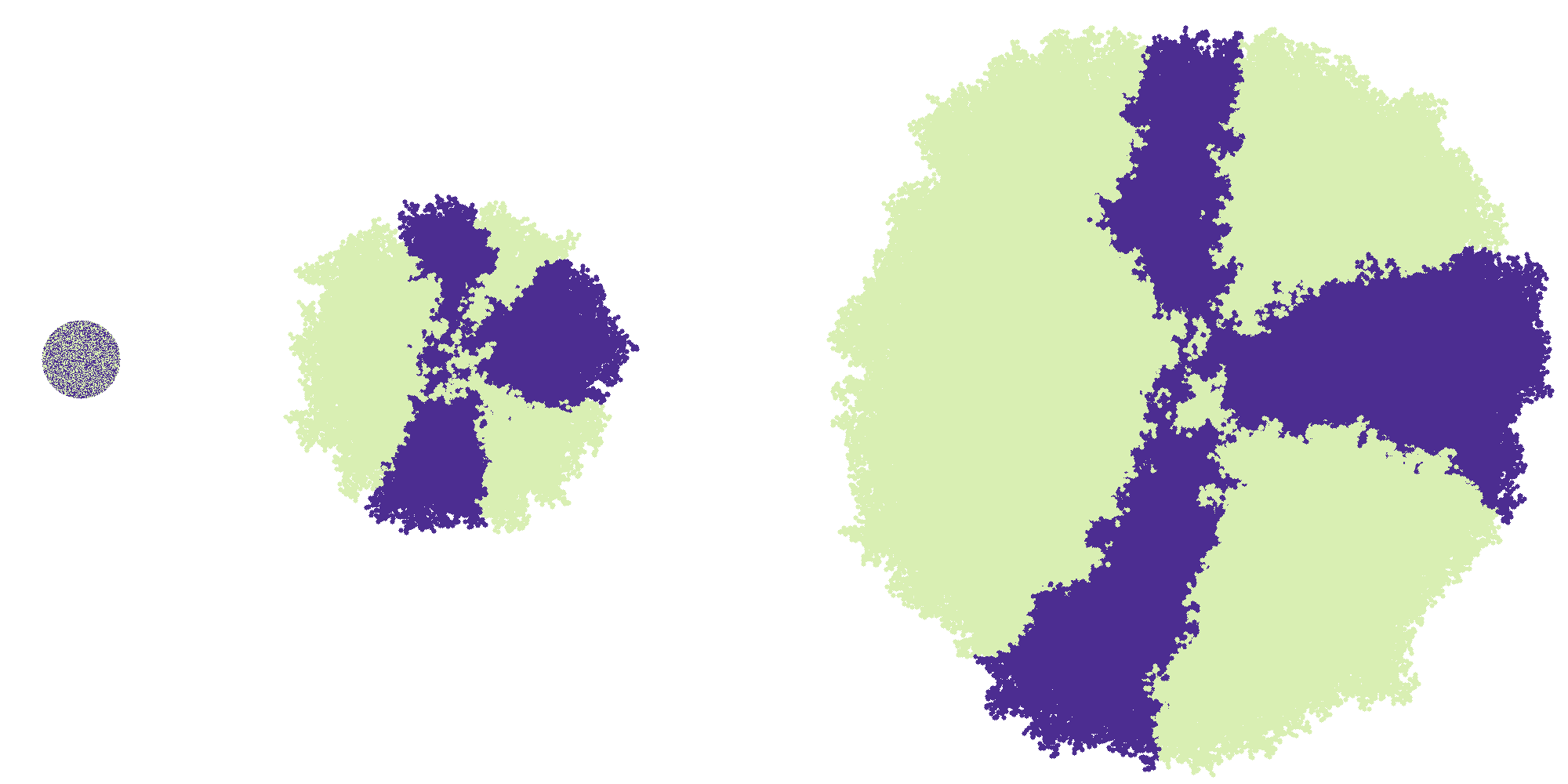}%
\caption{A two-type \protect{$\infty$}-parent SLFV process started from a ball with a 50:50 mix of green (pale) and purple (dark) particles grows a finite number of macroscopic sectors, as seen in experiments with bacteria \cite{hallatschek2007genetic}. The left-hand image shows the process at time $0$, and the two images to the right show the emergence of macroscopic regions of a single type as time increases.}
\label{fig:stripes}
\end{figure}

We mentioned above that the $\infty$-parent SLFV was originally defined in \cite{louvet2023stochastic}. In fact, we will not directly prove our results for the version introduced in \cite{louvet2023stochastic}, which characterised the process as the unique solution to a martingale problem. Instead, we will introduce a set-valued interpretation of the $\infty$-parent SLFV process, which we believe is more intuitive and flexible. In particular, it has the advantage that it can be initialised from sets of Lebesgue measure zero, whereas the measure-valued version of \cite{louvet2023stochastic} cannot. As part of this article we will show that under a simple condition on the initial state, our set-valued process is equal in distribution to the measure-valued version up to a change of state space.

We note that our results are stated in $\mathbb{R}^2$, but should be easily adaptable to $\mathbb{R}^d$ for any $d\ge 2$. We only consider $\mathbb{R}^2$ in order to keep the notational load manageable, and because dimension~$2$ is often the relevant dimension for biological applications~\cite{etheridge2008drift}, with the notable exception of tumour growth. There is also no reason that the reproduction events have to be $\ell^2$ balls; in fact, several of our proofs would be significantly simplified if we used $\ell^\infty$ balls. One could also work with reproduction events taken from a more general class of compact sets (see e.g. \cite{louvet2023measurevalued} for ellipsoid reproduction events). In this article we will use $\ell^2$ balls for concreteness, since these are again usually the most appropriate from a biological point of view.

\subsection{\texorpdfstring{The model: the $\infty$-parent SLFV as a set-valued process}{The model: the infinite-parent SLFV as a set-valued process}}

For $z \in \rtwo$ and $r \geq 0$, let $\bzr$ be the closed $\ell^2$ ball with centre~$z$ and radius~$r$. Let $\mu$ be a finite measure on $(0,+\infty)$ such that
\begin{equation}\label{cond:bounded_radius}
    \text{R}_{0} := \inf\left\{
r > 0: \mu(r,+\infty) = 0
    \right\} < + \infty;
\end{equation}
the measure $\mu$ will represent the distribution of the radius of reproduction events in the system, so that $\text{R}_0$ is the maximum such radius. Let $\Pi$ be a Poisson point process on $\rmath \times \rtwo \times (0,+\infty)$ with intensity
\begin{equation*}
    dt \otimes dz \otimes \mu(dr).
\end{equation*}
The point process $\Pi$ tells us the times, locations and radii of the reproduction events that characterise the growth of our process. We assume throughout the paper that $\Pi$ has no times $t$ at which \emph{two} events occur; that is, we work on the set
\begin{equation}\label{eqn:condition_on_pi}
\left\{\nexists t\in\mathbb{R} : (t,z_1,r_1)\in\Pi \text{ and } (t,z_2,r_2)\in\Pi \text{ for some } (z_1,r_1)\neq (z_2,r_2)\in\rtwo\times(0,+\infty)\right\}.
\end{equation}
The complementary event has zero probability, so we do not lose any generality by ruling it out.

We consider the stochastic growth model defined informally as follows. Fix a measurable set~$E \subseteq \rtwo$, which will represent the initially occupied area of our growth model. The idea is then that starting from $S^E_0=E$, for all~$(t,z,r) \in \Pi$, if the ball~$\bcal(z,r)$ intersects the occupied area $S^E_t$, we add it to the area initially occupied, and call it a \emph{reproduction event}, and we do nothing otherwise. We will give a precise definition below that works with all measurable initial conditions $E$, but note that if $E$ is bounded, then this intuitive description immediately yields a well-defined Markov process which jumps at a finite rate. In particular, this is the case if the initial area $E$ is the singleton~$\{(0,0)\}$.

On the other hand, if the area initially occupied is unbounded, then it is affected by an infinite number of reproduction events over any time interval, and some work is required in order to make our description of the process rigorous. To do so, one possibility is to introduce the following process, which identifies all the possible ancestors at time~$0$ of the individuals living in location $z \in \rtwo$ at time~$t \geq 0$.

\begin{definition}[$\infty$-parent ancestral skeleton]\label{defn:infty_parent_ancestral_skeleton}
Let $t \geq 0$ and $z \in \rtwo$. The $\infty$-parent ancestral skeleton $(B_{s}^{(t,z)})_{s\ge 0}$ is the Markov process defined as follows. 
\begin{enumerate}
    \item First we check whether there exist $z_0\in\rtwo$ and $r_0>0$ such that $(t,z_0,r_0)\in\Pi$ and $z\in\bcal(z_0,r_0)$. If so, we set $B_{0}^{(t,z)} = \bcal(z_0,r_0)$; otherwise we set $B_{0}^{(t,z)} = \{z\}$. In either case, set $t_0=t$.
    \item Then we wait until the last reproduction event $(t_{1},z_{1},r_{1}) \in \Pi$ to occur before time~$t_0$ and to intersect~$B_{0}^{(t,z)}$. That is, $(t_{1},z_{1},r_{1})$ is such that
    \begin{equation*}
        t_{1} = \max\left\{
t' < t_0 : \exists (z',r ') \in \rtwo \times (0,+\infty) \text{ with } (t',z',r ') \in \Pi \text{ and } B_{t-t_0}^{(t,z)} \cap \bcal(z',r ') \neq \emptyset
        \right\}.
    \end{equation*}
Then for all $s \in (t-t_0, t-t_{1})$, we set
\begin{equation*}
    B_{s}^{(t,z)} = B_{t-t_0}^{(t,z)}, 
\end{equation*}
and we set
\begin{equation*}
    B_{t-t_{1}}^{(t,z)} = B_{t-t_0}^{(t,z)} \cup \bcal(z_{1},r_{1}). 
\end{equation*}
    \item We then proceed recursively. Given the process $(B^{(t,z)}_s)_{s\le t-t_j}$ up to some time $t-t_j$, we look for the last reproduction event $(t_{j+1},z_{j+1},r_{j+1})$ before $t_j$ to intersect $B^{(t,z)}_{t-t_j}$, i.e.~such that
    \begin{equation*}
        t_{j+1} = \max\left\{
t' < t_j : \exists (z',r ') \in \rtwo \times (0,+\infty) \text{ with } (t',z',r ') \in \Pi \text{ and } B_{t-t_j}^{(t,z)} \cap \bcal(z',r ') \neq \emptyset
        \right\}.
    \end{equation*}
    Then for all $s \in (t-t_j, t-t_{j+1})$, we set
\begin{equation*}
    B_{s}^{(t,z)} = B_{t-t_j}^{(t,z)}, 
\end{equation*}
and we set
\begin{equation*}
    B_{t-t_{j+1}}^{(t,z)} = B_{t-t_j}^{(t,z)} \cup \bcal(z_{j+1},r_{j+1}). 
\end{equation*}
\end{enumerate}
\end{definition}

This process is well-defined, and its jump rate is bounded from above by that of a Yule process in which each particle splits into two at rate $\text{M}_0$, where
\begin{equation}\label{cond:finite_rate}
\text{M}_0 := \int_0^{\text{R}_0}\pi(\text{R}_0+r)^2 dr.
\end{equation}
The $\infty$-parent SLFV is then defined as follows. 

\begin{definition}[$\infty$-parent SLFV process]\label{defn:infty_parent_slfv}
Let $E$ be a measurable subset of $\rtwo$. The $\infty$-parent SLFV with initial condition~$E$ and intensity~$\mu$ is the process $(S^E_{t})_{t \geq 0}$ such that for all~$t \geq 0$ and~$z \in \rtwo$, 
\begin{equation*}
    z \in S^E_{t} \Longleftrightarrow B_{t}^{(t,z)} \cap E \neq \emptyset. 
\end{equation*}
\end{definition}
As a consequence of the structure of the underlying Poisson point process, $(S^E_{t})_{t \geq 0}$ is a Markov process and $S^E_{t}$ is measurable for all~$t \geq 0$. When the initial state $S_0 = E$ is clear, we will sometimes suppress it in the notation, writing $(S_t)_{t\ge 0}$. 

This definition is not the only possible way to construct the $\infty$-parent SLFV rigorously. In particular, two other approaches to constructing this process were introduced in \cite{louvet2023stochastic,louvet2023measurevalued}: as the unique solution to a martingale problem \cite[Theorem 2.14]{louvet2023stochastic}, or as the limit of other SLFV processes \cite[Theorems 2.9 and 2.10]{louvet2023stochastic}. 
Under some condition on~$\mu$, which is satisfied when~$\mu$ has bounded support, these two constructions are equivalent \cite[Theorem 2.14]{louvet2023stochastic}.
In Section~\ref{sec:equivalence_definitions}, we show that under some condition on the initial condition, these constructions are equivalent to the one considered in this article. This equivalence is interesting in its own right, and also allows us to transfer distributional results between the different versions. In recognition of the different definitions, sometimes we will call the version from Definition \ref{defn:infty_parent_slfv} the \emph{set-valued $\infty$-parent SLFV process}.

\subsection{Main results}

\subsubsection{Transverse fluctuations of geodesics}

For all $x > 0$, let $\mathcal{H}^{x}$ be the half-plane of points to the right of $x$, i.e.
\begin{equation*}
    \mathcal{H}^{x} := \left\{
(x',y') \in \rtwo : x' \geq x
    \right\},
\end{equation*}
and let $\lcal^{(r)}$ be the strip of radius $r$ around the $x$-axis, i.e.
\[\lcal^{(r)} = \left\{(x,y)\in\rtwo : |y|\le r\right\}.\]
Our first main result---which we state now only in an informal way since the precise statement relies on the development of substantial notation---describes the size of the fluctuations in the $y$-direction of the geodesics from~$\{(0,0)\}$ to the half-plane~$\mathcal{H}^{x}$. 

\begin{thm}\label{thm:geodesics}
Consider the $\infty$-parent SLFV started from $E=\{(0,0)\}$ and run until the first hitting time of $\hcal^x$. Say that any path of reproduction events leading from the origin to $\hcal^x$ at this time is a \emph{geodesic} from $0$ to $\hcal^x$. Then:\\
(i) For all $\eps>0$, there exists $A_\eps\in(0,\infty)$ such that for all $x>1$,
\[\P\left(\text{there exists a geodesic from $0$ to $\hcal^x$ that remains within $\lcal^{(A_\eps\sqrt{x})}$}\right)\ge 1-\eps;\]
(ii) For all $\delta>0$ and $\beta\in(1/2,1]$, there exists $c>0$ such that for all sufficiently large $x$,
\[\P\left(\text{there exists a geodesic from $0$ to $\hcal^x$ that remains within $\lcal^{(\delta x^\beta)}$}\right)\ge 1-\exp(-cx^{2\beta-1});\]
(iii) For all $\eps>0$, there exists $a_\eps>0$ such that for all $x$ sufficiently large,
\[\P\left(\text{all geodesics from $0$ to $\hcal^x$ finish outside $\lcal^{(a_\eps\sqrt{x})}$}\right)\ge 1-\eps.\]
\end{thm}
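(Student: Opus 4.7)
A geodesic from $0$ to $\hcal^x$ is a chain of reproduction events whose balls overlap consecutively; since each radius is at most $\text{R}_{0}$, consecutive ball centres differ by at most $2\text{R}_{0}$, and since the front advances at linear speed, a geodesic to $\hcal^x$ must contain $\Theta(x)$ events. Unlike KPZ-type models, there is no global optimisation over paths, so the transverse coordinate along a well-chosen geodesic behaves like a bounded-increment, mean-zero random walk of length $\Theta(x)$ and fluctuates on the $\sqrt{x}$ scale. This is the source of the $\sqrt{x}$ order in the theorem.

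\textbf{Parts (i) and (ii): upper bounds.} All geodesics must terminate at the unique event $(\tau, z_\tau, r_\tau) \in \Pi$ whose ball first crosses $\hcal^x$, so it suffices to construct one explicit geodesic with controlled $y$-fluctuations. The plan is to trace the ancestry of this event backwards through $\Pi$: at each backward step I would take the most recent earlier event in $\Pi$ that overlaps the current ball and has $x$-coordinate smaller than the current centre by at least $\eta\text{R}_{0}$, for a fixed $\eta>0$ at which such events occur at positive rate. This selection rule is symmetric in the $y$-coordinate of the current ball, so, writing $(X_k,Y_k)$ for the centre visited after $k$ backward steps (with $Y_0 = Y_\tau$), the increments $(Y_{k+1}-Y_k)$ are bounded by $2\text{R}_{0}$ and form a martingale difference sequence in the backward filtration. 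The chain reaches $x$-coordinate near $0$ in at most $N\le\lceil x/(\eta\text{R}_{0})\rceil$ steps, and a short extra excursion of $O(\sqrt{x})$ steps can be appended to land on an origin-overlapping ball, so that the final vertex satisfies $|Y_N|\le\text{R}_{0}$, without changing the fluctuation scale. Azuma's inequality together with Doob's maximal inequality then gives
\[
\P\!\left(\max_{k\le N}|Y_k - Y_0| > t\right) \le 2\exp\!\left(-\frac{t^2\,\eta}{8\,x\,\text{R}_{0}}\right),
\]
and combining with $|Y_N|\le\text{R}_{0}$ yields $\max_{k\le N}|Y_k|\le 2t+\text{R}_{0}$. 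Taking $t = A_\eps\sqrt{x}$ with $A_\eps$ large yields (i); taking $t=\delta x^\beta$ with $\beta\in(1/2,1]$ produces the stretched-exponential bound $\exp(-cx^{2\beta-1})$ of (ii).

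\textbf{Part (iii) and main obstacle.} Since every geodesic terminates at $(z_\tau,r_\tau)$, the assertion of (iii) reduces to the anti-concentration bound $\P(|Y_\tau|\le a_\eps\sqrt{x})\le\eps$. I would establish this by identifying $Y_\tau$ with the endpoint of the same backward random walk used above, whose bounded, mean-zero increments have variance bounded \emph{away from zero}, and then applying a local central limit theorem or an Esseen-type small-ball estimate to conclude that $Y_\tau/\sqrt{x}$ has a uniformly bounded density, so $\P(|Y_\tau|\le a\sqrt{x})\le Ca$ for all sufficiently large $x$. The main obstacle is precisely this anti-concentration step: whereas (i)--(ii) only need an upper bound on the single-step size, (iii) demands quantitative non-degeneracy of the single-step variance and sufficient smoothness of the endpoint distribution. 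Both are delicate, because the backward ancestral exploration is entangled with the forward Poisson structure that determined $(\tau,z_\tau,r_\tau)$ as the first-hit event, and because the selection rule optimised for an upper bound may need to be re-engineered to provide a genuinely non-degenerate increment variance and to support a local limit theorem.
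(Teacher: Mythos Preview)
Your backward construction in (i)--(ii) has two genuine gaps. First, the chain you build by stepping leftwards through $\Pi$ from $(z_\tau,r_\tau)$ is not a geodesic from $0$ to $\hcal^x$: when reversed in time it is a chain of $\Pi$-reproduction events, but it starts wherever the backward walk happens to terminate, not at the origin. Your ``short extra excursion of $O(\sqrt x)$ steps'' is not a fix, because a geodesic must live in the time window $[0,\hit{\{0\}}{\hcal^x}]$ and must begin at $(0,0)$ at time $0$; you have no control over how much time the backward walk has consumed when it reaches the $y$-axis, nor any mechanism to steer it to the specific point $0$ using only events in $\Pi$ that occurred before time $\hit{\{0\}}{\hcal^x}$. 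Second, and more seriously, your claim that the $y$-increments form a martingale difference sequence is undermined by exactly the entanglement you flag for part~(iii): the event $(z_\tau,r_\tau)$ is the \emph{first} to reach $\hcal^x$, and this conditions on the entire earlier Poisson configuration through the process $(S_t^{\{0\}})_{t\ge 0}$, so the law of your backward walk given its starting point is not the unconditional law of a leftward-selecting walk in $\Pi$.

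The paper avoids both problems by working forward from $0$ with a ``random geodesic'': it restricts the tree of chains to those that actually hit $\hcal^x$ at time $\hit{\{0\}}{\hcal^x}$ and then descends this subtree by fair coin flips. This is a geodesic by construction. For the martingale structure, the key device is to introduce auxiliary i.i.d.\ signs $B_i\in\{\pm 1\}$ and observe that the sequence $\big(X_{\Theta_j^{(x)}},\sum_{i\le j}B_i\Delta^Y_{\Theta_i^{(x)}}\big)_j$ has the same law as $(Z_{\Theta_j^{(x)}})_j$, by the $y$-reflection symmetry of $\Pi$ and of $\hcal^x$. Crucially, $\tilde Y_j=\sum_{i\le j}B_i\Delta^Y_{\Theta_i^{(x)}}$ is a martingale with respect to a filtration that already contains the \emph{entire} tree and the random path; only the signs are revealed step by step. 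This decoupling is what makes Doob's inequality (for (i)) and the exponential martingale bound (for (ii)) legitimate, and it is not available for your backward walk. For (iii), the paper does not attempt a local CLT; instead it applies Paley--Zygmund conditionally on the tree, using that $\E[\tilde Y(x)^2\mid\fcal^\tcal]=\sum_i(\Delta^Y_{\Theta_i^{(x)}})^2$ and $\E[\tilde Y(x)^4\mid\fcal^\tcal]=\E[\tilde Y(x)^2\mid\fcal^\tcal]^2$, and then proves separately (by a union bound over \emph{all} paths in the tree) that $\sum_i(\Delta^Y_{\Theta_i^{(x)}})^2\gtrsim x$ with high probability. The anti-concentration thus comes purely from the random signs, sidestepping the entanglement entirely.
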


\noindent
In simple terms, parts (i) and (iii) say that with high probability, the spatial fluctuations in the $y$-direction of at least one geodesic from $0$ to $\hcal^x$ are of order $\sqrt{x}$. Part (ii) gives tail bounds on fluctuations of larger order. We will state this result in a more precise and detailed way in Section~\ref{sec:geodesics}. We expect that in fact \emph{all} geodesics from $0$ to $\hcal^x$ have fluctuations of order $\sqrt{x}$, but we do not currently have a bound on how many such geodesics there are (in general there will be more than one). As noted in the introduction, Theorem \ref{thm:geodesics} is in marked contrast to the behaviour of first-passage percolation models, in which geodesics are generally expected to fluctuate in the transverse spatial direction to the order of $x^{2/3}$.

We also show the equivalent of part (i) of Theorem \ref{thm:geodesics} for geodesics from the left half-plane to a point $(x,0)$, showing that with high probability at least one such geodesic stays within a strip of radius $A\sqrt{x}$ for sufficiently large $A$. This task, of controlling geodesics from a plane to a point, is significantly more difficult simply because we have an unbounded initial condition, which restricts our use of self-duality and makes proofs more intricate. We expect that equivalents of parts (ii) and (iii) of Theorem \ref{thm:geodesics} also hold for geodesics from the half-plane to a point, but due to space constraints, we save these for future work.

Define $\hcal$ to be the closed half-plane of points to the left of the origin, $\hcal := \{
(x,y) \in \rtwo : x\le 0 \}$. We again state an informal version of a theorem, this time to be made precise in Section \ref{sec:geodesics_plane_to_point}.

\begin{thm}\label{thm:geodesic_plane_to_point}
Consider the $\infty$-parent SLFV started from $E=\hcal$ and run until the first hitting time of the point $(x,0)$ for $x>0$. Say that any path of reproduction events leading from the $\hcal$ to $(x,0)$ at this time is a \emph{geodesic} from $\hcal$ to $(x,0)$. Then for all $\eps>0$, there exist $A_\eps,x_\eps\in(0,\infty)$ such that for all $x>x_\eps$,
\[\P\left(\text{there exists a geodesic from $\hcal$ to $(x,0)$ that remains within $\lcal^{(A_\eps\sqrt{x})}$}\right)\ge 1-\eps.\]
\end{thm}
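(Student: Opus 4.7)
The plan is to deduce Theorem \ref{thm:geodesic_plane_to_point} from Theorem \ref{thm:geodesics}(i) via a self-duality argument, combined with time-reversal invariance of the Poisson point process~$\Pi$. The starting observation, which follows directly from Definition~\ref{defn:infty_parent_slfv}, is that $(x,0)\in S_t^\hcal$ if and only if the backward ancestral skeleton $B_t^{(t,(x,0))}$ intersects $\hcal$, and that the chains of reproduction events forming geodesics from $\hcal$ to $(x,0)$ in the forward process $S^\hcal$ are in one-to-one correspondence with the chains of reproduction events in this ancestral skeleton, traversed in reverse time order. Hence controlling the existence of a forward geodesic from $\hcal$ to $(x,0)$ contained in $\lcal^{(A_\eps\sqrt{x})}$ reduces to controlling the existence of such a chain of events in the backward skeleton from~$(x,0)$ that reaches $\hcal$ without leaving the strip.

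Next, I would exploit the time-reversal symmetry of $\Pi$: the backward skeleton $(B_s^{(0,(x,0))})_{s\ge 0}$ from $(x,0)$ should be equal in distribution, as a set-valued process together with its labelled chain of reproduction events, to the forward process $(S_s^{\{(x,0)\}})_{s\ge 0}$ started from the single point $\{(x,0)\}$, both constructed from~$\Pi$. Combining this with a translation by $(-x,0)$ and a reflection across the $y$-axis---both of which leave the law of $\Pi$ invariant and preserve every strip $\lcal^{(r)}$---identifies the problem of finding a geodesic from $\{(x,0)\}$ to $\hcal$ staying in $\lcal^{(A\sqrt{x})}$ with the problem of finding a geodesic from $\{(0,0)\}$ to $\hcal^x$ staying in $\lcal^{(A\sqrt{x})}$. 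Theorem~\ref{thm:geodesics}(i) then furnishes, for any $\eps>0$ and some $A_\eps$, such a geodesic with probability at least $1-\eps$, and transferring back along the duality yields the conclusion.

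The main obstacle, and the reason the authors flag this result as significantly more intricate than Theorem~\ref{thm:geodesics}(i), lies in making the duality rigorous when the initial condition is the unbounded set $\hcal$. The process $S^\hcal$ is itself only defined through the ancestral skeleton construction of Definition~\ref{defn:infty_parent_slfv}, and the set-valued $\infty$-parent SLFV from a single point does not carry an obvious labelling of ``geodesic paths'' separate from the sequence of reproduction events generating its growth; thus the identification of geodesics on both sides of the duality requires a careful parametrisation of chains of events through the set-valued dynamics, as well as verification that the first hitting time of~$(x,0)$ on the forward side corresponds precisely to the first hitting time of~$\hcal$ on the backward side. A further technical point is to check that the correspondence is not spoiled by the unboundedness of~$\hcal$, since in principle the geodesic realising the hitting time could come from events arbitrarily far from the $x$-axis, and one must ensure that the duality picks out a single canonical geodesic whose transverse position can then be controlled by Theorem~\ref{thm:geodesics}(i). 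Once this bookkeeping is in place, the probabilistic content of the theorem is supplied entirely by the point-to-halfplane case, and no new estimates are required.
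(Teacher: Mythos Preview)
Your proposal contains a genuine gap at the duality step, and it is precisely the difficulty the paper warns about. You correctly note that a geodesic from $\hcal$ to $(x,0)$ is a chain inside the ancestral skeleton $B^{(t,(x,0))}$ at $t=\hit{\hcal}{x,0}$, and that for any \emph{deterministic} $t$ the skeleton $(B_s^{(t,(x,0))})_{s\ge0}$ has the same law as the forward process $(S_s^{\{(x,0)\}})_{s\ge0}$. But in the next paragraph you silently pass from $B^{(\hit{\hcal}{x,0},(x,0))}$ to $B^{(0,(x,0))}$. The hitting time $\hit{\hcal}{x,0}$ is a random functional of $\Pi|_{[0,\hit{\hcal}{x,0}]}$, so $\Pi^{(\hit{\hcal}{x,0})}$ is \emph{not} distributed as $\Pi$ (for instance it almost surely carries a point at time~$0$). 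Consequently Theorem~\ref{thm:geodesics}(i), which is a statement about chains of $\Pi$-events, does not transfer to chains of $\Pi^{(\hit{\hcal}{x,0})}$-events by equality in distribution. The paper says exactly this at the start of Section~\ref{sec:geodesics_plane_to_point}: ``we cannot simply work with $\Pi^{(t)}$ for fixed $t$\ldots $\hit{\hcal}{x,0}$ is nothing like a typical time~$t$.'' The obstruction is not the bookkeeping of the bijection between forward and backward chains (that is immediate from Lemma~\ref{lem:strong_self_duality}), nor merely the unboundedness of~$\hcal$; it is that the duality would have to be applied at a random, $\Pi$-dependent time.

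The paper's route is accordingly not a direct duality. It discretises time to a mesh $\delta\nmath$, invokes $\Pi\stackrel{d}{=}\Pi^{(j)}$ only at \emph{deterministic} mesh points $j$, and proves that the resulting estimates are uniform in~$\delta$ so that one may send $\delta\to0$ and recover the actual hitting time. This uniformity is substantive new work: Lemma~\ref{lem:backwards_transverse_displ} redoes the Doob inequality of Lemma~\ref{lem:prelim_res_transverse_displ} on the event $\{\hit{\hcal}{z}\in(a,b]\}$, and the proof of Proposition~\ref{prop:almost_geodesic} controls the number of jumps uniformly in~$\delta$ by embedding the tree $\tcal^{[z]}(\Pi^{(s)})$ into $\tcal^{[z]}(\Pi^{(\lceil s\rceil)})$ so that only integer time-shifts need be summed over. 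Your assertion that ``no new estimates are required'' is therefore incorrect; the estimates of Section~\ref{sec:geodesics_plane_to_point} are what stand in for the duality step that cannot be carried out directly.
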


\subsubsection{The shape of the expansion}\label{sec:shape_expansion}

Consider the $\infty$-parent SLFV started from some measurable set $E\subset\rtwo$, and for all $A\subset\rtwo$, let $\hit{E}{A}$ be the first time at which the set~$A$ is reached: 
\begin{equation*}
    \hit{E}{A} := \inf\left\{ t \geq 0 : A\cap S^E_{t}\neq\emptyset \right\}. 
\end{equation*}
In a slight abuse of notation, for $z=(x,y)\in\rtwo$ we will write
\[\hit{E}{z} = \hit{E}{x,y} := \hit{E}{\{z\}} = \inf\left\{ t \geq 0 : z\in S^E_{t} \right\}.\]
Again, if the initial state $E$ of the process is clear, we will sometimes omit this from the notation, simply writing $\hit{}{A}$ and $\hit{}{z}$ for the first hitting time of $A$ and $z$ respectively.

Recall that $\hcal$ is the closed half-plane of points to the left of the origin, $\hcal := \{
(x,y) \in \rtwo : x\le 0 \}$, and that $\hcal^x$ is the closed half-plane of points to the right of $(x,0)$. We will prove the following result on hitting times.
\begin{thm}\label{thm:hitting_times}
There exists a constant $\nu\in(0,\infty)$ such that
\begin{itemize}
\item $\displaystyle{\frac{\hit{\{0\}}{\hcal^x}}{x}} \to \nu$ as $x\to +\infty$, almost surely and in $L^1$;
\item for any $z\in\rtwo$ such that $\|z\|=1$, $\displaystyle{\frac{\hit{\{0\}}{uz}}{u}} \to \nu$ as $u\to +\infty$, almost surely and in $L^1$;
\item for any $y\in\mathbb{R}$, $\displaystyle{\frac{\hit{\hcal}{x,y}}{x}} \to \nu$ as $x\to +\infty$, in probability and in $L^1$.
\end{itemize}
\end{thm}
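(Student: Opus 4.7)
The approach is to first prove convergence of $\sigma_x := \hit{\{0\}}{\hcal^x}$ via a subadditive argument; the remaining two parts then follow by combining this with the transverse fluctuation results, Theorems~\ref{thm:geodesics} and~\ref{thm:geodesic_plane_to_point}.

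The key input for the first part is a stochastic subadditivity property. At time $\sigma_x$, some point $z^{*} \in \hcal^x$ lies in $S^{\{0\}}_{\sigma_x}$; using the strong Markov property of the driving Poisson process together with translation invariance and monotonicity of the $\infty$-parent SLFV in its initial condition, one constructs $\tilde\sigma_y$ distributed as $\sigma_y$ and independent of $\fcal_{\sigma_x}$ with $\sigma_{x+y} \leq \sigma_x + \tilde\sigma_y$ almost surely. Taking expectations gives $E[\sigma_{x+y}] \leq E[\sigma_x] + E[\sigma_y]$; exhibiting an explicit chain of reproduction events reaching $\hcal^1$ in a bounded number of steps gives $E[\sigma_1] < \infty$, so Fekete's lemma yields $\nu := \lim_{x\to\infty} E[\sigma_x]/x = \inf_{x>0} E[\sigma_x]/x \in [0,\infty)$. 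For $\nu > 0$ I would compare the forward process with a branching random walk: by Definition~\ref{defn:infty_parent_ancestral_skeleton} the ancestral skeleton's size is dominated by a Yule process of rate $\text{M}_0$ with new balls centred within distance $2\text{R}_0$ of an existing one, so standard BRW theory bounds the maximum extent $F_t := \sup\{\|z\| : z \in S^{\{0\}}_t\}$ linearly in $t$ with exponential tails, yielding $\sigma_x \geq x/\lambda_0$ with overwhelming probability and hence $\nu \geq 1/\lambda_0 > 0$.

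The main obstacle is upgrading from mean convergence to almost-sure convergence. Iterating the subadditivity, $\sigma_{nx}$ is stochastically dominated by $\sum_{i=1}^n Y_i$ with $Y_i$ i.i.d.\ copies of $\sigma_x$; combined with monotonicity of $\sigma_y$ in $y$ and the SLLN, this gives $\limsup_{y\to\infty} \sigma_y/y \leq \nu$ almost surely. The matching $\liminf \geq \nu$ is the hard part, since the BRW bound only furnishes $\sigma_y \geq y/\lambda_0$ and $1/\lambda_0$ can be strictly less than $\nu$. The natural route is a variance bound $\mathrm{Var}(\sigma_x) = o(x^2)$ via a Poincar\'e-type inequality for Poisson functionals, after checking that a single point of $\Pi$ has bounded influence on $\sigma_x$ on a typical event; together with $E[\sigma_x]/x \to \nu$ this gives $\sigma_x/x \to \nu$ in $L^2$ and hence in probability, and Borel--Cantelli along a subsequence $x_k = k^2$ combined with monotonicity upgrades this to a.s.\ convergence, while uniform integrability (from the i.i.d.\ domination) delivers $L^1$ convergence.

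For the second part, the same subadditivity applied to $\hit{\{0\}}{uz}$ produces a limit $\nu_z$ for each unit direction; rotational isotropy of the Poisson intensity and the $\ell^2$-balls forces $\nu_z$ to be independent of $z$. To identify this common value with $\nu$, use Theorem~\ref{thm:geodesics}(i): with high probability there is a geodesic from $0$ to $\hcal^x$ terminating inside $\lcal^{(A\sqrt{x})}$, hence within Euclidean distance $O(\sqrt{x})$ of $(x,0)$, so $\hit{\{0\}}{x,0} \leq \sigma_x + O(\sqrt{x})$ w.h.p., which combined with the obvious inequality $\sigma_x \leq \hit{\{0\}}{x,0}$ yields $\hit{\{0\}}{x,0}/x \to \nu$ a.s.\ and in $L^1$. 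The third part then combines monotonicity with Theorem~\ref{thm:geodesic_plane_to_point}: the upper bound is $\hit{\hcal}{x,y} \leq \hit{\{(0,y)\}}{x,y} \stackrel{d}{=} \hit{\{0\}}{x,0}$, whose rescaled limit is $\nu$ by part 2; for the matching lower bound in probability, Theorem~\ref{thm:geodesic_plane_to_point} produces, with high probability, a geodesic from $\hcal$ to $(x,y)$ lying in a strip of width $O(\sqrt{x})$ and of Euclidean length at least $x$, and applying the linear-speed bound from part 1 to subsegments of this geodesic shows its temporal length is at least $\nu x (1 - o(1))$. Convergence in probability follows, and $L^1$ convergence from uniform integrability given by the upper bound.
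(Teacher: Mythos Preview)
Your overall architecture is right, but there are two substantive gaps and one avoidable detour.

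\textbf{Part 1: the subadditive argument.} You set up the stochastic subadditivity correctly, but then take a hard road to almost-sure convergence: a Poincar\'e-type variance bound plus Borel--Cantelli along a subsequence. The paper instead applies Liggett's subadditive ergodic theorem directly to the array $T_{m,n}$ defined by restarting from the point where $\hcal^m$ is first hit; since $(T_{nk,(n+1)k})_{n\ge 1}$ is i.i.d.\ (by the strong Markov property and translation invariance), ergodicity is automatic and both a.s.\ and $L^1$ convergence come for free. Your Poincar\'e route requires you to control the influence of a single Poisson point on $\sigma_x$, and this is not obviously bounded: inserting one reproduction event near the front can trigger a cascade of previously ``failed'' events to become successful. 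You have not justified why this influence is $O(1)$ on a typical event, and doing so properly would be at least as much work as the ergodic-theorem proof.

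\textbf{Part 2: the upgrade from w.h.p.\ to a.s.} You invoke Theorem~\ref{thm:geodesics}(i) to get the geodesic endpoint within $O(\sqrt{x})$ of $(x,0)$ with high probability, and then assert $\hit{\{0\}}{x,0}/x \to \nu$ a.s. The gap is the passage from ``with probability $\ge 1-\eps$'' to ``almost surely''. The paper handles this by using instead the linear-scale tail bound (Lemma~\ref{lem:large_devs_transverse_displ} with $\delta x$), which gives an exponentially small failure probability, combined with the exponential tail of Proposition~\ref{prop:tail_tau_x} for the additional coverage time; Borel--Cantelli then gives a.s.\ convergence along integers, and monotonicity extends to the continuum.

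\textbf{Part 3: circularity and an unnecessary detour.} You propose to use Theorem~\ref{thm:geodesic_plane_to_point} for the lower bound on $\hit{\hcal}{x,y}$. But in the paper, the proof of Theorem~\ref{thm:geodesic_plane_to_point} (via Proposition~\ref{prop:almost_geodesic}) explicitly invokes Theorem~\ref{thm:hitting_times} to ensure $\P(\hit{\hcal}{z}>\beta x)$ is small, so your argument is circular as stated. Moreover, your ``apply the linear-speed bound from part~1 to subsegments of this geodesic'' is not a valid deduction: part~1 bounds the hitting time of $\hcal^x$ from \emph{above} asymptotically (i.e.\ it says the front is not slow), which does not directly give a lower bound on the time taken by a specific chain. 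The paper bypasses all of this with one line: by self-duality (Corollary~\ref{cor:hitting_times_equal_distn}), $\hit{\hcal}{x,y}$ and $\hit{\{0\}}{\hcal^x}$ are equal in distribution, so part~3 is an immediate consequence of part~1.
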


We will prove this result in Section \ref{sec:almost_sure_conv_hitting_times}. The last point was proved in \cite[Theorem 1.5]{louvet2023measurevalued} for another version of the $\infty$-parent SLFV; we shall see in Theorem \ref{thm:informal_equiv_defn} that this version is equal in distribution to our (set-valued) version, so the last point above would also follow from that. However we will prove it directly, as an easy corollary of the first point above together with the self-duality of the $\infty$-parent SLFV.

With the second part of Theorem \ref{thm:hitting_times} in hand, it is natural to ask whether there is some uniformity over hitting times in all directions simultaneously. This is indeed the case.

\begin{thm}\label{thm:ball_shape}
Suppose that $E\subset\rtwo$ is measurable and compact. For any $\eps>0$,
\[\P\left( \bcal\big(0,(\nu^{-1}-\eps)t\big)\subseteq S^E_t \subseteq \bcal\big(0,(\nu^{-1}+\eps)t\big) \,\,\text{ for all large } t\right) =1.\]
\end{thm}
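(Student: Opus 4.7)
The plan is to prove the inner and outer inclusions separately, each via the standard shape-theorem recipe: combine the directional hitting-time convergence in Theorem \ref{thm:hitting_times} with a finite $\delta$-net on the unit circle, and interpolate between directions using monotonicity and translation invariance of the driving Poisson point process.

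\textbf{Reduction.} The statement is trivial when $E$ is empty, so fix $p_0 \in E$ and $R<\infty$ with $E \subseteq \bcal(0,R)$. Monotonicity $E_1 \subseteq E_2 \Rightarrow S^{E_1}_t \subseteq S^{E_2}_t$ is immediate from Definition \ref{defn:infty_parent_slfv}, so $S^{\{p_0\}}_t \subseteq S^E_t \subseteq S^{\bcal(0,R)}_t$. Since $R$ and $\|p_0\|$ are fixed constants they can be absorbed into $\eps$ for large $t$, reducing the problem to proving the inner bound for $E=\{0\}$ and the outer bound for $E=\bcal(0,R)$.

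\textbf{Inner bound.} The goal is $\max_{\|z\|=s}\hit{\{0\}}{z} \le (\nu + o(1))s$ almost surely. I would pick a $\delta$-net $\theta_1,\ldots,\theta_N$ on the unit circle, so that every unit vector lies within distance $\delta$ of some $\theta_i$. By Theorem \ref{thm:hitting_times}(second bullet) and a union bound over the finite set of directions, a.s.\ $\max_i \hit{\{0\}}{s\theta_i} \le (\nu+\eta)s$ for all $s$ large. For $z=s\theta$ with $\|\theta-\theta_i\|\le\delta$, the strong Markov property at time $\hit{\{0\}}{s\theta_i}$ combined with time-shift invariance of the PPP gives $\hit{\{0\}}{z} \le \hit{\{0\}}{s\theta_i} + \tau$, where $\tau$ is an independent copy distributed as $\hit{\{0\}}{z-s\theta_i}$. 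Bounding $\tau \le K\delta s$ a.s.\ via the crude uniform upper bound discussed below, and choosing $\delta<\eta/K$, yields $\hit{\{0\}}{z}\le (\nu+2\eta)s$, and letting $\eta\downarrow 0$ completes the inner bound.

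\textbf{Outer bound.} For the matching lower bound $\min_{\|z\|=s} \hit{\bcal(0,R)}{z} \ge (\nu - o(1))s$, I would combine Theorem \ref{thm:hitting_times}(first bullet) with rotational symmetry of the process distribution: the hitting time of the half-plane at signed distance $r$ from the origin in direction $\theta$, normalised by $r$, converges a.s.\ to $\nu$ for each fixed $\theta$. Since every $z$ lies in the half-plane orthogonal to $\theta_i$ at signed distance $\langle z,\theta_i\rangle$, and $\langle z,\theta_i\rangle \ge \|z\|(1-\delta^2/2)$ for $\theta_i$ closest to $z/\|z\|$ in the net, the same $\delta$-net plus union-bound argument gives $\min_{\|z\|=s}\hit{\{0\}}{z} \ge (\nu - o(1))s$ a.s. To pass from initial condition $\{0\}$ to $\bcal(0,R)$, I would invoke the self-duality $z\in S^{\bcal(0,R)}_t \Leftrightarrow B^{(t,z)}_t \cap \bcal(0,R)\neq\emptyset$ from Definition \ref{defn:infty_parent_slfv}: by time-reversal invariance of the PPP, the ancestral skeleton $(B^{(t,z)}_s)_{s\le t}$ has the same distribution as the forward process started from $\{z\}$ (up to an initial ball of radius at most $R_0$), so the inner-bound argument applied to it shows $B^{(t,z)}_t \subseteq \bcal(z,(\nu^{-1}+\eps/2)t)$ a.s.\ for large $t$, which for $\|z\|>(\nu^{-1}+\eps)t$ is disjoint from $\bcal(0,R)$, and a Borel--Cantelli step over integer times (with interpolation via the bounded reproduction radius $R_0$) upgrades this to the stated uniform-in-$t$ almost-sure statement. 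The main obstacle is the crude uniform upper bound $\max_{\|w\|\le r}\hit{\{0\}}{w} \le Kr$ a.s.\ for large $r$, not directly supplied by Theorem \ref{thm:hitting_times}; I would derive it from rotational symmetry (so that $\hit{\{0\}}{s\theta}$ is equidistributed in $\theta$) together with tail bounds on $\hit{\{0\}}{s\theta}$ around $\nu s$ extracted from the proof of Theorem \ref{thm:hitting_times}, applied via a union bound over an $O(r)$-sized angular net on the sphere of radius $r$.
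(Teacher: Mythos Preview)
Your approach is genuinely different from the paper's and largely sound, with one real gap. The paper handles both inclusions for $E=\{0\}$ via an ergodicity argument (Lemma~\ref{lem:z_great_as}): it shows that along each rational ray, a positive density of integer multiples $n_jz$ are ``good'' restart points from which every other point can be reached in time $\gamma(1+\text{distance})$, and then runs a compactness/contradiction argument over rational directions. You instead use a finite $\delta$-net on the circle together with restarts (inner bound) and half-planes (outer bound). Your outer bound for $\{0\}$ via rotated copies of Theorem~\ref{thm:hitting_times}(i) is actually cleaner than the paper's ``inwardly great'' machinery: since $z\in\hcal^{\theta_i}_{\langle z,\theta_i\rangle}$ one gets $\hit{\{0\}}{z}\ge\hit{\{0\}}{\hcal^{\theta_i}_{\langle z,\theta_i\rangle}}\ge(\nu-\eta)\langle z,\theta_i\rangle$ almost surely for large arguments, and intersecting the resulting half-spaces over a fine net traps $S^{\{0\}}_t$ in a ball of radius $(\nu^{-1}+\eps)t$. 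Your inner bound also works, but note that the ``crude uniform bound'' you need---for the restarted process at each stopping time $\hit{\{0\}}{s\theta_i}$, uniformly over the $\delta s$-ball, almost surely for all large integer $s$---must come from the exponential tails of Proposition~\ref{prop:tail_tau_x} (covering plus Borel--Cantelli); the proof of Theorem~\ref{thm:hitting_times} is a bare subadditive ergodic theorem and supplies no rates.

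The gap is in passing the outer bound from $\{0\}$ to $\bcal(0,R)$. Your duality step correctly notes that for each \emph{fixed} $t$, $B^{(t,z)}_t$ has the law of $S^{\{z\}}_t$, so the outer bound for the forward process gives $\P\big(B^{(t,z)}_t\not\subseteq\bcal(z,(\nu^{-1}+\eps/2)t)\big)\to 0$. But you need this simultaneously for all large $t$ (and for all relevant $z$); the ancestral skeletons for different $t$ are built from different time-reversals $\Pi^{(t)}$ and do not form a single process, so the almost-sure statement you proved for $(S^{\{z\}}_t)_{t\ge 0}$ does not transfer. Your proposed Borel--Cantelli step over integer $t$ would require summable tails for the \emph{sharp} outer inclusion $\P\big(S^{\{0\}}_t\not\subseteq\bcal(0,(\nu^{-1}+\eps)t)\big)$, and neither the subadditive ergodic theorem nor Lemma~\ref{lem:moment_lemma_1} (which only bounds the front by a crude speed far from $\nu^{-1}$) provides these. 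The paper's fix is much simpler and sidesteps the issue entirely: pick $z\in E$ and let $c$ be the almost surely finite random time at which $E\subseteq S^{\{z\}}_c$; then, by the strong Markov property at $c$ and monotonicity, a time-$c$-shifted copy of $S^E_t$ is contained in $S^{\{z\}}_{t+c}$, and the already-proved almost-sure outer bound for the singleton absorbs the finite $c$.
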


We will prove this theorem in Section \ref{subsec:shape_thm}. It is very similar to a result of Deijfen \cite{deijfen2003asymptotic}. Her model begins with a compact set, and reproduction events occur just as for the $\infty$-parent SLFV, except that only those whose \emph{centre} falls within the previously occupied region are successful. This leads to less reproduction at the front, and less pronounced fluctuations in the front position. Our theorem is also very much in the spirit of the Cox-Durrett shape theorem \cite{cox1981some} for first passage percolation, and our proof at least partially follows the accessible treatment of Auffinger, Damron and Hanson \cite{auffinger201750}.

A result like Theorem \ref{thm:ball_shape}  is \emph{not} true when we begin with an unbounded initial occupied set, e.g.~the left half-plane $\hcal$. In this case we cannot bound the speed of the front at all locations simultaneously; if we look far enough from $0$, we can always find a ``fast area'' of the Poisson point process $\Pi$ that contains many reproduction events in a narrow strip over a short time interval, which allow the process to advance arbitrarily quickly in that strip.

Instead, when starting from $\hcal$, rather than attempting to control the shape in all regions simultaneously, we concentrate on the $x$-axis and give a finer bound on the difference between the hitting time of $(x,0)$ and the time at which the whole interval $\{(x',0): x'\le x\}$ has been covered. In other words, we describe the advance of the area in which the density is maximal at the \textit{mesoscopic scale}, which we will refer to as the \textit{bulk of the expansion}, in line with the terminology used for PDEs, see e.g. \cite{birzu2018fluctuations}.

In order to state the result, we need some more notation. Starting our $\infty$-parent SLFV from a measurable $E\subseteq\hcal$ that contains the origin $(0,0)$ (the reader should think primarily of the case $E=\hcal$, but the definition also works for $E=\{(0,0)\}$), for any $z = (x,y)$ with $x>0$, let $\bulk{E}{z}$ be the first time at which the straight line from $0$ to $z$, i.e.~$\{uz:u\in[0,1]\}$, lies entirely in the occupied area. That is,
\[\bulk{E}{z} := \inf\left\{ t \geq 0 : uz \in S^E_{t}\,\,\,\forall u\in[0, 1] \right\}.\]
We refer to $\bulk{E}{z}$ as the \emph{bulk coverage time} of $z$. Again if $z=(x,y)$ then we will sometimes write $\bulk{E}{x,y}$ instead of $\bulk{E}{(x,y)}$, and if the initial state $E$ is clear we will sometimes omit this from the notation.

\begin{thm}\label{thm:new_speed_growth_bulk}
For any $\eps>0$, there exist $\beta_\eps,x_\eps\in(0,\infty)$ such that for any $\beta>\beta_\eps$ and $x>x_\eps$,
\[\P\left(\bulk{\hcal}{x,0} > \hit{\hcal}{x,0} + \beta\sqrt{x}\right) < \eps\]
and
\[\P\left(\bulk{\{0\}}{x,0} > \hit{\{0\}}{\hcal^x} + \beta\sqrt{x}\right) < \eps.\]
\end{thm}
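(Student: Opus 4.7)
The plan is to combine the transverse-fluctuation bound for geodesics (Theorem~\ref{thm:geodesic_plane_to_point} for the first inequality and Theorem~\ref{thm:geodesics}(i) for the second) with hitting-time estimates at scale~$\sqrt{x}$. The intuition is that on the high-probability event that a geodesic from $\hcal$ to $(x,0)$ (resp.\ from $\{0\}$ to $\hcal^{x}$) stays inside $\lcal^{(A\sqrt{x})}$ for some $A=A(\eps)$, every point $(x',0)$ with $x'\in[0,x]$ lies within distance $A\sqrt{x}+\text{R}_{0}$ of the occupied set at time $\hit{\hcal}{x,0}$ (resp.\ $\hit{\{0\}}{\hcal^{x}}$), and so a further duration of order~$\sqrt{x}$ should suffice for the expansion to fill in all remaining uncovered points on the axis.

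Concretely, for the first inequality I would fix $A=A_{\eps/3}$ from Theorem~\ref{thm:geodesic_plane_to_point} and work on the event~$\mcal$, of probability at least $1-\eps/3$, that there exists a geodesic $G=\{(t_{1},z_{1},r_{1}),\ldots,(t_{k},z_{k},r_{k})\}$ from $\hcal$ to $(x,0)$ contained in $\lcal^{(A\sqrt{x})}$ with $t_{k}=\hit{\hcal}{x,0}$. Since $\bigcup_{j}\bcal(z_{j},r_{j})$ is a connected set joining $\hcal$ to $(x,0)$, for every $x'\in[0,x]$ there is an index $j(x')$ and a point $(x',y^{(x')})\in\bcal(z_{j(x')},r_{j(x')})$ with $|y^{(x')}|\le A\sqrt{x}+\text{R}_{0}$, occupied by time $t_{j(x')}\le\hit{\hcal}{x,0}$. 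By the strong Markov property at $\hit{\hcal}{x,0}$ together with monotonicity of the set-valued $\infty$-parent SLFV in its initial condition, on $\mcal$ the hitting time $\hit{\hcal}{x',0}$ is bounded above by $\hit{\hcal}{x,0}$ plus the hitting time of $(x',0)$ in a fresh $\infty$-parent SLFV started from $\{(x',y^{(x')})\}$; by translation invariance the latter has the same law as $\hit{\{0\}}{0,|y^{(x')}|}$.

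To lift this pointwise bound into the uniform statement $\sup_{x'\in[0,x]}\hit{\hcal}{x',0}\le\hit{\hcal}{x,0}+\beta\sqrt{x}$ needed to control $\bulk{\hcal}{x,0}$, I would discretise $[0,x]$ at spacing $\delta\sqrt{x}$, union-bound over the $O(\sqrt{x})$ grid points using a tail estimate for $\hit{\{0\}}{0,y}$ at scale $|y|\le A\sqrt{x}+\text{R}_{0}$, and handle intermediate $x'$ by observing that, provided $\beta$ is chosen so that $(\nu^{-1}-\eta)^{2}\beta^{2}>A^{2}+\delta^{2}/4$, the expansion from each grid-point ancestor fills a disc of radius of order $\beta\sqrt{x}/\nu$ which intersects the $x$-axis in an interval of length greater than~$\delta\sqrt{x}$ centred at the corresponding grid point. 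The second inequality follows from exactly the same scheme, with Theorem~\ref{thm:geodesics}(i) replacing Theorem~\ref{thm:geodesic_plane_to_point}. The main obstacle is the quantitative tail estimate: the $L^{1}$ convergence in Theorem~\ref{thm:hitting_times} is on its own too weak to survive a union bound over $\Theta(\sqrt{x})$ grid points, so I expect to need a concentration bound of the form $\P(\hit{\{0\}}{0,y}\ge\nu y+s)\le e^{-cs^{2}/y}$ for $s$ large compared to $\sqrt{y}$, which I would derive from the chain-of-reproduction-events representation and the moment estimates on typical geodesics developed earlier in the paper.
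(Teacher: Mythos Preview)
Your overall architecture is right and close to the paper's: control a geodesic inside $\lcal^{(A\sqrt{x})}$ via Theorems~\ref{thm:geodesic_plane_to_point} and~\ref{thm:geodesics}(i), then spend an extra $O(\sqrt{x})$ time filling in the axis. But the ``main obstacle'' you identify is a red herring, and the way you propose to handle intermediate points has a gap.

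You do not need a Gaussian-type concentration bound $\P(\hit{\{0\}}{0,y}\ge \nu y+s)\le e^{-cs^{2}/y}$. All that is required is the \emph{crude} exponential tail already available from Proposition~\ref{prop:tail_tau_x} (equivalently, the slow-coverage chain of Section~\ref{subsec:slow_coverage}): for $\beta\ge\beta_{\mu}$, $\P\big(\sup_{z'\in\bcal(z,\delta_{\mu})}\sigma^{\{0\}}(z')>\beta\|z\|\big)\le e^{-c_{\mu}\beta\|z\|}$. Since the vertical distance from $(x',y^{(x')})$ to $(x',0)$ is at most $A\sqrt{x}+\text{R}_{0}$, taking any $\beta\ge\beta_{\mu}$ and applying this with $\|z\|\le A\sqrt{x}+\text{R}_{0}$ gives a failure probability $e^{-c\sqrt{x}}$ per point, which kills any polynomial-in-$x$ union bound. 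The theorem only asks for \emph{some} large $\beta_{\eps}$, not for $\beta$ close to $\nu A$, so concentration around the mean is irrelevant. Relatedly, your intermediate-point argument (``the expansion fills a disc of radius $\beta\sqrt{x}/\nu$ intersecting the axis in an interval of length $>\delta\sqrt{x}$'') implicitly invokes a quantitative shape statement at scale $\sqrt{x}$ that Theorem~\ref{thm:ball_shape} does not provide; it is asymptotic. The clean fix is to discretise at the \emph{constant} spacing $\delta_{\mu}$ from Proposition~\ref{prop:tail_tau_x} (giving $O(x)$ grid points, not $O(\sqrt{x})$), and let the ball $\bcal((x',0),\delta_{\mu})$ in that proposition absorb the intermediate points directly.

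For comparison, the paper's proof reverses the direction of the filling step: rather than growing \emph{down} from the geodesic after applying the strong Markov property at $\hit{\hcal}{x,0}$, it builds, for every $x'\in[0,x]$ and every integer time $j\le Bx$, a slow-coverage chain of $\Pi^{(j)}$-events from $(x',0)$ going \emph{up} (and another going down) to height $A\sqrt{x}$ in time $\alpha A\sqrt{x}$ (Corollary~\ref{cor:new_up_down_chains}); one of these must cross the geodesic, and a short concatenation lemma (Lemma~\ref{lem:connect_chains}) then produces a chain from $(x',0)$ to $\hcal$. This handles the random stopping time by a union bound over integer $j$ rather than the strong Markov property. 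Your forward-in-time strong-Markov argument is also valid and arguably cleaner for the first inequality; the paper's version has the advantage that the same vertical-chain estimate is reused verbatim for both inequalities.
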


This result, which we will prove in Section \ref{sec:new_control_diff}, gives significantly finer control over the bulk of the process than Theorem \ref{thm:ball_shape}. It tells us that starting from $\hcal$, the difference between the first hitting time of $z=(x,0)$ and its bulk coverage time is at most of order $\sqrt{x}$ with high probability; and starting from $\{0\}$, the difference between the first hitting time of $\hcal^x$ and the bulk coverage time of $(x,0)$ is also at most of order $\sqrt{x}$ with high probability. The former statement is significantly more delicate than the latter, again because controlling the growth of the $\infty$-parent SLFV process started from an unbounded initial region such as $\hcal$ leads to inherently more delicate proofs than when started from compact initial conditions. However, the extra difficulty is already contained in the proof of Theorem \ref{thm:geodesic_plane_to_point}, and with this in hand, the proofs of the two parts of this theorem will be very similar.

In Section \ref{subsec:slow_coverage}, we also obtain the following upper bound on the upper tail of $\bulk{\{0\}}{z}$, which also acts as an upper bound on the upper tail of $\hit{\{0\}}{z}$. It is developed as a tool for proving Theorems~\ref{thm:ball_shape} and~\ref{thm:new_speed_growth_bulk}, but given its more quantitative form, it may also be useful in its own right.

\begin{prop}\label{prop:tail_tau_x}
There exist $\beta_{\mu},c_{\mu},\delta_\mu > 0$ depending only on~$\mu$ such that for any $\beta \geq \beta_{\mu}$ and $z\in\rtwo$,
\[\proba\left( \sup_{z'\in\bcal(z,\delta_\mu)} \bulk{\{0\}}{z'} > \beta \|z\| \right) \leq \exp\left( -c_{\mu}\beta \|z\| \right),\]
where $\|\cdot\|$ denotes the $\ell^2$ norm in $\rtwo$. In particular,
\[\proba\left( \hit{\{0\}}{z} > \beta \|z\| \right) \leq \exp\left( -c_{\mu}\beta \|z\| \right),\]
\end{prop}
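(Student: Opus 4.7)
The plan is to construct explicitly, for each $z\in\rtwo$, a chain of Poisson reproduction events whose union contains a $\delta_\mu$-thickening of the segment $\overline{0z}$ by some random time $t_n$, and then to bound $\proba(t_n>\beta\|z\|)$ using a standard Cram\'er--Chernoff estimate on a sum of i.i.d.\ exponentials.

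For the \emph{geometric setup}, I fix $\underline{r}\in(0,R_0]$ with $\lambda_\mu:=\mu([\underline{r},R_0])>0$, and set $\ell:=\underline{r}/4$ and $\delta_\mu:=\underline{r}/8$. Let $n:=\max(1,\lceil\|z\|/\ell\rceil)$, partition $\overline{0z}$ into $n$ equal sub-segments, and write $M_1,\dots,M_n$ for their midpoints. Call $(t,y,r)\in\Pi$ a \emph{type-$i$ good event} if $r\ge\underline{r}$ and $\|y-M_i\|\le\underline{r}/16$. A triangle-inequality check shows that the ball of any type-$i$ good event contains the $\delta_\mu$-thickening of the $i$-th sub-segment, and that the balls of consecutive good events overlap at the shared sub-segment endpoint. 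Consequently, any chain of $n$ successive good balls covers the $\delta_\mu$-thickening of $\overline{0z}$, which in turn contains $\overline{0z'}$ for every $z'\in\bcal(z,\delta_\mu)$, since parameterising by $s\in[0,1]$ gives $\|sz'-sz\|\le\delta_\mu$.

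For the \emph{chaining} step, set $t_0:=0$, and for $i=1,\dots,n$ let $t_i$ be the first time after $t_{i-1}$ at which a type-$i$ good event $(t_i,y_i,r_i)$ occurs; write $B_i:=\bcal(y_i,r_i)$. Applying the strong Markov property of $\Pi$ at each stopping time $t_{i-1}$ gives that the increments $T_i:=t_i-t_{i-1}$ are i.i.d.\ $\mathrm{Exp}(\lambda)$ with $\lambda:=\lambda_\mu\pi(\underline{r}/16)^2$. Since $E=\{0\}$ is bounded, the forward description of the SLFV dynamics applies, and a one-line induction on $i$ gives $B_i\subseteq S^{\{0\}}_{t_i}$: $B_1\ni 0\in E$ at time $t_1$, and for $i\ge 2$, $B_{i-1}\cap B_i\neq\emptyset$ with $B_{i-1}\subseteq S^{\{0\}}_{t_i^-}$, so the $i$-th event is successful. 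Combining this with the geometric observation above,
\[\sup_{z'\in\bcal(z,\delta_\mu)}\bulk{\{0\}}{z'}\le t_n=\sum_{i=1}^n T_i.\]

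For the \emph{tail bound}, $\sum_{i=1}^n T_i\sim\mathrm{Gamma}(n,\lambda)$, so the standard Chernoff inequality yields
\[\proba\!\left(\sum_{i=1}^n T_i>\beta\|z\|\right)\le\exp\!\left(-\lambda\beta\|z\|+n+n\log\!\left(\frac{\lambda\beta\|z\|}{n}\right)\right).\]
Since $u\log(A/u)$ is maximised at $u=A/e$ with value $A/e$, the right-hand side is at most $\exp(-\lambda\beta\|z\|(1-1/e)+n)$; substituting $n\le\|z\|/\ell+1$ and taking $\beta_\mu$ large enough (depending only on $\mu$) shows that every $\beta\ge\beta_\mu$ yields an exponent bounded above by $-c_\mu\beta\|z\|$ with, e.g., $c_\mu:=\lambda(1-1/e)/2$. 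The regime $\|z\|<\ell$ is handled by the $n=1$ case directly, as $c_\mu<\lambda$, while $\|z\|=0$ is trivial. The stated bound on $\hit{\{0\}}{z}$ follows from $\hit{\{0\}}{z}\le\bulk{\{0\}}{z}\le\sup_{z'\in\bcal(z,\delta_\mu)}\bulk{\{0\}}{z'}$. The main obstacle is really the geometric bookkeeping of the first step: the sub-segment length, the centre tolerance, and $\delta_\mu$ must be chosen compatibly so that consecutive good balls overlap, each absorbs the $\delta_\mu$-thickening of its sub-segment with margin, and the terminal ball contains $\bcal(z,\delta_\mu)$. Once these inclusions are in place, the probabilistic part reduces to a textbook large-deviation bound.
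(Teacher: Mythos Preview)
Your proof is correct and follows essentially the same approach as the paper: construct an explicit ``slow covering'' chain of reproduction events along the segment $\overline{0z}$ with i.i.d.\ exponential waiting times, then apply a Chernoff bound to the Gamma-distributed total time. The only cosmetic difference is that the paper first invokes rotational invariance to reduce to $z=(x,0)$ and reuses its pre-built chain along the $x$-axis (Lemmas~\ref{lem:express_chain_bound} and~\ref{lem:tail_estimate_slow_covering_chain}), whereas you build the chain directly along the arbitrary segment; the geometry and probability are otherwise identical.
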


\subsubsection{\texorpdfstring{Equivalence of definitions of $\infty$-parent SLFV processes}{Equivalence of definitions of infinite-parent SLFV processes}}

For any measurable set $E \subseteq \rtwo$, we say that \textit{$E$ satisfies condition}~($\triangle$) if
\begin{equation*}
    \Vol\left( \bcal(z,\eps) \cap \text{E} \right) > 0 \,\,\,\,\text{ for all } z \in E \text{ and } \eps > 0.
\end{equation*}
For instance, any open set of $\rtwo$ satisfies condition~($\triangle$), while a set of null Lebesgue measure does not satisfy it. 
The idea behind this condition is that in the set-valued $\infty$-parent SLFV, reproduction occurs when a reproduction event intersects the occupied area, while in the measure-valued $\infty$-parent SLFV introduced in \cite{louvet2023stochastic}, reproduction only occurs when the intersected occupied area has non-zero Lebesgue measure. Condition~($\triangle$) will ensure that the set of centres of reproduction events for which a discrepancy occurs has null Lebesgue measure, and hence that such reproduction events almost surely never occur.  
We will show that if we start an $\infty$-parent SLFV from a set satisfying ($\triangle$), then it will satisfy ($\triangle$) for all times.

\begin{lem}\label{lem:triangle_valued_process} Let $E \subseteq \rtwo$ be measurable, and let $(S^E_{t})_{t \geq 0}$ be the (set-valued) $\infty$-parent SLFV with initial condition $S^E_{0} = E$. If $E$ satisfies condition~($\triangle$), then $S^E_{t}$ satisfies condition~($\triangle$) for all $t \geq 0$.
\end{lem}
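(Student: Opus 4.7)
The essential observation is that a point can enter $S^E_t$ only via the initial set $E$ or via the closed ball of some reproduction event, and both types of sets respect condition~$(\triangle)$: $E$ by hypothesis, and closed $\ell^2$ balls trivially by geometry. Fix $z \in S^E_t$ and $\eps > 0$; the goal is to show $\Vol(\bcal(z,\eps) \cap S^E_t) > 0$.

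First, $E \subseteq S^E_t$ by Definition~\ref{defn:infty_parent_slfv}: for any $z' \in E$, the ancestral skeleton $B^{(t,z')}_s$ always contains $z'$ (it starts from $\{z'\}$ or a ball containing $z'$ and only grows by union with balls), so $B^{(t,z')}_t \cap E \ni z'$ is non-empty. Hence if $z \in E$, condition~$(\triangle)$ applied to $E$ directly gives $\Vol(\bcal(z,\eps) \cap S^E_t) \ge \Vol(\bcal(z,\eps) \cap E) > 0$.

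Otherwise $z \in S^E_t \setminus E$, and I claim there is a reproduction event $(t_*, z_*, r_*) \in \Pi$ with $t_* \le t$ such that $z \in \bcal(z_*, r_*) \subseteq S^E_t$. Since $B^{(t,z)}_t \cap E \ne \emptyset$ and $z \notin E$, the ancestral skeleton from $(t,z)$ must grow beyond $\{z\}$; take $(t_*, z_*, r_*)$ to be the first event in the backward exploration that drives this growth, noting that $z \in \bcal(z_*, r_*)$ since the skeleton was $\{z\}$ just before. To show $\bcal(z_*, r_*) \subseteq S^E_t$, fix $z' \in \bcal(z_*, r_*)$: the ancestral skeleton from $(t,z')$ must process $(t_*, z_*, r_*)$ at some step, because it always contains $z' \in \bcal(z_*, r_*)$, hence the event intersects the skeleton throughout the exploration and cannot be skipped over when the current time descends through $t_*$ in the search for the latest intersecting event. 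After this step, $B^{(t,z')}_{t-t_*} \supseteq \bcal(z_*, r_*) = B^{(t,z)}_{t-t_*}$, and a monotonicity property of the backward dynamics---both skeletons are driven by the same $\Pi$ and grow only by unions with intersecting balls---gives $B^{(t,z')}_t \supseteq B^{(t,z)}_t$. In particular $B^{(t,z')}_t \cap E \ne \emptyset$, so $z' \in S^E_t$. The lemma then follows from the elementary geometric fact that $\Vol(\bcal(z,\eps) \cap \bcal(z_*, r_*)) > 0$ for any $z \in \bcal(z_*, r_*)$ and $\eps > 0$, since the closed ball has non-empty interior and $z$ admits a neighbourhood of positive measure inside it.

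The main obstacle is verifying the inclusion $\bcal(z_*, r_*) \subseteq S^E_t$ directly from the backward skeleton definition, which requires careful bookkeeping of the skeleton for different starting points driven by the shared Poisson noise $\Pi$. An alternative, cleaner route is to first establish the equivalent forward description of $S^E_t$ as the union of $E$ with all balls from ``successful'' reproduction events, from which this inclusion is immediate; the remaining case analysis and volume estimate are both elementary.
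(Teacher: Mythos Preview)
Your proof is correct and follows essentially the same approach as the paper: split into the cases $z \in E$ and $z \in S^E_t \setminus E$, and in the latter case identify the last (in forward time) reproduction event $(t_*,z_*,r_*)$ whose ball contains $z$, so that $\bcal(z_*,r_*) \subseteq S^E_t$ and the intersection $\bcal(z,\eps)\cap\bcal(z_*,r_*)$ has positive volume. The paper's proof simply asserts the inclusion $\bcal(z_*,r_*) \subseteq S^E_t$ without further comment, whereas you supply the monotonicity argument for the ancestral skeletons that justifies it; this extra care is appropriate, and the underlying idea is the same.
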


We now state another informal version of a theorem, this time on the equivalence of definitions of the $\infty$-parent SLFV, which we will state more precisely and in a more general form in Section \ref{sec:equivalence_definitions}.

\begin{thm}\label{thm:informal_equiv_defn}
Suppose that $E \subseteq \rtwo$ is measurable and satisfies condition~($\triangle$). Then the (set-valued) $\infty$-parent SLFV from Definition \ref{defn:infty_parent_slfv} started from $E$ is equal in distribution to the (measure-valued) $\infty$-parent SLFV defined in \cite{louvet2023stochastic} started from $E$, up to a change of state space. 
\end{thm}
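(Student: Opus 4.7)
The plan is to identify the set-valued process $(S^E_t)_{t\ge 0}$ with the measure-valued one via the canonical map $\Psi : S \mapsto \ind_S\cdot\text{Leb}$ from measurable sets into measures with $\{0,1\}$-valued densities, and to show that $(\Psi(S^E_t))_{t\ge 0}$ solves the martingale problem characterising the measure-valued $\infty$-parent SLFV in \cite[Theorem 2.14]{louvet2023stochastic}. Since $\mu$ has bounded support, that martingale problem has a unique solution, which then yields the claimed equality in distribution.

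The central issue is that the set-valued dynamics activate a reproduction event $(t,z_0,r_0)\in\Pi$ whenever $\bcal(z_0,r_0)\cap S^E_{t-}\neq\emptyset$, whereas the measure-valued martingale problem activates it only when $\Vol(\bcal(z_0,r_0)\cap S^E_{t-})>0$. Under ($\triangle$) these notions essentially agree, which I would establish in two steps. First, Lemma~\ref{lem:triangle_valued_process}---which follows quickly from the observation that taking the union with a closed ball preserves ($\triangle$)---ensures that $S^E_t$ satisfies ($\triangle$) for all $t\ge 0$, almost surely. Second, the key geometric observation: if $F$ satisfies ($\triangle$) and $\Vol(\bcal(z_0,r_0)\cap F)=0$ while the intersection is non-empty, then every point of $F\cap\bcal(z_0,r_0)$ must lie on $\partial\bcal(z_0,r_0)$; otherwise a sufficiently small open ball around such a point would sit inside $\bcal(z_0,r_0)$ and, by ($\triangle$), have positive $F$-volume. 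Consequently the ``bad'' centres $z_0$ are contained in the level set $\{z_0:d(z_0,F)=r_0\}$ of the $1$-Lipschitz distance function $d(\cdot,F)$, which has Lebesgue measure zero for Lebesgue-a.e.\ $r_0$; Fubini then shows that a.s.\ no reproduction event in $\Pi$ is bad for the current state, with a little additional care needed to rule out atoms of $\mu$ sitting on a fat level set of the random $F=S^E_t$.

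With the activity times identified, each jump adds a ball $\bcal(z_0,r_0)$ to $S^E_{t-}$, which under $\Psi$ is exactly the jump of raising the density to $1$ on that entire ball---precisely the prescribed jump of the measure-valued generator. For bounded initial conditions $E$ the set-valued process has a finite jump rate and this yields the martingale property by a routine compensator computation. The main obstacle is the unbounded initial-condition case, where infinitely many reproduction events affect the process in every positive-length interval and the ancestral skeleton of Definition~\ref{defn:infty_parent_ancestral_skeleton} is indispensable for making the dynamics rigorous. I expect to handle this by approximating $E$ from within by an increasing sequence of bounded measurable $E_n\subset E$, each satisfying ($\triangle$), exploiting the fact that both constructions are monotone in the initial condition to pass to the limit, and then invoking the bounded case together with uniqueness for the martingale problem to identify the two limits.
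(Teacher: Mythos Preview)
Your overall strategy matches the paper's: encode $S^E_t$ as a measure, show it solves the martingale problem of \cite{louvet2023stochastic}, and invoke uniqueness; handle unbounded $E$ by a compact approximation. Two technical steps differ, and one of them closes the gap you yourself flag.

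Your level-set argument for the bad centres is correct as far as it goes---indeed $\hat S_r\subset\{z_0:d(z_0,S)=r\}$---but the caveat about atoms of $\mu$ is a real gap, not a formality: nothing you have written excludes a $\mu$-atom at some $r$ for which the random level set has positive area. The paper avoids Fubini entirely and shows that $\hat S_r$ is \emph{porous} for every fixed $r>0$, hence Lebesgue-null for every $r$. The porosity comes straight from your own boundary observation: if $z\in\hat S_r$ then there is $w\in S$ with $\|z-w\|=r$, and for any $\rho<r$ the ball $\bcal\bigl(z+\tfrac{\rho}{2r}(w-z),\,\rho/4\bigr)\subset\bcal(z,\rho)$ lies entirely at distance strictly less than $r$ from $w$, so by ($\triangle$) every point in it has positive-volume intersection with $S$ and therefore lies outside $\hat S_r$. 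This handles atomic $\mu$ for free.

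For the unbounded case, the paper restricts the \emph{dynamics}, not just the initial condition, to growing squares $Q_n\uparrow\rtwo$: only events with centre in $Q_n$ act, and only the part inside $Q_n$ is added, with initial state $E\cap Q_n$. The restricted process is finite-rate, so the compensator computation is routine; one then shows $M^{[n]}\to M^E$ directly via the ancestral skeleton (each $B^{(t,z)}_t$ is almost surely eventually contained in $Q_n$), checks tightness by Aldous--Rebolledo, and passes the martingale identity to the limit since $\lcal_\mu^{[n]}$ and $\lcal_\mu^{\infty}$ agree on test functions supported well inside $Q_n$. Your $E_n\uparrow E$ route is workable in principle, but to ``identify the two limits'' you would additionally need continuity of the \emph{measure-valued} SLFV in its initial condition to know that $\lim_n\widetilde M^{E_n}=\widetilde M^{E}$; the paper's route sidesteps this by working only on the set-valued side and passing the martingale problem itself to the limit.
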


\subsection{Notation}
Throughout the paper there will implicitly be a dependence of our process on the $\sigma$-finite measure $\mu$ (with essential supremum $\text{R}_{0}<\infty$), and the Poisson point process $\Pi$. For any c\`adl\`ag process $(Z_t)_{t\ge 0}$, and any $t\ge 0$, we write $Z_{t-}$ to mean the left limit $\lim_{s\uparrow t}Z_s$. Throughout, $\|\cdot\|$ will denote the $\ell^2$ norm in $\rtwo$ and for $z\in\rtwo$ and $r\in[0,\infty)$, $\bcal(z,r)$ will denote the \emph{closed} $\ell^2$ ball of centre $z$ and radius $r$.

We now provide a brief summary of the notation introduced above, for the reader's reference.
\begin{itemize}
\item $\text{R}_0$ is the maximal radius of reproduction events,
\[\text{R}_{0} := \inf\left\{
r > 0: \mu(r,+\infty) = 0
    \right\} < + \infty,\]
and
\[\text{M}_0 := \int_0^{\text{R}_0}\pi(\text{R}_0+r)^2 dr\]
is the maximum rate at which a new reproduction event intersects one existing reproduction event.
\item $(B_{s}^{(t,z)})_{0 \leq s \leq t}$ is the $\infty$-parent ancestral skeleton, a Markov process consisting of a sequence of reproduction events leading away from $z\in\rtwo$ backwards in time from $t$ to $s$; see Definition \ref{defn:infty_parent_ancestral_skeleton}.
\item $S_t^E$ is the state of the $\infty$-parent SLFV process at time $t$ when started from $E\subset\rtwo$; it consists of the points whose ancestral skeleton at time $t$ intersects $E$. See Definition \ref{defn:infty_parent_slfv}.
\item $\hcal := \{
(x,y) \in \rtwo : x\le 0 \}$ and $\mathcal{H}^{x} := \left\{ (x',y') \in \rtwo : x' \geq x \right\}$.
\item $\lcal^{(r)} = \left\{(x,y)\in\rtwo : |y|\le r\right\}$.
\item $\hit{E}{A} := \inf\left\{ t \geq 0 : A\cap S^E_{t}\neq\emptyset \right\}$ is the first hitting time of the set $A$ starting from $E$, and in a slight abuse of notation, for $z=(x,y)\in\rtwo$ we write
\[\hit{E}{z} = \hit{E}{x,y} := \hit{E}{\{z\}} = \inf\left\{ t \geq 0 : z\in S^E_{t} \right\}.\]
\item When $(0,0)\in E$,
\[\bulk{E}{z} := \inf\left\{ t \geq 0 : uz \in S^E_{t}\,\,\,\forall u\in[0, 1] \right\};\]
we call this the bulk coverage time of $z$, the first time at which the straight line from $0$ to $z$ lies entirely in the occupied area.
\item Condition ($\triangle$) is satisfied by a set $E\subset\rtwo$ if $\Vol\left( \bcal(z,\eps) \cap \text{E} \right) > 0$ for all $z \in E$ and $\eps > 0$.
\end{itemize}

\subsection{Motivation and related work}\label{sec:related_work}

The development of new inference tools
represents an active field of research at the interface between probability, statistics and population genetics. A series of experiments carried out by Hallatschek and collaborators~\cite{hallatschek2007genetic,hallatschek2010life} demonstrated that for populations in
spatial expansion, randomness in reproduction even amongst well-mixed populations of equally fit bacteria gives rise to ``well defined, sector-like regions with fractal boundaries'', which are not observed in comparable populations that are not undergoing a spatial expansion. These experimental results, seen also in simulations (see Figure~\ref{fig:stripes}), suggest that the applicability of classical inference tools to populations in spatial expansion is limited and could lead to erroneous conclusions of selective advantage. This motivates the study of stochastic population genetics models adapted to populations in expansion in a continuum.

Studying genetic diversity at the front edge in expanding populations requires results regarding the location of the front edge in the underlying stochastic population process. First results describing the evolution of the location of the front edge were obtained in \cite{louvet2023measurevalued}. In this paper, we considerably extend these results, and describe the growth of the \textit{area of maximal density}, in which the dynamics of genetic diversity is as in the absence of a spatial expansion, as well as the evolution of the width of the front area. Our results highlight how sector-like regions can persist, and show that their emergence can be studied by focusing on the reproduction dynamics in a narrow area at the front edge. We therefore see the results of this paper as a substantial step towards proving exactly the behaviour observed
in~\cite{hallatschek2007genetic,hallatschek2010life}. 

There has been a huge interest in recent years in models of random interface growth in stochastic geometry, partially motivated by the scaling relations of Kardar, Parisi and Zhang \cite{kardar1986dynamic}; see also the survey of Quastel \cite{quastel2011introduction}. One of the most prominent discrete models is that of first passage percolation (FPP), where an independent and identically distributed random variable $X_e$ is attached to each edge $e$ of a graph, for example the lattice $\mathbb{Z}^2$, and one studies the random metric given by $D(w,z) = \min_P\sum_{e\in P} X_e$ for vertices $w,z$, where the minimum is taken over all paths $P$ from $w$ to $z$; alternatively one can consider the growing sets $W_t^{\{0\}} := \{z : D(0,z)\le t\}$ or $W_t^\hcal := \{z : D(\hcal,z)\le t\}$, for a clearer analogue to our processes $S^{\{0\}}_t$ or $S^\hcal_t$. The survey of Grimmett and Kesten \cite{grimmett2012percolation} contains references to many important contributions in the study of this model. It is conjectured that for many reasonable distributions for the weights $X_e$, there are two ``universal scaling exponents'', the fluctuation exponent $\chi$ and the wandering exponent $\xi$, that satisfy (what is often referred to as the) KPZ relation $\chi = 2\xi -1$. Roughly speaking, the fluctuation exponent $\chi$ is defined to satisfy
\[D(w,z) - \E[D(w,z)] \approx |w-z|^\chi\]
whereas for the wandering exponent, if we look at any geodesic between $0$ and $(x,0)$ (i.e.~any path which minimises the sum in the definition of $D(0,(x,0))$), then its greatest displacement from the $x$-axis should be of order $x^\xi$. Several possible rigorous definitions have been given for $\chi$ and $\xi$; Chatterjee \cite{chatterjee2013universal} showed that for two such definitions, if they agree, then the KPZ scaling related does indeed hold if the weights are ``nearly gamma'' in the sense of Bena\"im and Rossignol \cite{benaim2008exponential}. It is believed that for FPP in two dimensions, $\xi = 2/3$ and $\chi = 1/3$. The same exponents have been rigorously proven for a model of \emph{last} passage percolation by Johansson \cite{johansson2000transversal}.  Our Theorem \ref{thm:geodesics} establishes that for the $\infty$-parent SLFV, the wandering exponent $\xi$ (for at least one point-to-line geodesic) is equal to $1/2$. We hope to provide a bound on the fluctuation exponent for the $\infty$-parent SLFV in future work.

As mentioned above, our Theorem \ref{thm:ball_shape} is very similar to a result of Deijfen \cite{deijfen2003asymptotic}. She begins with a compact set, and considers the same Poisson point process of reproduction events as in our model, but if $S_t$ is her occupied region at time $t$, she only accepts reproduction events whose centre is in $S_t$, rather than (as in the $\infty$-parent SLFV) any reproduction event that intersects $S_t$. Clearly this leads to less reproduction near the front, and on an intuitive level should lead to less pronounced fluctuations in the front position in Deijfen's model. She proves a shape theorem identical to Theorem \ref{thm:ball_shape} for her model; further results on this model include \cite{deijfen2004coexistence, gouere2008continuous}. Simulations of the $\infty$-parent SLFV process suggest that growth of the process is driven by ``spikes'' in the direction of expansion, which then thicken in the transverse direction \cite{louvet2023measurevalued}; this heuristic suggests that despite their apparent similarity, the expansion and particularly the fluctuations in our model could be markedly different from that of Deijfen.

\subsection{Layout of article}
In Section \ref{sec:backwards_chains} we set up some of the key concepts to appear throughout the paper, specifically \emph{chains of reproduction events} which will be at the centre of the proofs of all our main theorems. We also provide some consequences of the self-duality of the system, and set out a simple strategy for slowly but surely covering a narrow strip of space, which will be surprisingly useful despite its obvious inefficiency. One of the applications of this strategy will be the proof of Proposition \ref{prop:tail_tau_x}.

In Section \ref{sec:geodesics} we first define a geodesic from a point to a set, and use this definition to make the statement of Theorem \ref{thm:geodesics} precise. We then use the setup of chains of reproduction events from Section \ref{sec:backwards_chains} to give bounds on the fluctuations of those geodesics, in order to prove Theorem \ref{thm:geodesics}. The upper bounds will be relatively straightforward, using Doob's submartingale inequality, whereas the lower bound will be somewhat more involved. In Section \ref{sec:geodesics_plane_to_point} we then define geodesics from sets to points, and give a precise statement of Theorem \ref{thm:geodesic_plane_to_point}, which we then prove in the same section, taking advantage of many of the tools from earlier in Section \ref{sec:geodesics}.

Section \ref{sec:hitting_and_difference} contains the proofs of Theorems \ref{thm:hitting_times}, \ref{thm:ball_shape} and \ref{thm:new_speed_growth_bulk}. Section \ref{sec:almost_sure_conv_hitting_times} deals with Theorem \ref{thm:hitting_times}, of which the first point is essentially a repeat of a result from \cite{louvet2023measurevalued} adapted to our notation, and the third point is a simple time-reversal argument. The second point is the main contribution here, and relies on the bounds on transverse fluctuations of geodesics developed in Section \ref{sec:geodesics}. We then move on to Theorem \ref{thm:ball_shape} in Section \ref{subsec:shape_thm}, where our strategy is to use ergodicity arguments similar to the proof of the Cox-Durrett shape theorem \cite{cox1981some} but the details required are substantially different. Finally we prove Theorem \ref{thm:new_speed_growth_bulk} in Section \ref{sec:new_control_diff}, where again the control on geodesics from Section \ref{sec:geodesics} is crucial, along with the slow coverage strategy from Section \ref{sec:backwards_chains}.

Finally, Section \ref{sec:equivalence_definitions} is independent of what has come before, defining a simple condition on sets under which the set-valued definition of the $\infty$-parent SLFV process is equivalent, up to a change of state space, to the measure-valued definition given in \cite{louvet2023stochastic}. We translate our set-valued construction directly into a measure-valued setting, and use martingale problems to uniquely characterise the distribution of the process when started from compact initial conditions, showing that the two definitions must agree in distribution. A limiting argument then allows us to ensure that the equality in distribution remains true for unbounded initial conditions.

\section{Chains of reproduction events}\label{sec:backwards_chains}

One of the attractive features of the $\infty$-parent SLFV process is its self-duality (see \cite[Proposition 2.18]{louvet2023stochastic}). To demonstrate, one intuitively imagines building the process from sequences of reproduction events forwards in time, but the actual definition given in Definition \ref{defn:infty_parent_slfv} involves sequences of reproduction events backwards in time; and such forwards and backwards \emph{chains} of reproduction events have the same distribution given the time-reversibility of the underlying Poisson point process.

In this section, we begin our study by formally introducing the concept of \emph{chains of reproduction events} trailed above, and translate the study of the first hitting time $\hit{E}{z}$ and bulk coverage time $\bulk{E}{z}$ into this language.

\subsection{Definition and first properties of chains of reproduction events}
We will shortly give a definition of chains of reproduction events. However, we note from the discussion above that sometimes we will use chains that are built backwards in time from a given time $t$; to this end, it is useful to consider the time-reversal of the Poisson process $\Pi$. For all $t \geq 0$, let $\Pi^{(t)}$ be the Poisson point process on $\rmath \times \rtwo \times (0,+\infty)$ with intensity $dt \otimes dz \otimes \mu(dr)$ such that for all $(t',z',r') \in \rmath \times \rtwo \times (0,+\infty)$, 
\[(t',z',r') \in \Pi^{(t)} \Longleftrightarrow (t-t',z',r') \in \Pi.\]
In other words, $\Pi^{(t)}$ is constructed by reversing time in $\Pi$, starting at time~$t$. We note here two things:
\begin{itemize}
\item the first component of $\Pi$, representing the times at which reproduction events occur, runs over the whole of $\mathbb{R}$, not just $[0,\infty)$;
\item although there will almost surely be no point at time zero in $\Pi$ (that is, there will not exist $Z_0$ and $R_0$ such that $(0,Z_0,R_0)\in\Pi$) there will certainly exist times $t$ such that there is a point at time zero in $\Pi^{(t)}$; indeed, this will be true whenever there is a point at time $t$ in $\Pi$. This observation will be important below.
\end{itemize}
We now state the definition of chains of reproduction events. We do so for a general Poisson point process $\Xi$ on $\rmath \times \rtwo \times (0,+\infty)$, but in practice we will always use $\Xi=\Pi$ or $\Xi = \Pi^{(t)}$ for some $t$.

\begin{definition}\label{defn:chain_reproduction_events}
Let $z\in\rtwo$ and $\Xi$ a Poisson point process on $\rmath \times \rtwo \times (0,+\infty)$. A \textbf{chain of $\Xi$-reproduction events started from~$z$} is an $\rtwo \times (0,+\infty)$-valued process
\[\mathbf{C} = (\mathbf{C}_{s})_{s \geq 0} = (Z_{s},R_{s})_{s \geq 0}\]
such that
\begin{itemize}
\item either $\mathbf{C}_{0} = (z,0)$ or $\mathbf{C}_{0} = (Z_0,R_0)$ with $(0,Z_0,R_0)\in\Xi$ and $z\in\bcal(Z_0,R_0)$;
\item for all $t > 0$, if $\mathbf{C}_{t} \neq \mathbf{C}_{t-}$, then $(t,Z_{t},R_{t}) \in \Xi$ and 
\[\bcal\left( Z_{t},R_{t} \right) \cap \bcal\left( Z_{t-}, R_{t-} \right) \neq \emptyset.\]
\end{itemize}
\end{definition}
As mentioned above, if $\Xi=\Pi$, then almost surely there is no point at time $0$, so (almost surely) we must have $\mathbf{C}_{0} = (z,0)$. However, the same is not true if we consider $\Xi = \Pi^{(t)}$ for an uncountable number of values for~$t$, hence the slight complication in the first part of the definition.

For all $z\in \rtwo$, let $\ccal^{(z)}(\Xi)$ be the set of all chains of $\Xi$-reproduction events started from~$z$. Moreover, for all $t \geq 0$, let
\[\ccal_{t}^{(z)}(\Xi) := \left\{\mathbf{C}_{t} : \mathbf{C} \in \ccal^{(z)}(\Xi) \right\}\]
be the set of possible states (or ``endpoints'') at time $t$ of chains of $\Xi$-reproduction events started from~$z$.

We now recall from Definition \ref{defn:infty_parent_ancestral_skeleton} the $\infty$-parent ancestral skeleton $(B_s^{(t,z)})_{0\le s \le t}$. We also recall that we work on the almost sure event (\ref{eqn:condition_on_pi}), which ensures that at most one reproduction event occurs at each instant. Our first lemma in this section observes that in fact $B_{s}^{(t,z)}$ can be interpreted as the union of all possible endpoints at time~$s$ for chains of $\Pi^{(t)}$-reproduction events started from $z$.

\begin{lem}\label{lem:equivalence_dual_chain}
For all $t \geq 0$, $z \in \rtwo$ and $0 \leq s \leq t$, 
\[B_{s}^{(t,z)} = \bigcup_{(z',r') \in \ccal_{s}^{(z)}(\Pi^{(t)})} \bcal(z',r').\]
\end{lem}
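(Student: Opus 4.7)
The plan is to prove the two set inclusions separately, in each case by induction on the jump times of the appropriate process. To simplify notation I will write $B_s = B_s^{(t,z)}$ and $\ccal_s = \ccal_s^{(z)}(\Pi^{(t)})$, and use the correspondence $(s',z',r') \in \Pi^{(t)} \iff (t-s',z',r') \in \Pi$ to read the recursive procedure for $B_\cdot$ forwards in $\Pi^{(t)}$-time: it processes events of $\Pi^{(t)}$ in chronological order and unions in the ball of any event that intersects the current set. The almost-sure event~(\ref{eqn:condition_on_pi}) ensures that at every positive time at most one event of $\Pi^{(t)}$ occurs, making both constructions unambiguous.

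For the inclusion $\supseteq$, I fix a chain $\mathbf{C} = (Z_\cdot,R_\cdot) \in \ccal^{(z)}(\Pi^{(t)})$ with jump times $0 < s_1 < s_2 < \cdots$ and argue by induction on $k \geq 0$ that $\bcal(Z_{s_k},R_{s_k}) \subseteq B_{s_k}$. The base case $k=0$ splits into two sub-cases depending on whether $\mathbf{C}_0 = (z,0)$ or $\mathbf{C}_0 = (Z_0,R_0)$ for some time-$0$ event of $\Pi^{(t)}$ whose ball contains $z$; in the first case $\{z\} = \bcal(z,0) \subseteq B_0$ trivially, and in the second case uniqueness forces $B_0 = \bcal(Z_0,R_0)$ by Definition~\ref{defn:infty_parent_ancestral_skeleton}. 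For the inductive step, monotonicity of $B_\cdot$ gives $\bcal(Z_{s_k},R_{s_k}) \subseteq B_{s_{k+1}-}$, so the chain's intersection condition at $s_{k+1}$ forces the event $(s_{k+1},Z_{s_{k+1}},R_{s_{k+1}}) \in \Pi^{(t)}$ to intersect $B_{s_{k+1}-}$; since the greedy construction adds any event chronologically which intersects the current set, it adds this ball at time $s_{k+1}$. Both sides are constant between jump times of the chain, so the inclusion extends to all $s \in [0,t]$.

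For the inclusion $\subseteq$, I would induct on the greedy jump index $j \geq 0$, showing that every ball added to $B_\cdot$ up to time $u_j := t - t_j$ is of the form $\bcal(Z,R)$ for some $(Z,R) \in \ccal_{u_j}$, with the base case handled exactly as above. For the step, the greedy rule forces $\bcal(z_{j+1},r_{j+1}) \cap B_{u_j} \neq \emptyset$, so by the induction hypothesis the new ball intersects some previously included ball $\bcal(z_i,r_i)$, where $(z_i,r_i)$ is the endpoint at time $u_i$ of some chain $\mathbf{C}^{(i)} \in \ccal^{(z)}(\Pi^{(t)})$. Then the chain that agrees with $\mathbf{C}^{(i)}$ on $[0,u_i]$, is held constant at $(z_i,r_i)$ on $[u_i, u_{j+1})$, and jumps at $u_{j+1}$ to $(z_{j+1},r_{j+1})$ is a valid element of $\ccal^{(z)}(\Pi^{(t)})$ whose endpoint at time $u_{j+1}$ is $(z_{j+1},r_{j+1})$, completing the induction.

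The main obstacle is careful bookkeeping at the initial time, where the two possible starting states for a chain in Definition~\ref{defn:chain_reproduction_events} must be matched up with the two cases in the initialization of $B_0$, including the edge case where the chain starts at $(z,0)$ even though a time-$0$ event of $\Pi^{(t)}$ covering $z$ exists. Beyond that, the argument is essentially a formal unwinding of the definitions, both of which encode the same notion of \emph{forward connectivity via intersecting reproduction events in $\Pi^{(t)}$}, just from opposite perspectives: the greedy construction builds a single growing union, while chains trace individual sequences through the same connectivity graph.
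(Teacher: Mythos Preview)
Your argument is correct and follows essentially the same approach as the paper's: both are inductions on jump times, exploiting that the greedy construction of $B$ and the chain construction encode the same forward connectivity in $\Pi^{(t)}$. The paper runs a single induction on the jump times $T_n$ of $\ccal_s^{(z)}(\Pi^{(t)})$ and proves equality directly (observing that $B$ cannot jump between consecutive $T_n$), whereas you split into two inclusions with separate inductions on the jumps of a fixed chain and of $B$ respectively---a minor organisational difference rather than a genuinely different route.
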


Notice that as $B^{(t,z)}$ is constructed by going backwards in time in $\Pi$ starting at time~$t$, it can also be seen as constructed going \textit{forwards in time} in $\Pi^{(t)}$, starting at time~$0$. 
The idea behind the proof is simply that $B^{(t,z)}_{s}$ describes the area covered by the ancestral skeleton at time $s$, and for a point $z'$ to be in this area there must have been a sequence of reproduction events whose last event occurred by time $s$ and covered $z'$. 
There is a slight complication that there can be successful reproduction events that do not cover any new space (their ball is already covered by the union of previous successful reproduction events), in which case the ancestral skeleton does not change whereas the set of chains of reproduction events does change.

\begin{proof}
Let $t \geq 0$ and $z \in \rtwo$. At time~$s = 0$, if there is $(t,z_0,r_0)\in\Pi$ with $z\in\bcal(z_0,r_0)$ then
\[\ccal_{0}^{(z)}(\Pi^{(t)}) = \{(z_0,r_0),(z,0)\} \text{ and } B_{0}^{(t,z)} = \bcal(z_0,r_0) = \bcal(z_0,r_0)\cup \bcal(z,0);\]
and otherwise
\[\ccal_{0}^{(z)}(\Pi^{(t)}) = \{(z,0)\} \text{ and } B_{0}^{(t,z)} = \{z\},\]
so the result holds with $s=0$. Now let $T_0=0$ and for all $n\ge 1$, let
\[T_n := \inf\Big\{s>T_{n-1} : \ccal_{s}^{(z)}(\Pi^{(t)}) \neq \ccal_{s-}^{(z)}(\Pi^{(t)})\Big\}.\]
It is easy to see that, almost surely, $T_n>T_{n-1}$ for each $n$, and indeed that $T_n\to\infty$ almost surely. We work by induction on $n$; suppose that the result is true up to time $T_{n-1}$, so in particular
\begin{equation}\label{eq:induction}
B_{T_{n-1}}^{(t,z)} = \bigcup_{(z',r') \in \ccal_{T_{n-1}}^{(z)}(\Pi^{(t)})}\bcal(z',r').
\end{equation}
There can be no point $(t'',z'',r'')\in \Pi^{(t)}$ with $t''\in( T_{n-1},T_{n})$ such that $\bcal(z'',r'')$ intersects 
\begin{equation*} 
\bigcup_{(z',r') \in \ccal_{T_{n-1}}^{(z)}(\Pi^{(t)})}\bcal(z',r'), 
\end{equation*}
otherwise we could use this point to define a new chain of reproduction events, $\ccal_{s}^{(z)}(\Pi^{(t)})$ would change at time $s=t''$, and thus we would have $T_{n}=t''$. Therefore, by \eqref{eq:induction} and the definition of the ancestral process, $B_{s}^{(t,z)}$ also does not change for $s\in(T_{n-1},T_{n})$; and the result is true up to time $T_{n}-$.

Now, since $\ccal_{T_n}^{(z)}(\Pi^{(t)}) \neq \ccal_{T_n-}^{(z)}(\Pi^{(t)})$, there exists some chain $\mathbf{C} \in \ccal^{(z)}(\Pi^{(t)})$ with $\mathbf{C}_{T_n} \neq \mathbf{C}_{T_n-}$. By the definition of backwards chains of reproduction events, this is equivalent to the existence of a point $(T_n,z_1,r_1)$ in $\Pi^{(t)}$ such that $\bcal(z_1,r_1)$ intersects some reproduction event in $\ccal_{T_n-}^{(z)}(\Pi^{(t)}) = \ccal_{T_{n-1}}^{(z)}(\Pi^{(t)})$. By \eqref{eq:induction}, this reproduction event also intersects $B^{(t,z)}_{T_{n-1}}$, and therefore by the definition of the ancestral skeleton, \eqref{eqn:condition_on_pi} and \eqref{eq:induction},
\[B^{(t,z)}_{T_n} = \bcal(z_1,r_1)\cup B^{(t,z)}_{T_{n-1}} = \bcal(z_1,r_1) \cup \bigcup_{(z',r') \in \ccal_{T_{n-1}}^{(z)}(\Pi^{(t)})}\bcal(z',r') = \bigcup_{(z',r') \in \ccal_{T_{n}}^{(z)}(\Pi^{(t)})}\bcal(z',r').\]
By induction, the proof is complete.
\end{proof}

If we start our $\infty$-parent SLFV process from a singleton, then our process satisfies a very strong form of self-duality, in the following sense.

\begin{lem}\label{lem:strong_self_duality}
For any $w,z\in\rtwo$ and $t\ge 0$, we have
\[z\in\bigcup_{(z',r') \in \ccal_{t}^{(w)}(\Pi^{(t)})} \bcal(z',r')  \,\,\,\, \Longleftrightarrow \,\,\,\, w\in \bigcup_{(z',r')\in\ccal_t^{(z)}(\Pi)} \bcal(z',r').\]
\end{lem}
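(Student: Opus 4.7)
The plan is to prove both implications by explicit construction, exploiting the involution $\Pi \leftrightarrow \Pi^{(t)}$ induced by time reversal from~$t$. By symmetry it suffices to prove the forward implication: given a chain $\mathbf{C} = (Z_{s},R_{s})_{s\ge 0} \in \ccal^{(w)}(\Pi^{(t)})$ with $z \in \bcal(Z_{t},R_{t})$, I would construct a chain $\mathbf{C}' \in \ccal^{(z)}(\Pi)$ with $w \in \bcal(\mathbf{C}'_{t})$. The converse then follows by applying the same argument to $\Pi^{(t)}$ (noting $\Pi = (\Pi^{(t)})^{(t)}$) with the roles of $w$ and $z$ swapped.

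First I would enumerate the times $0 \le \tau_{0} < \tau_{1} < \ldots < \tau_{k} \le t$ at which $\mathbf{C}$ uses a point of $\Pi^{(t)}$, with the convention that $\tau_{0} = 0$ exactly when $\mathbf{C}_{0}$ is itself a ball drawn from $\Pi^{(t)}$. For each $i$ write $\mathbf{C}_{\tau_{i}} = (\tilde{Z}_{i},\tilde{R}_{i})$, so that $(\tau_{i},\tilde{Z}_{i},\tilde{R}_{i}) \in \Pi^{(t)}$, equivalently $(t-\tau_{i},\tilde{Z}_{i},\tilde{R}_{i}) \in \Pi$. Unpacking Definition~\ref{defn:chain_reproduction_events}, $\mathbf{C}$ provides three facts: (a) $w \in \bcal(\tilde{Z}_{0},\tilde{R}_{0})$, directly if $\tau_{0}=0$ and from the first jump condition $\bcal(w,0)\cap\bcal(\tilde{Z}_0,\tilde{R}_0)\ne\emptyset$ if $\tau_{0}>0$; (b) $\bcal(\tilde{Z}_{i-1},\tilde{R}_{i-1}) \cap \bcal(\tilde{Z}_{i},\tilde{R}_{i}) \neq \emptyset$ for $i=1,\ldots,k$; and (c) $z \in \bcal(\tilde{Z}_{k},\tilde{R}_{k})$, since $\mathbf{C}$ is constant on $[\tau_{k},t]$ and $z\in\bcal(\mathbf{C}_t)$ by hypothesis. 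The degenerate case in which $\mathbf{C}$ uses no points is trivial: it forces $z = w$ and the constant chain $(z,0)$ witnesses both sides.

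I then define $\mathbf{C}'$ by replaying these balls in reverse time. Set $\tilde{\tau}_{i} := t-\tau_{i}$, so that $0 \le \tilde{\tau}_{k} < \tilde{\tau}_{k-1} < \ldots < \tilde{\tau}_{0} \le t$, and let $\mathbf{C}'$ be the right-continuous process whose value at time $\tilde{\tau}_{i}$ is $(\tilde{Z}_{i},\tilde{R}_{i})$ for each $i$, constant between these times. If $\tilde{\tau}_{k} > 0$, I prepend $\mathbf{C}'_{s} = (z,0)$ on $[0,\tilde{\tau}_{k})$; the first jump of $\mathbf{C}'$, at time $\tilde{\tau}_{k}$, uses $(\tilde{\tau}_{k},\tilde{Z}_{k},\tilde{R}_{k}) \in \Pi$, and its intersection condition $\bcal(z,0)\cap\bcal(\tilde{Z}_{k},\tilde{R}_{k})\ne\emptyset$ is exactly (c). If instead $\tilde{\tau}_{k} = 0$ (i.e.\ $\tau_{k}=t$), then $(0,\tilde{Z}_{k},\tilde{R}_{k}) \in \Pi$ and $z\in\bcal(\tilde{Z}_{k},\tilde{R}_{k})$ by (c), so the second clause of Definition~\ref{defn:chain_reproduction_events} permits $\mathbf{C}'_{0} := (\tilde{Z}_{k},\tilde{R}_{k})$. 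Every subsequent transition at $\tilde{\tau}_{i}$ for $i < k$ uses the point $(\tilde{\tau}_{i},\tilde{Z}_{i},\tilde{R}_{i}) \in \Pi$ and has its intersection condition certified by (b). Finally $\mathbf{C}'_{t} = (\tilde{Z}_{0},\tilde{R}_{0})$, which contains~$w$ by (a), so $\mathbf{C}' \in \ccal^{(z)}(\Pi)$ is the required chain.

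The argument is fundamentally a bookkeeping exercise, and the only piece requiring care is the boundary behaviour at $\tau_{0}=0$ and $\tau_{k}=t$: the definition of a chain treats its initial time specially, so one must verify that in the edge cases ($\tau_{k}=t$ allows the reversed chain to legitimately start from a ball, while $\tau_{0}=0$ allows its last jump to legitimately land at time~$t$) the reversed object still lies in $\ccal^{(z)}(\Pi)$. Once this has been checked the whole equivalence is immediate from time reversal.
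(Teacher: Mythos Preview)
Your proof is correct and follows essentially the same approach as the paper: reduce to one direction via the involution $(\Pi^{(t)})^{(t)}=\Pi$, then time-reverse the given chain by replaying its reproduction events in reverse order. If anything, your treatment of the boundary cases $\tau_0=0$ and $\tau_k=t$ is more explicit than the paper's, which simply writes down the reversed chain and remarks that it is easy to check it lies in $\ccal^{(z)}(\Pi)$.
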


\begin{proof}
Since $\left(\Pi^{(t)}\right)^{(t)} = \Pi$, it suffices to show that the implication holds in one direction, so suppose that
\[z\in\bigcup_{(z',r') \in \ccal_{t}^{(w)}(\Pi^{(t)})} \bcal(z',r').\]
The basic idea is that any chain of $\Pi^{(t)}$-reproduction events that reaches $z$ from $w$ can be reversed to give a chain of $\Pi$-reproduction events that reaches $w$ from $z$. To be precise, take a chain $\mathbf{C} = (Z_s,R_s)_{s\ge0} \in\ccal^{(w)}(\Pi^{(t)})$ such that $z\in\bcal(Z_t,R_t)$. Let $0\le t_1<\ldots<t_n\le t$ be the jump times of $\mathbf{C}$ before $t$ (where we say that there is a jump at time $0$, i.e. $t_1=0$, if $(Z_0,R_0)\neq (w,0)$). Define a new chain $\tilde{\mathbf{C}}$ as follows:
\begin{itemize}
\item set $\tilde{\mathbf{C}}_s = (z,0)$ for all $s\in[0,t-t_n)$;
\item for each $j=n,n-1,\ldots,2$, set $\tilde{\mathbf{C}}_s = \mathbf C_{t_n}$ for all $s\in[t-t_j,t-t_{j-1})$;
\item set $\tilde{\mathbf{C}}_s = \mathbf C_{t_1}$ for all $s\ge t-t_1$.
\end{itemize}
It is easy to check straight from the definition that $\tilde{\mathbf{C}}\in\ccal^{(z)}(\Pi)$, so since $w\in \mathbf C_{t_1} = \tilde{\mathbf{C}}_t$, the proof is complete.
\end{proof}

As a first corollary of this duality, if we start our $\infty$-parent SLFV process from a singleton, then we can represent $S_t$ directly in terms of chains of $\Pi$-reproduction events, rather than $\Pi^{(t)}$-reproduction events as in Lemma \ref{lem:equivalence_dual_chain}.

\begin{cor}\label{cor:forwards_St}
Consider the $\infty$-parent SLFV process started from a singleton, $E=\{z\}$ for some $z\in\rtwo$. Then
\[S_t^{\{z\}} = \bigcup_{(z',r')\in\ccal_t^{(z)}(\Pi)} \bcal(z',r').\]
\end{cor}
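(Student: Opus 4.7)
The plan is to chain together the two preceding lemmas. By Definition \ref{defn:infty_parent_slfv}, for the initial condition $E=\{z\}$ we have
\[w \in S_t^{\{z\}} \iff B_t^{(t,w)} \cap \{z\} \neq \emptyset \iff z \in B_t^{(t,w)}.\]
So the task reduces to showing that
\[z \in B_t^{(t,w)} \iff w \in \bigcup_{(z',r')\in\ccal_t^{(z)}(\Pi)}\bcal(z',r'),\]
for every $w\in\rtwo$.

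First I would apply Lemma \ref{lem:equivalence_dual_chain} with the roles of $z$ and $w$ reversed (its statement uses a generic starting point) to rewrite
\[B_t^{(t,w)} = \bigcup_{(z',r') \in \ccal_t^{(w)}(\Pi^{(t)})}\bcal(z',r').\]
Thus $z \in B_t^{(t,w)}$ is exactly the condition on the left-hand side of Lemma \ref{lem:strong_self_duality}. Invoking that lemma gives the equivalence with $w \in \bigcup_{(z',r')\in\ccal_t^{(z)}(\Pi)}\bcal(z',r')$, which is precisely the condition that $w$ belongs to the right-hand side of the claimed identity.

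Taking the union over all such $w$ yields the equality of the two sets, which completes the proof. There is no real obstacle here: the statement is essentially a repackaging of Lemmas \ref{lem:equivalence_dual_chain} and \ref{lem:strong_self_duality}, and the only subtlety worth being careful about is the convention that a chain $\mathbf{C}$ may start at either $(z,0)$ or at a genuine reproduction event covering $z$, which is already handled uniformly in the definition of $\ccal_t^{(z)}(\Pi)$ and in the preceding lemmas.
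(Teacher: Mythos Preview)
Your proof is correct and follows essentially the same approach as the paper: both use Definition~\ref{defn:infty_parent_slfv} to translate $w\in S_t^{\{z\}}$ into $z\in B_t^{(t,w)}$, then apply Lemma~\ref{lem:equivalence_dual_chain} followed by Lemma~\ref{lem:strong_self_duality} to conclude.
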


\begin{proof}
For any $w\in\rtwo$, note that by the definition of the $\infty$-parent SLFV and Lemma \ref{lem:equivalence_dual_chain},
\[\{w\in S_t^{\{z\}}\} = \{z\in B^{(t,w)}_t\} = \Bigg\{z\in\bigcup_{(z',r') \in \ccal_{t}^{(w)}(\Pi^{(t)})} \bcal(z',r') \Bigg\}.\]
Lemma \ref{lem:strong_self_duality} then allows us to conclude.
\end{proof}

We now rephrase the first hitting time of a point $z$, $\hit{E}{z}$, and the bulk coverage time $\bulk{E}{z}$, in terms of chains of reproduction events. For an arbitrary measurable initial state $E$, we need to use $\Pi^{(t)}$-reproduction events.

\begin{lem}\label{lem:hitting_times_charac_chains}
Consider the $\infty$-parent SLFV started from a measurable set $E$. For all $z \in E^{c}$, we have
\[\hit{E}{z} = \min\left\{ t \geq 0 : \exists (z',r') \in \ccal_{t}^{(z)}(\Pi^{(t)}) \text{ with } \bcal(z',r') \cap E \neq \emptyset \right\}.\]
Moreover, if $(0,0)\in E$, then for all $z \in E^{c}$, we have  
\[\bulk{E}{z} = \min\left\{ t \geq 0 : \forall u\in(0,1],\, \exists \left(z',r'\right) \in \ccal_{t}^{(uz)}(\Pi^{(t)}) \text{ with } \bcal\left( z',r' \right) \cap E \neq \emptyset \right\}.\]
\end{lem}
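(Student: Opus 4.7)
The approach will be to combine Definition~\ref{defn:infty_parent_slfv} with Lemma~\ref{lem:equivalence_dual_chain} to rewrite membership in $S_{t}^{E}$ in terms of chains of reproduction events, and then to verify that the resulting infimum over $t$ is attained as a minimum. By Definition~\ref{defn:infty_parent_slfv}, for any $z \in E^{c}$ we have $z \in S_{t}^{E}$ if and only if $B_{t}^{(t,z)} \cap E \neq \emptyset$; Lemma~\ref{lem:equivalence_dual_chain} then gives immediately
\[
z \in S_{t}^{E} \;\iff\; \exists\, (z',r') \in \ccal_{t}^{(z)}(\Pi^{(t)}) \text{ with } \bcal(z',r') \cap E \neq \emptyset,
\]
so that taking the infimum over $t \geq 0$ yields the stated identity for $\hit{E}{z}$, modulo upgrading ``$\inf$'' to ``$\min$''.

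To check that the infimum is attained for $\hit{E}{z}$, I would use that on the event~\eqref{eqn:condition_on_pi} the family $(S_{t}^{E})_{t \geq 0}$ is set-theoretically increasing and jumps only at isolated times of $\Pi$. If $\hit{E}{z}$ is finite, then the indicator $t \mapsto \ind\{z \in S_{t}^{E}\}$ jumps from $0$ to $1$ at some $t^{*}$, which must be the time of a reproduction event $(t^{*},z_{0},r_{0}) \in \Pi$ with $z \in \bcal(z_{0},r_{0})$. By the construction of the ancestral skeleton, this event is incorporated into $B_{0}^{(t^{*},z)}$ at time $t^{*}$ itself, so $z \in S_{t^{*}}^{E}$ and $\hit{E}{z} = t^{*}$ is attained.

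For the bulk coverage time, the $u = 0$ case is trivial since $(0,0) \in E$: the constant chain $(Z_{s},R_{s}) = ((0,0),0)$ belongs to $\ccal_{t}^{(0)}(\Pi^{(t)})$ and satisfies $\bcal((0,0),0) \cap E \ni (0,0)$, so only $u \in (0,1]$ need be considered. For each such $u$ the equivalence displayed above applies to $uz$, and taking the conjunction over $u \in (0,1]$ produces the desired characterisation of $\bulk{E}{z}$ as an infimum. The main (and only real) obstacle will be to show that this infimum is again a minimum despite the uncountable quantifier over $u$. The key observation is that within any bounded time window only finitely many points of $\Pi$ can contribute a ball meeting the compact segment $\{uz : u \in [0,1]\}$; the uncovered portion of the segment therefore decreases through at most finitely many right-continuous steps and, when nonempty, is a closed subset of the segment. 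Hence if it ever becomes empty it does so at a specific jump time of $\Pi$, and at that jump time the whole segment lies inside $S_{t}^{E}$, realising $\bulk{E}{z}$ as a minimum.
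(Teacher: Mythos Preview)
Your argument follows exactly the paper's route: apply Definition~\ref{defn:infty_parent_slfv} and then Lemma~\ref{lem:equivalence_dual_chain} to rewrite membership in $S_t^E$ via chains, then take the infimum. The paper's proof is in fact shorter than yours---it simply writes ``$\min$'' throughout and does not separately justify that the infimum is attained.

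Your additional justification of $\inf=\min$ is correct in substance. The decisive observation, which you make, is that only finitely many points of $\Pi$ in any bounded time window produce balls meeting the compact segment $\{uz:u\in[0,1]\}$; hence each $\hit{E}{uz}$ lies in a fixed finite set of event times, and the supremum over $u$ is automatically a maximum attained at one of these times. One small imprecision: your claim that the uncovered portion of the segment is a \emph{closed} subset is not correct for general measurable $E$ (since $S_t^E$ is $E$ together with closed balls, its complement in the segment need not be closed), but this claim is not actually needed---the finite-jump argument alone suffices.
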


\begin{proof}
Let $z \in E^{c}$. By definition of the $\infty$-parent SLFV, and using Lemma~\ref{lem:equivalence_dual_chain} to pass from the second to the third line,
\begin{align*}
    \hit{E}{z} &= \min\left\{
t \geq 0 : z \in S^E_{t}
    \right\} \\
    &= \min\left\{
t \geq 0 : E\cap B_{t}^{(t,z)} \neq \emptyset
    \right\} \\
    &= \min\left\{
t \geq 0 : E\cap
\bigcup_{(z',r') \in \ccal_{t}^{(z)}(\Pi^{(t)})} \bcal(z',r') \neq \emptyset 
    \right\} \\
    &= \min\left\{
t \geq 0 : \exists (z',r') \in \ccal_{t}^{(z)}(\Pi^{(t)}) \text{ with } E\cap \bcal(z',r') \neq \emptyset
    \right\}. 
\end{align*}
We proceed similarly for $\bulk{E}{z}$. 
\end{proof}

However, if our initial state $E$ is a singleton, then Corollary \ref{cor:forwards_St} simplifies the expression significantly, allowing us to express $\hit{E}{z}$ and $\bulk{E}{z}$ directly in terms of $\Pi$-reproduction events.

\begin{lem}\label{lem:hitting_times_charac_forward}
Consider the $\infty$-parent SLFV started from the origin, $E=\{0\}$. Then for all $z\in\rtwo\setminus\{0\}$, we have
\[\hit{\{0\}}{z} = \min\left\{ t \geq 0 : \exists (z',r') \in \ccal_{t}^{(0)}(\Pi) \text{ with } z\in \bcal(z',r')\right\}\]
and
\[\bulk{\{0\}}{z} = \min\left\{ t \geq 0 : \forall u\in(0,1],\, \exists \left(z',r'\right) \in \ccal_{t}^{(0)}(\Pi) \text{ with } uz \in \bcal\left( z',r' \right) \right\}.\]
\end{lem}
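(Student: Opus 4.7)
The plan is to derive this lemma essentially as a direct corollary of Corollary \ref{cor:forwards_St}, which already gives a forwards description of $S_t^{\{0\}}$ in terms of chains of $\Pi$-reproduction events started from $0$. The bulk of the work is therefore definitional substitution, with a minor check that the infimum in the definitions of $\hit{\{0\}}{z}$ and $\bulk{\{0\}}{z}$ is indeed attained.

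First I would start from the definition $\hit{\{0\}}{z} = \inf\{t \geq 0 : z \in S_t^{\{0\}}\}$ and apply Corollary \ref{cor:forwards_St} to rewrite the containment as the existence of $(z',r') \in \ccal_t^{(0)}(\Pi)$ with $z \in \bcal(z',r')$. This immediately produces the right-hand side of the first identity, up to replacing $\inf$ with $\min$. To justify the latter, I would note that by Definition \ref{defn:chain_reproduction_events} a chain of $\Pi$-reproduction events is right-continuous: if a jump occurs at time $t$, then $\mathbf{C}_t$ already records the new reproduction event. Consequently, the set $\bigcup_{(z',r') \in \ccal_t^{(0)}(\Pi)} \bcal(z',r')$ is monotone non-decreasing and right-continuous in $t$ (as a union of closed balls, and because new events only enlarge the set at their occurrence time), so the infimum over $t$ at which $z$ lies in this set is attained.

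For the bulk coverage time, I would similarly start from
\[\bulk{\{0\}}{z} = \inf\{t \geq 0 : uz \in S_t^{\{0\}} \text{ for all } u \in [0,1]\}\]
and apply Corollary \ref{cor:forwards_St} to replace $S_t^{\{0\}}$. The case $u=0$ is automatic: the constant chain $\mathbf{C}_s = (0,0)$ belongs to $\ccal^{(0)}(\Pi)$ for all $s$, and $0 \in \bcal(0,0)$; this is why the quantifier can be restricted to $u \in (0,1]$ in the stated identity. Replacing the $\inf$ with a $\min$ uses the same right-continuity argument as above, applied simultaneously in $u$: if at some time $t$ every point $uz$, $u \in (0,1]$, is covered by some chain's endpoint ball, then this covering persists for all larger times by monotonicity of $\ccal_t^{(0)}(\Pi)$, and the first such time is realised thanks to the càdlàg nature of the Poisson point process.

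No step is truly an obstacle here: the work was done in establishing Lemmas \ref{lem:equivalence_dual_chain}, \ref{lem:strong_self_duality} and Corollary \ref{cor:forwards_St}. The only mildly delicate point is the monotonicity/right-continuity argument justifying that the infimum is attained, but this follows directly from the right-continuity built into Definition \ref{defn:chain_reproduction_events} and the fact that, in the singleton-initial-condition case, no chain event can ever remove a previously covered region.
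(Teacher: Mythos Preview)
Your proposal is correct and follows essentially the same approach as the paper: both derive the identities directly from Corollary~\ref{cor:forwards_St} by substituting the forwards description of $S_t^{\{0\}}$ into the definitions of $\hit{\{0\}}{z}$ and $\bulk{\{0\}}{z}$. You provide somewhat more detail on why the infimum is attained (via right-continuity of chains), whereas the paper simply writes $\min$ throughout, relying on the earlier observation in Section~\ref{sec:geodesics} that the process started from a singleton jumps at a finite rate.
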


\begin{proof}
Take $z\in\rtwo\setminus\{0\}$. The proof is almost identical to that of Lemma \ref{lem:hitting_times_charac_chains} but using Lemma~\ref{cor:forwards_St} in place of Lemma~\ref{lem:equivalence_dual_chain}:
\begin{align*}
    \hit{\{0\}}{z} &= \min\left\{
t \geq 0 : z \in S^{\{0\}}_{t}
    \right\} \\
    &= \min\left\{
t \geq 0 : z\in\bigcup_{(z',r')\in\ccal_t^{(0)}(\Pi)} \bcal(z',r')
    \right\} \\
    &= \min\left\{
t \geq 0 : \exists (z',r') \in \ccal_{t}^{(0)}(\Pi) \text{ with } z\in \bcal(z',r')
    \right\}, 
\end{align*}
and similarly for $\bulk{\{0\}}{z}$. 
\end{proof}

As another example application of self-duality, we observe the equality in distribution of plane-to-point and point-to-plane hitting times. Note that such a result was already obtained in \cite{louvet2023measurevalued} for another version of the $\infty$-parent SLFV: see \cite[Lemma~3.8]{louvet2023measurevalued}. 

\begin{cor}\label{cor:hitting_times_equal_distn}
For any $z=(x,y)\in\rtwo$, the two hitting times $\hit{\hcal}{z}$ and $\hit{\{0\}}{\hcal^x}$ are equal in distribution.
\end{cor}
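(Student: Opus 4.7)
The plan is to translate both hitting times into the language of chains of reproduction events in $\Pi$ (restricted to a time window $[0,t]$), and then exchange them via the isometry $\sigma(x',y') = (x-x', y-y')$ combined with time-reversal invariance of the Poisson process.

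First I would apply Lemma \ref{lem:hitting_times_charac_chains} directly to $\hit{\hcal}{z}$, and apply the obvious extension of Lemma \ref{lem:hitting_times_charac_forward} (which follows from Corollary \ref{cor:forwards_St} exactly as in its proof) to $\hit{\{0\}}{\hcal^x}$, obtaining
\begin{align*}
\hit{\hcal}{z} &= \min\{t\ge 0 : \exists (z',r')\in\ccal_t^{(z)}(\Pi^{(t)}) \text{ with } \bcal(z',r')\cap \hcal\neq\emptyset\},\\
\hit{\{0\}}{\hcal^x} &= \min\{t\ge 0 : \exists (z',r')\in\ccal_t^{(0)}(\Pi) \text{ with } \bcal(z',r')\cap\hcal^x\neq\emptyset\}.
\end{align*}
Next I would fix $t\ge 0$ and note that the event $\{\hit{\hcal}{z}\le t\}$ depends only on $\Pi^{(t)}\cap([0,t]\times\rtwo\times(0,+\infty))$, which has the same distribution as $\Pi\cap([0,t]\times\rtwo\times(0,+\infty))$, since both are Poisson point processes of intensity $ds\otimes dz\otimes\mu(dr)$. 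Hence
\[\P(\hit{\hcal}{z}\le t) = \P\bigl(\exists (z',r')\in\ccal_t^{(z)}(\Pi) \text{ with } \bcal(z',r')\cap\hcal\neq\emptyset\bigr).\]

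I would then apply the spatial isometry $\sigma$, which swaps $z\leftrightarrow 0$ and $\hcal\leftrightarrow\hcal^x$. Since $\sigma$ is a composition of a reflection and a translation, it preserves the $\ell^2$ structure, so balls are mapped to balls of equal radius and ball intersections are preserved. Moreover, the spatial push-forward of $\Pi$ under $(t,z',r)\mapsto(t,\sigma(z'),r)$ has the same distribution as $\Pi$, by translation and reflection invariance of the spatial marginal $dz\otimes\mu(dr)$. Under $\sigma$, any chain in $\ccal_t^{(z)}(\Pi)$ whose final ball meets $\hcal$ corresponds bijectively to a chain in $\ccal_t^{(0)}(\sigma\cdot\Pi)$ whose final ball meets $\hcal^x$. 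Combining these observations gives $\P(\hit{\hcal}{z}\le t) = \P(\hit{\{0\}}{\hcal^x}\le t)$ for every $t\ge 0$, and since this holds for all $t$ the two random variables are equal in distribution.

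The main conceptual point, and the only spot I would write out carefully, is the reduction from chains in $\Pi^{(t)}$ to chains in $\Pi$ for fixed $t$: one must justify that the event in question is measurable with respect to the restriction of the point process to $[0,t]\times\rtwo\times(0,+\infty)$ (where the two have the same law), even though in the formulation with $\Pi^{(t)}$ the parameter $t$ also appears in the time-reversal. Everything else is a formal symmetry argument on the underlying Poisson point process.
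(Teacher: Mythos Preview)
Your proposal is correct and takes essentially the same approach as the paper: both arguments combine the time-reversal invariance $\Pi^{(t)}\stackrel{d}{=}\Pi$ with a spatial reflection/translation swapping $(z,\hcal)\leftrightarrow(0,\hcal^x)$, together with the forward-chain representation of $S^{\{0\}}_t$ from Corollary~\ref{cor:forwards_St}. The only difference is the order: the paper applies the spatial symmetry first (passing through $\P(0\in S^{\hcal^x}_t)$) and then time-reversal, whereas you apply time-reversal first and the isometry $\sigma$ second; this is cosmetic. The point you flag as needing care---that $\{\hit{\hcal}{z}\le t\}$ coincides with the chain condition at the \emph{single} time $t$ in $\Pi^{(t)}$---is exactly the monotonicity statement $\{\hit{\hcal}{z}\le t\}=\{z\in S^\hcal_t\}$ that the paper also invokes.
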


\begin{proof}
Using the translation and reflection invariance of the underlying Poisson point process, the fact that $S_{t}^{\hcal}$ is a non-decreasing set, and then the definition of $S^{\hcal^x}_t$, we have
\[\P\left(\hit{\hcal}{x,0}\le t\right) = \P\left(z\in S^\hcal_t\right) = \P\left(0\in S^{\hcal^x}_t\right) = \P\left(B^{(t,0)}_t\cap \hcal^x \neq \emptyset \right).\]
By Lemma \ref{lem:equivalence_dual_chain}, this equals
\[\P\left(\hcal^x\cap \bigcup_{(z',r') \in \ccal_{t}^{(0)}(\Pi^{(t)})} \bcal(z',r') \neq \emptyset \right),\]
and since $\Pi$ and $\Pi^{(t)}$ are equal in distribution, this equals
\[\P\left(\hcal^x\cap \bigcup_{(z',r') \in \ccal_{t}^{(0)}(\Pi)} \bcal(z',r') \neq \emptyset \right).\]
But by Corollary \ref{cor:forwards_St}, we recognise this as precisely
\[\P\left(\hcal^x\cap S^{\{0\}}_t \neq \emptyset\right) = \P\left(\hit{\{0\}}{\hcal^x}\le t\right)\]
and the proof is complete.
\end{proof}

\subsection{Using a slow coverage strategy to upper bound coverage times}\label{subsec:slow_coverage}

The main aim in this section is to use the framework of chains of reproduction events to show that~$\E[\bulk{E}{x,0}]$ is \textit{at most} linear in~$x$, in the following sense.

\begin{lem}\label{lem:upper_bound_sigma_x}
Consider the $\infty$-parent SLFV started from any measurable set $E\subset\rtwo$ such that $(0,0)\in E$. There exists a constant $C > 0$ depending only on $\mu$ (not on $E$) such that for all $x>0$, 
\begin{equation*}
    \esp\left[ \bulk{E}{x,0} \right] \leq Cx.
\end{equation*}
\end{lem}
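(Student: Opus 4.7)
My plan is to monotonically reduce to $E=\{(0,0)\}$ and then construct, by a very inefficient but explicit recipe, a family of chains of $\Pi$-reproduction events that together cover $[0,x]\times\{0\}$ in time of order $x$. Monotonicity of the $\infty$-parent SLFV in its initial condition is immediate from Definition~\ref{defn:infty_parent_slfv}, since enlarging $E$ can only make $B_t^{(t,z)}\cap E$ non-empty sooner; consequently $\bulk{E}{x,0}\le \bulk{\{0\}}{x,0}$ whenever $(0,0)\in E$, and starting from the singleton lets me use the forward representation of $S_t^{\{0\}}$ from Corollary~\ref{cor:forwards_St}.

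For the construction, fix $\eta\in(0,\text{R}_0)$ with $p_\eta:=\mu([\eta,\text{R}_0])>0$, which exists by the definition of $\text{R}_0$. Partition a thin strip along the positive $x$-axis into the disjoint rectangles
\[R_n := \bigl[(n-1)\eta/2,\,n\eta/2\bigr]\times\bigl[-\eta/4,\,\eta/4\bigr], \qquad n\ge 1,\]
and set $\Pi_n := \Pi \cap \bigl(\mathbb{R}\times R_n \times [\eta,\text{R}_0]\bigr)$. Since the $R_n$ are pairwise disjoint, the $\Pi_n$ are independent Poisson processes on the time axis, each of rate $\lambda := (\eta^2/4)\,p_\eta$. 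Let $T_1$ be the first time of $\Pi_1$, and for $n\ge 2$ let $T_n$ be the first time of $\Pi_n$ after $T_{n-1}$, writing $(z_n,r_n)$ for the spatial coordinates of the corresponding event; the increments $T_n-T_{n-1}$ are i.i.d.\ Exp$(\lambda)$.

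A short $\ell^2$ computation, using only that $r_n\ge\eta$ together with the dimensions of the rectangles, then gives the three key properties: $(0,0)\in \bcal(z_1,r_1)$; $\bcal(z_n,r_n)\cap \bcal(z_{n+1},r_{n+1})\neq\emptyset$ for every $n$; and $\bcal(z_n,r_n)$ entirely covers $\bigl[(n-1)\eta/2,\,n\eta/2\bigr]\times\{0\}$. For each $j\ge 1$, the process which takes value $(0,0)$ on $[0,T_1)$, value $(z_i,r_i)$ on $[T_i,T_{i+1})$ for $1\le i<j$, and value $(z_j,r_j)$ on $[T_j,\infty)$ is therefore a valid element of $\ccal^{(0)}(\Pi)$ whose endpoint from time $T_j$ onward covers the $j$-th sub-segment. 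Setting $N:=\lceil 2x/\eta\rceil$ and combining the $N$ chains above via Corollary~\ref{cor:forwards_St} yields $[0,x]\times\{0\}\subseteq S_{T_N}^{\{0\}}$, so $\bulk{\{0\}}{x,0}\le T_N$ and
\[\esp\bigl[\bulk{E}{x,0}\bigr]\le \esp[T_N] = N/\lambda \le (2x/\eta+1)/\lambda \le C(1+x),\]
with $C$ depending only on $\mu$; this is the required linear bound (subsuming a harmless additive constant into $Cx$ for $x\ge 1$).

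The main obstacle I expect is really conceptual rather than technical: because a single chain has only one endpoint at any given time, bounding a \emph{bulk} coverage time forces me to produce a whole family of chains with consistent truncations rather than a single one, and the recipe above is designed precisely so that cutting off the same underlying sequence of events at different times yields such a family. Once this structure is in place, verifying the three ``balls-intersect-and-cover'' conditions is a routine but delicate balancing of the rectangle dimensions against the lower bound $\eta$ on radii, and the expectation bound follows immediately from the i.i.d.\ exponential structure of $T_n-T_{n-1}$.
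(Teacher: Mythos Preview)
Your proof is correct and takes essentially the same approach as the paper: reduce by monotonicity to $E=\{0\}$, build a slow covering chain by waiting in successive disjoint rectangles along the axis for a reproduction event of large enough radius, verify the resulting balls link up and cover the segment, then sum i.i.d.\ exponential waiting times. The only differences are cosmetic parameter choices (your rectangles have width $\eta/2$ and radius threshold $\eta$; the paper's have width $\delta$ and threshold $3\delta$) and that you invoke Corollary~\ref{cor:forwards_St} directly while the paper routes through Lemma~\ref{lem:hitting_times_charac_forward}.
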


In order to show this result, our tactic will be to construct an explicit sequence of reproduction events which swallows up the horizontal axis bit by bit. This chain is very unlikely to be the one that corresponds to $\bulk{E}{x,0}$; we think of it  as a ``slow but sure'' way of covering the axis up to $(x,0)$ that will provide a useful upper bound.

Let $\delta, \eta > 0$ be such that $\mu((3\delta, \infty)) \geq \eta$. For all $j \in \mathbb{Z}$, we set $x_{j} = j\delta$. We define the sequence of reproduction events 
\begin{equation}\label{eq:slow_covering_chain}
\left(\overrightarrow{T}\!_{j}, \overrightarrow{Z}\!_{j}, \overrightarrow{R}\!_{j}\right)_{j \geq 1}
\end{equation}
recursively as follows. First we set $\overrightarrow{T}\!_{0} = 0$. Then, for all $j \geq 1$, let
\begin{equation*}
    \left(
\overrightarrow{T}\!_{j}, \overrightarrow{Z}\!_{j}, \overrightarrow{R}\!_{j}
    \right) \in \Pi
\end{equation*}
be the first reproduction event of radius $\overrightarrow{R}\!_{j} > 3\delta$ that occurs after time $\overrightarrow{T}\!_{j-1}$ and such that
\begin{equation*}
    \overrightarrow{Z}\!_{j} \in (x_{j-1},x_{j}) \times (-\delta, \delta). 
\end{equation*}
This sequence of reproduction events satisfies the following properties. 

\begin{lem}\label{lem:properties_sequence_reproduction_events} (i) The random variables
\begin{equation*}
    \left(
\overrightarrow{T}\!_{j} - \overrightarrow{T}\!_{j-1}
    \right)_{j \geq 1}
\end{equation*}
are i.i.d. and exponentially distributed, with rate bounded from below by $2\delta^{2}\eta$. 

\noindent (ii) For all $j \geq 1$, 
\begin{equation*}
    \left\{
(x,y)  : x_{j-2} \leq x \leq x_{j+1},\, -\delta\le y \le \delta
    \right\} \subseteq \bcal\left(
\overrightarrow{Z}\!_{j}, \overrightarrow{R}\!_{j}
    \right). 
\end{equation*}
In particular, 
\begin{equation*}
    \bcal\left(
\overrightarrow{Z}\!_{j}, \overrightarrow{R}\!_{j}
    \right) \cap \bcal\left(
\overrightarrow{Z}\!_{j+1}, \overrightarrow{R}\!_{j+1}
    \right) \neq \emptyset . 
\end{equation*}
\end{lem}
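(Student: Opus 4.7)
The plan is to prove the two parts independently; part (i) will follow from the thinning/restriction property of the Poisson point process $\Pi$ together with the strong Markov property, and part (ii) is a direct geometric calculation.

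For part (i), I would first observe that for each $j \geq 1$, the events under consideration in constructing $(\overrightarrow{T}\!_{j}, \overrightarrow{Z}\!_{j}, \overrightarrow{R}\!_{j})$ are exactly the points of $\Pi$ falling inside the fixed set $\mathbb{R} \times (x_{j-1},x_{j}) \times (-\delta,\delta) \times (3\delta,+\infty)$. By the restriction property of Poisson point processes, the restriction of $\Pi$ to this set is itself a Poisson point process whose first (time) coordinate forms, on projection, a Poisson process on $\mathbb{R}$ with rate
\[\mathrm{Leb}\bigl((x_{j-1},x_{j}) \times (-\delta,\delta)\bigr)\cdot\mu((3\delta,+\infty)) \;=\; 2\delta^{2}\,\mu((3\delta,+\infty)) \;\geq\; 2\delta^{2}\eta.\]
Since the spatial rectangles $(x_{j-1},x_{j}) \times (-\delta,\delta)$ are pairwise disjoint as $j$ varies, the restrictions of $\Pi$ to the corresponding slabs are independent Poisson point processes. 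Combined with the strong Markov property of $\Pi$ applied at each $\overrightarrow{T}\!_{j-1}$, this yields that $\overrightarrow{T}\!_{j} - \overrightarrow{T}\!_{j-1}$ is exponential with rate $2\delta^{2}\,\mu((3\delta,+\infty)) \geq 2\delta^{2}\eta$, independent of everything that has occurred before time $\overrightarrow{T}\!_{j-1}$, giving the i.i.d.~statement.

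For part (ii), I would work directly from the defining constraints: $\overrightarrow{Z}\!_{j} = (X,Y)$ with $X \in (x_{j-1},x_{j})$, $Y \in (-\delta,\delta)$, and $\overrightarrow{R}\!_{j} > 3\delta$. For any $(x,y)$ in the rectangle $[x_{j-2},x_{j+1}] \times [-\delta,\delta]$, the horizontal separation $|x - X|$ is bounded above by $x_{j+1} - x_{j-1} = 2\delta$ and the vertical separation $|y-Y|$ is bounded above by $2\delta$, so
\[\bigl\|(x,y) - (X,Y)\bigr\| \;\leq\; \sqrt{(2\delta)^{2} + (2\delta)^{2}} \;=\; 2\sqrt{2}\,\delta \;<\; 3\delta \;<\; \overrightarrow{R}\!_{j},\]
which gives the required inclusion. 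For the second assertion, the rectangles appearing in the inclusion for $j$ and for $j+1$ overlap in the non-empty rectangle $[x_{j-1},x_{j+1}] \times [-\delta,\delta]$, which is therefore contained in both $\bcal(\overrightarrow{Z}\!_{j},\overrightarrow{R}\!_{j})$ and $\bcal(\overrightarrow{Z}\!_{j+1},\overrightarrow{R}\!_{j+1})$, yielding their intersection.

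There is no real obstacle: the only points to watch out for are making sure to invoke the strong Markov property correctly so that the exponential inter-arrival times are i.i.d.~and not merely marginally exponential, and being careful that the constant $2\sqrt{2}\delta$ is genuinely strictly less than $3\delta$ so that the geometric inclusion is valid (which motivated the choice of the threshold $3\delta$ on the radius in the construction \eqref{eq:slow_covering_chain}).
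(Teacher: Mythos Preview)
Your proposal is correct and follows essentially the same approach as the paper: part (i) via the Poisson restriction/thinning rate $2\delta^{2}\mu((3\delta,\infty))\geq 2\delta^{2}\eta$ together with independence across the disjoint spatial strips, and part (ii) via the direct distance bound $\sqrt{(2\delta)^{2}+(2\delta)^{2}}=2\sqrt{2}\,\delta<3\delta$. Your write-up is in fact more careful than the paper's (which is very terse and contains a couple of minor typos in the ranges); the paper exhibits the single point $(x_{j},0)$ to witness the intersection, whereas you exhibit the overlapping sub-rectangle, but this is a cosmetic difference.
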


\begin{proof}
(i) This is a direct consequence of the fact that $\mu((2\delta,+\infty)) \geq \eta$. 

\noindent (ii) Let $j \geq 1$. Then, as $\overrightarrow{Z}\!_{j} \in (x_{j-2},x_{j}) \times (-\delta, \delta)$ and $\overrightarrow{R}\!_{j} > 3\delta$, 
\[\max\left\{
d\left(
\overrightarrow{Z}\!_{j},(x,y)
\right) : x_{j-1} \leq x \leq x_{j+1},\, -\delta\le y\le \delta
\right\} 
\leq \sqrt{8}\delta 
< \overrightarrow{R}\!_{j}.\]
Thus 
\[\left\{
(x,y): x_{j-2} \leq x \leq x_{j},\, -\delta\le y\le \delta
    \right\} \subseteq \bcal\left(
\overrightarrow{Z}\!_{j}, \overrightarrow{R}\!_{j}
    \right),\]
and in particular
\[\bcal\left( \overrightarrow{Z}\!_{j}, \overrightarrow{R}\!_{j} \right) \cap \bcal\left(
\overrightarrow{Z}\!_{j+1}, \overrightarrow{R}\!_{j+1}
\right) \supseteq \left\{
(x_{j},0)
\right\} \neq \emptyset.\qedhere\]
\end{proof}

Therefore, we can construct chains of $\Pi$-reproduction events $\mathbf{C}^{(x)} = ( \mathbf{Z}_{t}^{(x)}, \mathbf{R}_{t}^{(x)} )_{t \geq 0}, x > 0$
as follows. For each $x > 0$,  
\begin{enumerate}
    \item We set $\mathbf{C}^{(x)}_{t} = (0,0)$ for all $t\in[0,\overrightarrow{T}\!_1)$.
    \item For each $j\in\{1,2,\ldots,\lceil x/\delta\rceil-1\}$, we set
    \[\mathbf{C}^{(x)}_{t} = \left( \overrightarrow{Z}\!_{j}, \overrightarrow{R}\!_{j} \right) \,\,\,\, \text{ for all } t\in[\overrightarrow{T}\!_j,\overrightarrow{T}\!_{j+1}).\]
    \item We set 
    \[\mathbf{C}^{(x)}_{t} = \left( \overrightarrow{Z}\!_{\lceil x/\delta\rceil}, \overrightarrow{R}\!_{\lceil x/\delta\rceil} \right) \,\,\,\, \text{ for all } t\ge \overrightarrow{T}\!_{\lceil x/\delta\rceil}.\]
\end{enumerate}
In other words, all the chains $\mathbf{C}^{(x)}$, $x > 0$ start the same, but each one of them is stopped once it reaches location~$(x,0)$. 
We then have the following result. 

\begin{lem}\label{lem:express_chain_bound} 
Consider the $\infty$-parent SLFV started from the origin, $E=\{0\}$. Let $\delta, \eta > 0$ be such that $\mu((3\delta, \infty)) \geq \eta$, and construct
\[\left(\overrightarrow{T}\!_{j}, \overrightarrow{Z}\!_{j}, \overrightarrow{R}\!_{j}\right)_{j \geq 1}\]
as in \eqref{eq:slow_covering_chain}. Then for all $x > 0$,
\begin{equation*}
\sup_{z\in\bcal((x,0),\delta)} \bulk{\{0\}}{z} \leq \overrightarrow{T}\!_{\lceil x/\delta \rceil}.     
\end{equation*}
\end{lem}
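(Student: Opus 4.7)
The plan is to combine Corollary~\ref{cor:forwards_St} (which identifies $S_t^{\{0\}}$ with the union of balls reached by chains of $\Pi$-reproduction events starting from $0$) with Lemma~\ref{lem:properties_sequence_reproduction_events}(ii) (which says that the balls $\bcal(\overrightarrow{Z}\!_j, \overrightarrow{R}\!_j)$ cover a thickened strip along the $x$-axis). Writing $n := \lceil x/\delta\rceil$, the goal is to show that every segment $\{uz : u\in[0,1]\}$ with $z\in\bcal((x,0),\delta)$ is already swept out by those balls by time $\overrightarrow{T}\!_n$.

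First I would verify that $\mathbf{C}^{(x)}$ (and, for each $j\le n$, the truncation that stays at $(\overrightarrow{Z}\!_j, \overrightarrow{R}\!_j)$ for all $t\ge \overrightarrow{T}\!_j$) is a genuine chain of $\Pi$-reproduction events started from $0$ in the sense of Definition~\ref{defn:chain_reproduction_events}. The only non-routine point is the first jump: since $\overrightarrow{Z}\!_1\in(0,\delta)\times(-\delta,\delta)$ and $\overrightarrow{R}\!_1>3\delta$, we have $|\overrightarrow{Z}\!_1|\le \delta\sqrt{2}<3\delta<\overrightarrow{R}\!_1$, so $0\in\bcal(\overrightarrow{Z}\!_1,\overrightarrow{R}\!_1)$, as required for a chain starting at $(0,0)$. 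Consecutive jumps satisfy the intersection requirement by Lemma~\ref{lem:properties_sequence_reproduction_events}(ii). Consequently, for each $j\in\{1,\ldots,n\}$, $(\overrightarrow{Z}\!_j,\overrightarrow{R}\!_j)\in\ccal_{\overrightarrow{T}\!_n}^{(0)}(\Pi)$, and Corollary~\ref{cor:forwards_St} gives
\[
\bigcup_{j=1}^{n} \bcal(\overrightarrow{Z}\!_j,\overrightarrow{R}\!_j) \subseteq S^{\{0\}}_{\overrightarrow{T}\!_n}.
\]

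Next I would perform the elementary geometric check that for every $z=(x',y')\in\bcal((x,0),\delta)$ and every $u\in[0,1]$, the point $uz$ falls inside this union. We have $|y'|\le\delta$, hence $|uy'|\le\delta$; and $x-\delta\le x'\le x+\delta$. In the main case $x\ge\delta$ we have $x'\ge 0$, hence $ux'\in[0,x']\subseteq[0,x+\delta]\subseteq[-\delta,(n+1)\delta]$, while in the degenerate case $x<\delta$ (so $n=1$) we directly have $ux'\in[-\delta,2\delta]$. In both cases, $uz$ lies in the rectangle $[x_{-1},x_{n+1}]\times[-\delta,\delta]$, which by Lemma~\ref{lem:properties_sequence_reproduction_events}(ii) is covered by $\bigcup_{j=1}^{n}\bcal(\overrightarrow{Z}\!_j,\overrightarrow{R}\!_j)$.

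Combining the two steps, for any $z\in\bcal((x,0),\delta)$ the entire segment $\{uz:u\in[0,1]\}$ lies in $S^{\{0\}}_{\overrightarrow{T}\!_n}$, so by the definition of $\bulk{\{0\}}{\cdot}$ we conclude $\bulk{\{0\}}{z}\le \overrightarrow{T}\!_n$; taking the supremum over $z$ yields the claim. The argument is essentially pure bookkeeping, and the only mild obstacle is handling small $x$ where $x'$ may be negative and ensuring the very first jump of $\mathbf{C}^{(x)}$ is compatible with the convention $\mathbf{C}^{(x)}_0=(0,0)$; both are resolved by the bound $\overrightarrow{R}\!_j>3\delta$ baked into the construction.
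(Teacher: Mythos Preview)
Your proof is correct and follows essentially the same approach as the paper: both arguments use the forward-chain characterisation of $S^{\{0\}}_t$ (you invoke Corollary~\ref{cor:forwards_St} directly, the paper goes through its consequence Lemma~\ref{lem:hitting_times_charac_forward}) together with the geometric coverage provided by Lemma~\ref{lem:properties_sequence_reproduction_events}(ii). If anything, your write-up is slightly more careful, making explicit the first-jump check and the small-$x$ case that the paper leaves implicit.
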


\begin{proof}
Let $x > 0$ and take $z\in\bcal((x,0),\delta)$. By Lemma \ref{lem:hitting_times_charac_forward},
\[\bulk{\{0\}}{z} = \min\left\{ t \geq 0 : \forall u\in(0,1],\, \exists \left(z',r'\right) \in \ccal_{t}^{(0)}(\Pi) \text{ with } uz \in \bcal\left( z',r' \right) \right\}.\]
Consider $\mathbf{C}^{(ux)}$ as constructed above. By construction, this is a chain of $\Pi$-reproduction events, with $uz\in\mathbf{C}^{(ux)}_t$ for all $t\ge \overrightarrow{T}\!_{\lceil ux/\delta\rceil}$. Thus, for for every $u\in(0,1]$, since $\overrightarrow{T}\!_{\lceil ux/\delta\rceil}\le \overrightarrow{T}\!_{\lceil x/\delta\rceil}$, we have a chain of $\Pi$-reproduction events---namely $\mathbf{C}^{(ux)}$---with $uz\in\mathbf{C}^{(ux)}_t$ for all $t\ge \overrightarrow{T}\!_{\lceil x/\delta\rceil}$. We deduce the result.
\end{proof}

We can now conclude the proof of Lemma~\ref{lem:upper_bound_sigma_x}.

\begin{proof}[Proof of Lemma \ref{lem:upper_bound_sigma_x}]
By the obvious coupling, the expectation is largest when~$E=\{0\}$, so we consider this case only. Let $x > 0$. By Lemma~\ref{lem:express_chain_bound},
\[\esp\left[ 
\bulk{\{0\}}{x,0} 
\right] \leq \esp\left[ 
\overrightarrow{T}\!_{\lceil x/\delta \rceil} 
\right] = \esp\left[
\sum_{j = 1}^{\lceil x/ \delta \rceil} \left(
\overrightarrow{T}\!_{j} - \overrightarrow{T}\!_{j-1}
\right) 
\right] = \left\lceil \frac{x}{\delta} \right\rceil \esp\left[ 
\overrightarrow{T}\!_{1} - \overrightarrow{T}\!_{0}
\right]\leq \left\lceil \frac{x}{\delta} \right\rceil \times \frac{1}{2\delta^{2} \eta}\]
as required.
\end{proof}

In fact, later we will need to use the same strategy in a slightly more general context; once we have run our process for some time, we would then like to know that we can still cover the line between $(0,0)$ and $(x,0)$ in linear time, without using the events already seen. More precisely, we have the following result.

\begin{lem}\label{lem:more_general_upper_bound_sigma}
Consider the $\infty$-parent SLFV process started from a measurable set $E\subset\rtwo$ such that $(0,0) \in E$. There exists $C > 0$ depending only on $\mu$ (not on $E$) such that for all $T > 0$ and $x > 0$, there exists a random variable $\sigma\!_{x,T}^{(slow)}$ independent of the event $\{\bulk{E}{x,0} > T\}$ such that
\begin{align*}
    \esp\left[ 
    \sigma_{x,T}^{(slow)}
    \right] \leq Cx \quad \text{and} \quad \bulk{E}{x,0} \leq T + \sigma_{x,T}^{(slow)}.
\end{align*}
\end{lem}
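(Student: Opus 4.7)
The plan is to adapt the slow-covering construction from the proof of Lemma~\ref{lem:upper_bound_sigma_x} so that it uses only reproduction events occurring strictly after time $T$. This automatically makes $\sigma_{x,T}^{(slow)}$ measurable with respect to $\Pi$ restricted to $(T,\infty)\times\rtwo\times(0,+\infty)$, which by the independence properties of Poisson point processes is independent of $\Pi$ restricted to $(-\infty,T]\times\rtwo\times(0,+\infty)$. Since the event $\{\bulk{E}{x,0}>T\}$ is fully determined by $\Pi$ restricted to $(-\infty,T]$ (the bulk coverage time depending only on what has happened up to time $T$), this gives the required independence.

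Concretely, I would fix $\delta,\eta>0$ with $\mu((3\delta,\infty))\ge\eta$ and set $x_j=j\delta$, as in Section~\ref{subsec:slow_coverage}. Then I would recursively construct a sequence of $\Pi$-reproduction events by setting $\tilde T_0=T$ and, for each $j\ge 1$, letting $(\tilde T_j,\tilde Z_j,\tilde R_j)$ be the first $(t,z,r)\in\Pi$ with $t>\tilde T_{j-1}$, $r>3\delta$ and $z\in(x_{j-1},x_j)\times(-\delta,\delta)$. I would define $\sigma_{x,T}^{(slow)}:=\tilde T_{\lceil x/\delta\rceil}-T$. By the strong Markov/memoryless property of $\Pi$, the inter-arrival times $(\tilde T_j-\tilde T_{j-1})_{j\ge 1}$ are i.i.d.\ exponentials of rate at least $2\delta^{2}\eta$, exactly as in Lemma~\ref{lem:properties_sequence_reproduction_events}(i), so $\esp[\sigma_{x,T}^{(slow)}]\le Cx$ with the same constant $C$ as in Lemma~\ref{lem:upper_bound_sigma_x}.

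For the inequality $\bulk{E}{x,0}\le T+\sigma_{x,T}^{(slow)}$ I would argue by cases. On $\{\bulk{E}{x,0}\le T\}$ the bound is trivial. On $\{\bulk{E}{x,0}>T\}$, I would use that $(0,0)\in E\subseteq S^E_t$ for every $t\ge 0$: since $\tilde Z_1\in(0,\delta)\times(-\delta,\delta)$ and $\tilde R_1>3\delta>\sqrt{2}\,\delta$, the ball $\bcal(\tilde Z_1,\tilde R_1)$ contains $(0,0)$, hence intersects $S^E_{\tilde T_1-}$, so this first post-$T$ event successfully adds $\bcal(\tilde Z_1,\tilde R_1)$ to the occupied area. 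Applying Lemma~\ref{lem:properties_sequence_reproduction_events}(ii) inductively, each subsequent $\bcal(\tilde Z_{j+1},\tilde R_{j+1})$ intersects $\bcal(\tilde Z_j,\tilde R_j)\subseteq S^E_{\tilde T_{j+1}-}$, so the whole chain is successful and its union contains the strip $[0,x]\times[-\delta,\delta]$. In particular the segment $\{(s,0):0\le s\le x\}$ is contained in $S^E_{\tilde T_{\lceil x/\delta\rceil}}$, which yields $\bulk{E}{x,0}\le T+\sigma_{x,T}^{(slow)}$.

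No step is genuinely harder than the proof of Lemma~\ref{lem:upper_bound_sigma_x}; the only new ingredient is the independence assertion, and the whole point of restarting the slow-covering chain at time $T$ rather than $0$ is to make this essentially automatic via the product structure of the Poisson point process on $(-\infty,T]$ and $(T,\infty)$. The mildly delicate point is just verifying that the first post-$T$ event in the chain is indeed successful despite $S^E_T$ possibly being very different from $E$; this is why the assumption $(0,0)\in E$ is used in an essential way, ensuring $(0,0)$ remains in the occupied region at all times and can serve as the seed from which the slow chain propagates.
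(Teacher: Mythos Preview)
Your proposal is correct and follows exactly the same approach as the paper: restart the slow-covering chain of Section~\ref{subsec:slow_coverage} using only reproduction events strictly after time~$T$, so that the resulting random variable is independent of the process up to time~$T$. You even supply more detail than the paper does (the paper's proof is a single sentence), including the verification that the first post-$T$ event is successful because it contains the origin and the explicit independence argument via the product structure of the Poisson point process.
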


\begin{proof}
We proceed exactly as before, except that we now only use reproduction events occurring strictly after time~$T$ (in~$\Pi$) to construct~$\sigma_{x,T}^{(slow)}$. 
\end{proof}

The sequence of reproduction events
\[\left( \overrightarrow{T}_{j}, \overrightarrow{Z}_{j}, \overrightarrow{R}_{j} \right)_{j \geq 1}\]
introduced in (\ref{eq:slow_covering_chain}) can also be used to obtain tail estimates on $\bulk{E}{z}$. Indeed, by Lemma~\ref{lem:express_chain_bound}, for all $x > 0$, 
\begin{equation*}
    \sup_{z\in\bcal((x,0),\delta)} \bulk{\{0\}}{z} \leq \overrightarrow{T}\!_{\lceil x/\delta \rceil}. 
\end{equation*}
Therefore, to show Proposition~\ref{prop:tail_tau_x}, subject to translation and rotation, it is sufficient to obtain a tail estimate on $\overrightarrow{T}\!_{\lceil x/\delta \rceil}/x$.

\begin{lem}\label{lem:tail_estimate_slow_covering_chain}
For all $\beta > 3\eta^{-1}\delta^{-2}$ and for all $x > 0$, 
\begin{equation*}
    \proba\left(
\overrightarrow{T}\!_{\lceil x/\delta \rceil} > \beta x
    \right) \leq \exp\left(
-\delta\eta \beta x
    \right). 
\end{equation*}
\end{lem}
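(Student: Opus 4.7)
The statement is a standard Chernoff/Cram\'er-type tail bound for a sum of i.i.d.\ exponential random variables, so my plan is to apply the exponential Markov inequality to $\overrightarrow{T}\!_{\lceil x/\delta\rceil}$, exploiting the structure already identified in Lemma~\ref{lem:properties_sequence_reproduction_events}(i).

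The first step is to decompose
\[\overrightarrow{T}\!_{\lceil x/\delta\rceil} = \sum_{j=1}^{\lceil x/\delta\rceil}\bigl(\overrightarrow{T}\!_j - \overrightarrow{T}\!_{j-1}\bigr)\]
and invoke Lemma~\ref{lem:properties_sequence_reproduction_events}(i), which tells us that the increments are i.i.d.\ exponentials of rate at least $\lambda_0 := 2\delta^2\eta$. By stochastic domination, it suffices to bound $\P(\Gamma > \beta x)$ where $\Gamma$ is a sum of $n := \lceil x/\delta\rceil$ i.i.d.\ $\mathrm{Exp}(\lambda_0)$ random variables, i.e.\ a $\mathrm{Gamma}(n,\lambda_0)$ random variable.

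The second step is the Chernoff bound itself: for any $s \in (0,\lambda_0)$, the moment generating function of an exponential gives
\[\P(\Gamma > \beta x) \leq e^{-s\beta x}\,\E\bigl[e^{s\Gamma}\bigr] = e^{-s\beta x}\left(\frac{\lambda_0}{\lambda_0 - s}\right)^{n}.\]
One then tunes $s$ so that the right-hand side is at most $\exp(-\delta\eta\beta x)$. Taking logarithms, this reduces to verifying
\[(s - \delta\eta)\,\beta x \;\geq\; n\,\log\!\left(\frac{\lambda_0}{\lambda_0 - s}\right).\]
Since the hypothesis $\beta > 3\eta^{-1}\delta^{-2}$ forces $\beta x$ to be comfortably larger than the mean $n/\lambda_0 \leq (x/\delta + 1)/\lambda_0$, there is ample headroom in the linear term on the left to absorb the MGF factor on the right, after using $n \leq x/\delta + 1$.

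The only (mild) obstacle is the final algebraic calibration: one must pick an explicit admissible $s$---for instance, a suitable fixed fraction of $\lambda_0$, or the Chernoff-optimal point $s^* = \lambda_0 - n/(\beta x)$---and push through the resulting elementary numerical inequality using the quantitative lower bound on $\beta$. Beyond this bookkeeping, there is no substantive difficulty; the proof is essentially a one-line Chernoff estimate after the decomposition provided by Lemma~\ref{lem:properties_sequence_reproduction_events}(i).
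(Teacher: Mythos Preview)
Your proposal is correct and takes essentially the same approach as the paper: decompose $\overrightarrow{T}\!_{\lceil x/\delta\rceil}$ into i.i.d.\ increments via Lemma~\ref{lem:properties_sequence_reproduction_events}(i), dominate by a sum of exponentials, and apply a Chernoff bound. The paper rescales to unit-rate exponentials and uses the closed-form estimate $\proba(\sum_{i=1}^n \ecal_i > \alpha n) \le (\alpha e^{1-\alpha})^n$ together with $\alpha e^{1-\alpha} \le e^{-\alpha/2}$ for $\alpha > 6$, whereas you work directly with the MGF of $\mathrm{Exp}(\lambda_0)$ and leave the choice of $s$ open, but these are cosmetic variants of the same Cram\'er--Chernoff computation.
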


\begin{proof}
Let $\beta > 3\eta^{-1}\delta^{-2}$ and $x > 0$. As $\overrightarrow{T}\!_{0} = 0$, we have
\begin{equation*}
    \overrightarrow{T}\!_{\lceil x/\delta \rceil} = \sum_{j = 1}^{\lceil x/\delta \rceil}\left(
\overrightarrow{T}\!_{j} - \overrightarrow{T}\!_{j-1}
    \right). 
\end{equation*}
By Lemma~\ref{lem:properties_sequence_reproduction_events} (i), the random variables
\begin{equation*}
    \left(
\overrightarrow{T}\!_{j} - \overrightarrow{T}\!_{j-1}
    \right)_{j \geq 1}
\end{equation*}
are i.i.d. and exponentially distributed, with rate bounded from below by~$2\delta^{2}\eta$. Therefore, 
\[ \proba\left( \overrightarrow{T}\!_{\lceil x/\delta \rceil} > \beta x \right) = \proba\left( \sum_{j = 1}^{\lceil x/\delta \rceil} \left( \overrightarrow{T}\!_{j} - \overrightarrow{T}\!_{j-1} \right) > \beta x \right)
\leq \proba\left( \sum_{i = 1}^{\lceil x/\delta \rceil} \ecal_{i} > 2\beta x \delta^{2}\eta \right), \]
where $(\ecal_{i})_{i \geq 1}$ are independent exponential random variables with parameter~$1$. A standard Chernoff bound (consider $\esp[e^{\lambda \sum \ecal_{i}}]$ with $\lambda = 1 - 1/\alpha$) gives that for all~$n \in \mathbb{N} \backslash \{0\}$ and~$\alpha > 1$, 
\begin{equation*}
    \proba\left(
\sum_{i = 1}^{n} \ecal_{i} > \alpha n
    \right) \leq \left(
\alpha e^{1 - \alpha}
    \right)^{n}. 
\end{equation*}
It is easy to check that for~$\alpha > 6$, we have $\alpha e^{1-\alpha} \leq e^{-\alpha / 2}$. Thus, taking $n = \lceil x/\delta \rceil$ and $\alpha = 2\beta \delta^{2}\eta$, we obtain
\begin{equation*}
    \proba\left(
\overrightarrow{T}_{\lceil x/\delta \rceil} > \beta x
    \right) \leq \left(
\exp\left(
-\beta \delta^{2} \eta
\right)\right)^{\lceil x/\delta \rceil} \leq \exp\left(
-\beta x \delta \eta
\right), 
\end{equation*}
which completes the proof. 
\end{proof}

We can now prove Proposition~\ref{prop:tail_tau_x}. 

\begin{proof}[Proof of Proposition~\ref{prop:tail_tau_x}]
Since trivially
\[\hit{\{0\}}{z} \le \sup_{z'\in\bcal(z,\delta)} \bulk{\{0\}}{z'},\]
it suffices to prove the first part of the lemma. By invariance under rotation of the distribution of the underlying Poisson point process, we may assume without loss of generality that $z=(x,0)$, and since the result is trivial when $z=0$, we may assume that $x>0$.
Let~$\beta > 3\eta^{-1}\delta^{-2}$. Then by Lemma \ref{lem:express_chain_bound} we have $\sup_{z\in\bcal((x,0),\delta)} \bulk{\{0\}}{z} \leq \overrightarrow{T}\!_{\lceil x/\delta \rceil}$, so by Lemma~\ref{lem:tail_estimate_slow_covering_chain},
\begin{equation*}
\proba\left( \sup_{z'\in\bcal((x,0),\delta)} \bulk{\{0\}}{z'} > \beta x \right) \le \proba\left( \overrightarrow{T}\!_{\lceil x/\delta \rceil} > \beta x \right) \leq e^{ -\delta \eta \beta x } = e^{ - \delta \eta \beta \|z\|}.
\end{equation*}
We conclude by noting that acceptable values for~$\delta$ and $\eta$ (and therefore $\beta$) are entirely determined by~$\mu$. 
\end{proof}

\subsection{Tree representation of chains of reproduction events}\label{subsec:tree_representation}
We now introduce a tree-based representation of the set of chains of reproduction events. Indeed, the tree structure emerges naturally in the following way. Each chain of reproduction events implicitly chooses, each time a reproduction event \emph{could} be included in its chain, whether to include that event or reject it. We thus obtain a binary tree of possible choices, each path of which corresponds to a chain of reproduction events. We now provide the details.

As with the definition of chains of reproduction events, we work with a general Poisson point process $\Xi$ on $\rmath \times \rtwo \times (0,+\infty)$, but in practice we will always use $\Xi=\Pi$ or $\Xi = \Pi^{(t)}$ for some $t$. Given such a $\Xi$ and $z\in\rtwo$, let $\tcal^{[z]}(\Xi)$ be the rooted tree in which each vertex has two descendants, with vertices denoted according to the Ulam-Harris labelling, and each vertex $u$ also has three associated random variables $T_{u}^{[z]}$, $Z_{u}^{[z]}$ and $R_{u}^{[z]}$ introduced below. We recall that in the Ulam-Harris labelling, the root has label $\emptyset$, and the other vertices have labels formed by strings of $1$s and $2$s, with e.g.~vertex $12$ representing the second child of the first child of the root. For a vertex $u$, we write $u\cdot 1$ and $u\cdot 2$ for its children (so the $\cdot$ notation represents the concatenation of strings); and $\overleftarrow{u}$ for its parent. Moreover, we write $v \leq u$ if $v$ is an ancestor of~$u$, and $v < u$ if $v$ is a strict ancestor of~$u$ (i.e.~$v \neq u$).

To each vertex $u \in \tcal^{[z]}(\Xi)$, we associate three random variables $T_{u}^{[z]}$, $Z_{u}^{[z]}$ and $R_{u}^{[z]}$ recursively as follows. First we set $Z_{\emptyset}^{[z]} = z$, $R_{\emptyset}^{[z]} = 0$ and
\[T_{\emptyset}^{[z]} := \min\left\{ t \ge 0 : \exists (z,r) \in \rtwo \times (0,+\infty) \text{ with } (t,z,r) \in \Xi \text{ and } Z_{\emptyset}^{[z]} \in \bcal(z,r) \right\}.\]
The pair $(z,r)$ is unique a.s. (and will always be unique on \eqref{eqn:condition_on_pi}), which allows us to set
\[ \left( Z_{1}^{[z]}, R_{1}^{[z]} \right) = \left( Z_{\emptyset}^{[z]}, R_{\emptyset}^{[z]} \right) \text{ and } \left( Z_{2}^{[z]}, R_{2}^{[z]} \right) = (z,r);\]
in other words, the first child of the root rejects the first reproduction event, and the second child accepts it.
Then, for all $u \in \tcal^{[z]}(\Xi) \backslash \{\emptyset\}$, we set
\begin{multline*}
    T_{u}^{[z]} := \min\left\{ t > 0 : \exists (z,r) \in \rtwo \times (0,+\infty) \text{ with } \left( t + \sum_{\emptyset \leq v < u} T_{v}^{[z]}, z, r \right) \in \Xi \right. \\
\left.
\phantom{\sum_{\emptyset \leq v < u} T_{v}^{[z]}} \text{ and } \bcal\left( Z_{u}^{[z]}, R_{u}^{[z]} \right) \cap \bcal(z,r) \neq \emptyset
    \right\}. 
\end{multline*}
Moreover, we then set
\[ \left( Z_{u \cdot 1}^{[z]}, R_{u \cdot 1}^{[z]} \right) = \left( Z_{u}^{[z]}, R_{u}^{[z]} \right) \text{ and } \left( Z_{u \cdot 2}^{[z]}, R_{u \cdot 2}^{[z]} \right) = (z,r),\]
where $(z,r)$ is the almost surely unique pair appearing in the definition of $T_{u}^{[z]}$ above.

We note here that this tree does not satisfy the ``branching property'' that a subtree rooted at a vertex $u$ is independent of the rest of the tree given the values of $T_{u}^{[z]}$, $Z_{u}^{[z]}$ and $R_{u}^{[z]}$. Indeed, each reproduction event can affect several different parts of the tree simultaneously, since the corresponding ball $\bcal(z,r)$ can intersect several previous reproduction events. Nevertheless, the tree representation will be useful to formalise the discussion of geodesics in the following section. Of particular interest is the tree $\tcal = \tcal^{[(0,0)]}(\Pi)$.

\section{Fluctuations of geodesics: precise statements and proofs of Theorems \ref{thm:geodesics} and \ref{thm:geodesic_plane_to_point}}\label{sec:geodesics}

\subsection{Definition of geodesics from point to set and precise statement of Theorem \ref{thm:geodesics}}

We begin with Theorem \ref{thm:geodesics}, leaving the more delicate (but shorter) Theorem \ref{thm:geodesic_plane_to_point} for Section \ref{sec:geodesics_plane_to_point}. For now, we focus on the $\infty$-parent SLFV $(S^E_{t})_{t \geq 0}$ when started from a singleton $z$, usually the origin, i.e.~$E=\{(0,0)\}$. Our first goal is to formalise the statement of Theorem~\ref{thm:geodesics}. Recall that for $A\subset\rtwo$, $\hit{E}{A}$ is the first hitting time of the set $A$,
\[\hit{E}{A} := \inf\left\{ t \geq 0 : A\cap S^E_{t}\neq\emptyset \right\}.\]
Note that when we start our process from a singleton, e.g.~$E=\{(0,0)\}$, the infimum above is in fact a minimum, as a consequence of the fact that the process jumps at a finite rate. Indeed, its jump rate is bounded from above by that of a Yule process in which each particle splits in two at a rate proportional to $\text{M}_0$, where $\text{M}_0$ was defined in \eqref{cond:finite_rate}. Moreover, in this case, by Corollary \ref{cor:forwards_St}, for all $t \geq 0$, if $S_{t}^{\{z\}} \neq S_{t-}^{\{z\}}$, then a.s. there exists  $z' \in \rtwo$ and $r' > 0$ such that $(t,z',r') \in \Pi$ and
\begin{equation}\label{eqn:one_endpoint}
    S_{t}^{\{z\}} =  S_{t-}^{\{z\}} \cup \bcal(z,r). 
\end{equation}
We will particularly focus on the case when $A$ is the half-plane of points to the right of $x$,
\[ \mathcal{H}^{x} := \left\{ (x',y') \in \rtwo : x' \geq x \right\}.\]

We now define the concept of geodesics from a point $z\in\rtwo$ to a set~$A\subseteq\rtwo$ for the $\infty$-parent SLFV.

\begin{definition}\label{defn:geodesics}
For all $z\in\rtwo$ and $A\subseteq\rtwo$, a $\Pi$-\textbf{geodesic} from~$z$ to~$A$ is a chain of $\Pi$-reproduction events
\[\mathbf{C}^{[G]} = \left( Z_{t}^{[G]}, R_{t}^{[G]} \right)_{t\ge 0} \in \ccal^{(z)}(\Pi)\]
such that
\[\bcal\left( Z_{\hit{\{z\}}{A}}^{[G]}, R_{\hit{\{z\}}{A}}^{[G]} \right) \cap A \neq \emptyset.\]
For all~$x \geq 0$, let $\gcal^{x}(\Pi)$ be the set of all $\Pi$-geodesics from~$(0,0)$ to~$\mathcal{H}^{x}$. 
\end{definition}
Notice that by (\ref{eqn:one_endpoint}), a.s. all the $\Pi$-geodesics from~$z$ to~$A$ have the same endpoint at time $\hit{\{z\}}{A}$. That is, if~$\mathbf{C}^{[G],1}$ and~$\mathbf{C}^{[G],2}$ are two such $\Pi$-geodesics, then a.s.
\begin{equation*}
    \mathbf{C}_{\hit{\{z\}}{A}}^{[G],1} = \mathbf{C}_{\hit{\{z\}}{A}}^{[G],2}. 
\end{equation*}

For any $L > 0$, we define the strip of radius $L$ about the $x$-axis
\[ \lcal^{(L)} :=\left\{ (x,y) \in \rtwo : |y| \leq L \right\}. \]
We can now state the precise version of Theorem~\ref{thm:geodesics}.

\theoremstyle{plain}
\newtheorem*{geodesics}{Theorem~\ref{thm:geodesics} (precise statement)}
\begin{geodesics}
(i) (There is a geodesic that does not fluctuate more than $\sqrt x$ with high probability.) For all $\eps > 0$, there exists $A_{\eps} \in(0,\infty)$ such that for all $x > 1$, 
\begin{equation*}
\proba\left( \exists \mathbf{C}^{[G]} \in \gcal^{x}(\Pi) : \bcal\left( Z_{s}^{[G]}, R_{s}^{[G]} \right) \subseteq \lcal^{(A_{\eps}\sqrt{x})} \,\, \forall s \in \left[ 0,\hit{\{0\}}{\mathcal{H}^{x}} \right]\right) \geq 1-\eps . 
\end{equation*}

\noindent (ii) (Tail bounds on fluctuations of larger order.) For all $\delta>0$ and $\beta\in(1/2,1]$, there exists $c>0$ such that for all sufficiently large $x$,
\begin{equation*}
\proba\left( \exists \mathbf{C}^{[G]} \in \gcal^{x}(\Pi) : \bcal\left( Z_{s}^{[G]}, R_{s}^{[G]} \right) \subseteq \lcal^{(\delta x^\beta)} \,\, \forall s \in \left[ 0,\hit{\{0\}}{\mathcal{H}^{x}} \right]\right) \geq 1-\exp(-cx^{2\beta-1}).
\end{equation*}

\noindent (iii) (Endpoints of geodesics are on the order of $\sqrt x$ with high probability.) For all $\eps>0$, there exists $a_\eps>0$ such that for all $x$ large enough,
\begin{equation*}
\proba\left( \forall \mathbf{C}^{[G]} \in \gcal^{x}(\Pi),\,\, \bcal\left( Z_{\hit{\{0\}}{\mathcal{H}^{x}}}^{[G]}, R_{\hit{\{0\}}{\mathcal{H}^{x}}}^{[G]} \right) \nsubseteq \lcal^{(a_\eps\sqrt{x})} \right) \geq 1-\eps. 
\end{equation*}

\end{geodesics}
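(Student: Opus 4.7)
The three parts split naturally into two upper bounds on transverse geodesic fluctuations (parts (i) and (ii)) and a matching lower bound on the endpoint (part (iii)). My plan for the upper bounds is to exploit the symmetry of the Poisson point process~$\Pi$ under the reflection $y\mapsto -y$, which makes the $y$-coordinate of the centre of a chain of reproduction events into a martingale with uniformly bounded increments; combined with the linear upper bound on $\hit{\{0\}}{\hcal^x}$ from Proposition~\ref{prop:tail_tau_x}, this yields the desired $\sqrt{x}$-scale control via Doob's inequality. The plan for part (iii) is to argue that the endpoint's $y$-coordinate is morally the argmin of the random profile $y\mapsto\hit{\{0\}}{(x,y)}$, and that the parabolic mean profile, combined with fluctuations of order $\sqrt{x}$, forces this argmin to lie at distance at least $\sqrt{x}$ from the axis with positive probability.

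\textbf{Upper bounds (parts (i) and (ii)).} Let $\mathbf{C}=(Z_t,R_t)_{t\ge0}$ be any chain of $\Pi$-reproduction events, with successive accepted events $(t_n,z_n,r_n)_{n\ge1}$; write $Y_n$ for the $y$-coordinate of~$z_n$. Since the next accepted event conditioned on $(z_n,r_n)$ has its centre $\mu$-weighted over a region symmetric under reflection of $z_{n+1}-z_n$ across the $x$-axis, $(Y_n)_n$ is a martingale with increments bounded by~$2\text{R}_0$. Moreover, the number of accepted events along any chain up to time~$T$ is stochastically dominated by $\text{Poisson}(\text{M}_0 T)$. Doob's $L^2$ maximal inequality then gives part~(i), and Azuma--Hoeffding (in its sub-Gaussian form) handles the exponent $2\beta-1$ in part~(ii), since a displacement of $\delta x^\beta$ in $O(x)$ jumps of bounded size occurs with probability at most $\exp(-c x^{2\beta-1})$. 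The remaining step is to pass from a generic chain to an actual \emph{geodesic}: since all geodesics share the same final ball~$(Z^*,R^*)$ at time $\tau:=\hit{\{0\}}{\hcal^x}$, it suffices to construct \emph{one} chain reaching~$(Z^*,R^*)$ by time~$\tau$ whose trajectory lies in~$\lcal^{(A_\eps\sqrt{x})}$. I plan to do this by running the ancestral skeleton $B^{(\tau,z)}_{\cdot}$ of Definition~\ref{defn:infty_parent_ancestral_skeleton} backwards from $(Z^*,R^*)$: by Lemma~\ref{lem:equivalence_dual_chain}, the backward skeleton decomposes into chains of $\Pi^{(\tau)}$-reproduction events, to which the same martingale argument applies (up to a time reversal).

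\textbf{Lower bound (part (iii)).} Since the endpoint ball $(Z^*,R^*)$ at time~$\tau$ is a.s.\ unique, the statement reduces to showing $|Y^*|\ge a_\eps\sqrt{x}$ with probability at least $1-\eps$, where $Y^*$ is the $y$-coordinate of $Z^*$. The first ingredient I would establish is a \emph{parabolic} lower bound on the expected hitting time,
\[\esp\bigl[\hit{\{0\}}{(x,y)}\bigr]\ge \nu\sqrt{x^2+y^2}+o(x),\]
which follows from the directional convergence in Theorem~\ref{thm:hitting_times} together with the shape theorem~\ref{thm:ball_shape}. The second ingredient is a lower bound of ballistic type, $\hit{\{0\}}{(x,0)}\ge\nu x -C\sqrt{x}$, which holds with overwhelming probability. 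Combining the two, on a non-vanishing event we have simultaneously $\hit{\{0\}}{(x,0)}\ge\nu x-C\sqrt{x}$ and, for some~$|y|\in[a_\eps\sqrt{x},2a_\eps\sqrt{x}]$, $\hit{\{0\}}{(x,y)}<\hit{\{0\}}{(x,0)}$; consequently the argmin of~$y\mapsto \hit{\{0\}}{(x,y)}$ cannot lie in $\lcal^{(a_\eps\sqrt{x})}$, and so no geodesic endpoint can either. A covering/union-bound argument across $\sqrt{x}$-width slices should promote this to probability~$\ge 1-\eps$.

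\textbf{Main obstacle.} The principal difficulty lies in making (iii) quantitative. The heuristic that the argmin of a noisy parabola with $\sqrt{x}$-scale fluctuations sits at distance~$\sqrt{x}$ from the origin is compelling, but it requires (a) quantitative estimates on the speed of convergence in Theorem~\ref{thm:hitting_times} that are uniform over a $\sqrt{x}$-window of $y$-values, and (b) a lower bound on the variability of $\hit{\{0\}}{(x,y)}$ across~$y$, sufficient to overcome the parabolic curvature $\nu y^2/(2x)\sim O(1)$ for $|y|\sim\sqrt{x}$. A secondary technicality for parts (i) and (ii) is ensuring that the backwards chain extracted from the ancestral skeleton can be selected measurably---for instance, by always choosing the latest compatible reproduction event at each backwards step---so that the martingale argument goes through for that specific chain, which is itself the choice of geodesic the statement requires.
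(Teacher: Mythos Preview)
Your plan for (i) and (ii) captures the right mechanism --- a martingale/Doob argument on the $y$-coordinate --- but the step you flag as a mere ``measurability technicality'' is in fact the crux. The assertion that ``$(Y_n)_n$ is a martingale'' holds for a chain whose accept/reject decisions are adapted to the forward filtration of $\Pi$; a geodesic is \emph{not} such a chain, since it is selected on the event of being first to reach $\hcal^x$, and this conditioning destroys the conditional symmetry of the next $y$-increment. Your proposed fix, a Markovian backward rule from the endpoint (``always choose the latest compatible event''), does restore the martingale property, but that chain has no reason to reach the origin by time $\tau$ and therefore need not be a geodesic. The paper's resolution is different: it defines a \emph{random geodesic} by descending the geodesic subtree $\tcal(x)$ with independent fair coin flips at each branch point, and then observes that the joint law of its $y$-increments is invariant under flipping any subset of their signs. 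Conditioning on the entire tree and on the chosen path, and introducing i.i.d.\ signs $B_i\in\{\pm1\}$, makes $\sum_{i\le j}B_i\,\Delta^Y_{\Theta_i^{(x)}}$ a genuine martingale in an enlarged filtration, to which Doob's $L^2$ inequality (for (i)) and an exponential submartingale bound (for (ii)) then apply. The same conditioning issue affects your jump-count bound: domination by $\mathrm{Poisson}(\text{M}_0 T)$ holds for Markovian chains, not for conditioned ones; the paper instead bounds the depth of the \emph{whole} tree (Lemma~\ref{lem:moment_lemma_1}) and combines this with the tail of $\hit{\{0\}}{\hcal^x}$.

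For (iii) your approach departs substantially from the paper's and, as you partly anticipate, does not close. First a logical gap: exhibiting some $|y_0|\in[a_\eps\sqrt{x},2a_\eps\sqrt{x}]$ with $\hit{\{0\}}{x,y_0}<\hit{\{0\}}{x,0}$ only shows the argmin is not at $y=0$; it does not exclude the argmin lying at some other $|y|<a_\eps\sqrt{x}$. Second, the quantitative lower bound you need on the variability of $y\mapsto\hit{\{0\}}{x,y}$ over a $\sqrt{x}$-window is not supplied by Theorems~\ref{thm:hitting_times} or~\ref{thm:ball_shape}, which give only law-of-large-numbers control and no fluctuation lower bounds. The paper instead reuses the sign-flipping device: writing $\text{Y}(x)=\sum_i B_i\,\Delta^Y_{\Theta_i^{(x)}}$ conditionally on the tree, Paley--Zygmund yields $\P\bigl(\text{Y}(x)^2<\eps\sum_i(\Delta^Y_{\Theta_i^{(x)}})^2\bigr)\le 2\eps$, because the conditional fourth moment equals the square of the second. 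It then shows $\sum_i(\Delta^Y_{\Theta_i^{(x)}})^2\ge\alpha x$ with probability exponentially close to $1$ via a union bound over \emph{all} paths in the tree of length $O(x)$, ruling out any path with too many sub-threshold $y$-increments (Lemmas~\ref{lem:prob_y_small}--\ref{lem:simple_square_lower_bound}).
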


We note again here that we expect that in fact all geodesics from $0$ to $\hcal^x$ have fluctuations of order $\sqrt x$, but we do not currently have a bound on how
many such geodesics there are (in general there will be more than one) and therefore parts (i) and (ii) do not rule out that there may be ``unusual'' geodesics that fluctuate on a larger scale. However, all geodesics from $0$  to $\hcal^x$ end at the same point, so this issue does not arise with part (iii).

We will prove parts (i) and (ii) of this theorem in Section \ref{subsec:upper_bound_displacement} and part (iii) in Section \ref{subsubsec:lower_bound_displ}.

\subsection{Link with the tree representation: the random geodesic path}\label{subsec:random_geodesic}

One complication in our model is that there may be (in fact, we often expect there to be) many $\Pi$-geodesics from $0$ to the half-plane $\hcal^{x}$ for $x>0$. It will be useful for us to identify one such $\Pi$-geodesic, which we choose at random in a natural way. We recall the tree representation from Section \ref{subsec:tree_representation}, and in particular recall that $\tcal = \tcal^{[0]}(\Pi)$ is the tree started from the origin and built using the Poisson point process $\Pi$.

We write $(T_u,Z_u,R_u)$ for the random variables associated to vertex $u\in\tcal$, and write $Z_u = (X_u,Y_u)$. For each vertex $u$ except the root, we introduce the two $\rmath$-valued random variables $\Delta^X_{u} = X_{u}-X_{\overleftarrow u}$ and $\Delta^Y_{u} = Y_{u}-Y_{\overleftarrow u}$, the increments of $u$ relative to its parent in the $x$- and $y$-directions respectively. We also let $\Gamma_u = \sum_{v<u}T_v$ be the ``birth time'' of $u$.

Let $\tcal(x)$ be the sub-tree of $\tcal$ constructed by keeping only paths which are in $\mathcal{H}^{x}$ at all times $t \geq \hit{\{0\}}{ \mathcal{H}^{x}}$. That is, for all $u \in \tcal(x)$, 
\begin{itemize}
    \item If $\Gamma_u \geq \hit{\{0\}}{ \mathcal{H}^{x}}$ (that is, if $u$ is born at or after time $\hit{\{0\}}{ \mathcal{H}^{x}}$), then $X_u + R_{u} \geq x$.
    \item If $\Gamma_u < \hit{\{0\}}{ \mathcal{H}^{x}}$ (that is, if $u$ is born before time $\hit{\{0\}}{ \mathcal{H}^{x}}$), then there exists a path within $\tcal(x)$ starting from $u$ and leading to some $u' \in \tcal$ born at time $\hit{\{0\}}{ \mathcal{H}^{x}}$ and satisfying $X_{u'}+R_{u'} \geq x$.
\end{itemize}
In other words, all paths in $\tcal(x)$ contain particles that were the first to hit the half-plane $\mathcal{H}^x$. We now specify our choice of geodesic.

\begin{definition}[Random geodesic path]
For all $x > 0$, the random geodesic path from $0$ to $\hcal^x$, $(\Theta_{n}^{(x)})_{n \geq 0}$, is the random path over $\tcal$ started from the root and such that for all $n \geq 0$:
\begin{itemize}
    \item If $\Theta_{n}^{\ancx} \cdot 1 \notin \tcal(x)$ (resp. $\Theta_{n}^{\ancx} \cdot 2 \notin \tcal(x)$), then $\Theta_{n+1}^{\ancx} = \Theta_{n}^{\ancx}\cdot 2$ (resp. $\Theta_{n}^{\ancx} \cdot 1$). 
    \item Otherwise, both $\Theta_{n}^{\ancx} \cdot 1$ and $\Theta_{n}^{\ancx} \cdot 2$ are in $\tcal(x)$, and $\Theta_{n+1}^{\ancx} = \Theta_{n}^{\ancx} \cdot 1$ (resp. $\Theta_{n}^{\ancx} \cdot 2$) with probability $1/2$. 
\end{itemize}
\end{definition}

\noindent
This path is well-defined, as the root is in $\tcal(x)$, and every vertex in $\tcal(x)$ has at least one child in $\tcal(x)$. As its name suggests, it also characterises a $\Pi$-geodesic from $0$ to $\hcal^x$. For each $j\ge 0$, define
\[Z^{[x]}_t := Z_{\Theta^{(x)}_j} \,\,\,\,\text{ and } R^{[x]}_t := R_{\Theta^{(x)}_j} \,\,\,\,\text{ for all } t\in\Big[\Gamma_{\Theta^{(x)}_j},\Gamma_{\Theta^{(x)}_{j+1}}\Big),\]
where we recall that $\Gamma_{u}$, $u \in \tcal$ is the birth time of~$u$, 
and let $C^{[x]} = (Z^{[x]}_t, R^{[x]}_t)_{t\ge 0}$. We call $C^{[x]}$ the \emph{random geodesic} from $0$ to $\hcal^x$.

\begin{lem}\label{lem:random_geodesic_is_geodesic}
For any $x>0$, $C^{[x]}\in\gcal^{x}(\Pi)$. In words, the random geodesic from $0$ to $\hcal^x$ is indeed a $\Pi$-geodesic from $0$ to $\hcal^x$.
\end{lem}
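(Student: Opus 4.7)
The plan is to verify the two conditions in Definition~\ref{defn:geodesics}: (a) $C^{[x]} \in \ccal^{(0)}(\Pi)$, and (b) $\bcal(Z^{[x]}_{\tau}, R^{[x]}_{\tau}) \cap \hcal^x \neq \emptyset$, where I abbreviate $\tau := \hit{\{0\}}{\hcal^x}$.

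Part (a) is essentially a direct transcription of the tree construction into the chain vocabulary. Since $\Theta^{(x)}_0 = \emptyset$ and $(Z_\emptyset, R_\emptyset) = (0,0)$, we have $\mathbf{C}^{[x]}_0 = (0,0)$. The value $(Z^{[x]}_t, R^{[x]}_t)$ only changes at times $\Gamma_{\Theta^{(x)}_j}$ where $\Theta^{(x)}_j = \Theta^{(x)}_{j-1}\cdot 2$ (the \emph{accept} children), and at such a time the construction of $\tcal$ in Section~\ref{subsec:tree_representation} yields exactly the data required by Definition~\ref{defn:chain_reproduction_events}: $(\Gamma_{\Theta^{(x)}_j}, Z_{\Theta^{(x)}_j}, R_{\Theta^{(x)}_j}) \in \Pi$ and $\bcal(Z_{\Theta^{(x)}_j}, R_{\Theta^{(x)}_j}) \cap \bcal(Z_{\Theta^{(x)}_{j-1}}, R_{\Theta^{(x)}_{j-1}}) \neq \emptyset$. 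At the \emph{reject} steps the associated pair is unchanged and the chain makes no jump. Hence $C^{[x]}\in\ccal^{(0)}(\Pi)$.

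For part (b) I would first note that the path $(\Theta^{(x)}_n)_{n\ge 0}$ remains in $\tcal(x)$ throughout, which is immediate from the construction together with the fact that every vertex of $\tcal(x)$ has at least one child in $\tcal(x)$. Next, conditional on the history up to step $n$, the waiting time $T_{\Theta^{(x)}_n}$ is exponential with rate bounded above by $\text{M}_0$, and successive waiting times along the path correspond to disjoint time intervals of $\Pi$; a standard Borel--Cantelli / strong law argument therefore gives $\Gamma_{\Theta^{(x)}_n}\to+\infty$ almost surely. Define $n_0 := \min\{n\ge 0 : \Gamma_{\Theta^{(x)}_n}\ge \tau\}$, which is then a.s.\ finite.

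The crux is to show that the path does not overshoot, i.e.\ $\Gamma_{\Theta^{(x)}_{n_0}} = \tau$. Since $\Theta^{(x)}_{n_0-1}\in\tcal(x)$ has birth time strictly less than $\tau$, the recursive clause in the definition of $\tcal(x)$ supplies a descendant $v$ with $\Gamma_v = \tau$. The first vertex on the path from $\Theta^{(x)}_{n_0-1}$ down to $v$ is one of the two children of $\Theta^{(x)}_{n_0-1}$, so it has birth time $\Gamma_{\Theta^{(x)}_{n_0-1}} + T_{\Theta^{(x)}_{n_0-1}} \le \Gamma_v = \tau$. But this quantity equals $\Gamma_{\Theta^{(x)}_{n_0}}$ regardless of which of the two children the random path actually selects, so $\Gamma_{\Theta^{(x)}_{n_0}} \le \tau$, and equality follows from the definition of $n_0$. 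Applying the first clause of the definition of $\tcal(x)$ to $\Theta^{(x)}_{n_0}\in\tcal(x)$ yields $X_{\Theta^{(x)}_{n_0}} + R_{\Theta^{(x)}_{n_0}} \ge x$, and since $\mathbf{C}^{[x]}_\tau = (Z_{\Theta^{(x)}_{n_0}}, R_{\Theta^{(x)}_{n_0}})$ by the construction of $C^{[x]}$, we conclude $\bcal(Z^{[x]}_\tau, R^{[x]}_\tau) \cap \hcal^x \neq \emptyset$. The overshoot argument is the only genuinely delicate point; everything else is a routine unpacking of definitions.
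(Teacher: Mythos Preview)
Your proof is correct and follows the same approach as the paper: first observe that the tree construction makes $C^{[x]}$ a chain of $\Pi$-reproduction events, then use the definition of $\tcal(x)$ to locate a vertex $\Theta^{(x)}_{n_0}$ on the random path with $\Gamma_{\Theta^{(x)}_{n_0}} = \hit{\{0\}}{\hcal^x}$ and $X_{\Theta^{(x)}_{n_0}} + R_{\Theta^{(x)}_{n_0}} \ge x$. The paper treats the existence of such a vertex as immediate from the definition of $\tcal(x)$, whereas you spell out the ``no overshoot'' step explicitly via the observation that both children of $\Theta^{(x)}_{n_0-1}$ share the same birth time --- a welcome clarification, but not a different argument.
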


\begin{proof}
Since the random geodesic path consists of a path of vertices in $\tcal$, it is immediate that $C^{[x]}$ is a chain of reproduction events. Moreover, since it is a path in $\tcal(x)$, it must contain a vertex $u$ satisfying $\Gamma_u = \hit{\{0\}}{\hcal^x}$ and $X_u+R_u \ge x$. In other words, there exists $n$ such that $\Gamma_{\Theta^{(x)}_n} = \hit{\{0\}}{\hcal^x}$ and
\[\bcal\left(Z_{\Theta^{(x)}_n}, R_{\Theta^{(x)}_n}\right)\cap\hcal^x \neq \emptyset.\]
By the definition of $C^{[x]}$ we thus have
\[\bcal\left(Z^{[x]}_t, R^{[x]}_t\right)\cap\hcal^x \neq \emptyset \,\,\,\,\text{ for all } t\in\Big[\Gamma_{\Theta^{(x)}_n},\Gamma_{\Theta^{(x)}_{n+1}}\Big),\]
and in particular for $t=\Gamma_{\Theta^{(x)}_n} = \hit{\{0\}}{\hcal^x}$. We deduce that $C^{[x]}$ is a $\Pi$-geodesic from $0$ to $\hcal^x$ and the proof is complete. 
\end{proof}

The first two parts of Theorem \ref{thm:geodesics} state the \emph{existence} of a $\Pi$-geodesic with certain properties. It will therefore suffice to show that the random geodesic defined above has these properties. The last part of Theorem \ref{thm:geodesics} concerns the endpoint of all $\Pi$-geodesics from $0$ to the half-plane $\hcal^{x}$ at time $\hit{\{0\}}{\hcal^x}$; but all $\Pi$-geodesics from $0$ to $\hcal^{x}$ share the same endpoint at time $\hit{\{0\}}{\hcal^x}$, so again it will suffice to prove a statement about the endpoint of the random geodesic at time $\hit{\{0\}}{\hcal^x}$.

\subsection{\texorpdfstring{Control of the number of jumps before the random geodesic hits $\hcal^x$}{Control of the number of jumps before the random geodesic hits the half-plane}}

In order to control the fluctuations in the $y$-direction of the random geodesic from $0$ to $\hcal^x$, we need to control the number of jumps of $(\Theta_{n}^{(x)})_{n \geq 0}$. As a first step, we show the following technical lemma, which gives an upper bound on the probability that~$\tcal$ contains a path making more than~$\theta t$ jumps, for $\theta > 0$, before time~$t$. 

For $n \in \nmath$, let $\tcal_{n}$ be the sub-tree of~$\tcal$ containing only the~$n$ first generations. We recall the definition of $\text{M}_0$ from \eqref{cond:finite_rate}; $\text{M}_0$ is the maximum rate at which an individual vertex in the tree is affected by new reproduction events.

\begin{lem}\label{lem:moment_lemma_1}
For any~$t > 0$ and~$\theta > \text{M}_0$, 
\begin{equation*}
\proba\left(
\exists u \in \tcal \backslash \tcal_{\lceil \theta t \rceil - 1} : \Gamma_{u} \leq t
\right) \leq 2\exp\left(
- \theta t \left(
\log\left(\frac{\theta}{2\text{M}_0}\right) - 1 + \frac{\text{M}_0}{\theta}
\right)\right).
\end{equation*}
In particular, taking $\theta = 2e\text{M}_0$, we have that for all $t>0$,
\begin{equation*}
\proba\left(
\exists u \in \tcal \backslash \tcal_{\lceil 2e\text{M}_0 t \rceil -1} : \Gamma_{u} \leq t
\right) \leq 2e^{-\text{M}_0 t}. 
\end{equation*}
\end{lem}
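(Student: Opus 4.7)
The plan is to apply a union bound over the $2^n$ vertices at depth $n := \lceil \theta t \rceil$ in $\tcal$, combine this with a tail estimate on the birth time $\Gamma_u$ of a single such vertex, and conclude via a Chernoff bound for the Poisson distribution.

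First I would reduce the event to the existence of a depth-$n$ vertex with birth time at most $t$: since $\Gamma$ is non-decreasing along paths from the root, any $u \in \tcal \setminus \tcal_{n-1}$ with $\Gamma_u \leq t$ has a depth-$n$ ancestor with the same property. As $\tcal$ is a full binary tree by construction, there are exactly $2^n$ vertices at depth $n$, so a union bound reduces the problem to estimating $\proba(\Gamma_u \leq t)$ for a fixed depth-$n$ vertex $u$.

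For such a fixed $u$, I would write $\Gamma_u = \sum_{v<u} T_v$ as a sum of $n$ waiting times along the ancestral chain, and exploit the key observation that, conditional on the history up to the birth of $v$, the time $T_v$ is exponentially distributed with rate equal to the $\mu$-weighted Lebesgue intensity of reproduction events whose ball intersects $\bcal(Z_v, R_v)$; since $R_v \leq \text{R}_0$, this rate is bounded above by $\text{M}_0$. An iterative coupling along the ancestral chain then shows that $\Gamma_u$ stochastically dominates a sum of $n$ iid $\text{Exp}(\text{M}_0)$ variables, so that $\proba(\Gamma_u \leq t) \leq \proba(N_t \geq n)$, where $N_t \sim \text{Poisson}(\text{M}_0 t)$.

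To finish, I would apply the classical Chernoff estimate $\proba(N_t \geq n) \leq e^{-\text{M}_0 t}(e\text{M}_0 t/n)^n$, valid once $n > \text{M}_0 t$ (which holds for $\theta > \text{M}_0$ and $n = \lceil \theta t \rceil$). Combining with $2^{\lceil \theta t \rceil} \leq 2 \cdot 2^{\theta t}$ and using that $n \mapsto n \log(e\text{M}_0 t/n)$ is decreasing for $n > \text{M}_0 t$ (so one may replace $n$ by the lower bound $\theta t$ inside that expression), the algebra collapses to the announced exponent $-\theta t\bigl[\log(\theta/(2\text{M}_0)) - 1 + \text{M}_0/\theta\bigr]$. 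The second part of the lemma is then obtained by the substitution $\theta = 2e\text{M}_0$: one has $\log(\theta/(2\text{M}_0)) = 1$ and $\text{M}_0/\theta = 1/(2e)$, so the bracket equals $1/(2e)$ and the exponent reduces to $-\text{M}_0 t$.

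The main obstacle, though modest, is the stochastic dominance step. The rates of the successive $T_v$'s along a root-to-leaf path are random and depend on which reproduction events the ancestors have accepted or rejected, so they are not iid $\text{Exp}(\text{M}_0)$. I would handle this via a successive Markovian coupling, drawing auxiliary iid $\text{Exp}(\text{M}_0)$ variables $\xi_i$ conditionally on the filtration at the birth of the $i$-th ancestor so that $T_{v_i} \geq \xi_i$ almost surely, with independence of the $\xi_i$'s ensured by introducing extra independent randomness at each step. Once this is in place, the remaining steps are routine applications of a union bound and a classical Poisson tail estimate.
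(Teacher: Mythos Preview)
Your proposal is correct and follows essentially the same approach as the paper: reduce to depth-$\lceil\theta t\rceil$ vertices, take a union bound over the $2^{\lceil\theta t\rceil}$ of them, bound $\proba(\Gamma_u\le t)$ by comparing $\sum_{v<u}T_v$ to a sum of i.i.d.\ $\mathrm{Exp}(\text{M}_0)$ variables and hence to a Poisson tail, and finish with a Chernoff bound. The paper is slightly terser about the stochastic domination step (it simply asserts it), whereas you spell out the Markovian coupling; otherwise the arguments are the same.
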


\begin{proof}
Let $\theta, t > 0$. We have
\begin{align*}
\proba\left(
\exists u \in \tcal \backslash \tcal_{\lceil \theta t \rceil - 1}: \Gamma_{u} \leq t
\right) &= \proba\left(
\exists u \in \tcal, |u| = \lceil \theta t \rceil \text{ and } \Gamma_{u} \leq t
\right) \\
&\leq \sum_{|u| = \lceil \theta t \rceil} \proba\left(
\Gamma_{u} \leq t
\right) \\
&= \sum_{|u| = \lceil \theta t \rceil} \proba\left(
\sum_{v < u} T_{v} \leq t
\right). 
\end{align*}
For all $v < u$, $T_{v}$ is the first time at which $\bcal(Z_{v},R_{v})$ is affected by a reproduction event. By \eqref{cond:finite_rate}, $\sum_{v < u} T_{v}$ is stochastically bounded from below by a sum of $\lceil \theta t \rceil$ exponential random variables with parameter $\text{M}_0$. Let $(\ecal_{n})_{n \geq 1}$ be i.i.d. exponential random variables with parameter $\text{M}_0$, and let $\pcal_{t}$ be an independent Poisson random variable with parameter $t\text{M}_0$. Then, using the above observation, 
\[ \proba\left( \exists u \in \tcal \backslash \tcal_{\lceil \theta t \rceil - 1} : \Gamma_{u} \leq t \right) \leq 2^{\lceil \theta t \rceil} \proba\left( \sum_{n = 1}^{\lceil \theta t \rceil} \mathcal{E}_{n} \leq t \right) \leq 2^{\lceil \theta t \rceil} \proba\left( \pcal_{t} \geq \theta t \right).\]
By a standard Chernoff bound, using
\begin{equation*}
    \proba\left(
\pcal_{t} \geq \theta t
    \right) \leq \esp\left[ 
e^{\mu \pcal_{t}}
    \right] e^{-\mu \theta t}
\end{equation*}
with $\mu = \log(\theta / \text{M}_0)$, if $\theta > \text{M}_0$, then 
\begin{align*}
\proba\left(
\exists u \in \tcal \backslash \tcal_{\lceil \theta t \rceil - 1} : \Gamma_{u} \leq t
\right) &\leq 2 \exp\left(
\theta t \log(2) - \theta t \log\left(
\frac{\theta}{\text{M}_0}
\right)\right) \exp\left(
t \text{M}_0 \left(
\frac{\theta}{\text{M}_0} - A
\right)\right) \\
&= 2 \exp\left(
-\theta t \left(
\log\left(
\frac{\theta}{2\text{M}_0}
\right)-1 + \frac{\text{M}_0}{\theta}
\right)\right). 
\end{align*}
The last part of the lemma follows by taking $\theta = 2\text{M}_0 e$. 
\end{proof}

We can use Lemma~\ref{lem:moment_lemma_1} to obtain tail estimates for the number of jumps of the random geodesic before it hits $\hcal^x$. Recalling that $\Gamma_u$ is the birth time of particle $u$, we let $N^{(x)}_t$ be the number of jumps of the random geodesic from $0$ to $\hcal^x$ before time $t$, i.e.
\[N_{t}^{(x)} := \max\left\{ n \in \nmath : \Gamma_{\Theta_{n}^{(x)}} \leq t \right\},\]
and let $\text{N}(x)$ be the number of jumps before the random geodesic actually hits $\hcal^x$, i.e.
\[\text{N}(x) := \max\{n\in\mathbb{N} : \Gamma_{\Theta_n^{(x)}} \le \hit{}{\hcal^x} \} = N_{\hit{}{\hcal^x}}^{(x)},\]
where we have written $\hit{}{\hcal^x}$ instead of $\hit{\{0\}}{\hcal^x}$ since we will be starting our $\infty$-parent SLFV from $E=\{(0,0)\}$ throughout this section.

\begin{lem}\label{lem:tail_estimates_number_jumps}
There exists $c > 0$ such that for all~$x > 0$ and for all sufficiently large~$\theta$, 
\[\proba\left( \emph{\text{N}}(x) \geq \theta x \right) \leq 3\exp\left(-c\theta x\right). \]
\end{lem}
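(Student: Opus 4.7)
The plan is to split the event $\{\text{N}(x) \ge \theta x\}$ according to whether the hitting time $\hit{}{\hcal^x}$ is large or small, and then control the two resulting contributions using Proposition \ref{prop:tail_tau_x} and Lemma \ref{lem:moment_lemma_1} respectively. The intuition is that $\text{N}(x)$ is the generation in $\tcal$ at which the random geodesic reaches $\hcal^x$, and every such vertex has birth time at most $\hit{}{\hcal^x}$; so $\text{N}(x)$ is controlled by the maximum depth of any vertex of $\tcal$ born by the time $\hit{}{\hcal^x}$.

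Since $(x,0) \in \hcal^x$, we have $\hit{}{\hcal^x} \le \hit{}{(x,0)}$, and Proposition \ref{prop:tail_tau_x} applied at $z=(x,0)$ yields $\P(\hit{}{\hcal^x} > \beta x) \le \exp(-c_\mu \beta x)$ for every $\beta \ge \beta_\mu$. Now set the threshold $t^\star := \theta x / (2e\text{M}_0)$ and decompose
\[
\P(\text{N}(x) \ge \theta x) \le \P\bigl(\hit{}{\hcal^x} > t^\star\bigr) + \P\bigl(\text{N}(x) \ge \theta x,\ \hit{}{\hcal^x} \le t^\star\bigr).
\]
On the event $\{\hit{}{\hcal^x} \le t^\star\}$ the vertices $\Theta_0^{(x)}, \ldots, \Theta_{\text{N}(x)}^{(x)}$ of the random geodesic path all belong to $\tcal$ and have birth time at most $t^\star$, so $\{\text{N}(x) \ge \theta x\}$ forces the existence of some $u \in \tcal$ with $|u| \ge \theta x$ and $\Gamma_u \le t^\star$. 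The ``in particular'' part of Lemma \ref{lem:moment_lemma_1}, applied with $t=t^\star$ (noting that $\lceil 2e\text{M}_0\, t^\star\rceil = \lceil \theta x\rceil$, and $|u|$ is an integer), bounds the probability of this event by $2 e^{-\text{M}_0 t^\star} = 2\exp\!\bigl(-\theta x/(2e)\bigr)$.

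For the first term, we restrict attention to $\theta$ large enough that $t^\star \ge \beta_\mu x$, i.e.\ $\theta \ge 2e\text{M}_0 \beta_\mu$; then the bound above (with $\beta = \theta/(2e\text{M}_0)$) produces $\exp\!\bigl(-c_\mu \theta x/(2e\text{M}_0)\bigr)$. Setting $c := \min\!\bigl(c_\mu/(2e\text{M}_0),\ 1/(2e)\bigr)$ and summing the two contributions yields $3\exp(-c\theta x)$, as required. I do not anticipate any substantial obstacle: the only delicate point is to choose $t^\star$ \emph{linear in $\theta x$} rather than simply linear in $x$, so that Lemma \ref{lem:moment_lemma_1} contributes exponential decay in $\theta x$ and not merely in $x$; this is precisely what allows the bound to hold uniformly in $x$ for every fixed large $\theta$, and is what forces the use of the general ``height versus time'' estimate of Lemma \ref{lem:moment_lemma_1} rather than any cruder union bound.
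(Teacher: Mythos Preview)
Your proof is correct and follows essentially the same approach as the paper: the same splitting threshold $t^\star = \theta x/(2e\text{M}_0)$ (called $\beta x$ there), the same application of Proposition~\ref{prop:tail_tau_x} to bound $\P(\hit{}{\hcal^x} > t^\star)$, and the same use of the ``in particular'' case of Lemma~\ref{lem:moment_lemma_1} for the remaining term. Your explicit identification of $c = \min\bigl(c_\mu/(2e\text{M}_0),\,1/(2e)\bigr)$ is a minor refinement over the paper, which leaves the constant implicit.
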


\begin{proof}
Let $x > 0$, and again recall $\text{M}_0 > 0$ from \eqref{cond:finite_rate}. Let $\theta > 0$, and let $\beta = \theta/(2e\text{M}_0)$. Then, by decomposing depending on whether or not~$\hit{}{\hcal^x}$ is larger than~$\beta x$, 
\begin{align*}
\proba\left(
\text{N}(x) \geq \theta x
\right) &\leq \proba\left(
\text{N}(x) \geq \theta x, \hit{}{\hcal^x} \leq \beta x
\right) + \proba\left(
\hit{}{\hcal^x} > \beta x
\right) \\
&\leq \proba\left(
N_{\beta x}^{(x)} \geq \theta x
\right) + \proba\left(
\hit{}{\hcal^x} > \beta x
\right) \\
&\leq \proba\left(
\exists u \in \tcal \backslash \tcal_{\lceil \theta x \rceil - 1} : \Gamma_{u} \leq \beta x
\right) + \proba\left(
\hit{}{\hcal^x} > \beta x
\right). 
\end{align*}
In order to control the first term, we can apply Lemma~\ref{lem:moment_lemma_1} with $t = \beta x$. We obtain
\begin{align*}
\proba\left(
\exists u \in \tcal \backslash \tcal_{\lceil \theta x \rceil - 1} : \Gamma_{u} \leq \beta x
\right) &= \proba\left(
\exists u \in \tcal \backslash \tcal_{\lceil t \theta / \beta \rceil - 1} : \Gamma_{u} \leq t
\right) \\
&= \proba\left(
\exists u \in \tcal \backslash \tcal_{\lceil 2e\text{M}_0 t \rceil - 1}: \Gamma_{u} \leq t
\right) \\
&\leq 2e^{-\text{M}_0 t} = 2e^{-\text{M}_0 \beta x} = 2e^{-\theta x/2e}. 
\end{align*}
We now apply Proposition \ref{prop:tail_tau_x} to bound the second term, $\proba( \hit{}{\hcal^x} > \beta x )$. Applying Proposition \ref{prop:tail_tau_x} with $z=(x,0)$ and $E=\{(0,0)\}$ gives that for some constant $c_\mu$ and all sufficiently large $\beta$,
\[\proba\left( \hit{}{x,0} > \beta x \right) \leq \exp\left( -c_{\mu}\beta x \right).\]
Since trivially $\hit{}{x,0} \ge \hit{}{\hcal^x}$ since the first is the hitting time of the point $(x,0)$ and the second is the hitting time of the (closed) half-plane to the right of $(x,0)$, we deduce that for $\beta$ large enough,
\begin{equation*}
\proba\left( \hit{}{\hcal^x} > \beta x \right) \leq \exp\left( -c_{\mu}\beta x \right) = \exp\left( -\frac{c_{\mu}}{2c_1e}\theta x\right).
\end{equation*}
This completes the proof. 
\end{proof}

We will later need to control the moments of the number of jumps of the random geodesic at time~$\hit{}{\hcal^x}$, in order to provide a lower bound on its fluctuations. We apply the tail bound above to give the following bound.

\begin{cor}\label{cor:moments_number_jumps} For any $k\in\mathbb{N}$, there exists a constant $c(k)\in(0,\infty)$ (depending only on $k$ and $\mu$, not $x$) such that for all $x > 0$, \[\E[\emph{\text{N}}(x)^k]\le c(k) x^k.\]
\end{cor}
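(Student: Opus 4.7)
The plan is a standard application of tail integration, relying on the exponential tail bound from Lemma~\ref{lem:tail_estimates_number_jumps} together with a change of variables matched to the scale $x$.

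First, I would write
\[
\E\!\left[\text{N}(x)^k\right] \;=\; k\int_0^\infty u^{k-1}\,\P\!\left(\text{N}(x) \ge u\right)du,
\]
and substitute $u = \theta x$ to obtain
\[
\E\!\left[\text{N}(x)^k\right] \;=\; k\,x^k \int_0^\infty \theta^{k-1}\,\P\!\left(\text{N}(x) \ge \theta x\right)d\theta.
\]
Let $\theta_0 = \theta_0(\mu)$ denote a threshold such that the tail bound
$\P(\text{N}(x) \ge \theta x) \le 3\exp(-c\theta x)$ of Lemma~\ref{lem:tail_estimates_number_jumps} is valid for all $\theta \ge \theta_0$. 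I would then split the integral at $\theta_0$.

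For the small-$\theta$ piece, I simply bound the probability by $1$:
\[
k\,x^k\!\int_0^{\theta_0}\!\theta^{k-1}\,d\theta \;=\; \theta_0^{\,k}\,x^k.
\]
For the large-$\theta$ piece, I apply Lemma~\ref{lem:tail_estimates_number_jumps} and then change variables $v = \theta x$ so that the factor $x^k$ cancels:
\[
k\,x^k\!\int_{\theta_0}^\infty\!\theta^{k-1}\cdot 3e^{-c\theta x}\,d\theta
\;=\; 3k\!\int_{\theta_0 x}^\infty\! v^{k-1}\,e^{-cv}\,dv
\;\le\; \frac{3\,k!}{c^k}.
\]
Combining these two estimates yields
\[
\E\!\left[\text{N}(x)^k\right] \;\le\; \theta_0^{\,k}\,x^k \;+\; \frac{3\,k!}{c^k}.
\]

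The only remaining issue, which I do not view as a serious obstacle, is absorbing the additive $O(1)$ constant into a multiplicative factor of $x^k$. For $x \ge 1$ this is immediate, giving $c(k) = \theta_0^{\,k} + 3k!/c^k$. For $x \in (0,1)$, one can either observe that $\text{N}(x)$ is stochastically dominated by $\text{N}(1)$ (any chain reaching $\hcal^{1}$ reaches $\hcal^{x}$ on the way, so with a suitable monotone coupling the number of jumps only grows with $x$), or simply enlarge the constant $c(k)$ to cover the regime $x < 1$ directly using the same split with $\theta_0$ replaced by a larger $x$-dependent threshold. Either way, the desired bound $\E[\text{N}(x)^k] \le c(k)\,x^k$ holds uniformly in $x > 0$ with a constant depending only on $k$ and $\mu$.
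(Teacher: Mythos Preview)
Your approach is essentially identical to the paper's: both use the tail-integration identity, substitute $u=\theta x$, split the integral at a threshold $\theta_0$ where Lemma~\ref{lem:tail_estimates_number_jumps} applies, and bound the two pieces separately. The paper in fact writes less than you do---it simply asserts the combined bound is at most $c(k)x^k$ without isolating the additive $O(1)$ term or discussing the regime $x<1$ at all, so your treatment is if anything more careful (though your stochastic-domination claim for $\text{N}(x)\le \text{N}(1)$ would need more justification, since the random geodesic paths for different $x$ live in different subtrees).
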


\begin{proof} Let $x > 0$ and $k \in \nmath$. 
We simply use
\[\E[\text{N}(x)^k] = \sum_{n=1}^\infty n^{k-1}\P(\text{N}(x)\ge n) \le \int_0^\infty \alpha^{k-1}\P(\text{N}(x)\ge \alpha)\, d\alpha = x^k\int_0^\infty \theta^{k-1}\P(\text{N}(x)\ge \theta x)\, d\theta\]
and by Lemma \ref{lem:tail_estimates_number_jumps}, there exist constants $\theta_0$ and $c$ independent of~$x$ such that this is at most
\[x^k\int_0^{\theta_0} \theta^{k-1}\P(\text{N}(x)\ge \theta x)\, d\theta + x^k\int_{\theta_0}^\infty \theta^{k-1}e^{-c\theta x}\, d\theta \le c(k) x^k,\]
which completes the proof.
\end{proof}

\subsection{Upper bound on the transverse fluctuations of the random geodesic}\label{subsec:upper_bound_displacement}
The goal of this section is to provide an upper bound on how much the random geodesic from $0$ to $\hcal^x$ fluctuates in the $y$-direction. Recall that for a vertex $u\in\tcal$, we write $Z_u = (X_u,Y_u)$ for its position, and for each vertex $u$ except the root, we have $\Delta^X_{u} = X_{u}-X_{\overleftarrow u}$ and $\Delta^Y_{u} = Y_{u}-Y_{\overleftarrow u}$, the increments of $u$ relative to its parent in the $x$- and $y$-directions respectively.

Let $(B_i)_{i\ge 1}$ be a collection of i.i.d.~random variables, also independent of everything else, satisfying
\[\P(B_i=1)=\P(B_i=-1)=1/2\]
for each $i$. Note that for any $n$, the sequence $(Z_{\Theta_{j}^{(x)}})_{j\le n}$ of labels on the random geodesic path up to $\Theta_{n}^{(x)}$ has the same distribution as the sequence $(\tilde Z_{\Theta_{j}^{(x)}})_{j\le n}$ where for each $j$,
\[\tilde Z_{\Theta_{j}^{(x)}} = \left(X_{\Theta_{j}^{(x)}}, \tilde Y_{\Theta_{j}^{(x)}}\right) = \left(\sum_{i=1}^j \Delta^X_{\Theta_{i}^{(x)}}, \sum_{i=1}^j B_i \Delta^Y_{\Theta_{i}^{(x)}}\right).\]

Define $\fcal^\tcal$ to be the $\sigma$-algebra that knows everything about the tree $\tcal$, including all its labels, \emph{and} the information about the random geodesic path $(\Theta^{(x)}_j)_{j\ge 0}$, but not about the $B_i$, i.e.
\[\fcal^\tcal = \sigma(\{T_u,Z_u,R_u : u\in\mathcal T\}\cup\{\Theta_j^{(x)} : j\ge 0\}).\]
Then let $(\fcal_j)_{j\ge 0}$ be the filtration that includes $\fcal^\tcal$ plus the information about the $B_i$ for $i\le j$, i.e.
\[\fcal_j = \fcal^\tcal\vee \sigma\{B_i : i\le j\}.\]
It is then immediate that
\[\E\left[\left.\tilde Y_{\Theta_j^{(x)}}\right|\fcal_{j-1}\right] = \tilde Y_{\Theta_{j-1}^{(x)}}\]
and therefore $\big(\tilde Y_{\Theta_j^{(x)}}\big)_{j\ge 0}$ is a martingale with respect to the filtration $(\fcal_j)_{j\ge 0}$. This allows us to prove the following preliminary but key result.

\begin{lem}\label{lem:prelim_res_transverse_displ} For all $n \geq 1$, $x>0$ and $y>0$,
\begin{equation*}
\proba\left( \max_{j \leq n} \Big| Y_{\Theta_{j}^{(x)}} \Big| > y \right) \leq \frac{n \emph{\text{R}}_{0}^{2}}{y^{2}}. 
\end{equation*}
\end{lem}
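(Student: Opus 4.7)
The plan is to apply Doob's $L^2$ maximal inequality to the martingale identified immediately before the statement. First I would use the equality in distribution noted in the preamble: the sequence $(Y_{\Theta_j^{(x)}})_{j\le n}$ has the same law as $(\tilde Y_{\Theta_j^{(x)}})_{j\le n}$, where $\tilde Y_{\Theta_j^{(x)}} = \sum_{i=1}^j B_i \Delta^Y_{\Theta_i^{(x)}}$ with the auxiliary $\pm 1$ symmetrizers $B_i$. Since $(\tilde Y_{\Theta_j^{(x)}})_{j\ge 0}$ is a martingale with respect to the filtration $(\fcal_j)_{j\ge 0}$, the process $(|\tilde Y_{\Theta_j^{(x)}}|)_{j\ge 0}$ is a non-negative submartingale, and Doob's inequality gives
\[\proba\left( \max_{j \leq n} \big| \tilde Y_{\Theta_{j}^{(x)}} \big| > y \right) \leq \frac{\esp\!\left[ \tilde Y_{\Theta_{n}^{(x)}}^{2} \right]}{y^{2}}.\]

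Next I would compute the second moment via orthogonality of martingale differences. Conditionally on $\fcal^\tcal$, the factors $B_i$ are i.i.d.\ with $\esp[B_i]=0$ and $B_i^2=1$, so the increments $B_i \Delta^Y_{\Theta_i^{(x)}}$ are orthogonal in $L^2(\P)$ and
\[\esp\!\left[ \tilde Y_{\Theta_{n}^{(x)}}^{2} \right] = \sum_{i=1}^{n} \esp\!\left[ \big(\Delta^Y_{\Theta_{i}^{(x)}}\big)^{2} \right].\]

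The final step is the deterministic pointwise bound $\big|\Delta^Y_{\Theta_i^{(x)}}\big| \leq \text{R}_0$. Vertices labelled $\Theta_{j-1}^{(x)}\cdot 1$ in the Ulam--Harris tree inherit their parent's position, contributing $\Delta^Y = 0$, so only ``accept'' steps matter. For such a step, the new ball $\bcal(Z_{\Theta_i^{(x)}}, R_{\Theta_i^{(x)}})$ must intersect the parent's ball, so $\|Z_{\Theta_i^{(x)}} - Z_{\Theta_{i-1}^{(x)}}\|$ — and hence $|\Delta^Y_{\Theta_i^{(x)}}|$ — is controlled by the radii, which are bounded by $\text{R}_0$. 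Combining these three ingredients yields the claimed bound $n\text{R}_0^2/y^2$.

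The argument is short and relies on ingredients already at hand; the main subtlety — and the reason for introducing the $B_i$ earlier — is that the raw sequence $(Y_{\Theta_j^{(x)}})$ is not manifestly a martingale (the geometry of the tree correlates increments with past positions), so one should be careful to invoke Doob's inequality only after passing to the symmetrized version $\tilde Y$ with its clean filtration $(\fcal_j)$, for which the conditional mean-zero property of $B_i \Delta^Y_{\Theta_i^{(x)}}$ given $\fcal_{i-1}$ is immediate.
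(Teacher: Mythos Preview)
Your proposal is correct and matches the paper's proof essentially step for step: pass to the symmetrized process $\tilde Y$ via the noted equality in distribution, apply Doob's inequality (the paper phrases it as the submartingale inequality applied to $|\tilde Y|^2$, which is equivalent to your $L^2$ maximal form), and bound $\E[\tilde Y_n^2]$ using orthogonality of the $B_i$ together with the deterministic increment bound. Your closing remark on why one must work with $\tilde Y$ rather than $Y$ directly is apt and mirrors the paper's motivation for introducing the symmetrizers.
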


\begin{proof}
Let $n \geq 1$ and $x,y>0$. As discussed above, $\tilde Y_{\Theta_j^{(x)}}$ is a martingale with respect to the filtration $\fcal_j$; and therefore $\Big|\tilde Y_{\Theta_j^{(x)}}\Big|^2$ is a submartingale with respect to the same filtration. Thus Doob's submartingale inequality says that
\[\P\left(\max_{j\le n} \Big|\tilde Y_{\Theta^{(x)}_j}\Big| > y\right) = \P\left(\max_{j\le n} \Big|\tilde Y_{\Theta^{(x)}_j}\Big|^2 > y^2\right) \le y^{-2}\,\E\left[\Big|\tilde Y_{\Theta^{(x)}_n}\Big|^2\right].\]
Now, by the independence and orthonormality of the $B_i$,
\[\E\left[\Big|\tilde Y_{\Theta^{(x)}_n}\Big|^2\right] = \E\left[\left(\sum_{i=1}^n B_{i}\Delta^Y_{\Theta^{(x)}_i}\right)^2\right] = \sum_{i,j=1}^n \E\Big[B_{i}B_{j}\Big]\E\left[\Delta^Y_{\Theta^{(x)}_i}\Delta^Y_{\Theta^{(x)}_i}\right] = \sum_{i=1}^n \E\left[\Big(\Delta^Y_{\Theta^{(x)}_i}\Big)^2\right] \le n\text{R}_{0}^{2}\]
almost surely, which allows us to conclude. 
\end{proof}

We can use this result to show that with high probability the random geodesic path from $0$ to $\hcal^x$ moves at most distance of order~$\sqrt{x}$ in the $y$-direction by time $\hit{\{0\}}{\hcal^x}$, in the following sense. Recall that $\lcal^{(y)}$ is the strip of radius $y$ about the $x$-axis.

\begin{lem}\label{lem:tight_upper_bound} For all~$\eps > 0$, there exists~$A_{\eps} > 0$ such that for all~$x > 1$, 
\begin{equation*}
\proba\left( \exists n \leq \emph{\text{N}}(x) : \bcal\left( Z_{\Theta_{n}^{(x)}}, R_{\Theta_{n}^{(x)}} \right) \nsubseteq \lcal^{(A_{\eps} \sqrt{x})} \right) \leq \eps. 
\end{equation*}
\end{lem}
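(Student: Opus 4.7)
The plan is to combine the two quantitative bounds already proved, namely Lemma~\ref{lem:tail_estimates_number_jumps} (tail bound on $\text{N}(x)$) and Lemma~\ref{lem:prelim_res_transverse_displ} (Doob-style bound on the maximum transverse displacement along the geodesic path up to step $n$), via a truncation argument. First, I would translate the geometric event $\bcal(Z_{\Theta_n^{(x)}},R_{\Theta_n^{(x)}})\nsubseteq\lcal^{(A_\eps\sqrt x)}$ into a bound on $|Y_{\Theta_n^{(x)}}|$: since every $R_u\le\text{R}_0$, the event implies $|Y_{\Theta_n^{(x)}}|>A_\eps\sqrt x-\text{R}_0$, which for $A_\eps$ larger than some fixed constant and $x>1$ is at least, say, $(A_\eps/2)\sqrt x$.

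Next, I would pick $\theta$ large enough so that Lemma~\ref{lem:tail_estimates_number_jumps} gives $\proba(\text{N}(x)\ge\theta x)\le 3e^{-c\theta x}$, and also large enough (using $x>1$) that $3e^{-c\theta}\le\eps/2$. On the complementary event $\{\text{N}(x)<\theta x\}$, the existence of an $n\le\text{N}(x)$ with $|Y_{\Theta_n^{(x)}}|>(A_\eps/2)\sqrt x$ is contained in the event
\[\bigl\{\max_{j\le\lceil\theta x\rceil}|Y_{\Theta_j^{(x)}}|>(A_\eps/2)\sqrt x\bigr\},\]
so Lemma~\ref{lem:prelim_res_transverse_displ} bounds its probability by
\[\frac{\lceil\theta x\rceil\,\text{R}_0^{2}}{(A_\eps/2)^2 x}\;\le\;\frac{4(\theta+1)\text{R}_0^2}{A_\eps^2}.\]
Choosing $A_\eps$ large enough that this quantity is at most $\eps/2$ finishes the argument, after a union bound.

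I do not anticipate any serious obstacle here: both ingredients are already in place, and the key observation is simply that Lemma~\ref{lem:prelim_res_transverse_displ} is stated in terms of a \emph{deterministic} number of steps $n$, so one only needs to replace the random number $\text{N}(x)$ by the deterministic upper bound $\lceil\theta x\rceil$ on a high-probability event. The subtlety worth double-checking is that Lemma~\ref{lem:prelim_res_transverse_displ} is proved for $Y_{\Theta_j^{(x)}}$ and not for the symmetrised version $\tilde Y_{\Theta_j^{(x)}}$; but the proof there notes the equality in distribution of the sequences $(Z_{\Theta_j^{(x)}})_{j\le n}$ and $(\tilde Z_{\Theta_j^{(x)}})_{j\le n}$, which transfers the maximal inequality from $\tilde Y$ to $Y$ and thereby lets us apply it directly here. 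Finally, since both estimates are uniform in $x>1$, the constant $A_\eps$ can indeed be chosen to depend only on $\eps$ (and on $\mu$ through $\text{R}_0$ and $c$).
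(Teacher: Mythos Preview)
Your proposal is correct and follows essentially the same approach as the paper: truncate $\text{N}(x)$ at a deterministic multiple of $x$ using Lemma~\ref{lem:tail_estimates_number_jumps}, then apply Lemma~\ref{lem:prelim_res_transverse_displ} to bound the maximal transverse displacement over that many steps. The only differences are cosmetic (you simplify $A_\eps\sqrt{x}-\text{R}_0$ to $(A_\eps/2)\sqrt{x}$, while the paper keeps the exact expression), and your remark about $Y$ versus $\tilde Y$ is unnecessary since Lemma~\ref{lem:prelim_res_transverse_displ} is already stated for $Y$.
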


We observe that Theorem~\ref{thm:geodesics}~(i) is then a direct consequence of this result. 

\begin{proof}
Let $A,\eps > 0$, and let $A',x > 1$. We have
\begin{align*}
\proba\left( \exists n \leq \text{N}(x) : \bcal\left( Z_{\Theta_{n}^{(x)}}, R_{\Theta_{n}^{(x)}} \right) \nsubseteq \lcal^{(A \sqrt{x})} \right) &\leq \proba\left(
\exists n \leq \lfloor A' x \rfloor : \bcal\left( Z_{\Theta_{n}^{(x)}}, R_{\Theta_{n}^{(x)}} \right) \nsubseteq \lcal^{(A\sqrt{x})} \right) \\
&\quad\quad + \proba\left( \text{N}(x) \geq \lfloor A' x \rfloor \right). 
\end{align*}
If $A'$ is sufficiently large, we can apply Lemma~\ref{lem:tail_estimates_number_jumps} to obtain
\[ \proba\left( \text{N}(x) \geq \lfloor A'x \rfloor \right) \leq \proba\left( \text{N}(x) \geq (A'-1)x \right) \leq e^{-c_{2}(A'-1)x} \leq e^{-c_{2}(A'-1)} \]
since~$x > 1$. We choose $A'$ large enough so that $e^{-c_{2}(A'-1)} < \eps/2$.

Then, since the radius of reproduction events is bounded from above by~$\text{R}_{0}$, by Lemma~\ref{lem:prelim_res_transverse_displ},
\begin{align*}
\proba\left(
\exists n \leq \lfloor A' x \rfloor : \bcal\left(
Z_{\Theta_{n}^{(x)}}, R_{\Theta_{n}^{(x)}}
\right) \nsubseteq \lcal^{(A\sqrt{x})}
\right) &\leq \proba\left(
\exists n \leq \lfloor A'x \rfloor : \left|
Y_{\Theta_{n}^{(x)}}
\right| > \left| 
A \sqrt{x} - \text{R}_{0}
\right|\right) \\
&= \proba\left(
\max_{n \leq \lfloor A'x \rfloor} \left| 
Y_{\Theta_{n}^{(x)}}
\right| > \left|
A\sqrt{x} - \text{R}_{0}
\right|
\right) \\
&\leq \frac{\text{R}_{0}A' x}{\left(
A\sqrt{x} - \text{R}_{0}
\right)^{2}}.
\end{align*}
We then choose $A$ large enough so that for all~$x > 1$, 
\begin{equation*}
    \frac{\text{R}_{0}A'x}{\left(
A\sqrt{x} - \text{R}_{0}
    \right)^{2}} < \eps/2, 
\end{equation*}
which allows us to conclude the proof. 
\end{proof}

We now apply Lemma~\ref{lem:tight_upper_bound} to complete the proof of Theorem~\ref{thm:geodesics}~(i). 

\begin{proof}[Proof of Theorem~\ref{thm:geodesics}~(i)] Let $\eps > 0$ and $x > 1$. Let~$A_{\eps} > 0$ be given by Lemma~\ref{lem:tight_upper_bound}. By Lemma~\ref{lem:random_geodesic_is_geodesic},
\begin{multline*}
\proba\left( \exists \mathbf{C}^{[G]} \in \gcal^{x}(\Pi) : \bcal\left( Z_{s}^{[G]}, R_{s}^{[G]} \right) \subseteq \lcal^{(A_{\eps}\sqrt{x})} \,\, \forall s \in \big[0, \tau^{\left( \mathcal{H}^{x}\right)}\big] \right)\\
\ge \proba\left( \bcal\left( Z_{s}^{[x]}, R_{s}^{[x]} \right) \subseteq \lcal^{(A_{\eps}\sqrt{x})} \,\, \forall s \in \big[0, \tau^{\left( \mathcal{H}^{x}\right)}\big] \right),
\end{multline*}
or in words, the existence of a $\Pi$-geodesic staying within $\lcal^{(A_{\eps}\sqrt{x})}$ is implied by the event that the random geodesic stays within $\lcal^{(A_{\eps}\sqrt{x})}$. But by Lemma \ref{lem:tight_upper_bound}, this occurs with probability at least $1-\eps$, allowing us to conclude.
\end{proof}

For Theorem \ref{thm:geodesics} (ii), we will also need bounds on the tail behaviour of the transverse fluctuations of the random geodesic, which we can obtain by very similar methods. These tail bounds will also be useful for showing almost sure convergence of hitting times.

\begin{lem}
\label{lem:large_devs_n} For any $\delta>0$, there exists $c>0$ such that for all $x>0$ and $n \geq 1$,
\begin{equation*}
\proba\left(
\max_{j \leq n} \left|
Y_{\Theta_{j}^{(x)}}
\right| > \delta n
\right) \leq e^{-cn}. 
\end{equation*}
Moreover, for any $\delta>0$ and $\beta\in(1/2,1)$, there exists $c>0$ such that for $n$ sufficiently large,
\begin{equation*}
\proba\left( \max_{j \leq n} \left| Y_{\Theta_{j}^{(x)}} \right| > \delta n^\beta \right) \leq e^{-cn^{2\beta-1}}.
\end{equation*}
\end{lem}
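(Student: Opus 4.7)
The plan is to extend the argument already used in Lemma \ref{lem:prelim_res_transverse_displ}: we replace Doob's $L^2$ maximal inequality with its exponential counterpart (Azuma--Hoeffding). Recall that in the proof of Lemma \ref{lem:prelim_res_transverse_displ} it was established that the sequence $(Y_{\Theta_{j}^{(x)}})_{j \le n}$ has the same distribution as $(\tilde Y_{\Theta_{j}^{(x)}})_{j \le n}$, where
\[\tilde Y_{\Theta_{j}^{(x)}} = \sum_{i=1}^{j} B_{i} \Delta^Y_{\Theta_{i}^{(x)}},\]
and that this process is a martingale with respect to the filtration $(\fcal_j)_{j \ge 0}$. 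Crucially, its increments are bounded in absolute value by $\text{R}_0$ since $|B_i|=1$ and $|\Delta^Y_{\Theta_i^{(x)}}| \le \text{R}_0$ almost surely.

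Next, I would apply the standard Chernoff argument: for any $\theta > 0$, Hoeffding's lemma together with the tower property gives $\E[\exp(\theta \tilde Y_{\Theta_n^{(x)}})] \le \exp(\theta^2 n \text{R}_0^2 / 2)$. Applying Doob's maximal inequality to the nonnegative submartingale $(\exp(\theta \tilde Y_{\Theta_j^{(x)}}))_{j \le n}$, optimising $\theta$, and symmetrising yields the two-sided maximal Azuma--Hoeffding estimate
\[\proba\left(\max_{j \le n}|\tilde Y_{\Theta_j^{(x)}}| > \lambda\right) \le 2\exp\left(-\frac{\lambda^2}{2 n \text{R}_0^2}\right)\]
for all $\lambda > 0$, and hence the same bound for $|Y_{\Theta_j^{(x)}}|$.

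For the first statement, set $\lambda = \delta n$ to obtain the bound $2\exp(-\delta^2 n / (2\text{R}_0^2))$. Any $c \in (0,\delta^2/(2\text{R}_0^2))$ absorbs the factor $2$ for all $n$ large enough, and since $\max_{j\le n}|Y_{\Theta_j^{(x)}}| \le n\text{R}_0$ almost surely (giving a probability of $0$ once $\delta > \text{R}_0$, and for $\delta \le \text{R}_0$ leaving only finitely many small $n$ to handle), one can shrink $c$ further to cover the finitely many remaining values of $n$. For the second statement, set $\lambda = \delta n^\beta$ to obtain $2\exp(-\delta^2 n^{2\beta - 1} / (2\text{R}_0^2))$; since $2\beta - 1 > 0$ when $\beta \in (1/2,1)$, for $n$ sufficiently large the factor $2$ is absorbed by taking any $c < \delta^2 / (2\text{R}_0^2)$.

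I do not anticipate a genuine obstacle: the work was already done in setting up the martingale $\tilde Y$ in Lemma \ref{lem:prelim_res_transverse_displ}, and the only subtlety is the bookkeeping needed to absorb the multiplicative constant $2$ into the exponent uniformly in $n$, which is handled by shrinking $c$ slightly (or equivalently by first treating $n$ above an appropriate threshold and then noting that only finitely many smaller values remain).
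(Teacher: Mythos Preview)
Your proposal is correct and follows essentially the same route as the paper: both exploit that $(\tilde Y_{\Theta_j^{(x)}})_{j\ge 0}$ is a martingale with increments bounded by $\text{R}_0$, apply Doob's maximal inequality to the exponential submartingale, and optimise. The only cosmetic difference is that you invoke Azuma--Hoeffding as a packaged statement (via Hoeffding's lemma, giving the sub-Gaussian bound $\exp(\theta^2 n\text{R}_0^2/2)$), whereas the paper computes the moment generating function by hand, conditioning on $\fcal^{\tcal}$ to get $\cosh^n(\lambda\text{R}_0)$ and then choosing $\lambda$ appropriately for each part. Your treatment of the factor $2$ and the two-sided bound is in fact slightly more explicit than the paper's, which only writes out the one-sided inequality and leaves the symmetrisation implicit.
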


\begin{proof}
Fix $x>0$ and $n \geq 1$. We proceed as in the proof of Lemma \ref{lem:prelim_res_transverse_displ} above, noting that since $\tilde Y_{\Theta_j^{(x)}}$ is a martingale with respect to the filtration $\mathcal F_j$, we also have that $\exp\left(\lambda \tilde Y_{\Theta_j^{(x)}}\right)$ is a submartingale with respect to the same filtration for any $\lambda>0$. Thus Doob's submartingale inequality says that
\[\P\left(\max_{j\le n} \tilde Y_{\Theta^{(x)}_j} > y\right) = \P\left(\max_{j\le n} \exp\Big(\lambda\tilde Y_{\Theta^{(x)}_j}\Big) > e^{\lambda y}\right) \le e^{-\lambda y}\,\E\left[\exp\Big(\lambda \tilde Y_{\Theta^{(x)}_n}\Big)\right].\]
Now recalling that $\fcal^\tcal$ be the $\sigma$-algebra that knows everything about the tree, including all its labels, \emph{and} the information about the random geodesic path $(\Theta^{(x)}_j)_{j\ge 0}$, but not about the $B_i$, we have
\[\E\left[\exp\Big(\lambda \tilde Y_{\Theta^{(x)}_n}\Big)\right] = \E\left[\exp\left(\sum_{i=1}^n \lambda B_{i}\Delta^Y_{\Theta^{(x)}_i}\right)\right] = \E\left[\prod_{i=1}^n \E\left[\left. \exp\left(\lambda B_{i}\Delta^Y_{\Theta^{(x)}_i}\right)  \right|\fcal^\tcal\right]\right]\]
by independence of the $B_i$. Since for each $i$,
\[\E\left[\left. \exp\left(\lambda B_{i}\Delta^Y_{\Theta^{(x)}_i}\right)  \right|\fcal^\tcal\right] = \cosh\left( \lambda \Delta^Y_{\Theta^{(x)}_i}\right) \le \cosh\left(\lambda \text{R}_0\right)\]
almost surely, we obtain that
\[\P\left(\max_{j\le n} \tilde Y_{\Theta^{(x)}_j} > y\right) \le e^{-\lambda y} \cosh^n(\lambda\text{R}_0). \]
For the first part of the lemma, we let $y=\delta n$. As the derivative of $\lambda \mapsto \cosh(\lambda \text{R}_0) e^{-\lambda\delta/2}$ at~$0$ is equal to $-\delta/2 < 0$, we can choose
$\lambda = \lambda(\delta,\text{R}_0) >0$ small enough so that $\cosh(\lambda \text{R}_0) \le e^{\lambda\delta/2}$, which completes the proof of the first part of the lemma.

For the second part, we take $y=\delta n^\beta$ for $\beta\in(1/2,1)$. Then choosing the optimal
\[\lambda = \frac{1}{\text{R}_0}\tanh^{-1}\left(\frac{\delta n^{\beta-1}}{\text{R}_0}\right) = \frac{1}{2\text{R}_0}\log\left(\frac{1+\delta n^{\beta-1}/\text{R}_0}{1-\delta n^{\beta-1}/\text{R}_0}\right),\]
and using also that $\cosh(\tanh^{-1}x)= (1-x^2)^{-1/2}$, we obtain that
\[\P\left(\max_{j\le n} \tilde Y_{\Theta^{(x)}_j} > \delta n^\beta\right) \le \left(\frac{1-\delta n^{\beta-1}/\text{R}_0}{1+\delta n^{\beta-1}/\text{R}_0}\right)^{\delta n^\beta/2\text{R}_0} \cdot \left(1-\frac{\delta^2 n^{2\beta-2}}{\text{R}_0^2}\right)^{-n/2}. \]
Now using the approximations $(1-x)^{-1} \le e^{x+x^2}$ for $0\le x\le 1/2$ and $(1+x)^{-1}\le 1-x+x^2$ for $x\ge 0$, for $n$ sufficiently large the above becomes
\begin{align*}
\P\left(\max_{j\le n} \tilde Y_{\Theta^{(x)}_j} > \delta n^\beta\right) &\le \left(1-\frac{2\delta n^{\beta-1}}{\text{R}_0} + \frac{2\delta^2 n^{2\beta-2}}{\text{R}_0^2}\right)^{\delta n^\beta/2\text{R}_0} \cdot \exp\left(\frac{n}{2}\left(\frac{\delta^2 n^{2\beta-2}}{\text{R}_0^2}+ \frac{\delta^4 n^{4\beta-4}}{\text{R}_0^4}\right) \right)\\
&\le \exp\left(-\frac{\delta^2 n^{2\beta-1}}{2\text{R}_0^2} + \frac{\delta^3 n^{3\beta-2}}{\text{R}_0^3} + \frac{\delta^4 n^{4\beta-3}}{2\text{R}_0^4}\right)
\end{align*}
and since $\beta\in(1/2,1)$ this allows us to conclude.
\end{proof}

\begin{lem}\label{lem:large_devs_transverse_displ} For any~$\delta > 0$, there exists~$c > 0$ such that for all~$x > 2\text{R}_0/\delta$, 
\begin{equation*}
\proba\left( \exists n \leq \emph{\text{N}}(x) : \bcal\left( Z_{\Theta_{n}^{(x)}}, R_{\Theta_{n}^{(x)}} \right) \nsubseteq \lcal^{(\delta x)} \right) \leq 2e^{-cx}. 
\end{equation*}
Moreover, for any $\delta>0$ and $\beta\in(1/2,1)$, there exists $c>0$ such that for all $x$ sufficiently large,
\begin{equation*}
\proba\left( \exists n \leq \emph{\text{N}}(x) : \bcal\left( Z_{\Theta_{n}^{(x)}}, R_{\Theta_{n}^{(x)}} \right) \nsubseteq \lcal^{(\delta x^\beta)} \right) \leq \exp\left(-cx^{2\beta-1}\right).
\end{equation*}
\end{lem}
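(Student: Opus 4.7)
The plan is to mimic the two-part decomposition used in the proof of Lemma~\ref{lem:tight_upper_bound}: first control the total number of jumps of the random geodesic in terms of $x$, then apply the martingale tail bounds of Lemma~\ref{lem:large_devs_n} to the $Y$-coordinates of at most that many jumps. Concretely, for a parameter $A' > 0$ to be chosen, I would write
\begin{align*}
\proba\!\left( \exists n \leq \text{N}(x) : \bcal\!\left( Z_{\Theta_{n}^{(x)}}, R_{\Theta_{n}^{(x)}} \right) \nsubseteq \lcal^{(\delta x)} \right)
&\leq \proba\!\left( \exists n \leq \lfloor A'x \rfloor : \bcal\!\left( Z_{\Theta_{n}^{(x)}}, R_{\Theta_{n}^{(x)}} \right) \nsubseteq \lcal^{(\delta x)} \right)\\
&\qquad + \proba\!\left( \text{N}(x) > A'x \right).
\end{align*}

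For the second term, Lemma~\ref{lem:tail_estimates_number_jumps} gives $\proba(\text{N}(x) > A'x) \leq 3e^{-cA'x}$ once $A'$ is taken large enough. For the first term, the hypothesis $x > 2\text{R}_0/\delta$ ensures that if $\bcal(Z_{\Theta_n^{(x)}}, R_{\Theta_n^{(x)}}) \nsubseteq \lcal^{(\delta x)}$, then $|Y_{\Theta_n^{(x)}}| > \delta x - \text{R}_0 \geq \delta x / 2$. Hence the first term is at most
\[
\proba\!\left( \max_{j \leq \lfloor A'x \rfloor} |Y_{\Theta_j^{(x)}}| > \delta x / 2 \right),
\]
and applying the first part of Lemma~\ref{lem:large_devs_n} with $n = \lfloor A'x \rfloor$ and with the fixed parameter $\delta/(2A')$ in place of $\delta$ (so that $(\delta/(2A')) \cdot n \leq \delta x / 2$), this is bounded by $\exp(-c'' A' x)$ for some $c'' > 0$. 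Combining the two estimates yields the first inequality with an appropriate constant.

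For the second inequality, the argument is identical in structure, but one uses the polynomial-scale bound instead. Again write $n = \lfloor A'x \rfloor$; since $\bcal(\cdot,\cdot) \nsubseteq \lcal^{(\delta x^\beta)}$ forces $|Y_{\Theta_n^{(x)}}| > \delta x^\beta - \text{R}_0 \geq \delta x^\beta / 2$ for $x$ large, one compares against $(\delta/(2(A')^\beta)) \cdot n^\beta$ and applies the second part of Lemma~\ref{lem:large_devs_n} to obtain a bound of the form $\exp(-c''' x^{2\beta-1})$. The tail on $\text{N}(x)$ is again negligible, since $e^{-cA'x}$ decays faster than $e^{-c''' x^{2\beta-1}}$ for $\beta < 1$.

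There is no serious obstacle here; the argument is essentially the same combination as in Lemma~\ref{lem:tight_upper_bound}, and the only subtlety is bookkeeping the constants so that the bound coming from Lemma~\ref{lem:large_devs_n} dominates at the correct rate, together with verifying that the constraint $x > 2\text{R}_0/\delta$ (or $x$ large, in the second case) is enough to absorb the additive $\text{R}_0$ in the transverse displacement.
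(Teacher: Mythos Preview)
Your proposal is correct and follows essentially the same approach as the paper: decompose according to whether $\text{N}(x)$ exceeds a linear threshold, control the tail of $\text{N}(x)$ via Lemma~\ref{lem:tail_estimates_number_jumps}, use the condition on $x$ to replace $\delta x - \text{R}_0$ by $\delta x/2$, and then invoke the two parts of Lemma~\ref{lem:large_devs_n}. Your bookkeeping of the rescaled parameter in Lemma~\ref{lem:large_devs_n} is slightly more explicit than the paper's, but the argument is the same.
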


\begin{proof}
For any $x>1$, $a>0$ and $r>0$, we have
\begin{align*}
\proba\left( \exists n \leq \text{N}(x) : \bcal\left( Z_{\Theta_{n}^{(x)}}, R_{\Theta_{n}^{(x)}} \right) \nsubseteq \lcal^{(r)} \right) &\leq \proba\left(
\exists n \leq \lfloor a x \rfloor : \bcal\left( Z_{\Theta_{n}^{(x)}}, R_{\Theta_{n}^{(x)}} \right) \nsubseteq \lcal^{(r)} \right) \\
&\quad\quad + \proba\left( \text{N}(x) \geq \lfloor a x \rfloor \right). 
\end{align*}
By Lemma~\ref{lem:tail_estimates_number_jumps} we can fix $a$ sufficiently large and $c>0$ such that
\[ \proba\left( \text{N}(x) \geq \lfloor ax \rfloor \right) \leq e^{-cx}.\]
Then, since the radius of reproduction events is bounded from above by~$\text{R}_{0}$, provided $r>2\text{R}_0$,
\begin{align*}
\proba\left(
\exists n \leq \lfloor a x \rfloor : \bcal\left(
Z_{\Theta_{n}^{(x)}}, R_{\Theta_{n}}^{(x)}
\right) \nsubseteq \lcal^{(r)}
\right) &\leq \proba\left(
\exists n \leq \lfloor ax \rfloor : \left|
Y_{\Theta_{n}^{(x)}}
\right| > \left| 
r - \text{R}_{0}
\right|\right) \\
&\le \proba\left(
\max_{n \leq \lfloor ax \rfloor} \left| 
Y_{\Theta_{n}^{(x)}}
\right| > 
r/2 \right).
\end{align*}
We then apply Lemma \ref{lem:large_devs_n} to conclude, with $r=\delta x$ for the first part of the lemma and $r=\delta x^\beta$ for the second part.
\end{proof}

\begin{proof}[Proof of Theorem~\ref{thm:geodesics}~(ii)]
Fix $\delta>0$ and $\beta\in(1/2,1]$. Just as for the proof of part (i), we use that by Lemma~\ref{lem:random_geodesic_is_geodesic},
\begin{multline*}
\proba\left( \exists \mathbf{C}^{[G]} \in \gcal^{x}(\Pi) : \bcal\left( Z_{s}^{[G]}, R_{s}^{[G]} \right) \subseteq \lcal^{(\delta x^\beta)} \,\, \forall s \in \big[0, \tau^{\left( \mathcal{H}^{x}\right)}\big] \right)\\
\ge \proba\left( \bcal\left( Z_{s}^{[x]}, R_{s}^{[x]} \right) \subseteq \lcal^{(\delta x^\beta)} \,\, \forall s \in \big[0, \tau^{\left( \mathcal{H}^{x}\right)}\big] \right),
\end{multline*}
or in words, the existence of a $\Pi$-geodesic staying within $\lcal^{(\delta x^\beta)}$ is implied by the event that the random geodesic stays within $\lcal^{(\delta x^\beta)}$. Now we simply apply Lemma \ref{lem:large_devs_transverse_displ}, using the first part if $\beta=1$ and the second part if $\beta\in(1/2,1)$.
\end{proof}

\subsection{Lower bound on the transverse fluctuations of the random geodesic}\label{subsubsec:lower_bound_displ}

Our aim in this section is to prove the following lower bound on 
\begin{equation*}
\text{Y}(x) := Y_{\Theta_{\text{N}(x)}^{(x)}},
\end{equation*}
the $y$-displacement of the random geodesic from $0$ to $\hcal^x$ at the moment it hits $\hcal^x$.

\begin{prop}\label{prop:simple_transverse_lower_bound}
For any $\eps>0$, there exists $\delta>0$ such that for sufficiently large $x$,
\[\P\big(|\text{\emph{Y}}(x)| < \delta \sqrt x\big) \le \eps.\]
\end{prop}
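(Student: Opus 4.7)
The plan is to combine the symmetrization trick introduced for the upper bound with an anti-concentration inequality for Rademacher sums. Recall that $\text{Y}(x) \stackrel{d}{=} \sum_{i=1}^{\text{N}(x)} B_i \Delta^Y_{\Theta_i^{(x)}}$, with $(B_i)$ i.i.d.\ symmetric $\pm 1$ variables independent of $\fcal^\tcal$. Conditional on $\fcal^\tcal$, the right-hand side is a signed sum of deterministic reals. Setting $k_\theta := \#\{i \le \text{N}(x) : |\Delta^Y_{\Theta_i^{(x)}}| \ge \theta\}$, the Erd\H{o}s--Littlewood--Offord inequality yields, for some universal constant $C$,
\begin{equation*}
\proba\!\left(|\text{Y}(x)| < \delta\sqrt{x} \,\middle|\, \fcal^\tcal\right) \le \frac{C}{\sqrt{k_\theta}}\left(\frac{\delta\sqrt{x}}{\theta} + 1\right).
\end{equation*}
Hence the proposition will follow once we show that, for some $\theta, c_1 > 0$ depending only on $\mu$, the event $\{k_\theta \ge c_1 x\}$ has probability at least $1 - \eps/2$ for all sufficiently large $x$; the bound above then implies the desired estimate for $\delta$ small enough.

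To obtain this linear lower bound on $k_\theta$, I would first use that, since each reproduction event has radius at most $\text{R}_{0}$, the $x$-coordinate of the random geodesic advances by at most $2\text{R}_{0}$ at each acceptance step (i.e.\ each step of the form $\Theta_n^{(x)} = \Theta_{n-1}^{(x)}\cdot 2$). Consequently, any chain of reproduction events from $(0,0)$ to $\hcal^x$ must contain at least $\lceil x/(2\text{R}_0)\rceil$ acceptances, and this holds in particular for the random geodesic, almost surely. Next, by the rotation invariance of the underlying Poisson point process, the conditional distribution of the displacement $Z_{u\cdot 2} - Z_u$ at any acceptance, given the parent ball $\bcal(Z_u, R_u)$ and the past, is isotropic about the origin and non-degenerate. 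This yields constants $\theta, p > 0$ depending only on $\mu$ such that the next acceptance displacement satisfies $|\Delta^Y| \ge \theta$ with conditional probability at least $p$.

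The main obstacle is that acceptance displacements \emph{along the random geodesic} do not form an independent sequence: the rule that the geodesic must stay inside $\tcal(x)$ biases its choices towards branches that reach $\hcal^x$, which in principle could concentrate the selected $y$-displacements near zero. The key observation I would exploit is that the law of $\Pi$ and the definition of $\tcal(x)$ are both invariant under the reflection $y\mapsto -y$, and the fair coin flips used to define the random geodesic are independent of $\Pi$. Thus this biasing acts only through the $x$-coordinate of the displacement, and the rotation-symmetric lower bound $p$ on the probability of a $y$-substantial acceptance survives the conditioning, up to a harmless factor accounting for the coin flips. A conditional Chernoff bound then gives that $k_\theta \ge c_1 x$ with probability $1 - e^{-c_2 x}$ for some $c_1, c_2 > 0$ depending only on $\mu$. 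Combined with the anti-concentration estimate above, and using Lemma \ref{lem:tail_estimates_number_jumps} to rule out pathological behaviour of $\text{N}(x)$, this yields $\proba(|\text{Y}(x)| < \delta\sqrt{x}) \le \eps$ for $\delta$ small and $x$ large, completing the proof.
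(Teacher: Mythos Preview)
Your overall architecture---symmetrize, then use anti-concentration for Rademacher sums conditional on $\fcal^\tcal$, then show that linearly many coefficients are bounded away from zero---is exactly right, and Erd\H{o}s--Littlewood--Offord works just as well here as the Paley--Zygmund inequality the paper uses. The difficulty, as you correctly identify, is entirely in establishing $k_\theta \ge c_1 x$ with high probability.

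However, your symmetry argument for this step does not go through. Invariance under $y\mapsto -y$ tells you that the conditional law of each $\Delta^Y_{\Theta_i^{(x)}}$, given the geodesic selection, is symmetric about zero---this is precisely what justifies the $(B_i)$ trick. It says nothing about the conditional law of $|\Delta^Y_{\Theta_i^{(x)}}|$. The biasing you describe acts by favouring large $\Delta^X$, and since the centre displacement lies in a ball of radius at most $2\text{R}_0$, large $|\Delta^X|$ forces small $|\Delta^Y|$: if $|\Delta^X|$ is close to $R_u + r$ then $|\Delta^Y|$ must be close to zero. So the selection bias \emph{does} push $|\Delta^Y|$ towards zero, and your claim that ``the rotation-symmetric lower bound $p$ \ldots survives the conditioning'' is exactly the point at issue. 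Moreover, even granting a per-step lower bound, the conditional Chernoff step requires a filtration along which the events $\{|\Delta^Y_{\Theta_i^{(x)}}|\ge\theta\}$ dominate independent Bernoullis, and the geodesic selection looks into the future of the tree, so no such filtration is available along the random geodesic itself.

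The paper sidesteps both problems by abandoning the random geodesic at this stage and instead proving a statement about \emph{every} path in the tree of depth at most $\theta x$: it shows (Lemma~\ref{lem:no_long_bad_paths}) that with high probability there is no path with at least $x/(2\text{R}_0)$ acceptances yet fewer than $x/(4\text{R}_0)$ of them satisfying $|\Delta^Y_u|\ge y$. Along any fixed root-to-leaf path the acceptance displacements form a Markov chain (each depends on the past only through the parent radius $R_{\overleftarrow u}$, Lemma~\ref{lem:prob_y_small}), so a genuine conditional Chernoff bound applies; one then takes a union bound over the $2^{n+1}$ paths. The crucial point is that the per-step probability $p_y$ of a small-but-nonzero $|\Delta^Y|$ can be made as small as one likes by shrinking $y$, so the Chernoff exponent beats the entropy factor $2^{n+1}$. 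This tree-wide statement then applies in particular to the random geodesic, with no conditioning issues. Your argument is missing this device.
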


Before proving Proposition \ref{prop:simple_transverse_lower_bound}, we show how it implies Theorem~\ref{thm:geodesics}~(iii).

\begin{proof}[Proof of Theorem~\ref{thm:geodesics}~(iii)]
As noted earlier, by \eqref{eqn:one_endpoint}, all geodesics have the same endpoint; thus we can bound all geodesic endpoints by that of the random geodesic, and in particular
\[\proba\left( \forall \mathbf{C}^{[G]} \in \gcal^{x}(\Pi),\,\, \bcal\left( Z_{\hit{\{0\}}{\mathcal{H}^{x}}}^{[G]}, R_{\hit{\{0\}}{\mathcal{H}^{x}}}^{[G]} \right) \nsubseteq \lcal^{(a\sqrt{x})} \right) \ge \P\big(|\text{\emph{Y}}(x)| \ge a \sqrt x\big). \]
But by Proposition \ref{prop:simple_transverse_lower_bound}, the latter probability can be made larger than $1-\eps$ by choosing $a$ sufficiently small.
\end{proof}

As in Section \ref{subsec:upper_bound_displacement}, let $(B_n)_{n\ge 0}$ be a sequence of i.i.d.~random variables, also independent of everything else, satisfying
\[\P(B_n=1)=\P(B_n=-1)=1/2.\]
We will again take advantage of the fact that the sequence $(Z_{\Theta_{j}^{(x)}})_{j\le n}$ of labels on the path up to $\Theta_{n}^{(x)}$ has the same distribution as the sequence $(\tilde Z_{\Theta_{j}^{(x)}})_{j\le n}$ where for each $j$,
\[\tilde Z_{\Theta_{j}^{(x)}} = \left(X_{\Theta_{j}^{(x)}}, \tilde Y_{\Theta_{j}^{(x)}}\right) = \left(\sum_{i=1}^j \Delta^X_{\Theta_{i}^{(x)}}, \sum_{i=1}^j B_i \Delta^Y_{\Theta_{i}^{(x)}}\right).\]
We will also write
\[\tilde{\text{Y}}(x) := \tilde Y_{\Theta_{\text{N}(x)}^{(x)}},.\]
We first apply the Paley-Zygmund inequality to bound $\tilde{\text{Y}}(x)$ in terms of the squared $y$-increments.

\begin{lem}\label{lem:simple_Ytilde_bound}
For any $x>0$ and $\eps>0$,
\[\P\left(\tilde{\text{\emph{Y}}}(x)^2 < \eps \sum_{i=1}^{\text{\emph{N}}(x)}\left(\Delta^Y_{\Theta^{(x)}_i}\right)^2 \right) \le 2\eps.\]
\end{lem}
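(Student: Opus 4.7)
The plan is to condition on the $\sigma$-algebra $\fcal^\tcal$ in order to reduce the claim to a statement about a Rademacher sum with deterministic coefficients. Recall that $\text{N}(x)$ and each increment $\Delta^Y_{\Theta_i^{(x)}}$ are $\fcal^\tcal$-measurable, while the random signs $(B_i)_{i\ge 1}$ are independent of $\fcal^\tcal$. Writing $n = \text{N}(x)$, $a_i = \Delta^Y_{\Theta_i^{(x)}}$ and $S = \sum_{i=1}^n a_i^2$, under $\P(\cdot \mid \fcal^\tcal)$ the $a_i$ become constants and $\tilde{\text{Y}}(x) = \sum_{i=1}^n B_i a_i$ is a weighted Rademacher sum, which is exactly the setting where standard second-moment anti-concentration tools apply.

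Next, I would compute the first two moments of $Z := \tilde{\text{Y}}(x)^2$ under this conditioning. By the independence and the identity $\E[B_i B_j] = \delta_{ij}$, all cross terms vanish and
\[
\E[Z \mid \fcal^\tcal] \;=\; \sum_{i=1}^n a_i^2 \;=\; S.
\]
Expanding $(\sum_i B_i a_i)^4$ and using that $\E[B_i B_j B_k B_l]$ is nonzero only when the indices pair up, the only surviving terms are $\sum_i a_i^4$ (all four indices equal) and $3 \sum_{i\ne j} a_i^2 a_j^2$ (two distinct pairs, with $\binom{4}{2}=6$ orderings), giving
\[
\E[Z^2 \mid \fcal^\tcal] \;=\; \sum_i a_i^4 \;+\; 3\sum_{i\ne j} a_i^2 a_j^2 \;=\; 3S^2 - 2\sum_i a_i^4 \;\le\; 3S^2.
\]

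With these conditional moments in hand, I would apply the Paley--Zygmund inequality to $Z \ge 0$ conditionally on $\fcal^\tcal$: for $\theta \in [0,1]$,
\[
\P\bigl(Z > \theta S \,\bigm|\, \fcal^\tcal\bigr) \;\ge\; (1-\theta)^2 \, \frac{(\E[Z \mid \fcal^\tcal])^2}{\E[Z^2 \mid \fcal^\tcal]} \;\ge\; \frac{(1-\theta)^2}{3}.
\]
Taking complements with $\theta = \eps$ and averaging over $\fcal^\tcal$ then yields an upper bound on $\P(\tilde{\text{Y}}(x)^2 < \eps S)$ depending only on $\eps$, entirely independently of $x$ and of the random geometry of the tree. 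The main obstacle is getting the constant down to the claimed $2\eps$: the bare Paley--Zygmund bound is $1-(1-\eps)^2/3 = (2+2\eps-\eps^2)/3$, so to match $2\eps$ one must either optimise the comparison at the relevant range of $\eps$, or combine the conditional moment bound with a sharper anti-concentration estimate for Rademacher sums (of Littlewood--Offord type); the rest of the argument is a routine application of the tower property.
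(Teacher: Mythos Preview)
Your approach is exactly the paper's: condition on $\fcal^\tcal$ so that $\tilde{\text{Y}}(x)$ becomes a Rademacher sum with frozen coefficients, then apply Paley--Zygmund to $Z=\tilde{\text{Y}}(x)^2$. The divergence is at the fourth moment. The paper asserts
\[
\E\bigl[\tilde{\text{Y}}(x)^4\,\bigm|\,\fcal^\tcal\bigr]
=\sum_{i}\sum_{j}\bigl(\Delta^Y_{\Theta_i^{(x)}}\bigr)^{2}\bigl(\Delta^Y_{\Theta_j^{(x)}}\bigr)^{2}
=\E\bigl[\tilde{\text{Y}}(x)^2\,\bigm|\,\fcal^\tcal\bigr]^{2},
\]
i.e.\ $\E[Z^2\mid\fcal^\tcal]=S^2$; with this the Paley--Zygmund ratio is exactly $1$, giving $\P(Z\ge\eps S\mid\fcal^\tcal)\ge(1-\eps)^2$ and hence $\P(Z<\eps S)\le 1-(1-\eps)^2\le 2\eps$ after averaging. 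That is precisely where the constant $2\eps$ comes from in the paper.

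Your expansion $\E[Z^2\mid\fcal^\tcal]=\sum_i a_i^4+3\sum_{i\ne j}a_i^2 a_j^2=3S^2-2\sum_i a_i^4$ is the correct one for a Rademacher sum; the paper's display drops the combinatorial factor $3$ on the off-diagonal pairings. So the obstacle you flag is genuine and not a defect of your argument: with the true fourth moment, bare Paley--Zygmund yields only $1-(1-\eps)^2/3$, which does not tend to $0$ as $\eps\to0$ (a two-term sum with $a_1=a_2$ already gives conditional probability $1/2$ that $Z=0$). Your instinct to reach for a sharper small-ball/anti-concentration estimate for Rademacher sums is the right way to obtain a bound that vanishes with $\eps$; the constant $2\eps$ as printed rests on the erroneous identity $\E[Z^2\mid\fcal^\tcal]=S^2$.
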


\begin{proof}
The result holds trivially if $\eps\ge 1/2$, so fix $x>0$ and $\eps\in(0,1/2)$. Recall that $\fcal^\tcal$ is the $\sigma$-algebra that knows everything about the tree $\tcal$, including all its labels, \emph{and} the information about the random geodesic path, namely the chosen path $(\Theta^{(x)}_n)_{n\ge 0}$. Since the $B_i$ are i.i.d.~with zero mean and unit variance, and all other random variables are $\fcal^\tcal$-measurable,
\begin{equation}\label{eq:2momsimple}
\E\left[\left.\tilde{\text{Y}}(x)^2\,\right|\,\fcal^\tcal\right] = \E\left[\left.\left(\sum_{i = 1}^{\text{N}(x)} B_i\Delta^Y_{\Theta_i^{(x)}}\right)^2 \,\right|\,\fcal^\tcal\right] = \sum_{i = 1}^{\text{N}(x)} \left(\Delta^Y_{\Theta_i^{(x)}}\right)^2 
\end{equation}
and
\begin{equation}\label{eq:4momsimple}
\E\left[\left.\tilde{\text{Y}}(x)^4\,\right|\,\fcal^\tcal\right] = \E\left[\left.\left(\sum_{i = 1}^{\text{N}(x)} B_i\Delta^Y_{\Theta_i^{(x)}}\right)^4 \,\right|\,\fcal^\tcal\right] = \sum_{i = 1}^{\text{N}(x)}\sum_{j = 1}^{\text{N}(x)} \left(\Delta^Y_{\Theta_i^{(x)}}\right)^2\left(\Delta^Y_{\Theta_j^{(x)}}\right)^2 = \E\left[\left.\tilde{\text{Y}}(x)^2\,\right|\,\fcal^\tcal\right]^2.
\end{equation}
By the Paley-Zygmund inequality and \eqref{eq:4momsimple},
\[\P\left( \left. \tilde{\text{Y}}(x)^2 \ge \eps \E\left[\left.\tilde{\text{Y}}(x)^2\,\right|\,\fcal^\tcal\right]\, \right|\, \fcal^\tcal \right) \ge (1-\eps)^2 \frac{\E\left[\left.\tilde{\text{Y}}(x)^2\,\right|\,\fcal^\tcal\right]^2}{\E\left[\left.\tilde{\text{Y}}(x)^4\,\right|\,\fcal^\tcal\right]} = (1-\eps)^2.\]
Thus, using \eqref{eq:2momsimple},
\[\P\left( \left. \tilde Y(x)^2 < \eps \sum_{i=1}^{\text{\emph{N}}(x)}\left(\Delta^Y_{\Theta^{(x)}_i}\right)^2 \,\right|\,\fcal^\tcal \right) \le 1-(1-\eps)^2 \le 2\eps.\]
Taking expectations gives the result.
\end{proof}

It remains to show that the sum of the squares of the transverse increments along the random geodesic path are unlikely to be small. Clearly, one way in which this could occur would be if many of the non-zero displacements in the $y$-direction were all extremely small in magnitude. Since our random geodesic path is one that hits $\hcal^x$ quickly, its displacements in the $x$-direction are in general large, which could \emph{a priori} force its displacements in the $y$-direction to be small due to the shape of reproduction events. Given this dependence between the parallel and transverse fluctuations on the random geodesic path, our approach is to show instead that \emph{nowhere} in the tree is there a path with many extremely small but non-zero $y$-displacements. First we show that any single vertex in $\tcal$ is unlikely to move a very small non-zero distance in the $y$-direction. We recall that if a vertex is the first child of its parent, then it inherits its parent's position; therefore only vertices $u$ that satisfy $u=\overleftarrow u \cdot 2$ have $\Delta^Y_u \neq 0$.

\begin{lem}\label{lem:prob_y_small}
If $u=\overleftarrow{u} \cdot 2$, then for any $y\in(0,1)$
\[\sup_{\rho>0}\P\left(\left.|\Delta^Y_u|<y\, \right|\, R_{\overleftarrow u} = \rho\right) \le p_y \]
where
\[p_y = \frac{4\mu((0,y^{1/2}))}{1\wedge\int_0^{\text{\emph{R}}_{0}}u^2\mu(du)} + \frac{\frac{4}{\pi}(1+\text{\emph{R}}_{0}^2)y^{1/2}}{1\wedge\int_0^{\text{\emph{R}}_{0}} u^2\mu(du)}\to 0\]
as $y\downarrow 0$.
\end{lem}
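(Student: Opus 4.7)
My plan is to make explicit the conditional law of $(Z_u, R_u)$ given $R_{\overleftarrow u} = \rho$, after which the lemma reduces to an elementary area estimate. Since $u = \overleftarrow u \cdot 2$, the pair $(Z_u, R_u)$ is the first mark of $\Pi$ occurring strictly after the parent's creation time whose ball intersects $\bcal(Z_{\overleftarrow u}, \rho)$; by translation invariance take $Z_{\overleftarrow u} = 0$. Standard Poisson process calculations (restrict $\Pi$ to the relevant region and apply the law of the first point of a Poisson process) give that the joint density of $(R_u, Z_u)$ is proportional to $\ind\{z \in \bcal(0, \rho+r)\}\, dz \otimes \mu(dr)$. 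In particular the marginal density of $R_u$ with respect to $\mu$ is proportional to $\pi(\rho+r)^2$, and conditional on $R_u = r$ the center $Z_u$ is uniform on $\bcal(0, \rho+r)$. Writing $A(R, y) := \Vol(\bcal(0, R) \cap \lcal^{(y)})$, this yields
\[\P\bigl(|\Delta^Y_u| < y \mid R_{\overleftarrow u} = \rho\bigr) = \frac{\int_0^{\text{R}_{0}} A(\rho+r, y)\, \mu(dr)}{\int_0^{\text{R}_{0}} \pi(\rho+r)^2\, \mu(dr)}.\]

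The only geometric input needed is the elementary bound $A(R, y) \leq \pi R^2 \wedge 4Ry$, valid because the intersection lies both in $\bcal(0, R)$ and in a rectangle of dimensions $2R \times 2y$. The main step is then to split the integration in the numerator at the threshold $r = y^{1/2}$. For $r \in (0, y^{1/2})$ I would use $A(\rho+r, y) \leq \pi(\rho+r)^2 \leq 2\pi(1+\text{R}_{0}^2)$ (applying $\rho \leq \text{R}_{0}$, $r \leq y^{1/2} \leq 1$ and $y < 1$), together with the lower bound $\int_0^{\text{R}_{0}}(\rho+r)^2 \mu(dr) \geq \int_0^{\text{R}_{0}} u^2 \mu(du)$ on the denominator, to bound this contribution by a constant multiple of $\mu((0, y^{1/2}))/\int_0^{\text{R}_{0}} u^2 \mu(du)$. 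For $r \in [y^{1/2}, \text{R}_{0}]$ I would use $A(\rho+r, y) \leq 4y(\rho+r)$; since $\rho+r \geq y^{1/2}$ in this range, $A(\rho+r, y)/[\pi(\rho+r)^2] \leq 4y^{1/2}/\pi$ pointwise, and this bound survives integration and division by the denominator to give a contribution of order $y^{1/2}$, uniformly in $\rho$.

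Adding the two contributions and replacing $\int_0^{\text{R}_{0}} u^2 \mu(du)$ by the weaker $1 \wedge \int_0^{\text{R}_{0}} u^2 \mu(du)$ in the denominator (to handle the degenerate case where $\mu$ puts most of its mass near $0$) then produces a bound of the form of $p_y$, after absorbing numerical constants. Taking the supremum over $\rho > 0$ is immediate since none of the bounds depend on $\rho$ beyond the inequality $\rho \leq \text{R}_0$ (which always holds because $R_{\overleftarrow u}$ is drawn from the support of $\mu$). The conclusion $p_y \to 0$ as $y \downarrow 0$ follows from countable additivity of the finite measure $\mu$, which has no atom at $0$, combined with $y^{1/2} \to 0$. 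The main obstacle is simply the arithmetic bookkeeping to match the precise constants $4$ and $(4/\pi)(1 + \text{R}_{0}^2)$ in the statement; the probabilistic and geometric content is entirely contained in the displayed identity and the pointwise bound $A(R, y) \leq \pi R^2 \wedge 4Ry$.
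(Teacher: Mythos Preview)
Your approach is essentially the paper's: write the conditional density of $R_u$ given $R_{\overleftarrow u}=\rho$, note that the center is uniform on the appropriate ball, and split the integration at the threshold $r=y^{1/2}$. The only substantive difference is in how uniformity in $\rho$ is achieved for the small-$r$ contribution: the paper handles this with a case split $\rho<1$ versus $\rho\ge1$, while you bound $(\rho+r)^2\le 2(1+\text{R}_0^2)$ by invoking $\rho\le\text{R}_0$. Your justification for that restriction is sound for the application (indeed $R_{\overleftarrow u}$ is always either $0$ or in the support of $\mu$), but the lemma as stated asks for $\sup_{\rho>0}$, and your first-term bound would blow up for large $\rho$; if you want the literal statement you would need a brief separate argument for $\rho$ bounded away from $0$, e.g.\ factoring out $\rho^2$ as the paper does. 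A minor aside: your description of the conditional law (center uniform on $\bcal(0,\rho+r)$) is correct; the paper writes $\bcal(0,r)$ at this step, which is a slip, but the resulting inequality $\le 4y/(\pi r)$ is weaker than $4y/(\pi(\rho+r))$ and so still holds.
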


\begin{proof}
Take $u\in\tcal$ with $u=\overleftarrow{u} \cdot 2$. It is easy to check from the definition in Section \ref{subsec:tree_representation} that
\[\P(R_u\in dr \,|\, R_{\overleftarrow u} = \rho) = \frac{(\rho + r)^2}{\int_0^{\text{R}_{0}} (\rho + u)^2 \mu(du)},\]
and that given $R_u=r$, the displacement $(\Delta^X_u,\Delta^Y_u)$ is chosen uniformly over $\bcal(0,r)$. Thus
\[\P(|\Delta^Y_u|<y \, | \, R_u = r) = \frac{\int_{(-y,y)\times(-r,r)} dz}{\pi r^2} \le \frac{4y}{\pi r}.\]
Combining these facts, we obtain that for $y\in(0,1)$,
\begin{align}
\P(|\Delta^Y_u|<y \,|\,R_{\overleftarrow u}=\rho) &\le \P(|R_u|<y^{1/2}\,|\,R_{\overleftarrow u}=\rho) + \P( |R_u|\ge y^{1/2},\,|\Delta^Y_u|<y \,|\, R_{\overleftarrow u}=\rho)\nonumber\\
&\le \frac{\int_0^{y^{1/2}}(\rho+r)^2\mu(dr)}{\int_0^{\text{R}_{0}}(\rho+u)^2\mu(du)} + \frac{\int_{y^{1/2}}^{\text{R}_{0}} \frac{4y}{\pi r}(\rho+r)^2 \mu(dr)}{\int_0^{\text{R}_{0}}(\rho+u)^2 \mu(du)}.\label{eq:Y_upper_bound_given_rho}
\end{align}
If $\rho\in(0,1)$, then \eqref{eq:Y_upper_bound_given_rho} is at most
\[\frac{\int_0^{y^{1/2}}(1+r)^2\mu(dr)}{\int_0^{\text{R}_{0}}u^2\mu(du)} + \frac{\int_{y^{1/2}}^{\text{R}_{0}} \frac{4y}{\pi r}(1+r)^2 \mu(dr)}{\int_0^{\text{R}_{0}}u^2 \mu(du)} \le \frac{4\mu((0,y^{1/2}))}{\int_0^{\text{R}_{0}}u^2\mu(du)} + \frac{4(1+\text{R}_{0}^2)y^{1/2}}{\pi\int_0^{\text{R}_{0}} u^2\mu(du)}\]
and if $\rho\ge 1$, then \eqref{eq:Y_upper_bound_given_rho} is at most
\[\frac{\int_0^{y^{1/2}}(1+r/\rho)^2\mu(dr)}{\int_0^{\text{R}_{0}}(1+u/\rho)^2\mu(du)} + \frac{\int_{y^{1/2}}^{\text{R}_{0}} \frac{4y}{\pi r}(1+r/\rho)^2 \mu(dr)}{\int_0^{\text{R}_{0}}(1+u/\rho)^2 \mu(du)} \le 4\mu((0,y^{1/2})) + \frac{4}{\pi}(1+\text{R}_{0}^2)y^{1/2}.\]
Thus, for any $\rho>0$,
\[\P(|\Delta^Y_u|<y \,|\,R_{\overleftarrow u}=\rho) \le \frac{4\mu((0,y^{1/2}))}{1\wedge\int_0^{\text{R}_{0}}u^2\mu(du)} + \frac{\frac{4}{\pi}(1+\text{R}_{0}^2)y^{1/2}}{1\wedge\int_0^{\text{R}_{0}} u^2\mu(du)}\to 0\]
as $y\downarrow 0$, as required.
\end{proof}

Write $N_2(v)=\#\{u\le v : u = \overleftarrow{u}\cdot 2\}$ for the number of ancestors of $v$ that are the second child of their parent. Recall that we are interested in vertices that have travelled distance $x$ in the $x$-direction, and which therefore satisfy $N_2(v)\ge ax$ where $a=1/2\text{R}_{0}$. 
We recall that~$\tcal_{n}$, $n \in \nmath$ is the sub-tree of~$\tcal$ containing only the~$n$ first generations. 

\begin{lem}\label{lem:no_long_bad_paths}
For any $a,x>0$, $n\in\mathbb{N}$, and any sufficiently small $y>0$, we have
\[\P\left(\exists v\in\tcal_n : N_2(v)\ge a x\, \text{ and } \,\#\{u\le v : |\Delta^Y_u|\ge y\} < \frac{a x}{2}\right)\\
\le 2^{n+1} \exp\left(\frac{a x}{2}\left(1-2p_y +\log(2p_y)\right)\right).\]
\end{lem}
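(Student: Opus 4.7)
The plan is to apply a union bound over all $v\in\tcal_n$ combined with a Chernoff bound for each fixed $v$. Since $|\tcal_n|\le 2^{n+1}-1$, writing $A_v$ for the event $\{N_2(v)\ge ax\}\cap\{\#\{u\le v:|\Delta^Y_u|\ge y\}<ax/2\}$, it suffices to show that for every fixed $v\in\tcal_n$,
\[
\P(A_v) \le \exp\left(\tfrac{ax}{2}(1-2p_y+\log(2p_y))\right).
\]

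For the per-vertex bound, note that for first-children $u$ we have $Z_u=Z_{\overleftarrow u}$, so $\Delta^Y_u=0$; thus only second-child ancestors can contribute to $\{u\le v:|\Delta^Y_u|\ge y\}$. On $\{N_2(v)\ge ax\}$, list the first $\lceil ax\rceil$ second-child strict ancestors of $v$ in ancestral order as $u_1<u_2<\cdots<u_{\lceil ax\rceil}$. If fewer than $ax/2$ of them satisfy $|\Delta^Y_{u_i}|\ge y$, then at least $ax/2$ of them satisfy $|\Delta^Y_{u_i}|<y$. Hence, letting $W_v:=\sum_{i=1}^{\lceil ax\rceil}\ind\{|\Delta^Y_{u_i}|<y\}$, we have $A_v\subseteq\{W_v\ge ax/2\}$.

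Next I would apply a Chernoff bound, using the fact that each $\ind\{|\Delta^Y_{u_i}|<y\}$ is conditionally dominated by a Bernoulli$(p_y)$. Concretely, let $\fcal_i$ denote the $\sigma$-algebra generated by $\Pi$ and the tree labels up to the moment the reproduction event producing $u_i$ occurs (so $R_{\overleftarrow{u_i}}$ is $\fcal_i$-measurable but $\Delta^Y_{u_i}$ is not). By Lemma~\ref{lem:prob_y_small},
\[
\E\left[e^{\lambda\ind\{|\Delta^Y_{u_i}|<y\}}\,\big|\,\fcal_i\right]
\le 1+p_y(e^\lambda-1)\le\exp(p_y(e^\lambda-1))
\qquad\text{for every } \lambda>0.
\]
Iterating over $i$ and applying Markov's inequality,
\[
\P(W_v\ge ax/2)\le\exp\left(-\tfrac{\lambda ax}{2}+ax\, p_y(e^\lambda-1)\right).
\]
Provided $p_y<1/2$ (which holds for all small enough $y$ by Lemma~\ref{lem:prob_y_small}), the right-hand side is optimized at $\lambda=\log(1/(2p_y))>0$, giving
\[
\P(W_v\ge ax/2)\le\exp\left(\tfrac{ax}{2}\bigl(1-2p_y+\log(2p_y)\bigr)\right),
\]
which is the desired per-vertex bound, and the lemma follows after the union bound.

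The only subtle point is verifying the conditional stochastic domination: one must check that, in the tree construction of Section~\ref{subsec:tree_representation}, $\P(|\Delta^Y_{u_i}|<y\mid\fcal_i)$ depends on the past only through $R_{\overleftarrow{u_i}}$, so that Lemma~\ref{lem:prob_y_small} applies directly with the conditioning it provides. This follows from the fact that, given all the data in $\fcal_i$, the new event $(T_{u_i},Z_{u_i}-Z_{\overleftarrow{u_i}},R_{u_i})$ is determined by the Poisson point process restricted to times after $\Gamma_{\overleftarrow{u_i}}$ and balls intersecting $\bcal(Z_{\overleftarrow{u_i}},R_{\overleftarrow{u_i}})$, whose law depends only on $R_{\overleftarrow{u_i}}$ (by translation invariance of $\Pi$), so the argument is essentially book-keeping.
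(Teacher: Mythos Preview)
Your proof is essentially the same as the paper's: a union bound over $v\in\tcal_n$ followed by a Chernoff bound using the conditional domination from Lemma~\ref{lem:prob_y_small}, optimized at $\lambda=\log(1/(2p_y))$. The only slip is that after iterating you replace $\lceil ax\rceil$ by $ax$ in the moment-generating-function term, which goes the wrong way; the paper avoids this by summing over all $k=N_2(v)$ second-child ancestors and then using that the exponent is negative to bound by its value at $k=ax$, but your version is easily repaired the same way.
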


\begin{proof}
For $u\in\tcal$, let $\fcal_{\overleftarrow u}$ be the $\sigma$-algebra consisting of all information about the strict ancestors of $u$, i.e.
\[\fcal_{\overleftarrow u} = \sigma(\{T_w,Z_w,R_w : w < u\}).\]
Note that the value of $\Delta^Y_u$ depends on $\fcal_{\overleftarrow u}$ only through $R_{\overleftarrow u}$. Thus if $u=\overleftarrow{u} \cdot 2$ and $y\in(0,1)$, by Lemma \ref{lem:prob_y_small} we have
\[\P\left(\left.|\Delta^Y_u|<y\, \right|\, \fcal_{\overleftarrow u}\right) \le p_y \]
almost surely.

Now, fix $v\in\tcal$ and suppose that $N_2(v) = k$. Label the ancestors of $v$ that satisfy $u = \overleftarrow{u}\cdot 2$ as $u_1,u_2,\ldots,u_k$. Then for any $y\in(0,1)$ and $\lambda>0$,
\begin{align}
\P\left(\#\{u\le v : |\Delta^Y_u|\ge y\} < k/2\right) &\le \P\left(\#\{u\le v : u=\overleftarrow{u}\cdot 2 \text{ and } |\Delta^Y_u|< y\} \ge k/2\right)\nonumber\\
&\le \E\left[e^{\lambda \#\{u\le v\, :\, u=\overleftarrow{u}\cdot 2 \text{ and } |\Delta^Y_u|< y\}}\right] e^{-\lambda k/2}\nonumber\\
&= \E\Bigg[\prod_{i=1}^k \exp(\lambda \ind_{\{|\Delta^Y_{u_i}|<y\}})\Bigg]e^{-\lambda k/2}.\label{eq:Y_chernoff_step1}
\end{align}
Conditioning first on $\fcal_{\overleftarrow u_k}$, and applying Lemma \ref{lem:prob_y_small}, we see that
\begin{align*}
\E\Bigg[\prod_{i=1}^k \exp(\lambda \ind_{\{|\Delta^Y_{u_i}|<y\}})\Bigg] &\le \E\Bigg[\E\left[\left.\exp(\lambda \ind_{\{|\Delta^Y_{u_k}|<y\}})\right|\fcal_{\overleftarrow{u}_k}\right]\prod_{i=1}^{k-1} \exp(\lambda \ind_{\{|\Delta^Y_{u_i}|<y\}})\Bigg]\\
&\le (e^\lambda p_y + 1-p_y)\E\Bigg[\prod_{i=1}^{k-1} \exp(\lambda \ind_{\{|\Delta^Y_{u_i}|<y\}})\Bigg].
\end{align*}
By recursion, we obtain that
\[\E\Bigg[\prod_{i=1}^k \exp(\lambda \ind_{\{|\Delta^Y_{u_i}|<y\}})\Bigg] \le (e^\lambda p_y + 1-p_y)^k.\]
Substituting this into \eqref{eq:Y_chernoff_step1} and choosing the optimal $\lambda = \log(1/2p_y)$ (which is positive provided $y$ is small enough as $p_{y} \to 0$ when $y \to 0$), we obtain that for any $v\in\tcal$ satisfying $\#\{u\le v : u=\overleftarrow{u}\cdot 2\} = k$, we have
\begin{equation}\label{eq:not_many_big_y}
\P\left(\#\{u\le v : |\Delta^Y_u|\ge y\} < k/2\right) \le \exp\left(\frac{k}{2}\left(1-2p_y +\log(2p_y)\right)\right).
\end{equation}
Thus for any $a>0$, $x>0$ and $n\in\mathbb{N}$, and sufficiently small $y>0$,
\begin{align*}
&\P\Big(\exists v\in\tcal_n : N_2(v)\ge a x \text{ and } \#\{u\le v : |\Delta^Y_u|\ge y\} < a x/2\Big)\\
&\le \E\Big[\#\big\{v\in\tcal_n : N_2(v)\ge a x \text{ and } \#\{u\le v : |\Delta^Y_u|\ge y\} < a x/2\big\}\Big]\\
& = \sum_{k=\lceil a x\rceil}^n \sum_{v\in\tcal_n : N_2(v)=k} \P\Big(\#\{u\le v : |\Delta^Y_u|\ge y\} < a x/2\Big)\\
& \le \sum_{k=\lceil a x\rceil}^n \sum_{v\in\tcal_n : N_2(v)=k} \P\Big(\#\{u\le v : |\Delta^Y_u|\ge y\} < k/2\Big)\\
& \le \sum_{k=\lceil a x\rceil}^n \sum_{v\in\tcal_n : N_2(v)=k} \exp\left(\frac{k}{2}\left(1-2p_y +\log(2p_y)\right)\right),
\end{align*}
where the last inequality uses \eqref{eq:not_many_big_y}. Since the exponent above is negative for any $y$, we can bound the exponential term above by its value when $k= a x$, so that
\begin{multline*}
\P\left(\exists v\in\tcal_n : N_2(v)\ge a x\, \text{ and } \,\#\{u\le v : |\Delta^Y_u|\ge y\} < a x/2\right)\\
\le \sum_{k=\lceil a x\rceil}^n \sum_{v\in\tcal_n : N_2(v)=k} \exp\left(\frac{ax}{2}\left(1-2p_y +\log(2p_y)\right)\right),
\end{multline*}
and since there are $2^{n+1}-1$ vertices in total in $\tcal_n$, the result follows.
\end{proof}

\begin{lem}\label{lem:simple_square_lower_bound}
There exist constants $\alpha,\beta,\gamma\in(0,\infty)$ (depending only on $\mu$) such that for all $x>0$,
\[\P\left(\sum_{i=1}^{\text{\emph{N}}(x)}\left(\Delta^Y_{\Theta^{(x)}_i}\right)^2 < \alpha x\right) \le \beta e^{-\gamma x}.\]
\end{lem}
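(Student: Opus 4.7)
The plan is to combine the upper tail bound on $\text{N}(x)$ from Lemma~\ref{lem:tail_estimates_number_jumps} with the no-bad-path estimate of Lemma~\ref{lem:no_long_bad_paths}, applied to the endpoint $\Theta^{(x)}_{\text{N}(x)}$ of the random geodesic. First, since the random geodesic hits $\hcal^x$ at step $\text{N}(x)$, we have $X_{\Theta^{(x)}_{\text{N}(x)}} \ge x - \text{R}_0$. Each non-zero $x$-increment $\Delta^X_u$ satisfies $|\Delta^X_u| \le R_u + R_{\overleftarrow u} \le 2\text{R}_0$ and occurs precisely when $u = \overleftarrow u \cdot 2$. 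By the triangle inequality, the number of second-child ancestors of the endpoint therefore satisfies
\[
N_2\bigl(\Theta^{(x)}_{\text{N}(x)}\bigr) \;\ge\; \frac{X_{\Theta^{(x)}_{\text{N}(x)}}}{2\text{R}_0} \;\ge\; \frac{x - \text{R}_0}{2\text{R}_0} \;\ge\; a x
\]
for $a := 1/(4\text{R}_0)$ and all $x \ge 2\text{R}_0$.

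Next I fix $\theta > 0$ large enough that Lemma~\ref{lem:tail_estimates_number_jumps} gives $\P(\text{N}(x) \ge \theta x) \le 3 e^{-c \theta x}$. On the complementary event the endpoint lies in $\tcal_{\lfloor \theta x \rfloor}$. I then choose $y > 0$ small enough that the quantity $p_y$ from Lemma~\ref{lem:prob_y_small} satisfies
\[
\theta \log 2 \;+\; \frac{a}{2}\bigl(1 - 2 p_y + \log(2 p_y)\bigr) \;<\; 0,
\]
which is possible because $p_y \to 0$ as $y \downarrow 0$ and hence $\log(2 p_y) \to -\infty$. Applying Lemma~\ref{lem:no_long_bad_paths} with $n = \lfloor \theta x \rfloor$ then shows that, outside an event of probability at most $\beta e^{-\gamma x}$, every $v \in \tcal_{\lfloor \theta x \rfloor}$ with $N_2(v) \ge a x$ has at least $a x / 2$ ancestors $u$ satisfying $|\Delta^Y_u| \ge y$. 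Intersecting with $\{\text{N}(x) \le \theta x\}$ and specialising to $v = \Theta^{(x)}_{\text{N}(x)}$ yields
\[
\sum_{i=1}^{\text{N}(x)} \bigl(\Delta^Y_{\Theta^{(x)}_i}\bigr)^2 \;\ge\; \frac{a x}{2} \cdot y^2,
\]
giving the lemma with $\alpha = a y^2 / 2$ for $x \ge 2\text{R}_0$; the bounded range $x < 2\text{R}_0$ is absorbed by enlarging~$\beta$.

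The principal obstacle is the prefactor $2^{n+1}$ in Lemma~\ref{lem:no_long_bad_paths}, which comes from union-bounding over all vertices at depth up to $n \sim \theta x$ and contributes $\theta x \log 2$ to the exponent. This is precisely why the choice of $y$ is postponed until after $\theta$ is fixed: the term $\frac{a}{2}\log(2 p_y)$ can be made arbitrarily negative by shrinking $y$, so for sufficiently small $y$ it dominates $\theta \log 2$ and produces a genuinely exponential bound of the desired sign, uniformly in $x$.
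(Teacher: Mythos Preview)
The proposal is correct and follows essentially the same approach as the paper: fix $\theta$ via Lemma~\ref{lem:tail_estimates_number_jumps}, then choose $y$ small enough that the exponent in Lemma~\ref{lem:no_long_bad_paths} beats the $2^{n+1}$ prefactor, and apply the resulting bound to the endpoint $\Theta^{(x)}_{\text{N}(x)}$ of the random geodesic (which necessarily has $N_2 \gtrsim x/\text{R}_0$). Your version is in fact slightly more careful than the paper's in writing $X_{\Theta^{(x)}_{\text{N}(x)}} \ge x - \text{R}_0$ rather than $\ge x$, and in explicitly absorbing small $x$ into $\beta$; otherwise the two arguments are the same up to the choice of constants.
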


\begin{proof}
By Lemma \ref{lem:tail_estimates_number_jumps}, we can fix $c>0$ and $\theta>0$ such that for all $x>0$,
\[\P(\text{N}(x) \ge \theta x) \le 3\exp(-c\theta x).\]
Recall the quantity $p_y$ from Lemma \ref{lem:prob_y_small}, and recall that $p_y\to 0$ as $y\to 0$. We can therefore fix $y>0$ such that
\begin{equation}\label{eq:setting_y}
2^\theta \exp\left(\frac{1}{4\text{R}_0}(1-2p_y+\log(2p_y))\right) \le \exp(-1).
\end{equation}
Applying Lemma \ref{lem:no_long_bad_paths} with $a=1/2\text{R}_0$ and $n=\lceil\theta x\rceil$, we have
\begin{align*}
&\P\left(\exists v\in\tcal_{\lceil\theta x\rceil} : N_2(v)\ge \frac{x}{2\text{R}_0}\, \text{ and } \,\#\{u\le v : |\Delta^Y_u|\ge y\} < \frac{x}{4\text{R}_0}\right)\\
&\le 2^{\lceil\theta x\rceil + 1} \exp\left(\frac{x}{4\text{R}_0}(1-2p_y+\log(2p_y))\right)\\
&\le 4\exp(-x)
\end{align*}
by \eqref{eq:setting_y}. Define the event
\[\Lambda(x) = \big\{\text{N}(x)\ge \theta x\big\} \cup \left\{\exists v\in\tcal_{\lceil\theta x\rceil} : N_2(v)\ge \frac{x}{2\text{R}_0}\, \text{ and } \,\#\left\{u\le v : |\Delta^Y_u|\ge y\right\} < \frac{x}{4\text{R}_0}\right\}.\]
This is our ``bad event'', which we have shown above has exponentially small probability in $x$.

We now note that, since by definition $X_{\Theta^{(x)}_{\text{N}(x)}} \ge x$, and each reproduction event can move us at most $2\text{R}_0$ to the right, we necessarily have $N_2(\Theta^{(x)}_{\text{N}(x)}) \ge x/2\text{R}_0$. Thus, on $\Lambda(x)^c$, we have
\[\#\left\{i\le \text{N}(x) : \Big|\Delta^Y_{\Theta^{(x)}_i}\Big|\ge y\right\} \ge \frac{x}{4\text{R}_0},\]
and thus
\[\sum_{i=1}^{\text{\emph{N}}(x)}\left(\Delta^Y_{\Theta^{(x)}_i}\right)^2 \ge \frac{xy^2}{4\text{R}_0}.\]
Choosing $\alpha = y^2/4\text{R}_0$, $\beta = 7$ and $\gamma = c\theta \wedge 1$, we deduce the result.
\end{proof}

We can now combine Lemmas \ref{lem:simple_Ytilde_bound} and \ref{lem:simple_square_lower_bound} to prove our main result for this section.

\begin{proof}[Proof of Proposition \ref{prop:simple_transverse_lower_bound}]
As observed earlier, $(\tilde Y_{\Theta^{(x)}_i})_{i\ge 0}$ has the same distribution as $(Y_{\Theta^{(x)}_i})_{i\ge 0}$. Thus
\[\P\big(|\text{Y}(x)| < \delta \sqrt x\big) = \P\left(|\tilde{\text{Y}}(x)| < \delta \sqrt x\right).\]
Fix $\alpha,\beta,\gamma\in(0,\infty)$ from Lemma \ref{lem:simple_square_lower_bound}. Splitting depending on whether the event in Lemma \ref{lem:simple_square_lower_bound} occurs or not,
\begin{align*}
\P\left(|\tilde{\text{Y}}(x)| < \delta \sqrt x\right) &\le \P\left(\sum_{i=1}^{\text{\emph{N}}(x)}\left(\Delta^Y_{\Theta^{(x)}_i}\right)^2 < \alpha x\right) + \P\left(\sum_{i=1}^{\text{\emph{N}}(x)}\left(\Delta^Y_{\Theta^{(x)}_i}\right)^2 \ge \alpha x \,\, \text{ and } \,\, \tilde{\text{Y}}(x)^2 < \delta^2 x\right)\\
&\le \beta e^{-\gamma x} + \P\left(\tilde{\text{Y}}(x)^2 < \frac{\delta^2}{\alpha} \sum_{i=1}^{\text{\emph{N}}(x)}\left(\Delta^Y_{\Theta^{(x)}_i}\right)^2 \right).
\end{align*}
By Lemma \ref{lem:simple_Ytilde_bound}, the last probability on the right-hand side above is at most $2\delta^2/\alpha$. Thus the result follows by choosing $\delta=\sqrt{\alpha\eps}/2$ and $x$ large enough that $\beta e^{-\gamma x} \le \eps/2$.
\end{proof}

\subsection{Geodesics from plane to point: precise statement and proof of Theorem \ref{thm:geodesic_plane_to_point}}\label{sec:geodesics_plane_to_point}

It is important to recall at this point that it is only the self-duality of the $\infty$-parent SLFV process started from a point (see e.g.~Corollary \ref{cor:forwards_St}) that allows us to characterise $\hit{\{0\}}{\hcal^x}$ and the corresponding geodesics in Theorem \ref{thm:geodesics} in terms of chains of $\Pi$-reproduction events (forwards in time from $0$ to $\hcal^x$). Since, in Theorem \ref{thm:geodesic_plane_to_point}, we are instead concerned with the $\infty$-parent SLFV started from $\hcal$, we must use directly the definition of the process (given in Definition \ref{defn:infty_parent_slfv}), which says that $\hit{\hcal}{x,0}\le t$ (by which we mean $(x,0)\in S^\hcal_t$) if and only if there is a chain of $\Pi^{(t)}$-reproduction events starting from $(x,0)$ that intersects $\hcal$ by time $t$.

This makes things significantly more complicated; we cannot simply work with $\Pi^{(t)}$ for fixed $t$, argue that this is equal in distribution to $\Pi$ and therefore the same results hold; we must consider a range of values of $t$ in order to be sure that we have covered $\hit{\hcal}{x,0}$. And of course, $\hit{\hcal}{x,0}$ is nothing like a typical time $t$. Nonetheless, we \emph{will} attempt to use our existing results and replicate the strategy backwards in time, even though there will be subtle complications in transferring these results to gain results directly about geodesics from $\hcal$ to $z$.

First we formally define geodesics from a set to a point, and give a precise statement of Theorem \ref{thm:geodesic_plane_to_point}.

\begin{definition}\label{defn:geodesics_set_point}
For all $z\in\rtwo$ and $A\subseteq\rtwo$, a $\Pi$-\textbf{geodesic} from~$A$ to~$z$ is a chain of $\Pi^{(\hit{A}{z})}$-reproduction events
\[\mathbf{C}^{[G]} = \left( Z_{t}^{[G]}, R_{t}^{[G]} \right)_{t\ge 0} \in \ccal^{(z)}(\Pi^{(\hit{A}{z})})\]
such that
\[\bcal\left( Z_{\hit{\{z\}}{A}}^{[G]}, R_{\hit{\{z\}}{A}}^{[G]} \right) \cap A \neq \emptyset.\]
For any measurable $A\subset\rtwo$ and any $z\in\rtwo$, let $\gcal^{A\to z}(\Pi)$ be the set of all $\Pi$-geodesics from~$A$ to~$z$.
\end{definition}

Recall that for $L > 0$, the strip of radius $L$ about the $x$-axis is denoted
\[ \lcal^{(L)} =\left\{ (x,y) \in \rtwo : |y| \leq L \right\}. \]
We can now state the precise version of Theorem~\ref{thm:geodesic_plane_to_point}.

\theoremstyle{plain}
\newtheorem*{geodesics_plane_to_point}{Theorem~\ref{thm:geodesic_plane_to_point} (precise statement)}
\begin{geodesics_plane_to_point}
For all $\eps > 0$, there exist $A_{\eps},x_\eps \in(0,\infty)$ such that for all $x \ge x_\eps$, 
\begin{equation*}
\proba\left( \exists \mathbf{C}^{[G]} \in \gcal^{\hcal\to(0,x)}(\Pi) : \bcal\left( Z_{s}^{[G]}, R_{s}^{[G]} \right) \subseteq \lcal^{(A_{\eps}\sqrt{x})} \,\, \forall s \in \left[ 0,\hit{\hcal}{x,0} \right]\right) \geq 1-\eps.
\end{equation*}
\end{geodesics_plane_to_point}

As for Theorem \ref{thm:geodesics}, we expect that in fact all geodesics from $\hcal$ to $(0,x)$ have fluctuations of order $\sqrt x$, but we do not currently have a bound on how
many such geodesics there are (in general there will be more than one) and therefore do not rule out that there may be ``unusual'' geodesics that fluctuate on a larger scale.

We now proceed with the proof of Theorem \ref{thm:geodesic_plane_to_point}. As previously mentioned, this will be more intricate than the proof of Theorem \ref{thm:geodesics}(i), but we will make use of several of the tools and results already developed.

We recall from Section \ref{subsec:tree_representation} that $\tcal^{[z]}(\Pi^{(t)})$ is the tree representation of $\Pi^{(t)}$-chains of reproduction events started from $z\in\rtwo$. We define, for $t\in\mathbb{R}$ and $z\in\rtwo\setminus\hcal$,
\[\hit{[z,t]}{\hcal} :=\inf\left\{s\ge 0 : \exists C = (Z_u,R_u)_{u\ge 0} \in \ccal^{(z)}(\Pi^{(t)}) \text{ with } \bcal(Z_s,R_s)\cap\hcal \neq \emptyset \right\},\]
i.e.~the first time $s$ such that there exists a chain of $\Pi^{(t)}$-reproduction events started from $z$ that intersects $\hcal$ by time $s$. Contrast this with
\[\hit{\hcal}{z} = \inf\left\{s\ge 0 : \exists C = (Z_u,R_u)_{u\ge 0} \in \ccal^{(z)}(\Pi^{(s)}) \text{ with } \bcal(Z_s,R_s)\cap\hcal \neq \emptyset \right\},\]
i.e.~the first time $s$ such that there exists a chain of $\Pi^{(s)}$-reproduction events started from $z$ that intersects $\hcal$ by time $s$. Much of the added complexity of starting the $\infty$-parent SLFV from the half-plane, rather than a singleton or other compact set, comes from this difference. We note, however, that if $\hit{\hcal}{z}\le t$, then $\hit{[z,t]}{\hcal}\le t$.

If $z=(x,y)\in\rtwo$, then we also let $(\Theta^{[z,t]}_n)_{n\ge 0}$ be the exact analogue of the random geodesic path $(\Theta^{(x)}_n)_{n\ge 0}$ defined in Section \ref{subsec:random_geodesic}, but for the tree $\tcal^{[z]}(\Pi^{(t)})$ rather than $\tcal^{[0]}(\Pi)$. To be more precise, we let $\tcal^{[z,t]}$ be the subtree of $\tcal^{[z]}(\Pi^{(t)})$ constructed by keeping only paths which are in $\hcal$ at all times $t\ge \hit{[z,t]}{\hcal}$. We then set $\Theta^{[z,t]}_0 = \emptyset$, i.e.~we start from the root of the tree, and recursively define $\Theta^{[z,t]}_{n+1}$ as follows:
\begin{itemize}
\item If $\Theta_{n}^{[z,t]} \cdot 1 \notin \tcal^{[z,t]}$ (resp. $\Theta_{n}^{[z,t]} \cdot 2 \notin \tcal^{[z,t]}$), then $\Theta_{n+1}^{[z,t]} = \Theta_{n}^{[z,t]}\cdot 2$ (resp. $\Theta_{n}^{[z,t]} \cdot 1$). 
\item Otherwise, both $\Theta_{n}^{[z,t]} \cdot 1$ and $\Theta_{n}^{[z,t]} \cdot 2$ are in $\tcal^{[z,t]}$, and $\Theta_{n+1}^{[z,t]} = \Theta_{n}^{[z,t]} \cdot 1$ (resp. $\Theta_{n}^{[z,t]} \cdot 2$) with probability $1/2$. 
\end{itemize}
To be clear, now when we write e.g.~$Y_{\Theta_{n}^{[z,t]}}$, we mean the $y$-displacement of the vertex $\Theta_{n}^{[z,t]}$ \emph{in the tree} $\tcal^{[z]}(\Pi^{(t)})$, rather than in the tree $\tcal=\tcal^{[0]}(\Pi)$. Recall that $\fcal^\tcal$ is the $\sigma$-algebra that knows everything about the tree $\tcal$, including all its labels, \emph{and} the information about  $(\Theta^{(x)}_j)_{j\ge 0}$; similarly let $\fcal^{[z,t]}$ be the $\sigma$-algebra that knows everything about $\tcal^{[z]}(\Pi^{(t)})$ and the path $(\Theta^{[z,t]}_n)_{n\ge 0}$. We now give a lemma similar to Lemma \ref{lem:prelim_res_transverse_displ}.

\begin{lem}\label{lem:backwards_transverse_displ} Fix $t\in\mathbb{R}$ and $z=(x,0)$ for $x>0$. For all $n \geq 1$, $y>0$ and $0\le a<b<\infty$,
\begin{equation*}
\proba\left( \left\{\max_{j \leq n} \Big| Y_{\Theta_{j}^{[z,t]}} \Big| > y\right\}\cap \left\{\hit{\hcal}{z}\in(a,b]\right\} \right) \leq \frac{n \emph{\text{R}}_{0}^{2}}{y^{2}}\P\left(\hit{\hcal}{z}\in(a,b]\right). 
\end{equation*}
\end{lem}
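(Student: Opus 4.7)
The plan is to mimic the proof of Lemma~\ref{lem:prelim_res_transverse_displ}, the analogous forward-in-time statement, with the added complication that we must also incorporate the indicator $\ind_{\{\hit{\hcal}{z}\in(a,b]\}}$. Introduce i.i.d.\ random variables $(B_i)_{i\geq 1}$ with $\P(B_i=1)=\P(B_i=-1)=1/2$, independent of $\Pi$ and of the coin flips used to define the random path $\Theta^{[z,t]}$, and set
\[\tilde{Y}_{\Theta_j^{[z,t]}} := \sum_{i=1}^j B_i\,\Delta^Y_{\Theta_i^{[z,t]}}.\]
The first step is to establish the joint distributional identity
\[\bigl((Y_{\Theta_j^{[z,t]}})_{j\leq n},\,\ind_{\{\hit{\hcal}{z}\in(a,b]\}}\bigr) \stackrel{d}{=} \bigl((\tilde{Y}_{\Theta_j^{[z,t]}})_{j\leq n},\,\ind_{\{\hit{\hcal}{z}\in(a,b]\}}\bigr),\]
which is the exact analogue of the distributional identity used in Lemma~\ref{lem:prelim_res_transverse_displ} but now also involving the hitting-time indicator.

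Granted this identity, the martingale step runs cleanly. Let $\mathcal{G}$ denote the $\sigma$-algebra generated by $\Pi$ together with the coin flips defining $\Theta^{[z,t]}$, so that $(\Delta^Y_{\Theta_i^{[z,t]}})_{i\leq n}$, the tree $\tcal^{[z]}(\Pi^{(t)})$, the random path, and $\ind_{\{\hit{\hcal}{z}\in(a,b]\}}$ are all $\mathcal{G}$-measurable, while $(B_i)$ is independent of $\mathcal{G}$. Conditional on $\mathcal{G}$, $(\tilde{Y}_{\Theta_j^{[z,t]}})_{j\geq 0}$ is a mean-zero martingale in the filtration $(\mathcal{G}\vee\sigma(B_i:i\leq j))_{j\geq 0}$, and $|\tilde{Y}|^2$ is a submartingale with $\E[|\tilde{Y}_{\Theta_n^{[z,t]}}|^2 \mid \mathcal{G}] = \sum_{i=1}^n(\Delta^Y_{\Theta_i^{[z,t]}})^2 \leq n\,\text{R}_0^2$ almost surely, by the orthonormality of the $B_i$. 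Doob's submartingale inequality therefore yields $\P(\max_{j\leq n}|\tilde{Y}_{\Theta_j^{[z,t]}}|>y \mid \mathcal{G}) \leq n\,\text{R}_0^2/y^2$ almost surely; multiplying by $\ind_{\{\hit{\hcal}{z}\in(a,b]\}}$ and taking expectations gives the bound on the $\tilde{Y}$-probability, and the joint equality in distribution transfers it to $Y$.

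The hard part is justifying the joint distributional identity, since $\hit{\hcal}{z}$ genuinely depends on $y$-coordinates of Poisson events through ball-to-ball intersection conditions, so a naive independent sign-flipping of on-path $\Delta^Y$'s could a priori change chain connectivity and hence the hitting time. The key observations are that within the tree $\tcal^{[z]}(\Pi^{(t)})$ adjacent balls intersect by construction irrespective of $y$-coordinates, and that the condition ``$\bcal(z',r')\cap\hcal\neq\emptyset$'' depends only on $x$-coordinates and radii. These allow us to condition on a $\sigma$-algebra rich enough to determine $\hit{\hcal}{z}$ while leaving the on-path $\Delta^Y$'s conditionally independent and uniform on symmetric intervals around zero, whence independent sign-flipping preserves the joint law.
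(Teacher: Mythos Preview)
Your approach is correct and essentially identical to the paper's: introduce i.i.d.\ signs $B_i$, pass from $Y$ to $\tilde Y$ via the joint distributional identity with the hitting-time indicator, and apply Doob's submartingale inequality to bound the second moment by $n\text{R}_0^2\,\P(\hit{\hcal}{z}\in(a,b])$. The only cosmetic difference is that you condition on $\mathcal{G}$ and apply Doob conditionally before multiplying by the indicator, whereas the paper works directly with the submartingale $|\tilde Y_j|^2\,\ind_{\{\hit{\hcal}{z}\in(a,b]\}}$; the paper's justification of the distributional identity is just as brief as yours (a single line: ``$\hit{\hcal}{z}$ is independent of the sign of the $y$-displacements of any individual path in the tree''), so your additional care there is not a departure.
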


\begin{proof}
Fix $n\ge1$, $y>0$ and $0\le a<b<\infty$. As in Section \ref{subsec:upper_bound_displacement}, we let $(B_i)_{i\ge 1}$ be a collection of i.i.d.~random variables, also independent of everything else, satisfying
\[\P(B_i=1)=\P(B_i=-1)=1/2\]
for each $i$. Define the filtration
\[\fcal^{[z,t]}_j = \fcal^\tcal \vee \fcal^{[z,t]} \vee \sigma\{B_i : i\le j\}.\]
Set $\tilde Y_0=0$ and, for $j\ge 1$,
\[\tilde Y_j = \sum_{i=1}^j B_i \Delta^Y_{\Theta^{[z,t]}_i}.\]
Since $\hit{\hcal}{z}$ is independent of the sign of the $y$-displacements of any individual path in the tree, 
\[\proba\left( \left\{\max_{j \leq n} \Big| Y_{\Theta_{j}^{[z,t]}} \Big| > y\right\}\cap \left\{\hit{\hcal}{z}\in(a,b]\right\} \right) = \proba\left( \left\{\max_{j \leq n} \Big| \tilde Y_j \Big| > y\right\}\cap \left\{\hit{\hcal}{z}\in(a,b]\right\} \right).\]
It is also clear that $(\tilde Y_j \ind_{\{\hit{\hcal}{z}\in(a,b]\}})_{j\ge 0}$ is a martingale with respect to the filtration $\fcal_j$; and therefore $\big(\big|\tilde Y_j\big|^2 \ind_{\{\hit{\hcal}{z}\in(a,b]\}}\big)_{j\ge 0}$ is a submartingale with respect to the same filtration. Thus Doob's submartingale inequality says that
\begin{align*}
\proba\left( \left\{\max_{j \leq n} \Big| \tilde Y_j \Big| > y\right\}\cap \left\{\hit{\hcal}{z}\in(a,b]\right\} \right) &= \P\left(\max_{j\le n} \Big|\tilde Y_j\Big|^2 \ind_{\{\hit{\hcal}{z}\in(a,b]\}} > y^2\right)\\
&\le y^{-2}\,\E\left[\Big|\tilde Y_n\Big|^2 \ind_{\{\hit{\hcal}{z}\in(a,b]\}}\right].
\end{align*}
Now, by the independence and orthonormality of the $B_i$,
\begin{align*}
\E\left[\Big|\tilde Y_n\Big|^2 \ind_{\{\hit{\hcal}{z}\in(a,b]\}}\right] &= \E\left[\left(\sum_{i=1}^n B_{i}\Delta^Y_{\Theta^{(x)}_i}\right)^2\ind_{\{\hit{\hcal}{z}\in(a,b]\}}\right]\\
&= \sum_{i,j=1}^n \E\Big[B_{i}B_{j}\Big]\E\left[\Delta^Y_{\Theta^{(x)}_i}\Delta^Y_{\Theta^{(x)}_i}\ind_{\{\hit{\hcal}{z}\in(a,b]\}}\right]\\
&= \sum_{i=1}^n \E\left[\Big(\Delta^Y_{\Theta^{(x)}_i}\Big)^2\ind_{\{\hit{\hcal}{z}\in(a,b]\}}\right]\\
&\le n\text{R}_{0}^{2}\P\left(\hit{\hcal}{z}\in(a,b]\right)
\end{align*}
almost surely, which allows us to conclude. 
\end{proof}

For $\delta\in(0,1]$, let $\tau^\hcal_\delta(z) = \delta\lceil \hit{\hcal}{z}/\delta\rceil$, i.e.~we round $\hit{\hcal}{z}$ up to the next element of $\delta\mathbb{N}$. Our next proposition \emph{almost} guarantees that the geodesic from $\hcal$ to $z=(x,0)$ stays within a strip of radius $A\sqrt{x}$ for sufficiently large $A$ with high probability; but for now, we have to make do with a chain of reproduction events that starts \emph{just after} time $\tau^\hcal(z)$, specifically at time $\tau^\hcal_\delta(z)$. The uniformity in $\delta$, for which we have to be careful in the proof, will then allow us to deduce that in fact the geodesic itself also stays within the strip with high probability.

\begin{prop}\label{prop:almost_geodesic}
Let $z=(x,0)$. For any $\eps>0$, there exists $A_\eps,x_\eps\in(0,\infty)$ such that for all $\delta\in(0,1]$ and $x\ge x_\eps$,
\begin{multline*}
\P\Big(\exists C = (Z_u,R_u)_{u\ge 0} \in \ccal^{(z)}\big(\Pi^{(\tau^\hcal_\delta(z))}\big) :\\\bcal\Big(Z_{\tau^\hcal_\delta(z)},R_{\tau^\hcal_\delta(z)}\Big)\cap\hcal\neq\emptyset \text{ and } \bcal(Z_u,R_u)\subseteq \lcal^{(A\sqrt{x})} \,\, \forall u\le \tau^\hcal_\delta(z)\Big) \ge 1-\eps.
\end{multline*}
\end{prop}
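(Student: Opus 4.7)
The plan is to exhibit, for each $\delta\in(0,1]$ and $x$ large enough, a single candidate chain and show it has the required properties with probability at least $1-\eps$. A natural candidate is the random geodesic path $\big(\Theta^{[z,\tau^\hcal_\delta(z)]}_n\big)_{n\ge 0}$ in the tree $\tcal^{[z,\tau^\hcal_\delta(z)]}$, viewed (after freezing it once it reaches $\hcal$) as a chain in $\Pi^{(\tau^\hcal_\delta(z))}$. Since $\tau^\hcal_\delta(z)\ge\hit{\hcal}{z}$, and any chain in $\Pi^{(\hit{\hcal}{z})}$ hitting $\hcal$ extends to one in $\Pi^{(\tau^\hcal_\delta(z))}$ hitting $\hcal$ (by prepending a waiting phase of length $\tau^\hcal_\delta(z)-\hit{\hcal}{z}$ at $z$), one has $\hit{[z,\tau^\hcal_\delta(z)]}{\hcal}\le\tau^\hcal_\delta(z)$, so the random geodesic really does reach $\hcal$ within the time budget. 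As every ball in the chain has radius at most $\text{R}_0$, it suffices to show that with probability at least $1-\eps$, every vertex $\Theta^{[z,\tau^\hcal_\delta(z)]}_j$ visited before $\hcal$ is reached satisfies $|Y_{\Theta^{[z,\tau^\hcal_\delta(z)]}_j}|\le A\sqrt x-\text{R}_0$.

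Let $L^*$ denote the number of jumps of the random geodesic before it hits $\hcal$. For a constant $K>0$ to be chosen, I would decompose the bad event as
\[\{\text{random geodesic leaves the strip}\}\subseteq \bigg\{\max_{j\le\lceil Kx\rceil}\Big|Y_{\Theta^{[z,\tau^\hcal_\delta(z)]}_j}\Big|>A\sqrt x-\text{R}_0\bigg\}\cup\{L^*>Kx\}.\]
For the first piece I would use the disjointness of the events $\{\tau^\hcal_\delta(z)=k\delta\}=\{\hit{\hcal}{z}\in((k-1)\delta,k\delta]\}$ for $k\ge 1$, together with Lemma~\ref{lem:backwards_transverse_displ} applied with $t=k\delta$, $n=\lceil Kx\rceil$, $y=A\sqrt x-\text{R}_0$, $a=(k-1)\delta$ and $b=k\delta$, to obtain
\[\P\bigg(\max_{j\le\lceil Kx\rceil}\Big|Y_{\Theta^{[z,\tau^\hcal_\delta(z)]}_j}\Big|>A\sqrt x-\text{R}_0\bigg)\le\sum_{k\ge 1}\frac{\lceil Kx\rceil\,\text{R}_0^2}{(A\sqrt x-\text{R}_0)^2}\P\big(\hit{\hcal}{z}\in((k-1)\delta,k\delta]\big)\le\frac{\lceil Kx\rceil\,\text{R}_0^2}{(A\sqrt x-\text{R}_0)^2},\]
which is at most $CK/A^2$ for $x$ large, and crucially does not involve $\delta$.

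For the second piece, $\P(L^*>Kx)$, I would aim to establish a uniform-in-$\delta$ moment bound $\E[(L^*)^q]\le C_q x^q$, giving $\P(L^*>Kx)\le C_q/K^q$ by Markov's inequality. This is the backwards analogue of Corollary~\ref{cor:moments_number_jumps}, and I would prove it by adapting the proof of Lemma~\ref{lem:tail_estimates_number_jumps}: branching in $\tcal^{[z,\tau^\hcal_\delta(z)]}$ is controlled by Lemma~\ref{lem:moment_lemma_1}, while the upper tail of $\hit{\hcal}{z}$ (and hence of $\tau^\hcal_\delta(z)\le\hit{\hcal}{z}+1$) follows from Proposition~\ref{prop:tail_tau_x} combined with Corollary~\ref{cor:hitting_times_equal_distn}, none of which involve $\delta$. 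Given the moment bound, one chooses $q$ large, then $K$ large enough that $C_q/K^q<\eps/2$, and finally $A_\eps$ and $x_\eps$ so that $\lceil Kx\rceil\,\text{R}_0^2/(A_\eps\sqrt x-\text{R}_0)^2<\eps/2$ for all $x\ge x_\eps$. The main obstacle, and the reason this argument is more delicate than the proof of Theorem~\ref{thm:geodesics}(i), is precisely this uniform-in-$\delta$ tail control of $L^*$: since $L^*$ is the random-geodesic length in the random-time tree $\tcal^{[z,\tau^\hcal_\delta(z)]}$, naive term-by-term bounds over $k$ would introduce spurious factors of $1/\delta$ (via the geometric sum of $e^{-\text{M}_0 k\delta}$) that must be avoided by exploiting the disjointness of $\{\tau^\hcal_\delta(z)=k\delta\}$ together with the identity $\Pi^{(s)}\stackrel{d}{=}\Pi$ for every fixed $s$.
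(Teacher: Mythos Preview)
Your overall plan mirrors the paper's: use the random geodesic in $\tcal^{[z]}(\Pi^{(\tau^\hcal_\delta(z))})$ as the candidate chain, and split the bad event into ``transverse excursion within the first $\lceil Kx\rceil$ jumps'' plus ``$L^*>Kx$''. Your treatment of the first piece via Lemma~\ref{lem:backwards_transverse_displ} summed over the disjoint events $\{\hit{\hcal}{z}\in((k-1)\delta,k\delta]\}$ is correct and is exactly what the paper does.

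The gap is in the second piece. You correctly flag the difficulty---naive bounds bring in a factor $1/\delta$---but your proposed fix, ``disjointness of $\{\tau^\hcal_\delta(z)=k\delta\}$ together with $\Pi^{(s)}\stackrel{d}{=}\Pi$'', does not close it. Writing
\[
\P(L^*>Kx,\ \tau^\hcal_\delta(z)\le Bx)\ =\ \sum_{k:\,k\delta\le Bx}\P\big(\tau^\hcal_\delta(z)=k\delta,\ \exists u\in\tcal^{[z]}(\Pi^{(k\delta)})\setminus\tcal^{[z]}_{\lceil Kx\rceil-1}(\Pi^{(k\delta)}):\Gamma_u\le Bx\big),
\]
you cannot factorise the summand: $\{\tau^\hcal_\delta(z)=k\delta\}$ depends on $\Pi$ on $(0,k\delta]$, and the tree event depends on $\Pi$ on $[k\delta-Bx,k\delta]$, so they overlap and are correlated. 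Dropping the first event recovers the $O(Bx/\delta)$ many identical terms you were trying to avoid; keeping it blocks the use of $\Pi^{(k\delta)}\stackrel{d}{=}\Pi$. Hence Lemma~\ref{lem:moment_lemma_1} cannot be applied to the random-time tree as directly as you suggest, and the uniform moment bound $\E[(L^*)^q]\le C_q x^q$ is not established.

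The paper resolves this with a structural observation you are missing: for $s\le t$, the tree $\tcal^{[z]}(\Pi^{(s)})$ embeds as a subtree of $\tcal^{[z]}(\Pi^{(t)})$ rooted at the appropriate ``all-ones'' vertex, so a long path in the $k\delta$-tree forces a long path in the $\lceil k\delta\rceil$-tree. This lets one collapse all $j\in\delta\mathbb{N}\cap(i-1,i]$ to a single integer time $i$, keep the disjointness $\{\hit{\hcal}{z}\in(i-1,i]\}$ at the integer scale, and then drop it to get at most $\lfloor Bx\rfloor$ copies of $\P(\exists u\in\tcal\setminus\tcal_{\lceil Kx\rceil-1}:\Gamma_u\le Bx)$, which Lemma~\ref{lem:moment_lemma_1} makes exponentially small. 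That embedding step is the missing idea.
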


\begin{proof}
We define, for any $t\in\mathbb{R}$ and $z\in\rtwo\setminus\hcal$,
\[\text{N}^{[z,t]} = \max\left\{n\in\mathbb{N} : \Gamma_{\Theta^{[z,t]}_n} \le \tau^{[z,t]}(\hcal)\right\}.\]
This is the number of jumps the path $(\Theta^{[z,t]}_n)_{n\ge 0}$ makes before hitting $\hcal$. Then for any $C,\theta>0$,
\begin{align*}
&\P\Big(\exists C = (Z_u,R_u)_{u\ge 0} \in \ccal^{(z)}\big(\Pi^{(\tau^\hcal_\delta(z))}\big) :\\[-1mm]
&\hspace{40mm}\bcal\Big(Z_{\tau^\hcal_\delta(z)},R_{\tau^\hcal_\delta(z)}\Big)\cap\hcal\neq\emptyset \text{ and } \bcal(Z_u,R_u)\subseteq \lcal^{(A\sqrt{x})} \,\, \forall u\le \tau^\hcal_\delta(z)\Big)\\
&\ge \sum_{j\in\delta\mathbb{N}} \P\Big(\hit{\hcal}{z}\in(j-\delta,j],\, \exists C = (Z_u,R_u)_{u\ge 0} \in \ccal^{(z)}\big(\Pi^{(j)}\big) :\\[-3mm]
&\hspace{60mm}\bcal\left(Z_{j},R_{j}\right)\cap\hcal\neq\emptyset \text{ and } \bcal\left(Z_u,R_u\right)\subseteq \lcal^{(A\sqrt{x})} \,\, \forall u\le j\Big)\\
&\ge \sum_{j\in\delta\mathbb{N}} \P\Big(\hit{\hcal}{z}\in(j-\delta,j],\, \hit{[z,j]}{\hcal}\le j,\, \bcal\left(Z_{\Theta^{[z,j]}_n},R_{\Theta^{[z,j]}_n}\right)\subseteq \lcal^{(A\sqrt{x})} \,\, \forall n\le \text{N}^{[z,j]}\Big)\\
&\ge \sum_{j\in\delta\mathbb{N}} \P\Big(\hit{\hcal}{z}\in(j-\delta,j],\, \hit{[z,j]}{\hcal}\le j,\, \text{N}^{[z,j]}\le \theta x,\,\bcal\left(Z_{\Theta^{[z,j]}_n},R_{\Theta^{[z,j]}_n}\right)\subseteq \lcal^{(A\sqrt{x})} \,\, \forall n\le \lfloor\theta x\rfloor\Big)\\
&\ge \sum_{j\in\delta\mathbb{N}} \bigg( \P\Big(\hit{\hcal}{z}\in(j-\delta,j],\, \hit{[z,j]}{\hcal}\le j,\, \text{N}^{[z,j]}\le \theta x\Big)\\[-1.5mm]
&\hspace{40mm}-\P\Big(\left\{\hit{\hcal}{z}\in(j-\delta,j]\right\}\cap\left\{ \exists n\le \lfloor\theta x\rfloor : \bcal\left(Z_{\Theta^{[z,j]}_n},R_{\Theta^{[z,j]}_n}\right)\nsubseteq \lcal^{(A\sqrt{x})} \right\}\Big)\bigg).
\end{align*}
We first deal with the sum of negative terms. By Lemma \ref{lem:backwards_transverse_displ},
\begin{align*}
&\sum_{j\in\delta\mathbb{N}}\P\Big(\left\{\hit{\hcal}{z}\in(j-\delta,j]\right\}\cap\left\{ \exists n\le \lfloor\theta x\rfloor : \bcal\left(Z_{\Theta^{[z,j]}_n},R_{\Theta^{[z,j]}_n}\right)\nsubseteq \lcal^{(A\sqrt{x})} \right\}\Big)\\
&\le \sum_{j\in\delta\mathbb{N}}\P\bigg(\left\{\hit{\hcal}{z}\in(j-\delta,j]\right\}\cap\bigg\{ \max_{n\le \lfloor\theta x\rfloor} \left|Y_{\Theta^{[z,j]}_n}\right| > A\sqrt{x} - \text{R}_0 \bigg\}\bigg)\\
&\le \sum_{j\in\delta\mathbb{N}}\frac{\theta x \text{R}_{0}^{2}}{(A\sqrt x-\text{R}_0)^{2}}\P\left(\hit{\hcal}{z}\in(j-\delta,j]\right)\\
&= \frac{\theta x \text{R}_{0}^{2}}{(A\sqrt x-\text{R}_0)^{2}},
\end{align*}
which, for any $\theta>0$ and $x>1$, can be made smaller than $\eps/2$ by choosing $A$ sufficiently large. It therefore suffices to show that
\begin{equation}\label{eq:backwards_N_bound}\sum_{j\in\delta\mathbb{N}} \P\Big(\hit{\hcal}{z}\in(j-\delta,j],\, \hit{[z,j]}{\hcal}\le j,\, \text{N}^{[z,j]}\le \theta x\Big) \ge 1-\eps/2
\end{equation}
for some $\theta>0$ and $x$ sufficiently large. A subtle difficulty arises because we want to eliminate any dependence on $\delta$.

We recall again the tree representation of chains of reproduction events from Section \ref{subsec:tree_representation}. We aim to proceed similarly to the proof of Lemma \ref{lem:tail_estimates_number_jumps}, noting that if $\hit{[z,j]}{\hcal}\le j$ and $\text{N}^{[z,j]} > \theta x$ then there must exist a vertex in $\tcal^{[z]}(\Pi^{(j)})$ outside the first $\lceil\theta x\rceil -1$ generations whose birth time is smaller than $j$. However, since the number of possible values of $j$ depends on $\delta$, we want to reduce the number of trees that we need to inspect.

Consider $0<s\le t$ (we will eventually take $s=j\in\delta\mathbb{N}$ and $t=\lceil j\rceil$). Recall that the vertices of $\tcal^{[z]}(\Pi^{(t)})$ are represented by strings of $1$s and $2$s, and the first child of each vertex inherits its position, whereas the second child moves according to the corresponding reproduction event. Since we will need to work momentarily with both trees $\tcal^{[z]}(\Pi^{(t)})$ and $\tcal^{[z]}(\Pi^{(s)})$, we will explicitly include the relevant point process in the notation of the vertex labels, e.g.~$Z_u^{[z]}(\Pi^{(t)})$ to represent the position of vertex $u$ in the tree $\tcal^{[z]}(\Pi^{(t)})$, and $\Gamma_u^{[z]}(\Pi^{(t)})$ and $T_u^{[z]}(\Pi^{(t)})$ for the birth and death times of the same vertex.

Let $v_j$ be the string of exactly $j$ $1$s, so $Z^{[z]}_{v_j}(\Pi^{(t)}) = z$ and $R^{[z]}_{v_j}(\Pi^{(t)}) = 0$ for all $j$. Let
\[J=J^{[z]}(s,t) = \min\{j : T^{[z]}_{v_j}(\Pi^{(t)}) \ge t-s\},\]
the first $j$ such that the death time of $v_j$ is after $t-s$. Then $\tcal^{[z]}(\Pi^{(s)})$ can be viewed as the subtree of $\tcal^{[z]}(\Pi^{(t)})$ rooted at $v_{J}$, with labels satisfying $Z_u^{[z]}(\Pi^{(s)}) = Z_{v_J u}^{[z]}(\Pi^{(t)})$, $R_u^{[z]}(\Pi^{(s)}) = R_{v_J u}^{[z]}(\Pi^{(t)})$, and $T_u^{[z]}(\Pi^{(s)}) = T_{v_J u}^{[z]}(\Pi^{(t)}) - (t-s)$. In particular, for any $n\in\mathbb{N}$ and $\gamma>0$, if there is a vertex $u$ outside the first $n$ generations of $\tcal^{[z]}(\Pi^{(s)})$ with $\Gamma_u^{[z]}(\Pi^{(s)}) \le \gamma$, then there certainly exists a vertex $u'$ outside the first $n$ generations of $\tcal^{[z]}(\Pi^{(t)})$ with $\Gamma_u^{[z]}(\Pi^{(t)}) \le \gamma+t-s$; indeed, $u' = v_J u$ is such a vertex.

Taking $j\in\delta\mathbb{N}$ and applying this argument with $s=j$, $t=\lceil j\rceil$, $n=\lceil\theta x\rceil -1$, and $\gamma = j$, we have
\begin{align*}
&\P\Big(\hit{\hcal}{z}\in(j-\delta,j],\, \hit{[z,j]}{\hcal}\le j,\, \text{N}^{[z,j]}> \theta x\Big)\\
&\le \P\Big(\hit{\hcal}{z}\in(j-\delta,j],\, \exists u\in \tcal^{[z]}(\Pi^{(j)}) \setminus \tcal^{[z]}_{\lceil\theta x\rceil-1}(\Pi^{(j)}) : \Gamma_u \le j\Big)\\
&\le \P\Big(\hit{\hcal}{z}\in(j-\delta,j],\, \exists u'\in \tcal^{[z]}(\Pi^{(\lceil j\rceil)}) \setminus \tcal^{[z]}_{\lceil\theta x\rceil-1}(\Pi^{(\lceil j\rceil)}) : \Gamma_u \le \lceil j\rceil \Big).
\end{align*}
Thus, for any $i\in\mathbb{N}$,
\begin{multline*}
\sum_{j\in\delta\mathbb{N}\cap(i-1,i]}\P\Big(\hit{\hcal}{z}\in(j-\delta,j],\, \hit{[z,j]}{\hcal}\le j,\, \text{N}^{[z,j]}> \theta x\Big)\\
\le \P\Big(\hit{\hcal}{z}\in(i-1,i],\, \exists u'\in \tcal^{[z]}(\Pi^{(i)}) \setminus \tcal^{[z]}_{\lceil\theta x\rceil-1}(\Pi^{(i)}) : \Gamma_u \le i \Big),
\end{multline*}
and for any $n\in\mathbb{N}$,
\begin{align*}
&\sum_{j\in\delta\mathbb{N}\cap(0,n]}\P\Big(\hit{\hcal}{z}\in(j-\delta,j],\, \hit{[z,j]}{\hcal}\le j,\, \text{N}^{[z,j]}> \theta x\Big)\\
&\le \sum_{i=1}^n\P\Big(\hit{\hcal}{z}\in(i-1,i],\, \exists u'\in \tcal^{[z]}(\Pi^{(i)}) \setminus \tcal^{[z]}_{\lceil\theta x\rceil-1}(\Pi^{(i)}) : \Gamma_u \le i \Big)\\
&\le \sum_{i=1}^n\P\Big(\exists u'\in \tcal^{[z]}(\Pi^{(i)}) \setminus \tcal^{[z]}_{\lceil\theta x\rceil-1}(\Pi^{(i)}) : \Gamma_u \le n \Big).
\end{align*}
Since, for each $i$, the point process $\Pi^{(i)}$ has the same distribution as $\Pi$, and using also the translation invariance of $\Pi$, we obtain that
\begin{equation}\label{eq:delta_sum_union}
\sum_{j\in\delta\mathbb{N}\cap(0,n]}\P\Big(\hit{\hcal}{z}\in(j-\delta,j],\, \hit{[z,j]}{\hcal}\le j,\, \text{N}^{[z,j]}> \theta x\Big) \le n\P\Big(\exists u'\in \tcal \setminus \tcal_{\lceil\theta x\rceil-1} : \Gamma_u \le n \Big).
\end{equation}

We now apply this with the aim of obtaining \eqref{eq:backwards_N_bound}. By Theorem \ref{thm:hitting_times}, we can choose $\beta>0$ such that $\P(\hit{\hcal}{z} > \lfloor \beta x\rfloor)<\eps/4$. Then, using also that if $\hit{\hcal}{z}\le j$ then $\hit{[z,j]}{\hcal}\le j$, we have
\begin{align*}
&\sum_{j\in\delta\mathbb{N}} \P\Big(\hit{\hcal}{z}\in(j-\delta,j],\, \hit{[z,j]}{\hcal}\le j,\, \text{N}^{[z,j]}\le \theta x\Big)\\
&\ge \sum_{j\in\delta\mathbb{N} \cap (0,\lfloor\beta x\rfloor]} \hspace{-1mm} \bigg(\P\Big(\hit{\hcal}{z}\in(j-\delta,j],\, \hit{[z,j]}{\hcal}\le j\Big) - \P\Big(\hit{\hcal}{z}\in(j-\delta,j],\, \hit{[z,j]}{\hcal}\le j,\, \text{N}^{[z,j]}> \theta x\Big)\hspace{-1mm}\bigg)\\
&= \P\Big(\hit{\hcal}{z} \le \lfloor\beta x\rfloor\Big) - \sum_{j\in\delta\mathbb{N} \cap (0,\lfloor\beta x\rfloor]}\P\Big(\hit{\hcal}{z}\in(j-\delta,j],\, \hit{[z,j]}{\hcal}\le j,\, \text{N}^{[z,j]}> \theta x\Big).
\end{align*}
By \eqref{eq:delta_sum_union}, this is at least
\[1-\eps/4 - \beta x \P\Big(\exists u'\in \tcal \setminus \tcal_{\lceil\theta x\rceil-1} : \Gamma_u \le \beta x \Big),\]
and then applying Lemma \ref{lem:moment_lemma_1} with $t=\beta x$ and $\theta = 2e\text{M}_0 \beta$, we have
\[\P\Big(\exists u'\in \tcal \setminus \tcal_{\lceil\theta x\rceil-1} : \Gamma_u \le \beta x \Big) \le 2e^{-\text{M}_0\beta x}\]
and therefore
\[\sum_{j\in\delta\mathbb{N}} \P\Big(\hit{\hcal}{z}\in(j-\delta,j],\, \hit{[z,j]}{\hcal}\le j,\, \text{N}^{[z,j]}\le \theta x\Big) \ge 1-\eps/4 - 2\beta x e^{-\text{M}_0\beta x}.\]
Taking $x$ sufficiently large establishes \eqref{eq:backwards_N_bound} and thus completes the proof.
\end{proof}

It is now a simple exercise to deduce Theorem \ref{thm:geodesic_plane_to_point}.

\begin{proof}[Proof of Theorem \ref{thm:geodesic_plane_to_point}]
Fix $A_\eps$ and $x_\eps$ as in Proposition \ref{prop:almost_geodesic}. For $\delta\in(0,1]$ and $x>0$, with $z=(x,0)$, let $\mathcal A_\delta(x)$ be the event from Proposition \ref{prop:almost_geodesic}, i.e.
\begin{multline*}
\mathcal A_\delta(x) := \Big\{\exists C = (Z_u,R_u)_{u\ge 0} \in \ccal^{(z)}\big(\Pi^{(\tau^\hcal_\delta(z))}\big) :\\\bcal\Big(Z_{\tau^\hcal_\delta(z)},R_{\tau^\hcal_\delta(z)}\Big)\cap\hcal\neq\emptyset \text{ and } \bcal(Z_u,R_u)\subseteq \lcal^{(A_\eps\sqrt{x})} \,\, \forall u\le \tau^\hcal_\delta(z)\Big\}.
\end{multline*}
Note that, since we can always extend a chain of reproduction events to create another chain over a longer time interval by rejecting any additional reproduction events, we have $\mathcal A_\delta(x)\subseteq \mathcal A_{\delta'}(x)$ for any $\delta\le \delta'$. Thus, by Proposition \ref{prop:almost_geodesic} and continuity of probability measures, if $x\ge x_\eps$ then
\[\P\Bigg(\bigcap_{\delta\in(0,1]}\mathcal A_\delta(x)\Bigg) = \lim_{\delta\to 0} \P(\mathcal A_\delta(x)) \ge 1-\eps.\]
But since our $\infty$-parent SLFV process is c\`adl\`ag, and $\tau^\hcal_\delta(z)\to \hit{\hcal}{z}$ as $\delta\to 0$, we deduce that in fact the event
\begin{multline*}
\mathcal A(x) := \Big\{\exists C = (Z_u,R_u)_{u\ge 0} \in \ccal^{(z)}\big(\Pi^{(\tau^\hcal(z))}\big) :\\\bcal\Big(Z_{\tau^\hcal(z)},R_{\tau^\hcal(z)}\Big)\cap\hcal\neq\emptyset \text{ and } \bcal(Z_u,R_u)\subseteq \lcal^{(A_\eps\sqrt{x})} \,\, \forall u\le \tau^\hcal(z)\Big\}
\end{multline*}
satisfies $\P(\mathcal A(x))\ge 1-\eps$ for all $x\ge x_\eps$, which is exactly the statement that there exists a geodesic from $\hcal$ to $z$ that remains within $\lcal^{(A_\eps\sqrt{x})}$ with probability at least $1-\eps$. This completes the proof.
\end{proof}

\section{\texorpdfstring{Hitting times: almost sure convergence, and control of the difference between $\hit{\hcal}{z}$ and $\bulk{\hcal}{z}$}{Hitting times: almost sure convergence, and control of the difference between hitting times and bulk coverage times}}\label{sec:hitting_and_difference}

\subsection{Almost sure convergence of hitting times}\label{sec:almost_sure_conv_hitting_times}

In this section we aim to prove Theorem \ref{thm:hitting_times}. We start with the first bullet point.

\begin{prop}\label{prop:almost_sure_plane}
The rescaled plane-to-line hitting times $\hit{\{0\}}{\hcal^x}/x$ converge as $x\to\infty$ almost surely and in $L^1$ to a constant $\nu\in(0,\infty)$.
\end{prop}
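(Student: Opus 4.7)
The plan is to apply the subadditive ergodic theorem to the plane-to-line hitting times $\tau_n := \hit{\{0\}}{\hcal^n}$ for integer $n$, interpolate to real $x$ using monotonicity, and derive $L^1$ convergence from uniform integrability. The approach mirrors that of \cite{louvet2023measurevalued} for the analogous plane-to-point hitting times. The key subadditive relation I would first establish is: for all $x, y > 0$,
\[
\tau_{x+y}\le \tau_x + \widetilde\tau_y,
\]
where $\widetilde\tau_y$ has the same distribution as $\tau_y$ and is independent of $\mathcal F_{\tau_x}$. By Corollary \ref{cor:forwards_St} and condition \eqref{eqn:condition_on_pi}, at time $\tau_x$ there is an $\mathcal F_{\tau_x}$-measurable point $(X^*, Y^*)\in S^{\{0\}}_{\tau_x}\cap\hcal^x$, necessarily with $X^*\ge x$. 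Let $\widetilde\Pi$ be the image of $\Pi|_{[\tau_x,\infty)}$ under the shift sending $(X^*, Y^*, \tau_x)$ to $(0,0,0)$; the strong Markov property at $\tau_x$ together with the spatial translation invariance of $\Pi$ ensures that $\widetilde\Pi$ has the same law as $\Pi$ and is independent of $\mathcal F_{\tau_x}$. Defining $\widetilde\tau_y$ as the hitting time of $\hcal^y$ by the $\infty$-parent SLFV started from $\{(0,0)\}$ under $\widetilde\Pi$ gives $\widetilde\tau_y \stackrel{d}{=}\tau_y$ with $\widetilde\tau_y$ independent of $\mathcal F_{\tau_x}$, and the monotone coupling of the $\infty$-parent SLFV combined with the inclusion $\hcal^{x+y-X^*}\supseteq\hcal^y$ (valid since $X^*\ge x$) yields the stated bound.

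Taking expectations gives $\E[\tau_{x+y}]\le\E[\tau_x]+\E[\tau_y]$, so Fekete's lemma yields $\E[\tau_n]/n\to\nu := \inf_n\E[\tau_n]/n$. Proposition \ref{prop:tail_tau_x} applied with $z=(n,0)$ gives $\E[\tau_n]\le Cn$ by integration of its exponential tail, so $\nu<\infty$. For positivity of $\nu$, I use that any chain of reproduction events reaching $\hcal^n$ from $(0,0)$ must make at least $\lceil n/(2\text{R}_0)\rceil$ jumps, since each event can advance the horizontal coordinate by at most $2\text{R}_0$. Applying Lemma \ref{lem:moment_lemma_1} with $t=n/(4e\text{R}_0\text{M}_0)$ and $\theta=2e\text{M}_0$, the probability that any chain of $\tcal$ makes at least $\lceil n/(2\text{R}_0)\rceil$ jumps by time $t$ is bounded by $2e^{-\text{M}_0 t}$, forcing $\E[\tau_n]\ge cn$ for some $c>0$ and hence $\nu\ge c>0$.

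For almost-sure convergence, I would apply Kingman's subadditive ergodic theorem to a family $\{X_{m,n}\}_{0\le m<n}$ constructed by iterating the $\widetilde\Pi$ construction at each hitting time $\tau_m$, so that $X_{0,n}=\tau_n$, strict subadditivity $X_{\ell,n}\le X_{\ell,m}+X_{m,n}$ holds, and the blocks $\{X_{nk,(n+1)k}\}_n$ form stationary ergodic sequences---the stationarity flowing from the translation invariance of $\Pi$ together with strong Markov at the stopping times $\tau_m$. The integrability hypothesis of Kingman's theorem is furnished by Proposition \ref{prop:tail_tau_x} and the previous step. The theorem then yields $\tau_n/n\to\nu$ almost surely (and in $L^1$) along integer $n$; monotonicity of $\tau_x$ in $x$ gives the sandwich
\[
\frac{\lfloor x\rfloor}{x}\cdot\frac{\tau_{\lfloor x\rfloor}}{\lfloor x\rfloor}\le\frac{\tau_x}{x}\le \frac{\lceil x\rceil}{x}\cdot\frac{\tau_{\lceil x\rceil}}{\lceil x\rceil},
\]
extending convergence to real $x$, while the uniform exponential tails in Proposition~\ref{prop:tail_tau_x} give uniform integrability and hence convergence in $L^1$. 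The main technical obstacle is the careful construction of $\{X_{m,n}\}$: the naive choice $X_{m,n}=\tau_n-\tau_m$ is strictly subadditive (in fact additive) but the blocks fail to be stationary, whereas fully independent resamplings of $\Pi$ give stationarity but only stochastic subadditivity. Reconciling these two constraints via a coupling at the stopping times $\tau_m$, as above, is the technical core adapted from \cite{louvet2023measurevalued}.
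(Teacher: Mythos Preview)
Your proposal is correct and follows essentially the same route as the paper: both apply the subadditive ergodic theorem (the paper cites Liggett's version explicitly) to an array $T_{m,n}$ built by restarting the process at a point where $\hcal^m$ is first hit and using the post-$\tau_m$ Poisson process, then extend to real $x$ by monotonicity, and establish $\nu>0$ via the jump-count bound of Lemma~\ref{lem:moment_lemma_1}. The paper obtains $L^1$ convergence directly from the ergodic theorem rather than via uniform integrability, and uses Lemma~\ref{lem:upper_bound_sigma_x} rather than Proposition~\ref{prop:tail_tau_x} for the moment bound, but these are cosmetic differences.
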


\begin{proof}
Essentially this is a repeat of the argument in \cite[Proposition 4.1]{louvet2023measurevalued} adapted to our notation, which in turn is a standard application of Liggett's strengthening \cite{liggett1985improved} of Kingman's subadditive ergodic theorem \cite{kingman1968ergodic}. We will show that an appropriately chosen sequence of hitting times satisfies the conditions of \cite[Theorem 1.10]{liggett1985improved}.

Let $P_0=(0,0)\in\rtwo$ and for each $n\in\mathbb{N}$, let $P_n=Z_{\Theta^{(n)}_{\text{N}(n)}}$, the centre of the reproduction event with which the random geodesic hits $\hcal^{n}$. (Our choice of $P_n$ is somewhat arbitrary; we simply need any point at which $H^{n}$ is hit for the first time.) In order to apply the subadditive ergodic theorem, we need to consider time-shifts of our $\infty$-parent SLFV process. To this end, for each $n\in\mathbb{N}$ let $\Pi(n)$ be the time-shift of $\Pi$ by $\hit{\{0\}}{\hcal^{n}}$, and not including the point at time $\hit{\{0\}}{\hcal^{n}}$; that is,
\[(t,z,r) \in \Pi(n) \Longleftrightarrow (t+\hit{\{0\}}{\hcal^{n}},z,r) \in \Pi,\, t\neq 0.\]
Then let $S_t(n)$ be the $\infty$-parent SLFV started from $\{P_n\}$ and built using $\Pi(n)$, i.e. using reproduction events that occur strictly after $\hit{\{0\}}{\hcal^{n}}$. We also let $S_t(0) = S_t^{\{0\}}$.

Now, for $m<n\in\{0,1,2,\ldots\}$, let
\[T_{m,n}:= \inf\{t\ge 0 : S_t(m)\cap\hcal^{n} \neq \emptyset\}.\]
In words, we wait until the first time that $\hcal^{m}$ is hit; we take one particular point $P_n$ at which $\hcal^{n}$ is first hit; we start an $\infty$-parent SLFV provess from that point using reproduction events strictly after that time; and $T_{m,n}$ captures the amount of time this SLFV takes to hit $\hcal^{n}$.

We then note that:
\begin{enumerate}
\item $T_{0,n}\le T_{0,m} + T_{m,n}$ for any $0<m<n$;
\item for each $m\ge 0$, the joint distributions of $\{T_{m+1,m+k+1}: k\ge 1\}$ are the same as those of $\{T_{m,m+k} : k\ge 1\}$ (by the Markov property of the underlying Poisson point process $\Pi$);
\item for each $k\ge 1$, the process $(T_{nk,(n+1)k})_{n\ge 1}$ is stationary (by the stationarity of $\Pi$);
\item for each $n$, $T_{0,n}\ge 0$ and $\E[T_{0,n}]<\infty$, by Lemma \ref{lem:upper_bound_sigma_x}.
\end{enumerate}
These are precisely the conditions (1.7), (1.8), (1.9) and (1.3) from \cite{liggett1985improved}. In fact, we observe that the sequence in point 3 above is i.i.d. and therefore ergodic. We therefore deduce from \cite[Theorem 1.10]{liggett1985improved} that the constant
\[\nu := \inf_{n\ge 1} \frac{1}{n}\E[T_{0,n}]\]
satisfies
\[\nu = \lim_{n\to\infty}\frac{1}{n}\E[T_{0,n}]\]
and
\[\nu = \lim_{n\to\infty}\frac{1}{n}T_{0,n} \,\,\text{ almost surely.}\]
The proof is then almost complete, by the fact that $T_{0,n} = \hit{\{0\}}{\hcal^n}$.

To move from the countable sequence $n\to\infty$ through $\mathbb{N}$ to the full limit $x\to\infty$, we simply observe that by monotonicity, we have
\[\frac{n}{n+1}\cdot\frac{\hit{\{0\}}{\hcal^{n}}}{n} \le \inf_{x\in[n,n+1]}\frac{\hit{\{0\}}{\hcal^x}}{x} \le \sup_{x\in[n,n+1]}\frac{\hit{\{0\}}{\hcal^x}}{x} \le \frac{n+1}{n}\cdot\frac{\hit{\{0\}}{\hcal^{n+1}}}{n+1}.\]
Finally, we need to check that $\nu>0$. Note that any chain of reproduction events that reaches $\hcal^x$ must have jumped at least $x/2\text{R}_0$ times, since the maximum radius of a reproduction event is $\text{R}_0$. Let $c$ be the constant from Lemma \ref{lem:moment_lemma_1}. Then, recalling that in the tree representation from Section \ref{subsec:tree_representation}, $\Gamma_u$ is the birth time of particle $u$, we have
\[\P\left(\hit{\{0\}}{\hcal^x} \le \frac{x}{4ce\text{R}_0}\right) \le \P\left(\exists u\in\tcal\setminus\tcal_{\lceil \frac{x}{2\text{R}_0}\rceil -1} : \Gamma_u \le \frac{x}{4ce\text{R}_0}\right).\]
Applying Lemma \ref{lem:moment_lemma_1}, this is at most $2\exp(-\frac{x}{4e\text{R}_0})$, and we deduce that $\nu>\frac{1}{4ce\text{R}_0}$. This completes the proof.
\end{proof}

Our results on the transverse fluctuations of geodesics then allow us to show that the rescaled point-to-point hitting times converge to the same constant $\nu$.

\begin{prop}\label{prop:almost_sure_point}
The rescaled point-to-point hitting times $\hit{\{0\}}{x,0}/x$ also converge as $x\to\infty$ almost surely and in $L^1$ to the same constant $\nu\in(0,\infty)$ as in Proposition \ref{prop:almost_sure_plane}.
\end{prop}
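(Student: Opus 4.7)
The plan is to prove a sandwich bound: $\hit{\{0\}}{x,0}$ equals $\hit{\{0\}}{\hcal^x}$ plus an error of order $o(x)$ almost surely, and then invoke Proposition~\ref{prop:almost_sure_plane}. The lower bound is immediate: since $(x,0) \in \hcal^x$, we have $\hit{\{0\}}{x,0} \ge \hit{\{0\}}{\hcal^x}$ and hence $\liminf_{x \to \infty} \hit{\{0\}}{x,0}/x \ge \nu$ almost surely. All the content is in the upper bound.

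We first work along integer $x = n$. Let $T_1 := \hit{\{0\}}{\hcal^n}$ and let $P_n$ denote the (a.s.\ unique, by~\eqref{eqn:one_endpoint}) centre of the first reproduction event intersecting $\hcal^n$, so that $P_n^x \in [n - \text{R}_0, n + \text{R}_0]$ deterministically. Since all geodesics from $0$ to $\hcal^n$ share the endpoint $P_n$, Theorem~\ref{thm:geodesics}(ii) with $\beta = 3/4$ and a fixed $\delta$ yields $|P_n^y| \le \delta n^{3/4}$ except on an event $A_n$ with $\proba(A_n) \le \exp(-c n^{1/2})$, and in particular $\|P_n - (n,0)\| \le C n^{3/4}$ on $A_n^c$. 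Setting $D_n := \hit{\{0\}}{n,0} - T_1$, the strong Markov property at $T_1$ together with monotonicity of the SLFV in its initial condition lets us dominate $D_n$, conditionally on $\fcal_{T_1}$, by an independent copy of $\hit{\{0\}}{(n,0) - P_n}$. Applying Proposition~\ref{prop:tail_tau_x} conditionally on $P_n$, with the choice $\beta = \eps n/\|(n,0) - P_n\|$ (which satisfies $\beta \ge \beta_\mu$ on $A_n^c$ for $n$ large), yields
\[
\proba(D_n > \eps n,\, A_n^c) \le \esp\!\left[\exp(-c_\mu \eps n)\,\ind_{A_n^c}\right] \le \exp(-c_\mu \eps n).
\]
Combined with the bound on $\proba(A_n)$, this gives $\sum_n \proba(D_n > \eps n) < \infty$, so Borel-Cantelli yields $D_n/n \to 0$ almost surely, and thus $\hit{\{0\}}{n,0}/n \to \nu$ almost surely.

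To extend to the continuous limit $x \to \infty$, we note that for $x \in [n, n+1]$ we still have $\|(x,0) - P_n\| \le C n^{3/4} + 1$ on $A_n^c$, so exactly the same Borel-Cantelli argument controls $\hit{\{0\}}{x,0} - T_1$ uniformly over $x \in [n, n+1]$. Combined with the lower bound $\hit{\{0\}}{x,0}/x \ge \hit{\{0\}}{\hcal^n}/(n+1)$, this gives almost sure convergence for continuous $x$. The $L^1$ convergence then follows from almost sure convergence together with uniform integrability, the latter being a direct consequence of Proposition~\ref{prop:tail_tau_x}, which implies $\sup_{x \ge 1}\esp[(\hit{\{0\}}{x,0}/x)^2] < \infty$.

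The main obstacle is the careful application of the strong Markov property: conditionally on $\fcal_{T_1}$, we need the Poisson process of reproduction events strictly after $T_1$ to be an independent copy of $\Pi$, so that Proposition~\ref{prop:tail_tau_x} applies conditionally on the $\fcal_{T_1}$-measurable point $P_n$. The use of Theorem~\ref{thm:geodesics}(ii) rather than the qualitatively weaker part~(i) is also essential: part~(i) would only produce convergence in probability via a union bound, whereas the summable failure probability in part~(ii) is what powers the Borel-Cantelli step.
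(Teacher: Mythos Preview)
Your proof is correct and follows essentially the same approach as the paper: bound the geodesic endpoint location, apply the strong Markov property together with Proposition~\ref{prop:tail_tau_x} to control the catch-up time, and use Borel--Cantelli along integers before extending to continuous $x$. The only cosmetic differences are that the paper invokes the $\beta=1$ case of the fluctuation bound (Lemma~\ref{lem:large_devs_transverse_displ}) rather than your $\beta=3/4$, and handles the passage to continuous $x$ by a separate Markov-property step at $\hit{\{0\}}{n,0}$ rather than directly at the hitting point $P_n$; both variants work equally well.
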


\begin{proof}
The strategy is as follows. Clearly $\hit{\{0\}}{x,0}\ge \hit{\{0\}}{\hcal^x}$ so it suffices to give an upper bound. We know that we hit $\hcal^x$ at roughly time $\nu x$. We know from Lemma \ref{lem:large_devs_transverse_displ} that the random geodesic hits $\hcal^x$ at a point $P(x)$ that is of sublinear distance from $(x,0)$ with probability exponentially close to $1$. Then by Proposition \ref{prop:tail_tau_x}, starting from $P(x)$ we know that the time to cover the whole of the line between $P(x)$ and $(x,0)$ is linear in the distance between the two points (with probability exponentially close to $1$). This gives the result. We now carry out the details.

Take $\beta_\mu,c_\mu>0$ as in Proposition \ref{prop:tail_tau_x}, and let $\eps > 0$. Then apply Lemma \ref{lem:large_devs_transverse_displ} with $\delta = \frac{\eps }{2\beta_\mu}$ to obtain $c>0$ such that for all $x > 4 \text{R}_{0} \beta_{\mu} \eps^{-1}$,
\[\P\left(Y_{\Theta^{(x)}_{\text{N}(x)}} > \frac{\eps x}{2\beta_\mu}\right) \le e^{-cx}.\]
We then have
\begin{multline*}
\P\left(\hit{\{0\}}{\hcal^x}\le (\nu+\eps/2)x \text{ and } \hit{\{0\}}{x,0} > (\nu+\eps)x\right) \\
\le \P\left(Y_{\Theta^{(x)}_{\text{N}(x)}} > \frac{\eps x}{2\beta_\mu}\right) + \P\left(\hit{\{0\}}{\hcal^x}\le (\nu+\eps/2)x,\,\, Y_{\Theta^{(x)}_{\text{N}(x)}} \le \frac{\eps x}{2\beta_\mu},\,\, \text{ and } \,\,\hit{\{0\}}{x,0} > (\nu+\eps)x\right).
\end{multline*}
The first probability on the right-hand side above is at most $e^{-cx}$, and the second probability is, by the strong Markov property, at most
\[\P\left(\sigma^{\{0\}}\left(\frac{\eps x}{2\beta_\mu},0\right)>\eps x/2\right)\]
which by Proposition \ref{prop:tail_tau_x} is at most $\exp(-c_\mu x)$.

We have established that
\[\P\left(\hit{\{0\}}{\hcal^x}\le (\nu+\eps/2)x\text{ and } \hit{\{0\}}{x,0} > (\nu+\eps)x\right) \le \exp(-cx) + \exp(-c_\mu x).\]
We deduce by the Borel-Cantelli lemma that
\[\P\left(\hit{\{0\}}{\hcal^n}\le (\nu+\eps/2)n \text{ and }\hit{\{0\}}{n,0} > (\nu+\eps)n\, \text{ for infinitely many } n\in\mathbb{N}\right) = 0.\]
By Proposition \ref{prop:almost_sure_plane}, we know that the former event occurs for all large $n$, and therefore
\[\P\left(\hit{\{0\}}{n,0} > (\nu+\eps)n\, \text{ for infinitely many } n\in\mathbb{N}\right) = 0.\]

To move from discrete $n$ to continuous $x$, we again apply Proposition \ref{prop:tail_tau_x}, simply observing that by the strong Markov property, for all $n > \beta_{\mu}\eps^{-1},$
\begin{align*}
\P\left(\hit{\{0\}}{n,0} \le (\nu+\eps)n,\, \exists x\in[n,n+1] : \hit{\{0\}}{x,0} > (\nu+2\eps)x\right) &\le \P\left(\bulk{\{(n,0)\}}{n+1,0} > \eps n\right)\\
&= \proba\left(
\bulk{\{(0,0)\}}{1,0} > \eps n
\right) \\
&\le \exp(-c_{\mu} \eps n). 
\end{align*}
Again the Borel-Cantelli lemma tells us that this occurs only finitely often, which allows us to conclude.
\end{proof}

The proof of Theorem \ref{thm:hitting_times} now follows immediately from these results together with Corollary \ref{cor:hitting_times_equal_distn}.

\begin{proof}[Proof of Theorem \ref{thm:hitting_times}]
The first bullet point in Theorem \ref{thm:hitting_times} is precisely the statement in Proposition \ref{prop:almost_sure_plane}, and the second follows from Proposition \ref{prop:almost_sure_point} together with rotational invariance of the process. The third bullet point follows from the first together with the equality in distribution of $\hit{\hcal}{z}$ and $\hit{\{0\}}{\hcal^x}$, proved in Corollary \ref{cor:hitting_times_equal_distn}.
\end{proof}

\subsection{A shape theorem: proof of Theorem \ref{thm:ball_shape}}\label{subsec:shape_thm}

We showed in Proposition \ref{prop:tail_tau_x} that when we start from $0$, we can cover the whole line between $0$ and a point $z\in\rtwo$ in a time that is at most linear in $\|z\|$. We now give a simple corollary which allows us to cover all directions simultaneously, and therefore gives us a first linear upper bound on the time to cover a ball. It says that with high probability we cover the ball of radius $r$ in time at most $\gamma (r+1)$, for suitably large $\gamma$. Note that this is still a long way from proving Theorem \ref{thm:ball_shape}, which says that the correct linear factor is not $\gamma$ but $\nu$, the same constant that appears in Theorem \ref{thm:hitting_times}.

\begin{cor}\label{cor:uniform_tau_bound}
For any $\eps>0$, there exists $\gamma\in(0,\infty)$ such that
\[\P\left(\sup_{z\in\rtwo} \frac{\hit{\{0\}}{z}}{1+\|z\|} > \gamma \right) \le \eps.\]
\end{cor}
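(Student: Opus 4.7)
The plan is a union bound over a countable grid. Take the shifted integer lattice
\[\{z_k\}_{k\ge 1} := \Big\{\delta_\mu\Big(k_1+\tfrac 12,\, k_2+\tfrac 12\Big) : (k_1,k_2)\in\zmath^2\Big\};\]
its Vorono\"i cells are squares of side $\delta_\mu$, so balls of radius $\delta_\mu$ around the grid points cover $\rtwo$, every grid point satisfies $\|z_k\|\ge\delta_\mu/\sqrt 2 > 0$, and $\#\{k : \|z_k\|\le n\} \le C_\mu(1+n^2)$ for some constant $C_\mu$ depending only on $\delta_\mu$.

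Given any $z\in\rtwo$, pick some $z_k$ with $z\in\bcal(z_k,\delta_\mu)$; then $1+\|z_k\|\le(1+\delta_\mu)(1+\|z\|)$ and
\[\hit{\{0\}}{z} \le \bulk{\{0\}}{z} \le \sup_{z'\in\bcal(z_k,\delta_\mu)}\bulk{\{0\}}{z'}.\]
Define $E_k := \{\sup_{z'\in\bcal(z_k,\delta_\mu)}\bulk{\{0\}}{z'} > \gamma(1+\|z_k\|)\}$. On $\bigcap_k E_k^c$, for every $z$ we obtain $\hit{\{0\}}{z}\le \gamma(1+\|z_k\|)\le \gamma(1+\delta_\mu)(1+\|z\|)$, so the supremum appearing in the corollary is bounded above by $\gamma(1+\delta_\mu)$.

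To control $\P(E_k)$, apply Proposition~\ref{prop:tail_tau_x} with $\beta := \gamma(1+\|z_k\|)/\|z_k\|$; since $\|z_k\|\ge\delta_\mu/\sqrt 2$ we have $\gamma\le\beta\le\gamma(1+\sqrt 2/\delta_\mu)$, so provided $\gamma\ge\beta_\mu$ the proposition applies and yields $\P(E_k)\le \exp(-c_\mu\gamma(1+\|z_k\|))$. Grouping grid points into integer annuli, whose cardinality grows only linearly in $n$, gives
\[\sum_k \P(E_k) \le e^{-c_\mu\gamma}\sum_{n\ge 0} \#\{k : n\le\|z_k\|<n+1\}\exp(-c_\mu\gamma n),\]
and this upper bound tends to $0$ as $\gamma\to\infty$. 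Taking $\gamma$ large enough therefore makes $\P(\bigcup_k E_k)\le\eps$; absorbing the factor $(1+\delta_\mu)$ into $\gamma$ gives the statement.

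The main obstacle is that Proposition~\ref{prop:tail_tau_x} degenerates at the origin, where the exponent $c_\mu\beta\|z\|$ vanishes and the tail estimate is useless. The shifted lattice is designed precisely to sidestep this: by pushing every center to distance at least $\delta_\mu/\sqrt 2$ from $0$, the tail bound is nontrivial uniformly in $k$, and the remainder of the argument is a routine polynomial-versus-exponential union bound.
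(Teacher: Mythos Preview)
The proposal is correct and takes essentially the same approach as the paper: a union bound over a discrete lattice covering $\rtwo$, applying Proposition~\ref{prop:tail_tau_x} at each lattice point and summing the resulting exponential tails. The only cosmetic difference is how the degeneracy at the origin is handled---you shift the lattice by $(\delta_\mu/2,\delta_\mu/2)$ so that every center has norm at least $\delta_\mu/\sqrt 2$, whereas the paper uses $\mathbb{Z}_\delta^2\setminus\{0\}$ with $\delta=\delta_\mu\wedge 1$ and the observation $1+\|w\|\ge\|z_w\|$; both devices serve the same purpose and the remaining bookkeeping is equivalent.
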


\begin{proof}
Fix $\delta=\delta_\mu\wedge 1$ where $\delta_\mu$ is the constant from Proposition \ref{prop:tail_tau_x}. We cover $\rtwo$ with balls of radius $\delta$ whose centres are located on the discrete grid $\mathbb{Z}_{\delta}^2 \setminus\{0\}$. Note that for any $w\in\rtwo$, taking $z_{w}$ to be the closest point in $\mathbb{Z}_{\delta}^2 \setminus\{0\}$, we have $w\in\bcal(z_{w},\delta)$ and $1+\|w\|\ge \|z_{w}\|$. Thus
\[\left\{\frac{\hit{\{0\}}{w}}{1+\|w\|} > \gamma\right\} \subseteq \left\{\sup_{z' \in\bcal(z_{w},\delta) } \bulk{\{0\}}{z'} > \gamma\|z_{w}\| \right\}\]
and therefore
\begin{align*}
\P\left(\sup_{w\in\rtwo} \frac{\hit{\{0\}}{w}}{1+\|w\|} > \gamma \right) &\le \P\left(\exists z\in \mathbb{Z}_{\delta}^2 \setminus\{0\} : \sup_{z' \in\bcal(z,\delta) } \bulk{\{0\}}{z'} > \gamma\|z\| \right)\\
&\le \sum_{z\in \mathbb{Z}_{\delta}^2 \setminus\{0\}} \P\left(\sup_{z' \in\bcal(z,\delta) } \bulk{\{0\}}{z'} > \gamma\|z\| \right).
\end{align*}
Since $\delta\le \delta_\mu$, we can apply Proposition \ref{prop:tail_tau_x} to see that for any $\beta>\beta_\mu$, this is at most
\[\sum_{z\in \mathbb{Z}_{\delta}^2 \setminus\{0\}} \exp(-c_\mu\beta\|z\|) \le \sum_{j=1}^\infty 2^{2j} e^{-c_\mu\beta j\delta},\]
which we can ensure is smaller than $\eps$ by choosing $\beta$ sufficiently large, completing the proof.
\end{proof}

For our next result, which will be very similar to Corollary \ref{cor:uniform_tau_bound} but ``backwards in time'', we note that---although we normally imagine specifying $E$ in advance and watching one $\infty$-parent SLFV process evolve starting from $E$---our definition of the $\infty$-parent SLFV allows us to construct the process starting from several initial conditions simultaneously, using the same Poisson point process $\Pi$. In particular, for two points $w\neq z\in\rtwo$, $S_t^{\{w\}}$ describes the set of points covered by chains of $\Pi^{(t)}$-reproduction events started from $w$, and $S_t^{\{z\}}$ describes the set of points covered by chains of $\Pi^{(t)}$-reproduction events started from $z$; both objects exist simultaneously.

We also need some more notation. For $t\in\mathbb{R}$, write $\Pi_{(t)}$ for the $t$-shift of $\Pi$, i.e.
\begin{equation}\label{eq:pi_timeshift}
(t',z',r')\in\Pi_{(t)}\Longleftrightarrow (t+t',z',r')\in\Pi.
\end{equation}
Then (recalling Lemma \ref{lem:hitting_times_charac_forward}) define, for $w,z\in\rtwo$,
\[\tau_t^{\{w\}}(z) = \min\left\{ s \geq 0 : \exists (z',r') \in \ccal_{s}^{(w)}(\Pi_{(t)}) \text{ with } z\in \bcal(z',r')\right\}.\]
In words, $\tau_t^{\{w\}}(z)$ is the time taken after $t$ to hit $z$ when starting from $\{w\}$ and using reproduction events that occur after time $t$, rather than time $0$ (we emphasise that $t$ could be negative). Similarly define
\[\sigma_t^{\{w\}}(z) = \min\left\{ s \geq 0 : \forall u\in(0,1],\, \exists \left(z',r'\right) \in \ccal_{s}^{(0)}(\Pi_{(t)}) \text{ with } w+u(z-w) \in \bcal\left( z',r' \right) \right\},\]
the time taken after $t$ to cover the whole line between $w$ and $z$ when using reproduction events that occur after time $t$.

We are now ready to state the result. Recall that Corollary \ref{cor:uniform_tau_bound} said that with high probability, starting from $0$, we we can hit each other point $z$ within a time that grows linearly with $\|z\|$ (with a large constant factor), uniformly over all $z$. Our next result says the reverse: that with high probability, starting from each point $z$, we can hit $0$ within a time linear in $\|z\|$, uniformly over all $z$.

\begin{cor}\label{cor:uniform_backwards_tau_bound}
For any $\eps>0$, there exists $\gamma\in(0,\infty)$ such that
\[\P\left(\sup_{z\in\rtwo} \frac{\tau^{\{z\}}_{-\gamma(1+\|z\|)}(0)}{1+\|z\|} > \gamma \right) \le \eps.\]
\end{cor}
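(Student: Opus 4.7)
The plan is to reduce the statement to Corollary~\ref{cor:uniform_tau_bound} via the time-reversal symmetry of the underlying Poisson point process. First I would introduce the time-reversed process $\Pi^{\ast} := \Pi^{(0)}$, defined by $(t',z',r') \in \Pi^{\ast} \iff (-t',z',r') \in \Pi$. Since the intensity $dt \otimes dz \otimes \mu(dr)$ of $\Pi$ is invariant under $t \mapsto -t$, the process $\Pi^{\ast}$ has the same distribution as $\Pi$. Write $\tau^{\ast,\{0\}}(z)$ for the first hitting time of $z$ starting from the origin in the $\infty$-parent SLFV built from $\Pi^{\ast}$ in place of $\Pi$.

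The key step is to establish, for every $z \in \rtwo$ and every $s \geq 0$, the \emph{pointwise} identity of events
\[\{\tau^{\{z\}}_{-s}(0) \leq s\} = \{\tau^{\ast,\{0\}}(z) \leq s\}.\]
Unrolling the time shift in the definition of $\Pi_{(-s)}$, the left-hand event occurs iff there exist $\Pi$-events $(t_1,z_1,r_1), \ldots, (t_k,z_k,r_k)$ with $-s \leq t_1 < \cdots < t_k \leq 0$ such that $z \in \bcal(z_1,r_1)$, consecutive balls intersect, and $0 \in \bcal(z_k,r_k)$. Under the map $t \mapsto -t$ these become $\Pi^{\ast}$-events at times $0 \leq -t_k < \cdots < -t_1 \leq s$; read in this order they form a chain of $\Pi^{\ast}$-reproduction events starting from $\mathbf{C}_0 = (0,0)$ at time $0$ whose first jump at time $-t_k$ produces a ball containing $0$, and whose last ball (at time $-t_1 \leq s$) contains $z$. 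Hence $\tau^{\ast,\{0\}}(z) \leq s$, and the converse direction is identical by symmetry.

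Setting $s = \gamma(1+\|z\|)$ in the identity above, taking complements, and taking unions over $z \in \rtwo$ yields the pointwise identity
\[\left\{ \sup_{z\in\rtwo} \frac{\tau^{\{z\}}_{-\gamma(1+\|z\|)}(0)}{1+\|z\|} > \gamma \right\} = \left\{ \sup_{z\in\rtwo} \frac{\tau^{\ast,\{0\}}(z)}{1+\|z\|} > \gamma \right\}.\]
Since $\Pi^{\ast}$ has the same distribution as $\Pi$, the probability of the right-hand event equals $\P(\sup_{z\in\rtwo} \hit{\{0\}}{z}/(1+\|z\|) > \gamma)$, which Corollary~\ref{cor:uniform_tau_bound} bounds by $\eps$ for $\gamma$ large enough.

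The main obstacle I anticipate is making the chain-reversal bijection completely rigorous in the language of Definition~\ref{defn:chain_reproduction_events}: one must use the almost sure fact that no $\Pi$-event occurs at time exactly $0$ (so that the reversed chain starts legitimately with $\mathbf{C}_0 = (0,0)$), and one must justify that the uncountable supremum is measurable and that the event identity passes through it, which can be handled by a countable-grid reduction analogous to the one in the proof of Corollary~\ref{cor:uniform_tau_bound}. Beyond these essentially routine checks the argument is pure bookkeeping, as all of the substantive probabilistic content is supplied by the time-reversal invariance of $\Pi$ and by Corollary~\ref{cor:uniform_tau_bound} applied to $\Pi^{\ast}$.
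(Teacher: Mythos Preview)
Your proposal is correct and takes a genuinely different route from the paper. The paper's proof repeats the entire grid-and-sum machinery of Corollary~\ref{cor:uniform_tau_bound}: it covers $\rtwo$ by balls centred on $\mathbb{Z}_\delta^2\setminus\{0\}$, passes from $\tau^{\{z\}}_{-\gamma(1+\|z\|)}(0)$ to $\sigma^{\{z'\}}_{-\gamma\|z\|}(0)$, uses stationarity and translation invariance to reduce to $\sup_{z'\in\bcal(0,\delta)}\bulk{\{z'\}}{z}$, and then re-invokes the slow-coverage estimates (Lemmas~\ref{lem:properties_sequence_reproduction_events} and~\ref{lem:tail_estimate_slow_covering_chain}) to get the summable exponential tail. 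Your argument instead observes that the pointwise chain-reversal identity $\{\tau^{\{z\}}_{-s}(0)\le s\}=\{\tau^{\ast,\{0\}}(z)\le s\}$ is exactly Lemma~\ref{lem:strong_self_duality} applied with $\Pi^\ast:=\Pi^{(0)}$ as the base process (since $(\Pi^\ast)^{(s)}=\Pi_{(-s)}$); this collapses the whole statement to Corollary~\ref{cor:uniform_tau_bound} for $\Pi^\ast\stackrel{d}{=}\Pi$, with no need to reopen the grid argument. Your approach is cleaner and avoids duplicating work; the paper's approach is more self-contained but essentially proves Corollary~\ref{cor:uniform_tau_bound} twice. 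One small remark: your stated obstacle about measurability of the uncountable supremum and a ``countable-grid reduction'' is unnecessary in your framework, because once you have reduced to the event in Corollary~\ref{cor:uniform_tau_bound} for $\Pi^\ast$, whatever measurability is implicit there transfers verbatim; the only genuine null-set issue is the almost-sure absence of a $\Pi$-event at time~$0$, which you already flag, and even that is harmless thanks to the first clause of Definition~\ref{defn:chain_reproduction_events}.
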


\begin{proof}
We proceed similarly to the proof of Corollary \ref{cor:uniform_tau_bound}. Again let $\delta = \delta_\mu\wedge 1$ where $\delta_\mu$ is the constant from Proposition \ref{prop:tail_tau_x}. Note that
\begin{align*}
\P\left(\sup_{z\in\rtwo} \frac{\tau^{\{z\}}_{-\gamma(1+\|z\|)}(0)}{1+\|z\|} > \gamma \right) & \le \P\left( \exists z\in \mathbb{Z}^2_\delta\setminus\{0\} : \sup_{z'\in\bcal(z,\delta)} \sigma^{\{z'\}}_{-\gamma(1+\|z'\|)}(0) > \gamma\|z\|\right)\\
&\le \sum_{z\in \mathbb{Z}^2_\delta\setminus\{0\}} \P\left( \sup_{z'\in\bcal(z,\delta)} \sigma^{\{z'\}}_{-\gamma(1+\|z'\|)}(0) > \gamma\|z\|\right)\\
&\le \sum_{z\in \mathbb{Z}^2_\delta\setminus\{0\}} \P\left( \sup_{z'\in\bcal(z,\delta)} \sigma^{\{z'\}}_{-\gamma\|z\|}(0) > \gamma\|z\|\right)\\
&= \sum_{z\in \mathbb{Z}^2_\delta\setminus\{0\}} \P\left( \sup_{z'\in\bcal(z,\delta)} \bulk{\{z'\}}{0} > \gamma\|z\|\right)\\
&= \sum_{z\in \mathbb{Z}^2_\delta\setminus\{0\}} \P\left( \sup_{z'\in\bcal(0,\delta)} \bulk{\{z'\}}{z} > \gamma\|z\|\right)
\end{align*}
where the penultimate line follows by the stationarity of the Poisson point process $\Pi$, and the last line follows from translation invariance.

We cannot directly apply Proposition \ref{prop:tail_tau_x} as we did in the proof of Corollary \ref{cor:uniform_tau_bound}, but the bound is essentially the same: due to Lemma \ref{lem:properties_sequence_reproduction_events} (ii), we have
\[\sup_{z'\in\bcal(0,\delta)} \bulk{\{z'\}}{z} \le \overrightarrow{T}_{\lceil x/\delta\rceil},\]
and therefore by Lemma \ref{lem:tail_estimate_slow_covering_chain},
\[\P\left( \sup_{z'\in\bcal(0,\delta)} \bulk{\{z'\}}{z} > \gamma\|z\|\right) \le \P\left( \overrightarrow{T}_{\lceil x/\delta\rceil} > \gamma\|z\|\right) \le \exp(-c_\mu\beta\|z\|).\]
Thus
\[\P\left(\sup_{z\in\rtwo} \frac{\tau^{\{z\}}_{-\gamma(1+\|z\|)}(0)}{1+\|z\|} > \gamma \right) \le \sum_{z\in \mathbb{Z}_{\delta}^2 \setminus\{0\}} \exp(-c_\mu\beta\|z\|) \le \sum_{j=1}^\infty 2^{2j} e^{-c_\mu\beta j\delta},\]
which we can make arbitrarily small by choosing $\beta$ sufficiently large.
\end{proof}

Now, by Corollaries \ref{cor:uniform_tau_bound} and \ref{cor:uniform_backwards_tau_bound}, we can fix $\gamma$ such that 
\[\P\left(\sup_{z\in\rtwo} \frac{\hit{\{0\}}{z}}{1+\|z\|} > \gamma \right)\le 1/2 \quad\text{ and }\quad  \P\left(\sup_{z\in\rtwo} \frac{\tau^{\{z\}}_{-\gamma(1+\|z\|)}(0)}{1+\|z\|} > \gamma \right) \le 1/2.\]
Say that $z\in\rtwo$ is \emph{outwardly} $t$-\emph{good} if
\[\sup_{z'\in\rtwo} \frac{\tau_t^{\{z\}}(z')}{1+\|z'-z\|} \le \gamma,\]
i.e.~if starting from $z$ at time $t$, any point $z'\in\rtwo$ is reached in a further time $\gamma(1+\|z'-z\|)$. Say that $z\in\rtwo$ is \emph{inwardly} $t$-\emph{good} if
\[\sup_{z'\in\rtwo} \frac{\tau^{\{z'\}}_{t-\gamma(1+\|z'-z\|)}(z)}{1+\|z'-z\|} \le \gamma,\]
i.e.~if $z$ is reached by time $t$ from any point $z'\in\rtwo$ by starting $\gamma(1+\|z'-z\|)$ time earlier.

Note that by stationarity and translation-invariance of the underlying Poisson point process,  for any $z\in\rtwo$ and $t\ge 0$ we have
\begin{equation}\label{eq:good_invariance}
\P(z\text{ is outwardly $t$-good}) = \P\left(\sup_{z'\in\rtwo} \frac{\tau_t^{\{z\}}(z')}{1+\|z'-z\|} \le \gamma\right) = \P\left(\sup_{z'\in\rtwo} \frac{\hit{\{0\}}{z'}}{1+\|z'\|} \le \gamma \right) \ge 1/2,
\end{equation}
by our choice of $\gamma$, and similarly
\begin{equation}\label{eq:inward_good_invariance}
\P(z\text{ is inwardly $t$-good}) = \P\left(\sup_{z'\in\rtwo} \frac{\tau^{\{z'\}}_{t-\gamma(1+\|z'-z\|)}(z)}{1+\|z'-z\|} \le \gamma\right) = \P\left(\sup_{z\in\rtwo} \frac{\tau^{\{z\}}_{-\gamma(1+\|z\|)}(0)}{1+\|z\|} \le \gamma \right) \ge 1/2.
\end{equation}

For $\eps>0$, say that $z\in\rtwo$ is \emph{outwardly} $\eps$-\emph{great} if there exists a strictly increasing sequence $(n_j)_{j\ge 1}$ of natural numbers such that $n_j z$ is outwardly $(\nu+\eps)n_j\|z\|$-good for all $j\ge 1$, and $n_{j+1}/n_j \to 1$ as $j\to\infty$. Similarly say that $z\in\rtwo$ is \emph{inwardly} $\eps$-\emph{great} if there exists a strictly increasing sequence $(m_j)_{j\ge 1}$ of natural numbers such that $m_j z$ is inwardly $(\nu-\eps)m_j\|z\|$-good for all $j\ge 1$, and $m_{j+1}/m_j \to 1$ as $j\to\infty$. The idea here is that $n_j z$ should have been hit by time $(\nu+\eps)n_j\|z\|$, so we can use reproduction events after this time to create a chain from $n_j z$ to other nearby vertices, and therefore they will also be hit soon afterwards; and similarly if any vertex near to $m_j z$ was hit far too early, we could create an inward chain to $m_j z$ which would cause $m_j z$ to also be hit too early.

\begin{lem}\label{lem:z_great_as}
For any fixed $z\in\rtwo$ and $\eps>0$, $\P(z \text{ is outwardly and inwardly $\eps$-great}) = 1$.
\end{lem}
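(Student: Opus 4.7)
The plan is to apply Birkhoff's ergodic theorem to obtain a positive asymptotic density of ``outwardly good'' scales $n$, then greedily extract a subsequence whose consecutive ratios tend to one. The inward case will be entirely analogous. Throughout, assume $z \neq 0$ (the statement is degenerate otherwise).

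The key object is the shift $T^{+}$ acting on the underlying Poisson point process by $T^{+}\Pi := \{(t - (\nu+\varepsilon)\|z\|,\, z' - z,\, r) : (t, z', r) \in \Pi\}$. By stationarity of $\Pi$, $T^{+}$ is measure-preserving, and since the translation vector $(z, (\nu+\varepsilon)\|z\|)$ is nonzero, $T^{+}$ is mixing—and in particular ergodic—by the well-known mixing property of Poisson point processes under nontrivial translations of the base space. Unpacking the definition of ``outwardly $t$-good'', one checks directly that $A_n^{+} := \{nz \text{ is outwardly } (\nu+\varepsilon)n\|z\|\text{-good}\}$ is exactly $A_0^{+}$ applied to the space-time shifted process, i.e.\ $\mathbf{1}_{A_n^{+}} = \mathbf{1}_{A_0^{+}} \circ (T^{+})^{n}$. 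Since \eqref{eq:good_invariance} gives $\mathbb{P}(A_0^{+}) \geq 1/2$, Birkhoff's theorem yields
\[\frac{1}{N}\sum_{n=0}^{N-1} \mathbf{1}_{A_n^{+}} \xrightarrow[N \to \infty]{} \mathbb{P}(A_0^{+}) \geq \frac{1}{2} \quad \text{almost surely}.\]

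Given this positive density, a standard greedy extraction produces the required subsequence. On the event of full measure where the Ces\`aro limit holds, for each $k \geq 1$ there is a (random) threshold $M_k$ such that every interval $[N, N(1+1/k)]$ with $N \geq M_k$ contains at least one $n$ with $A_n^{+}$—indeed, subtracting two Ces\`aro averages shows such an interval contains roughly $\mathbb{P}(A_0^{+}) N/k$ successes. Building $n_k$ inductively so that each $n_{k+1}$ is a success in $[n_k+1,\, (n_k+1)(1+1/j(k))]$ with $j(k) := \max\{j : M_j \leq n_k\}$ (which tends to $\infty$ since $n_k \to \infty$) produces a strictly increasing sequence satisfying $n_{k+1}/n_k \to 1$, witnessing that $z$ is outwardly $\varepsilon$-great almost surely.

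The inward case runs identically with the shift $T^{-}\Pi := \{(t - (\nu-\varepsilon)\|z\|,\, z' - z,\, r) : (t, z', r) \in \Pi\}$ and \eqref{eq:inward_good_invariance} replacing \eqref{eq:good_invariance}; intersecting the two almost-sure events gives the claim. The only real subtlety is the ergodicity (or mixing) of $T^{\pm}$ under the joint space-time translation, but this is the classical mixing property of Poisson processes under any nontrivial translation, so the rest reduces to a direct application of Birkhoff followed by the elementary subsequence extraction above.
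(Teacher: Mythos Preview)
Your proof is correct and follows essentially the same route as the paper: apply the pointwise ergodic theorem to the space--time shift by $(z,(\nu\pm\varepsilon)\|z\|)$ to get positive asymptotic density of good scales, then extract a subsequence with consecutive ratios tending to one. The only difference is in the extraction step, where the paper is a bit slicker: instead of a greedy construction, it simply takes $(n_j)_{j\ge1}$ to be the full increasing sequence of \emph{all} indices $n$ for which $G_n$ holds, observes that the Ces\`aro convergence forces $j/n_j \to \mathbb{P}(G_0)$, and then writes
\[
\frac{n_j}{n_{j+1}} = \frac{n_j}{j}\cdot\frac{j+1}{n_{j+1}}\cdot\frac{j}{j+1} \to \frac{1}{\mathbb{P}(G_0)}\cdot \mathbb{P}(G_0)\cdot 1 = 1.
\]
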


\begin{proof}
Fix $z\in\rtwo$ and $\eps>0$. Let $G_n$ be the event that $nz$ is outwardly $(\nu+\eps)n\|z\|$-good. Then since the shift map by $z$ in space and $(\nu+\eps)\|z\|$ in time is ergodic, by Von Neumann's $L^1$ ergodic theorem,
\[\frac{1}{n}\sum_{i=1}^{n} \ind_{G_i} \to \P(G_0) = \P(z \text{ is outwardly $(\nu+\eps)\|z\|$-good})  \ge 1/2 \,\, \text{ almost surely}\]
by \eqref{eq:good_invariance}. Thus we almost surely have infinitely many natural numbers $n$ such that $nz$ is $(\nu+\eps)n\|z\|$-good. Label them in increasing order as $n_1, n_2,\ldots$ and note that then also
\[\frac{j}{n_j} = \frac{1}{n_j}\sum_{i=1}^{n_j} \ind_{G_i} \to \P(G_0) \,\, \text{ almost surely},\]
so
\[\frac{n_j}{n_{j+1}} = \frac{n_j}{j}\cdot\frac{j+1}{n_{j+1}}\cdot\frac{j+1}{j} \to 1 \,\, \text{ almost surely}.\]
This shows that $z$ is almost surely outwardly $\eps$-good. The proof that it is almost surely inwardly $\eps$-good is identical, just using the shift map by $z$ in space and $(\nu-\eps)\|z\|$ in time.
\end{proof}

Before we proceed, we will also need the following (almost obvious, but we include a short proof for completeness) simple triangle inequality for hitting times.

\begin{lem}\label{lem:triangle_ineq_chains}
For any $z_1,z_2,z_3\in\rtwo$ and $t\ge 0$, if $\hit{\{z_1\}}{z_2} \le t$, then
\[\hit{\{z_1\}}{z_3} \le t + \tau_t^{\{z_2\}}{z_3}.\]
\end{lem}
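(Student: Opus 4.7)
The plan is a direct chain-concatenation argument. By (the translation-invariant analogue of) Lemma \ref{lem:hitting_times_charac_forward}, the hypothesis $\hit{\{z_1\}}{z_2}\le t$ produces a chain $\mathbf{C} = (Z_s,R_s)_{s\ge 0} \in \ccal^{(z_1)}(\Pi)$ and some time $s_0 \le t$ with $z_2 \in \bcal(Z_{s_0},R_{s_0})$; similarly, the definition of $\tau_t^{\{z_2\}}(z_3)$ supplies a chain $\mathbf{C}' = (Z'_s,R'_s)_{s\ge 0} \in \ccal^{(z_2)}(\Pi_{(t)})$ with $z_3 \in \bcal(Z'_{s'},R'_{s'})$ for $s' := \tau_t^{\{z_2\}}(z_3)$. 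Without loss of generality I will assume $\mathbf{C}$ makes no further jumps after time $s_0$, since rejecting reproduction events never invalidates a chain; so $\mathbf{C}_s = (Z_{s_0},R_{s_0})$ for all $s\ge s_0$.

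I would then glue the two into a single chain $\tilde{\mathbf{C}} \in \ccal^{(z_1)}(\Pi)$ by setting $\tilde{\mathbf{C}}_s = \mathbf{C}_s$ for $s<t$ and, after the junction, $\tilde{\mathbf{C}}_s = \mathbf{C}'_{s-t}$. The only real care is at the junction, which splits into two cases via the first clause of Definition \ref{defn:chain_reproduction_events}. If $\mathbf{C}'_0 = (z_2,0)$, then $\mathbf{C}'$ makes no jump at its time~$0$, so I extend the constancy of $\mathbf{C}$ up to the first jump time $s_1 > 0$ of $\mathbf{C}'$ (in the time-shifted coordinates), and at absolute time $t+s_1$ accept the corresponding reproduction event; this is a legitimate jump because the new event ball contains $z_2$ and hence intersects $\bcal(Z_{s_0},R_{s_0})$. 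Otherwise $\mathbf{C}'_0 = (Z'_0,R'_0)$ with $(t,Z'_0,R'_0)\in\Pi$ and $z_2 \in \bcal(Z'_0,R'_0)$, in which case setting $\tilde{\mathbf{C}}_t = (Z'_0,R'_0)$ is again a valid jump by the same reasoning, and \eqref{eqn:condition_on_pi} guarantees no conflicting event of $\Pi$ at time $t$. In either case $\tilde{\mathbf{C}}$ is a valid $\Pi$-chain from $z_1$ with $z_3 \in \bcal(\tilde Z_{t+s'},\tilde R_{t+s'})$, so the chain characterisation delivers $\hit{\{z_1\}}{z_3} \le t + s' = t + \tau_t^{\{z_2\}}(z_3)$.

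I do not expect any substantive obstacle: the entire argument is a bookkeeping exercise resting on two simple facts—chains are free to reject reproduction events, and both balls participating in the gluing step contain the intermediate point $z_2$, so they automatically intersect. The only thing requiring mild attention is the case split above on whether the second chain is initialised by a reproduction event at its time~$0$, together with a brief appeal to \eqref{eqn:condition_on_pi} in the second case to rule out simultaneous conflicting jumps.
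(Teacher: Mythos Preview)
Your proof is correct and takes essentially the same approach as the paper: concatenate a $\Pi$-chain from $z_1$ hitting $z_2$ by time $t$ with a $\Pi_{(t)}$-chain from $z_2$ hitting $z_3$, observing that the balls at the junction both contain $z_2$ and hence intersect. You are in fact slightly more careful than the paper about the junction (handling explicitly the two cases for $\mathbf{C}'_0$ in Definition~\ref{defn:chain_reproduction_events}), whereas the paper relegates the edge case to a parenthetical about the probability-zero event of a reproduction event at exactly time $t$.
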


\begin{proof}
If $\hit{\{z_1\}}{z_2} \le t$, then there exists a chain $C^{(1)} = (Z^{(1)}_u,R^{(1)}_u)_{u\ge 0}$ of $\Pi$-reproduction events started from $z_1$ such that $z_2\in \bcal(Z^{(1)}_t,R^{(1)}_t)$. If we further know that $\tau_t^{\{z_2\}}{z_3}\le s$, then there exists a chain $C^{(2)} = (Z^{(2)}_u,R^{(2)}_u)_{u\ge 0}$ of $\Pi_{(t)}$-reproduction events started from $z_2$ such that $z_3\in \bcal(Z^{(2)}_s,R^{(2)}_s)$. We then let $C^{(3)}_u = C^{(1)}_u$ for $u\le t$ and $C^{(3)}_u = C^{(2)}_{u-t}$ for $u>t$. (Or, on the event of probability zero that there is a $\Pi$-reproduction event at time $t$, if that event appears in $C^{(2)}_0$ and not $C^{(1)}_t$, then we set $C^{(3)}_t = C^{(2)}_{0}$ instead of $C^{(3)}_t = C^{(1)}_t$.) The process $C^{(3)}$ is then a chain of $\Pi$-reproduction events started from $z_1$ such that $z_3\in \bcal(Z^{(3)}_{t+s},R^{(3)}_{t+s})$, and therefore $\hit{\{z_1\}}{z_3}\le t+s$. This completes the proof.
\end{proof}

We now use the fact that each point is almost surely inwardly and outwardly $\eps$-great to prove Theorem \ref{thm:ball_shape}. We will proceed in two steps, first showing that no point is hit too late, and then showing that no point is hit too early; the main ideas of the two proofs are very similar, but since the details are different, we carry them out separately.

Proposition \ref{prop:no_point_hit_late} says that no point is hit too late. Essentially we can use Theorem \ref{thm:hitting_times} to ensure that all rational points are hit at the right time, and Lemma \ref{lem:z_great_as} to ensure that after hitting each rational point, we can hit all the surrounding points not much later.

\begin{prop}\label{prop:no_point_hit_late}
For any $\eps>0$,
\[\P\left( \bcal\big(0,(\nu^{-1}-\eps)t\big)\subseteq S^{\{0\}}_t \,\,\text{ for all large } t\right) =1.\]
\end{prop}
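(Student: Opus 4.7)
The plan is to reformulate the statement: it suffices to show that for every $\delta>0$, almost surely there exists a (random) $R<\infty$ with $\hit{\{0\}}{z}\le(\nu+\delta)\|z\|$ for all $z$ satisfying $\|z\|\ge R$. Choosing $\delta$ small enough that $\nu+\delta<(\nu^{-1}-\eps)^{-1}$ then recovers the ball inclusion in the proposition, so I fix $\delta>0$ throughout and aim to prove this reformulation.

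First I make parameter choices. Fix small $\eta>0$ and $\theta>0$ with $\eta+\gamma\theta+\gamma\eta<\delta$, where $\gamma$ is the constant built into the definition of ``outwardly $t$-good''. Then fix a finite collection of unit vectors $z_1,\ldots,z_k\in\rtwo$ such that every unit vector lies within Euclidean distance $\theta$ of some $z_i$. On the intersection of finitely many full-probability events, I may assume that simultaneously for every $i\in\{1,\ldots,k\}$: (a) by the second bullet of Theorem~\ref{thm:hitting_times}, $\hit{\{0\}}{uz_i}/u\to\nu$ as $u\to\infty$; and (b) by Lemma~\ref{lem:z_great_as}, $z_i$ is outwardly $\eta$-great, giving a random sequence $(n_j^i)_{j\ge 1}\subset\nmath$ with $n_{j+1}^i/n_j^i\to 1$ and each $n_j^i z_i$ outwardly $(\nu+\eta)n_j^i$-good. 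Since there are only finitely many directions, an a.s.\ finite (random) $J$ exists such that for all $j\ge J$ and all $i$ one has both $\hit{\{0\}}{n_j^i z_i}\le(\nu+\eta)n_j^i$ and $n_{j+1}^i/n_j^i\le 1+\eta$.

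The main step is then a covering-plus-triangle-inequality argument. Take any $z\in\rtwo$ with $\|z\|\ge R:=\max_i n_J^i$, and set $\hat z:=z/\|z\|$. Pick $i$ with $\|\hat z-z_i\|\le\theta$ and $j\ge J$ with $n_j^i\le\|z\|<n_{j+1}^i$. Set $t:=(\nu+\eta)n_j^i$, so that $\hit{\{0\}}{n_j^i z_i}\le t$. Lemma~\ref{lem:triangle_ineq_chains} then gives $\hit{\{0\}}{z}\le t+\tau_t^{\{n_j^i z_i\}}(z)$, and outward $t$-goodness of $n_j^i z_i$ yields $\tau_t^{\{n_j^i z_i\}}(z)\le\gamma(1+\|z-n_j^i z_i\|)$. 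A radial-angular split bounds
\[\|z-n_j^i z_i\|\le \|z-n_j^i\hat z\|+n_j^i\|\hat z-z_i\|\le (n_{j+1}^i-n_j^i)+n_j^i\theta\le \|z\|(\eta+\theta),\]
and therefore
\[\hit{\{0\}}{z}\le(\nu+\eta)n_j^i+\gamma\bigl(1+\|z\|(\theta+\eta)\bigr)\le\bigl(\nu+\eta+\gamma\theta+\gamma\eta\bigr)\|z\|+\gamma,\]
which is at most $(\nu+\delta)\|z\|$ once $\|z\|\ge\gamma/\eta$, by the choice of $\eta,\theta$.

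The main obstacle I anticipate is managing the uniformity over directions $z_i$: the sequences $(n_j^i)$ and the convergence rates in (a) and (b) are random and depend on $i$, and for a generic target $z$ neither the direction $\hat z$ nor the radius $\|z\|$ lines up exactly with any anchor $n_j^i z_i$. This is resolved by the finiteness of the angular cover, which lets the $i$-dependent events intersect cleanly and produces a uniform threshold $J$; and by the radial-angular split above, which is exactly where the two parameters $\theta$ (angular resolution) and $\eta$ (radial density of the $n_j^i$) play independent roles and can be tuned separately against $\delta$.
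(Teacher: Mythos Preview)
Your proof is correct and follows essentially the same approach as the paper: both use Theorem~\ref{thm:hitting_times}, Lemma~\ref{lem:z_great_as} (outward $\eps$-greatness of each direction), and the triangle inequality Lemma~\ref{lem:triangle_ineq_chains} to transfer the hitting-time bound from a dense set of directions to arbitrary targets. The paper phrases this via a contradiction/compactness argument over $\mathbb{Q}^2$, whereas you argue directly using a finite net of unit vectors; your packaging is arguably cleaner, but the underlying ideas are identical (and your final threshold $\|z\|\ge\gamma/\eta$ should be $\|z\|\ge\gamma/(\delta-\eta-\gamma\theta-\gamma\eta)$, a trivial fix).
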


\begin{proof}
Fix $\delta'>0$, to be specified later. Let $\Upsilon$ be the event that both
\[\frac{\hit{\{0\}}{nz}}{n\|z\|} \to \nu \quad \forall z\in\mathbb{Q}^2\]
and $z$ is outwardly $\delta'$-great for all $z\in\mathbb{Q}^2$. By Theorem \ref{thm:hitting_times} and Lemma \ref{lem:z_great_as}, and the fact that $\mathbb{Q}^2$ is countable, we know that $\P(\Upsilon)=1$.

Fix $\delta>0$. For a contradiction, suppose that there exists a sequence $(w_n)_{n\ge 1}$ in $\rtwo$ with $\|w_n\|\to+\infty$ and
\[\frac{\hit{\{0\}}{w_n}}{\|w_n\|} > \nu + \delta.\]
By compactness, by taking a subsequence if necessary, we may assume that $w_n/\|w_n\|\to w$ for some $w$ (with $\|w\|=1$). Also, for each $n\in\mathbb{N}$, choose $d(n)\in\mathbb{N}$ such that $d(n)\le \|w_n\| < d(n)+1$.

Choose $z\in\mathbb{Q}^2$ such that $\|z-w\|<\delta'$. On $\Upsilon$, we may take $(n_j)_{j\ge 1}$ a strictly increasing sequence in $\mathbb{N}$ such that $n_j z$ is outwardly $(\nu+\delta')n_j\|z\|$-good for all $j$, and $n_j/n_{j+1} \to 1$. We then choose $J\in\mathbb{N}$ such that
\begin{itemize}
\item $n_{j}/n_{j+1} > 1-\delta'$ for all $j\ge J$;
\item $\displaystyle{\left|\frac{\hit{\{0\}}{n_j z}}{n_j\|z\|} - \nu\right|<\delta'}$ for all $j\ge J$ (this is possible since $z$ is rational and we are on $\Upsilon$).
\end{itemize}
Let $\kappa(n)$ be such that
\begin{equation}\label{eq:kappa_defn}
n_{\kappa(n)} \le \|w_n\| \le n_{\kappa(n)+1}.
\end{equation}
Then choose $N$ such that
\begin{itemize}
    \item $\kappa(n)\ge J$ for all $n\ge N$;
    \item $n_{\kappa(n)} \ge 1/\delta'$ for all $n\ge N$;
    \item $\|w_n/\|w_n\| - w\| < \delta'$ for all $n\ge N$.
\end{itemize}
Then for any $n\ge N$, on $\Upsilon$, we have $\hit{\{0\}}{n_{\kappa(n)}z} \le (\nu+\delta')n_{\kappa(n)}\|z\|$, and therefore by Lemma \ref{lem:triangle_ineq_chains},
\[\hit{\{0\}}{w_n} \le (\nu+\delta')n_{\kappa(n)}\|z\| + \tau^{\{n_{\kappa(n)}z\}}_{(\nu+\delta')n_{\kappa(n)}\|z\|}(w_n).\]
Thus, since $n_{\kappa(n)}z$ is outwardly $(\nu+\delta')n_{\kappa(n)}\|z\|$-good, we have
\[\hit{\{0\}}{w_n} \le (\nu+\delta')n_{\kappa(n)}\|z\| + \gamma\big(1+\|w_n - n_{\kappa(n)}z\|\big).\]
Dividing through by$\|w_n\|$ and using~\eqref{eq:kappa_defn}, we obtain that
\[\frac{\hit{\{0\}}{w_n}}{\|w_n\|} \le (\nu+\delta')\|z\| + \frac{\gamma}{n_{\kappa(n)}} + \gamma\left\|\frac{w_n}{\|w_n\|} - \frac{n_{\kappa(n)}z}{\|w_n\|}\right\|.\]
By our choice of $z$ we have $\|z\|\le 1+\delta'$; by our choice of $N$ we have $n_{\kappa(n)}\ge 1/\delta'$; and so, using the triangle inequality,
\[\frac{\hit{\{0\}}{w_n}}{\|w_n\|} \le (\nu+\delta')(1+\delta') + \gamma\delta' + \gamma\left\|\frac{w_n}{\|w_n\|} - w\right\| + \gamma\|w-z\| + \gamma\left\|z-\frac{n_{\kappa(n)}z}{\|w_n\|}\right\|.\]
Finally, by our choice of $N$ the third and fourth terms on the right-hand side are at most $\gamma\delta'$, and using \eqref{eq:kappa_defn} and the fact that (again by our choice of $N$) $n_{\kappa(n)}/n_{\kappa(n)+1} > 1-\delta'$, we have
\begin{align*}
\frac{\hit{\{0\}}{w_n}}{\|w_n\|} &\le (\nu+\delta')(1+\delta') + 3\gamma\delta' + \gamma\|z\|\left(1-\frac{n_{\kappa(n)}}{n_{\kappa(n)+1}}\right)\\
&\le (\nu+\delta')(1+\delta') + 3\gamma\delta' + \gamma(1+\delta')\delta'.
\end{align*}
By choosing $\delta'$ sufficiently small, we can make this smaller than $\nu+\delta$, contradicting our choice of the sequence $w_n$. We deduce that for any $\delta>0$, there cannot exist a sequence $(w_n)_{n\ge 1}$ in $\rtwo$ with $\|w_n\|\to+\infty$ and
\[\frac{\hit{\{0\}}{w_n}}{\|w_n\|} > \nu + \delta;\]
and thus for any $\eps>0$ we must have
$\bcal\big(0,(\nu^{-1}-\eps)t\big)\subseteq S^{\{0\}}_t$ for all large $t$.
\end{proof}

Proposition \ref{prop:no_point_hit_early}, which says that no point is hit too early, has a very similar proof. The idea is that if there were ``bad'' points that were hit too early, then by Lemma \ref{lem:z_great_as} there would be nearby points with rational co-ordinates that were also hit too early; but by Theorem \ref{thm:hitting_times} we know that all rational points are hit at the right time.

\begin{prop}\label{prop:no_point_hit_early}
For any $\eps>0$,
\[\P\left( S^{(0)}_t \subseteq \bcal\big(0,(\nu^{-1}+\eps)t\big) \,\,\text{ for all large } t\right) =1.\]
\end{prop}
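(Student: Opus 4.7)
The plan is to mirror the proof of Proposition \ref{prop:no_point_hit_late}, with the roles of outward and inward greatness swapped. First, the conclusion $S^{\{0\}}_t\subseteq\bcal(0,(\nu^{-1}+\eps)t)$ for all large $t$ is equivalent, after setting $\delta=\nu^2\eps/(1+\nu\eps)>0$, to the statement that $\hit{\{0\}}{w}\geq(\nu-\delta)\|w\|$ for all $w$ with $\|w\|$ large enough. So it suffices to rule out (on a set of full measure) the existence of a sequence $(w_n)_{n\geq 1}$ in $\rtwo$ with $\|w_n\|\to\infty$ and $\hit{\{0\}}{w_n}<(\nu-\delta)\|w_n\|$. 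Fix a small $\delta'>0$ (to be chosen at the end, depending only on $\delta$, $\nu$ and $\gamma$). Define $\Upsilon$ as the almost-sure event on which (i) $\hit{\{0\}}{nz}/n\to\nu\|z\|$ for every $z\in\mathbb{Q}^2\setminus\{0\}$, and (ii) every $z\in\mathbb{Q}^2\setminus\{0\}$ is inwardly $\delta'$-great; this event has probability $1$ by the second bullet of Theorem~\ref{thm:hitting_times}, Lemma~\ref{lem:z_great_as}, and countability of $\mathbb{Q}^2$.

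Working on $\Upsilon$, suppose for contradiction that such a sequence $(w_n)$ exists. By compactness of the unit sphere, pass to a subsequence with $w_n/\|w_n\|\to w$ for some unit vector $w$; pick $z\in\mathbb{Q}^2$ with $\|z-w\|<\delta'$ (so $|\|z\|-1|<\delta'$). By the inward $\delta'$-greatness of $z$, we have a strictly increasing sequence $(m_j)_{j\geq 1}$ such that $m_jz$ is inwardly $(\nu-\delta')m_j\|z\|$-good and $m_j/m_{j+1}\to 1$. Choose $J$ large enough so that $m_j/m_{j+1}>1-\delta'$ and $\hit{\{0\}}{m_jz}>(\nu-\delta'/2)m_j\|z\|$ for all $j\geq J$. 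Let $\kappa(n)$ be the largest $j$ with $m_j\|z\|\leq\|w_n\|$, so $m_{\kappa(n)}\|z\|\leq\|w_n\|\leq m_{\kappa(n)+1}\|z\|$. As in the proof of Proposition~\ref{prop:no_point_hit_late}, for $n$ large one obtains both $\|w_n\|-m_{\kappa(n)}\|z\|\leq\delta'\|w_n\|$ and $\|w_n-m_{\kappa(n)}z\|\leq C\delta'\|w_n\|$ for an absolute constant $C$, combining the estimates $m_{\kappa(n)}\|z\|\geq(1-\delta')\|w_n\|$ and $\|w_n/\|w_n\|-z/\|z\|\|<3\delta'$.

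Now set $t_s:=(\nu-\delta')m_{\kappa(n)}\|z\|-\gamma(1+\|w_n-m_{\kappa(n)}z\|)$. A routine arithmetic check (using the bounds in the previous paragraph) shows that for $\delta'<\delta/(4\gamma C+\nu+2)$ and $n$ large enough, the assumed bound $\hit{\{0\}}{w_n}\leq(\nu-\delta)\|w_n\|$ implies $\hit{\{0\}}{w_n}\leq t_s$. Applying the inward goodness of $m_{\kappa(n)}z$ with $z''=w_n$ gives
\[
\tau^{\{w_n\}}_{t_s}(m_{\kappa(n)}z)\;\leq\;\gamma(1+\|w_n-m_{\kappa(n)}z\|),
\]
and then Lemma~\ref{lem:triangle_ineq_chains} (with $z_1=0$, $z_2=w_n$, $z_3=m_{\kappa(n)}z$, $t=t_s$) yields
\[
\hit{\{0\}}{m_{\kappa(n)}z}\;\leq\;t_s+\gamma(1+\|w_n-m_{\kappa(n)}z\|)\;=\;(\nu-\delta')m_{\kappa(n)}\|z\|,
\]
which contradicts $\hit{\{0\}}{m_{\kappa(n)}z}>(\nu-\delta'/2)m_{\kappa(n)}\|z\|$ from our choice of $J$. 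This contradiction completes the proof. The main obstacle, exactly as in Proposition~\ref{prop:no_point_hit_late}, is the careful bookkeeping of the various $\delta'$-dependent error terms so that the slack between $\nu-\delta$ and $\nu-\delta'$ absorbs both the discretisation error $\|w_n\|-m_{\kappa(n)}\|z\|$ and the extra $\gamma(1+\|w_n-m_{\kappa(n)}z\|)$ term; the signs of all inequalities are simply reversed relative to the outward case.
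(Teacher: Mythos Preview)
Your proof is correct and follows essentially the same approach as the paper: both argue by contradiction, use the inward $\delta'$-greatness of a nearby rational direction $z$ to find a scaled point $m_{\kappa(n)}z$ close to $w_n$, apply the definition of inward goodness at the shifted time $t_s=D(n)$ together with Lemma~\ref{lem:triangle_ineq_chains} to deduce $\hit{\{0\}}{m_{\kappa(n)}z}\leq(\nu-\delta')m_{\kappa(n)}\|z\|$, and obtain a contradiction with the almost-sure convergence of $\hit{\{0\}}{m_jz}/(m_j\|z\|)$ to $\nu$. The only cosmetic differences are your explicit conversion from $\eps$ to $\delta$, your choice of $\kappa(n)$ via $m_j\|z\|\le\|w_n\|$ rather than $m_j\le\|w_n\|$, and your use of $(\nu-\delta'/2)$ rather than $(\nu-\delta')$ for the contradiction threshold.
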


\begin{proof}
Fix $\delta'>0$, to be specified later. Let $\Upsilon$ be the event that both
\[\frac{\hit{\{0\}}{nz}}{n\|z\|} \to \nu \quad \forall z\in\mathbb{Q}^2\]
and $z$ is inwardly $\delta'$-great for all $z\in\mathbb{Q}^2$. By Theorem \ref{thm:hitting_times} and Lemma \ref{lem:z_great_as}, and the fact that $\mathbb{Q}^2$ is countable, we know that $\P(\Upsilon)=1$.

Fix $\delta>0$. For a contradiction, suppose that there exists a sequence $(w_n)_{n\ge 1}$ in $\rtwo$ with $\|w_n\|\to+\infty$ and
\[\frac{\hit{\{0\}}{w_n}}{\|w_n\|} < \nu - \delta.\]
By compactness, by taking a subsequence if necessary, we may assume that $w_n/\|w_n\|\to w$ for some $w$ (with $\|w\|=1$). Also, for each $n\in\mathbb{N}$, choose $d(n)\in\mathbb{N}$ such that $d(n)\le \|w_n\| < d(n)+1$.

Choose $z\in\mathbb{Q}^2$ such that $\|z-w\|<\delta'$. On $\Upsilon$, we may take $(m_j)_{j\ge 1}$ a strictly increasing sequence in $\mathbb{N}$ such that $m_j z$ is inwardly $(\nu-\delta')m_j\|z\|$-good for all $j$, and $m_j/m_{j+1} \to 1$. We then choose $J\in\mathbb{N}$ such that
\begin{itemize}
\item $m_{j}/m_{j+1} > 1-\delta'$ for all $j\ge J$;
\item $\displaystyle{\left|\frac{\hit{\{0\}}{m_j z}}{m_j\|z\|} - \nu\right|<\delta'}$ for all $j\ge J$ (this is possible since $z$ is rational and we are on $\Upsilon$).
\end{itemize}
Let $\kappa(n)$ be such that
\begin{equation*}
n_{\kappa(n)} \le \|w_n\| \le n_{\kappa(n)+1}.
\end{equation*}
Then choose $N$ such that
\begin{itemize}
    \item $\kappa(n)\ge J$ for all $n\ge N$;
    \item $m_{\kappa(n)} \ge 1/\delta'$ for all $n\ge N$;
    \item $\|w_n/\|w_n\| - w\| < \delta'$ for all $n\ge N$.
\end{itemize}
Note that then, on $\Upsilon$,
\[\|w_n-m_{\kappa(n)}z\| \le \|w_n\|\cdot\left\|\tfrac{w_n}{\|w_n\|} - w\right\| + \|w_n\|\cdot\left\|w-\tfrac{m_{\kappa(n)}}{\|w_n\|}z\right\| \le 3\delta'\|w_n\|\]
and as a result, by taking $\delta'$ sufficiently small relative to $\delta$, we may ensure that
\[(\nu-\delta)\|w_n\| \le (\nu-\delta')\|m_{\kappa(n)}z\| - \gamma\big(1+\|w_n-m_{\kappa(n)}z\|\big).\]

Let $D(n)$ be the quantity on the right-hand side above, i.e.
\[D(n):=(\nu-\delta')\|m_{\kappa(n)}z\| - \gamma\big(1+\|w_n-m_{\kappa(n)}z\|\big).\]

Since, on $\Upsilon$, by our choice of $w_n$ we have
\[\hit{\{0\}}{w_n} < (\nu - \delta)\|w_n\| \le D(n),\]
we deduce that on $\Upsilon$, using Lemma \ref{lem:triangle_ineq_chains},
\[\hit{\{0\}}{m_{\kappa(n)}z} \le D(n) + \tau^{\{w_n\}}_{D(n)}(m_{\kappa(n)z}).\]
Since $m_{\kappa(n)}z$ is inwardly $(\nu-\delta')m_{\kappa(n)}\|z\|$-good, we have
\[\tau^{\{w_n\}}_{D(n)}(m_{\kappa(n)z}) \le \gamma(1+\|w_n-m_{\kappa(n)}z\|)\]
and therefore, on $\Upsilon$,
\[\hit{\{0\}}{m_{\kappa(n)}z} \le D(n) + \gamma(1+\|w_n-m_{\kappa(n)}z\|) = (\nu-\delta')\|m_{\kappa(n)}z\|. \]
\[\hit{\{0\}}{w_n} \le (\nu+\delta')n_{\kappa(n)}\|z\| + \gamma\big(1+\|w_n - n_{\kappa(n)}z\|\big).\]
But our choice of $J$ ensured that for all $j\ge J$,
\[\hit{\{0\}}{m_j z} > (\nu-\delta')\|m_j z\|,\]
giving a contradiction.

We deduce that for any $\delta>0$, there cannot exist a sequence $(w_n)_{n\ge 1}$ in $\rtwo$ with $\|w_n\|\to+\infty$ and
\[\frac{\hit{\{0\}}{w_n}}{\|w_n\|} < \nu - \delta;\]
and thus for any $\eps>0$ we must have
$S^{\{0\}}_t\subseteq \bcal\big(0,(\nu^{-1}+\eps)t\big)$ for all large $t$.
\end{proof}

\begin{proof}[Proof of Theorem \ref{thm:ball_shape}]
In the case that $E=\{0\}$, the proof of Theorem \ref{thm:ball_shape} is a trivial combination of Propositions \ref{prop:no_point_hit_late} and \ref{prop:no_point_hit_early}; by translation invariance we also obtain that for any $z\in\rtwo$ and any $\eps>0$,
\[\P\left( \bcal\big(z,(\nu^{-1}-\eps)t\big)\subseteq S^{\{z\}}_t \subseteq \bcal\big(z,(\nu^{-1}+\eps)t\big) \,\,\text{ for all large } t\right) =1.\]
For a more general compact set $E$, take some $z\in E$; then clearly
\[\bcal(z,(\nu^{-1}-\eps)t)\subseteq S_t^{\{z\}} \subseteq S_t^E,\]
and also
\[S_t^E \subseteq S_{t+c}^{\{z\}} \subseteq \bcal\big(z,(\nu^{-1}+\eps)(t+c)\big)\]
where $c$ is a sufficiently large constant that $E\subseteq S_c^{\{z\}}$. This allows us to conclude.
\end{proof}

\subsection{\texorpdfstring{Controlling the difference between $\hit{\hcal}{x,0}$ and $\bulk{\hcal}{x,0}$, and between $\hit{\{0\}}{\hcal^x}$ and $\bulk{\{0\}}{x,0}$: proof of Theorem \ref{thm:new_speed_growth_bulk}}{Controlling the difference between hitting times and bulk coverage times: proof of Theorem \ref{thm:new_speed_growth_bulk}}}\label{sec:new_control_diff}

We now turn to proving Theorem \ref{thm:new_speed_growth_bulk}. We begin with the first part; the second part will follow a very similar argument. Recall that we want to ensure that for large enough $\beta$, the whole segment of the $x$-axis between the origin and $(x,0)$ is covered by time $\hit{\hcal}{x,0} + \beta\sqrt{x}$ with high probability.

To do this, we will use Theorem \ref{thm:geodesic_plane_to_point} to show that with high probability there exists a chain of $\Pi^{(\hit{\hcal}{x,0})}$-reproduction events leading from $(x,0)$ to $\hcal$ in time $\hit{\hcal}{x,0}$ that does not wander too far from the $x$-axis. We call this the \emph{right-to-left chain}. We will then show that the extra time $\beta\sqrt{x}$ is enough to ensure that all the other points between $0$ and $(x,0)$ are covered too, by constructing chains of reproduction events that ``join'' to the one already mentioned.

Our next goal is to show that if the right-to-left chain exists, then we can find chains from all other points on the $x$-axis between $0$ and $(x,0)$ in an additional $\beta\sqrt x$ time that ``join up'' with the right-to-left chain. To carry out the ``joining up'' step, our main tool is the following simple observation, which essentially says that if we know one chain of reproduction events has hit the half-plane, then any other chain that intersects it can be extended to a chain that will also hit the half-plane.

\begin{lem}\label{lem:connect_chains}
Suppose that $0\le t_1\le t_2$ and $z_1,z_2\in\rtwo$. Take $\mathbf{C}^{[1]}\in\ccal^{(z_1)}(\Pi^{(t_1)})$ and $\mathbf{C}^{[2]}\in\ccal^{(z_2)}(\Pi^{(t_2)})$. Suppose that $\mathbf{C}^{[1]}$ hits the half-plane $\mathcal{H}$ by time $t_1$, i.e.
\begin{equation*}
    \bcal\left( Z_{t_{1}}^{[1]}, R_{t_{1}}^{[1]} \right) \cap \mathcal{H} \neq \emptyset
\end{equation*}
and that the two chains meet at earlier times, i.e.~there exist $t'_{1}, t'_{2} \geq 0$ such that $t_{2} - t'_{2} > t_{1} - t'_{1} \ge 0$ and 
\begin{equation*}
    \bcal\left( Z_{t'_{1}}^{[1]}, R_{t'_{1}}^{[1]} \right) \cap \bcal\left( Z_{t'_{2}}^{[2]}, R_{t'_{2}}^{[2]} \right) \neq \emptyset \text{ and } \mathbf{C}_{t'_{1}}^{[1]} \neq \mathbf{C}_{t'_{1}-}^{[1]}.
\end{equation*}
Then there exists $\mathbf{C}^{[m]}\in\ccal^{(z_2)}(\Pi^{(t_2)})$ such that 
\begin{equation*}
    \bcal\left( Z_{t_{2}}^{[m]}, R_{t_{2}}^{[m]} \right) \cap \mathcal{H} \neq \emptyset.
\end{equation*}
\end{lem}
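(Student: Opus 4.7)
The plan is to explicitly build a merged chain $\mathbf{C}^{[m]}$ by following $\mathbf{C}^{[2]}$ up until the meeting chain-time $t'_2$, then using the reproduction event responsible for the jump of $\mathbf{C}^{[1]}$ at chain-time $t'_1$ as a bridge, and finally concatenating the tail of $\mathbf{C}^{[1]}$ in order to inherit its intersection with~$\hcal$. The key arithmetic observation is that the change of base time from $t_1$ to $t_2$ in the underlying point process shifts chain-times by $t_2-t_1$: from the definition of $\Pi^{(t)}$ it follows that
\[
(s,Z,R)\in\Pi^{(t_1)} \quad\Longleftrightarrow\quad (t_1-s,Z,R)\in\Pi \quad\Longleftrightarrow\quad \bigl(s+(t_2-t_1),Z,R\bigr)\in\Pi^{(t_2)}.
\]
In particular, since $\mathbf{C}^{[1]}_{t'_1}\neq\mathbf{C}^{[1]}_{t'_1-}$, the point $(t'_1,Z^{[1]}_{t'_1},R^{[1]}_{t'_1})$ belongs to $\Pi^{(t_1)}$, hence $\bigl((t_2-t_1)+t'_1,Z^{[1]}_{t'_1},R^{[1]}_{t'_1}\bigr)\in\Pi^{(t_2)}$, and more generally the entire trajectory of $\mathbf{C}^{[1]}$ after chain-time $t'_1$ translates into a valid chain of $\Pi^{(t_2)}$-events once we shift all chain-times by $t_2-t_1$.

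Concretely, I would define $\mathbf{C}^{[m]}$ by
\[
\mathbf{C}^{[m]}_s =
\begin{cases}
\mathbf{C}^{[2]}_s & \text{if } s\in[0,t'_2],\\[1mm]
\mathbf{C}^{[2]}_{t'_2} & \text{if } s\in\bigl(t'_2,(t_2-t_1)+t'_1\bigr),\\[1mm]
\mathbf{C}^{[1]}_{s-(t_2-t_1)} & \text{if } s\geq (t_2-t_1)+t'_1.
\end{cases}
\]
Note that the hypothesis $t_2-t'_2>t_1-t'_1$ is exactly $(t_2-t_1)+t'_1>t'_2$, so the middle interval is nonempty and well-defined (this is what makes room for the bridging event to occur strictly after the portion inherited from $\mathbf{C}^{[2]}$). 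Then I would verify the two bullets of Definition~\ref{defn:chain_reproduction_events}: the initial condition is automatic since $\mathbf{C}^{[m]}_0=\mathbf{C}^{[2]}_0$ is a valid start from $z_2$ in $\Pi^{(t_2)}$; every jump inherited from $\mathbf{C}^{[2]}$ or $\mathbf{C}^{[1]}$ is a valid $\Pi^{(t_2)}$-event by the shift identity above; and the only new jump at chain-time $(t_2-t_1)+t'_1$ is valid because its associated ball is $\bcal(Z^{[1]}_{t'_1},R^{[1]}_{t'_1})$, which by assumption intersects $\bcal(Z^{[2]}_{t'_2},R^{[2]}_{t'_2})=\bcal(Z^{[m]}_{(t_2-t_1)+t'_1-},R^{[m]}_{(t_2-t_1)+t'_1-})$.

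Finally, evaluating at $s=t_2$ gives $\mathbf{C}^{[m]}_{t_2}=\mathbf{C}^{[1]}_{t_1}$, so
\[
\bcal\bigl(Z^{[m]}_{t_2},R^{[m]}_{t_2}\bigr)=\bcal\bigl(Z^{[1]}_{t_1},R^{[1]}_{t_1}\bigr)\cap\hcal\neq\emptyset,
\]
completing the proof. The construction is essentially bookkeeping; the only subtlety is that the strict inequality $t_2-t'_2>t_1-t'_1$ is needed to ensure the bridging event occurs at a chain-time strictly greater than $t'_2$, so that the splicing of the two chains is well-defined and satisfies the jump condition of Definition~\ref{defn:chain_reproduction_events}. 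There is no real obstacle; the only thing one must be vigilant about is the sign of the time shift when translating a point of $\Pi^{(t_1)}$ into a point of $\Pi^{(t_2)}$.
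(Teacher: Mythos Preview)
Your construction is exactly the same three-piece splicing as the paper's own proof, and your additional verification of the jump conditions and the role of the strict inequality $t_2-t'_2>t_1-t'_1$ makes explicit what the paper leaves to the reader. The only slip is a typo in the final display: you want $\bcal\bigl(Z^{[m]}_{t_2},R^{[m]}_{t_2}\bigr)=\bcal\bigl(Z^{[1]}_{t_1},R^{[1]}_{t_1}\bigr)$, and then separately that this ball meets~$\hcal$.
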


\begin{proof}
We construct the desired chain of $\Pi^{(t_2)}$-reproduction events $\mathbf{C}^{[m]}$ as follows. 
\begin{itemize}
    \item For all $0 \leq t \leq t'_{2}$, we set $\mathbf{C}_{t}^{[m]} = \mathbf{C}_{t}^{[2]}$. 
    \item For all $t'_{2} \leq t < t_{2} - t_{1} + t'_{1}$, we set $\mathbf{C}_{t}^{[m]} = \mathbf{C}_{t'_{2}}^{[m]}$. 
    \item For all $t \geq t_{2} - t_{1} + t'_{1}$, we set $\mathbf{C}_{t}^{[m]} = \mathbf{C}_{t - t_{2} + t_{1}}^{[1]}$.\qedhere
\end{itemize}
\end{proof}

Lemma \ref{lem:connect_chains} tells us that, if we already have a right-to-left chain $\mathbf{C}^{[1]}$ of $\Pi^{(\hit{\hcal}{x,0})}$-reproduction events leading from~$(x,0)$ to $\mathcal{H}$, then for each $x'\in(0,x)$ we can obtain a chain of $\Pi^{(\hit{\hcal}{x,0}+\beta\sqrt x)}$-reproduction events leading from~$(x',0)$ to $\mathcal{H}$ by simply finding another chain $\mathbf{C}^{[2]}$ from $(x',0)$ that intersects $\mathbf{C}^{[1]}$ before time $\beta\sqrt x$. Theorem \ref{thm:geodesic_plane_to_point} allows us to ask that the right-to-left chain $\mathbf{C}^{[1]}$ does not move too far from the $x$-axis, and therefore $\mathbf{C}^{[2]}$ does not have to travel far to intersect it. A suitable chain $\mathbf{C}^{[2]}$ can be constructed via the methods in Section \ref{subsec:slow_coverage}, except that since we do not know whether $\mathbf{C}^{[1]}$ will be above or below the $x$-axis when it passes near $(x',0)$, we will in fact need two chains to cover a region both above and below the $x$-axis.  We now carry out the details.

The following easy consequence of the work done in Section \ref{subsec:slow_coverage} says that starting from any point in a sufficiently small segment of the $x$-axis, if we wait time $Ay - 1$ for sufficiently large $A$, then we will see a chain of $\Pi^{(t)}$-reproduction events that goes above $y$ with overwhelming probability. (We use $\alpha y -1$ rather than $\alpha y$ simply because this is how we will apply this corollary later.)

\begin{lem}\label{lem:new_up_down_chains}
Fix $\delta,\eta\in(0,1]$ such that $\mu((3\delta,\infty))\ge \eta$. For all $i\in\mathbb{Z}$, $t\ge 0$, $y\ge 3$ and $\alpha\ge \frac{6}{\delta^2\eta}\vee 2$,
\[\P\left( \exists x'\in[i\delta,(i+1)\delta] : \nexists\mathbf{C} = ((X_s,Y_s),R_s)_{s\ge 0} \in \ccal^{((x',0))}(\Pi^{(t)}) \text{ with } Y_{\alpha y-1} > y\right) \le e^{-\delta\eta \alpha y/2}.\]
\end{lem}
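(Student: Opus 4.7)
The plan is to adapt the slow-coverage construction from Section~\ref{subsec:slow_coverage} to move \emph{vertically} rather than horizontally. For each $j \geq 1$, set $B_j := [i\delta, (i+1)\delta] \times [(j-1)\delta, j\delta]$, and let $(\overrightarrow T_j, \overrightarrow Z_j, \overrightarrow R_j)$ be the first reproduction event in $\Pi^{(t)}$ occurring after $\overrightarrow T_{j-1}$ (with $\overrightarrow T_0 := 0$), with radius strictly greater than $3\delta$ and centre in $B_j$.

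Three elementary geometric facts underlie the construction. First, any centre in $B_1 = [i\delta, (i+1)\delta] \times [0, \delta]$ is at distance at most $\sqrt{2}\,\delta < 3\delta$ from every $(x', 0)$ with $x' \in [i\delta, (i+1)\delta]$, so the ball $\bcal(\overrightarrow Z_1, \overrightarrow R_1)$ already contains the entire starting segment. Second, centres of two consecutive boxes $B_{j-1}$ and $B_j$ are at distance at most $\sqrt{5}\,\delta < 3\delta$, so consecutive balls intersect. Third, after $N := \lceil y/\delta \rceil + 2$ such events the $y$-coordinate of the current centre is at least $(N-1)\delta > y$. Combining these observations, for every $x' \in [i\delta, (i+1)\delta]$ simultaneously, the sequence $(\overrightarrow Z_j, \overrightarrow R_j)_{j \geq 1}$ yields a chain $\mathbf{C} \in \ccal^{((x', 0))}(\Pi^{(t)})$ whose $y$-coordinate at time $\overrightarrow T_N$ exceeds $y$. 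A single deterministic sequence of reproduction events therefore witnesses the statement for all $x'$ at once, so no union bound over $x'$ is needed, and the event in the lemma is contained in $\{\overrightarrow T_N > \alpha y - 1\}$.

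It therefore remains to bound $\proba(\overrightarrow T_N > \alpha y - 1)$. The inter-event times $(\overrightarrow T_j - \overrightarrow T_{j-1})_{j \geq 1}$ are i.i.d.\ exponential with rate at least $\delta^2 \eta$, since each box has area $\delta^2$ and $\mu((3\delta, +\infty)) \geq \eta$. The constraints $y \geq 3$ and $\alpha \geq 2$ give $\alpha y - 1 \geq \alpha y / 2$, so a standard Chernoff bound on a sum of $N \leq y/\delta + 2$ such exponentials -- exactly mirroring the calculation in the proof of Lemma~\ref{lem:tail_estimate_slow_covering_chain}, with the hypothesis $\alpha \geq 6/(\delta^2 \eta)$ ensuring that the relevant Chernoff parameter exceeds $6$ -- yields the claimed bound $e^{-\delta \eta \alpha y / 2}$.

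The only real work is the quantitative Chernoff bookkeeping: making sure that with our choice of rate, number of events, and threshold $\alpha y - 1$, the final exponent reads exactly $\delta \eta \alpha y / 2$. This is a routine calculation that follows the proof of Lemma~\ref{lem:tail_estimate_slow_covering_chain} essentially verbatim, so the main conceptual point is really the vertical repurposing of the slow-coverage chain together with the observation that a single chain serves every starting point in the interval at once.
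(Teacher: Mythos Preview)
Your strategy is the right one and matches the paper's: build a single slow-coverage chain that simultaneously serves every starting point in the segment, then reduce to a Chernoff bound on a sum of exponentials. The paper does this by using translation and rotation invariance to turn the vertical problem into the horizontal one already treated in Section~\ref{subsec:slow_coverage}, and then invokes Lemma~\ref{lem:tail_estimate_slow_covering_chain} directly.

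However, there is a genuine quantitative gap in your version. Your boxes $B_j = [i\delta,(i+1)\delta]\times[(j-1)\delta,j\delta]$ have area $\delta^{2}$, so the inter-event rate is only $\delta^{2}\eta$. The slow-coverage chain in Section~\ref{subsec:slow_coverage} uses boxes $(x_{j-1},x_j)\times(-\delta,\delta)$ of area $2\delta^{2}$, giving rate $2\delta^{2}\eta$; Lemma~\ref{lem:tail_estimate_slow_covering_chain} is calibrated to that rate. With your rate, the Chernoff parameter is $a=(\alpha y-1)\delta^{2}\eta/N$, and under $\alpha\ge 6/(\delta^{2}\eta)$, $N\le y/\delta+3$, this is only of order $\delta$, not guaranteed to exceed~$6$. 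Even ignoring that threshold, the best exponent the Chernoff bound can deliver from rate $\delta^{2}\eta$ is of order $\delta^{2}\eta\alpha y$, not $\delta\eta\alpha y$. Since $\delta\le 1$, this is strictly weaker than the stated bound $e^{-\delta\eta\alpha y/2}$, so the claim that the calculation ``follows the proof of Lemma~\ref{lem:tail_estimate_slow_covering_chain} essentially verbatim'' is false.

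The fix is cheap: either widen your boxes to area $2\delta^{2}$ (e.g.\ take $B_j=[i\delta,(i+1)\delta]\times[(j-2)\delta,j\delta]$, or extend by $\delta/2$ on each side in the $x$-direction --- the geometric claims survive since the radius is $>3\delta$), or do exactly what the paper does: invoke rotational invariance to reduce to the already-built horizontal chain, set $x=y+2\delta$, check $\alpha y-1\ge \alpha x/2$, and apply Lemma~\ref{lem:tail_estimate_slow_covering_chain} with $\beta=\alpha/2$.
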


\begin{proof}
Since $\Pi^{(t)}$ has the same distribution as $\Pi$, and using translation and rotation invariance of $\Pi$, we have
\begin{multline*}
\P\left( \exists x'\in[i\delta,(i+1)\delta] : \nexists\mathbf{C} = ((X_s,Y_s),R_s)_{s\ge 0} \in \ccal^{((x',0))}(\Pi^{(t)}) \text{ with } Y_{\alpha y-1} > y\right)\\
= \P\left( \exists x'\in[0,\delta] : \nexists\mathbf{C} = ((X_s,Y_s),R_s)_{s\ge 0} \in \ccal^{((x',0))}(\Pi) \text{ with } X_{\alpha y-1} > y\right).
\end{multline*}
In other words, asking for a chain of $\Pi^{(t)}$-reproduction events from a segment of the $x$-axis to height $y$ has the same probability as asking for a chain of $\Pi$-reproduction events from a segment of the $y$-axis to the half-plane $\hcal^y$. But, using the definition \eqref{eq:slow_covering_chain} of the slow coverage chain and Lemma \ref{lem:properties_sequence_reproduction_events} (ii), this event entails that $\overrightarrow{T}_{\lfloor y/\delta\rfloor + 2} > \alpha y-1$. And letting $x=y+2\delta$, we have
\[\P\left(\overrightarrow{T}_{\lfloor y/\delta\rfloor +2} > \alpha y-1\right) = \P\left(\overrightarrow{T}_{\lfloor x/\delta\rfloor} > \alpha y-1\right) \le \P\left(\overrightarrow{T}_{\lceil x/\delta\rceil} > \alpha y-1\right).\]
Since $\alpha\ge 2$, $\delta\le 1$ and $y\ge 3$, it is easy to check that $\alpha y-1 \ge \alpha x/2$. Thus, putting the ingredients above together, we have that
\[\P\left( \exists x'\in[i\delta,(i+1)\delta] : \nexists\mathbf{C} = ((X_s,Y_s),R_s)_{s\ge 0} \in \ccal^{((x',0))}(\Pi^{(t)}) \text{ with } Y_{\alpha  y-1} > y\right) \le  \P\left(\overrightarrow{T}_{\lceil x/\delta\rceil} > \frac{\alpha x}{2}\right)\]
and by Lemma \ref{lem:tail_estimate_slow_covering_chain} we deduce the result.
\end{proof}

We now take a union bound over points on the $x$-axis and integer times $j$.

\begin{cor}\label{cor:new_up_down_chains}
Fix $\delta,\eta\in(0,1]$ such that $\mu((3\delta,\infty))\ge \eta$. For all $\alpha\ge \frac{6}{\delta^2\eta}\vee 2$ and $A,B\ge 1$ and $x\ge 9$,
\begin{align*}
\P\left( \exists x'\in[0,x], j\in\{0,\ldots,\lceil Bx\rceil\} : \nexists\mathbf{C} = ((X_s,Y_s),R_s)_{s\ge 0} \in \ccal^{((x',0))}(\Pi^{(j)}) \text{ with } Y_{\alpha Ax^{1/2}-1} > Ax^{1/2}\right)&\\
\le (Bx+2)(x/\delta+1)e^{-\delta\eta \alpha A x^{1/2}/2}.&
\end{align*}
\end{cor}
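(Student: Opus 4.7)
The plan is to derive this corollary from Lemma \ref{lem:new_up_down_chains} by a simple union bound over a discretization of the interval $[0,x]$ and the set of integer times $\{0,1,\ldots,\lceil Bx\rceil\}$. The essential probabilistic content has already been established in the preceding lemma; what remains is only bookkeeping.

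First, I would cover $[0,x]$ by the family of intervals $\bigl\{[i\delta,(i+1)\delta]\bigr\}_{i=0}^{\lceil x/\delta\rceil-1}$, which contains at most $x/\delta+1$ elements. The event whose probability we wish to control implies that there exist an index $i$ in this range, a point $x'\in[i\delta,(i+1)\delta]\cap[0,x]$, and an integer $j\in\{0,\ldots,\lceil Bx\rceil\}$ such that no chain $\mathbf{C}\in\ccal^{((x',0))}(\Pi^{(j)})$ reaches height above $Ax^{1/2}$ by time $\alpha Ax^{1/2}-1$. Since $B\ge 1$ and $x\ge 9$, the number of admissible values of $j$ is at most $\lceil Bx\rceil+1\le Bx+2$.

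For each pair $(i,j)$, I would apply Lemma \ref{lem:new_up_down_chains} with $t=j$ and $y=Ax^{1/2}$. The hypotheses of that lemma are satisfied: $\delta,\eta\in(0,1]$ with $\mu((3\delta,\infty))\ge\eta$ by assumption; $\alpha\ge \tfrac{6}{\delta^2\eta}\vee 2$ by hypothesis; and $y=Ax^{1/2}\ge\sqrt{9}=3$ since $A\ge 1$ and $x\ge 9$. The lemma therefore yields the per-term bound $e^{-\delta\eta\alpha A x^{1/2}/2}$, and a straightforward union bound over the at most $(x/\delta+1)(Bx+2)$ pairs gives the claimed estimate.

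There is no real obstacle here: the slow-coverage chain argument from Section \ref{subsec:slow_coverage}, packaged through Lemmas \ref{lem:tail_estimate_slow_covering_chain} and \ref{lem:new_up_down_chains}, already provides exponentially small tails in $x^{1/2}$ for each individual starting slab and each integer time. The only point requiring mild attention is that the polynomial prefactor $(Bx+2)(x/\delta+1)$ arising from the discretization is negligible against the exponential $e^{-\delta\eta\alpha Ax^{1/2}/2}$, which is exactly why this corollary will be useful in the applications to follow (in particular, combined with Theorem \ref{thm:geodesic_plane_to_point} and Lemma \ref{lem:connect_chains}, in the proof of Theorem \ref{thm:new_speed_growth_bulk}).
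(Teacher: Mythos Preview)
Your proposal is correct and follows essentially the same approach as the paper: a union bound over the discretization $\{[i\delta,(i+1)\delta]\}$ of $[0,x]$ and the integer times $j\in\{0,\ldots,\lceil Bx\rceil\}$, followed by an application of Lemma~\ref{lem:new_up_down_chains} with $t=j$ and $y=Ax^{1/2}$.
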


\begin{proof}
By a union bound,
\begin{align*}
&\P\left( \exists x'\in[0,x], j\in\{0,\ldots,\lceil Bx\rceil\} : \nexists\mathbf{C} = ((X_s,Y_s),R_s)_{s\ge 0} \in \ccal^{((x',0))}(\Pi^{(j)}) \text{ with } Y_{\alpha Ax^{1/2}-1} > Ax^{1/2}\right)\\
&\le \sum_{i=0}^{\lfloor x/\delta\rfloor} \sum_{j=0}^{\lceil Bx\rceil} \P\left( \exists x'\in[i\delta,(i+1)\delta] : \nexists\mathbf{C} = ((X_s,Y_s),R_s)_{s\ge 0} \in \ccal^{((x',0))}(\Pi^{(j)}) \text{ with } Y_{\alpha Ax^{1/2}-1} > Ax^{1/2}\right)
\end{align*}
and then applying Lemma \ref{lem:new_up_down_chains} with $t=j$ and $y= Ax^{1/2}$ (which is larger than $3$ since $A\ge 1$ and $x\ge 9$) gives the result.
\end{proof}

We can now put our ingredients together to prove Theorem \ref{thm:new_speed_growth_bulk}. In a few words, Theorem \ref{thm:geodesic_plane_to_point} provides a right-to-left chain from $(x,0)$ to $\hcal$ that remains within a strip about the $x$-axis; Corollary \ref{cor:new_up_down_chains} provides upwards and downwards chains from every point on the $x$-axis that leave the strip, and therefore must intersect the right-to-left chain; and Lemma \ref{lem:connect_chains} ensures that we can tie the upwards and downwards chains together with the right-to-left chain to create a path from every point on the $x$-axis to $\hcal$.

\begin{proof}[Proof of Theorem \ref{thm:new_speed_growth_bulk}: first part]
As above, fix $\delta,\eta\in(0,1]$ such that $\mu((3\delta,\infty))\ge \eta$, and then fix $\alpha = \frac{6}{\delta^2\eta}\vee 2$. Define the event 
\begin{multline*}
\ucal(x,A,B) := \Big\{ \forall x'\in[0,x],\,\, \forall j\in\{0,\ldots,\lceil Bx\rceil\}, \\
\exists\mathbf{C} = ((X_s,Y_s),R_s)_{s\ge 0} \in \ccal^{((x',0))}(\Pi^{(j)}) \text{ with } Y_{\alpha Ax^{1/2}-1} > Ax^{1/2}\\
\text{and } \exists\mathbf{C}' = ((X'_s,Y'_s),R'_s)_{s\ge 0} \in \ccal^{((x',0))}(\Pi^{(j)}) \text{ with } Y_{\alpha Ax^{1/2}-1} < -Ax^{1/2}\Big\}.
\end{multline*}
In words, $\ucal(x,A,B)$ holds if for every $x'\in[0,x]$, there exists a chain of $\Pi^{(j)}$-reproduction events leading upwards from $(x',0)$ to height $Ax^{1/2}$, and another leading downwards from $(x',0)$ to depth $-Ax^{1/2}$, both in time $\alpha Ax^{1/2}-1$; and this occurs starting at every integer time $j$ up to $\lceil Bx\rceil$ (backwards in time from $j$, since we are using $\Pi^{(j)}$). We know from Corollary \ref{cor:new_up_down_chains} that for any $\eps>0$ and any fixed $A$ and $B$, we can make $\P(\ucal(x,A,B)^c)<\eps$ by taking $x$ sufficiently large.

Now fix $A_\eps,x_\eps$ as in Theorem \ref{thm:geodesic_plane_to_point} and let $\acal(x)$ be the event in Theorem \ref{thm:geodesic_plane_to_point}, which when written in terms of chains of reproduction events is
\begin{multline*}
\acal(x) := \Big\{\exists C\in\ccal^{((x,0))}\big(\Pi^{(\hit{\hcal}{x,0})}\big) : \bcal(Z_s,R_s)\subseteq \lcal^{(A\sqrt x)} \, \forall s\le \hit{\hcal}{x,0}\\
\text{ and } \bcal\big(Z_{\hit{\hcal}{x,0}},R_{\hit{\hcal}{x,0}}\big)\cap \hcal \neq \emptyset\Big\}.
\end{multline*}
Theorem \ref{thm:geodesic_plane_to_point} says that for any $x\ge x_\eps$ we have
\[\P(\acal(x)^c)\le \eps.\]
In words, with high probability there is a right-to-left chain of $\Pi^{(\hit{\hcal}{x,0})}$-reproduction events from $(x,0)$ to $\hcal$ that does not stray too far from the origin.

Further, by Theorem \ref{thm:hitting_times}, we can choose $B_\eps\ge 1$ and increase $x_\eps$ if necessary so that for all $x\ge x_\eps$,
\[\P(\hit{\hcal}{x,0} > B_\eps x) \le \eps;\]
and by increasing $x_\eps$ further if necessary, we may assume that for all $x\ge x_\eps$, we have
\[1+A_\eps\alpha\sqrt{x} \le x.\]

With the parameters chosen above, if $x>x_\eps$, then on the event 
\begin{equation}\label{eq:key_event}
\left\{\hit{\hcal}{x,0} \le B_\eps x\right\}\cap \acal(x)\cap \ucal(x,A_\eps,B_\eps+1),
\end{equation}
the following all occur:
\begin{itemize}
\item There is a right-to-left chain, i.e.~a chain of $\Pi^{(\hit{\hcal}{x,0})}$-reproduction events from $(x,0)$ to $\hcal$ that remains within $\lcal^{(A_\eps\sqrt x)}$.
\item We have $\lceil \hit{\hcal}{x,0}\rceil\le B_\eps x + 1$, and therefore since $x\ge x_\eps$, we know that
\[\lceil \hit{\hcal}{x,0}\rceil + \lceil A_\eps\alpha\sqrt{x}\rceil \le (B_\eps + 1)x.\]
\item Let $T=\lceil\hit{\hcal}{x,0}\rceil + \lceil A_\eps\alpha\sqrt{x}\rceil$; then for each $x'\in[0,x]$, there is a chain of $\Pi^{(T)}$-reproduction events leading upwards from $(x',0)$ to height $Ax^{1/2}$, and another leading downwards from $(x',0)$ to depth $-Ax^{1/2}$, both in time $\alpha Ax^{1/2}-1$.
\end{itemize}
By Lemma \ref{lem:connect_chains}, we deduce that for each $x'\in[0,x]$, there is also a a chain of $\Pi^{(T)}$-reproduction events starting from $(x',0)$ and hitting $\hcal$ by time $T$. In other words, on the event \eqref{eq:key_event}, and for $x>x_\eps$, the whole $x$-axis up to $(x,0)$ is contained within $S^\hcal_T$; or in yet other terms, $\bulk{\hcal}{x,0} \le T$. We have therefore shown that
\[\left\{\hit{\hcal}{x,0} \le B_\eps x\right\}\cap \acal(x,A_\eps)\cap \ucal(x,A_\eps,B_\eps+1) \subseteq \left\{\bulk{\hcal}{x,0} \le \lceil\hit{\hcal}{x,0}\rceil + \lceil A_\eps\alpha\sqrt{x}\rceil \right\}.\]
Letting $\beta = A_\eps\alpha+1$, since we have
\[\lceil\hit{\hcal}{x,0}\rceil + \lceil A_\eps\alpha\sqrt{x}\rceil \le \hit{\hcal}{x,0} + A_\eps\alpha\sqrt{x} + 2 \le \hit{\hcal}{x,0} + \beta\sqrt{x}\]
for sufficiently large $x$, and the probability of \eqref{eq:key_event} is at least $1-3\eps$, the proof of the first part of the theorem is complete.
\end{proof}

\begin{proof}[Proof of Theorem \ref{thm:new_speed_growth_bulk}: second part]
We proceed very similarly to above. Again fix $\delta,\eta\in(0,1]$ such that $\mu((3\delta,\infty))\ge \eta$, and then fix $\alpha = \frac{6}{\delta^2\eta}\vee 2$. This time, instead of a right-to-left chain, we have a left-to-right chain which is precisely the geodesic provided by Theorem \ref{thm:geodesics} (i). That is, we let $A_\eps$ be as in (the precise statement of) Theorem \ref{thm:geodesics}(i) and define
\begin{multline*}
\acal'(x) := \Big\{\exists C\in\ccal^{(0)}(\Pi) : \bcal(Z_s,R_s)\subseteq \lcal^{(A_\eps\sqrt x)} \, \forall s\le \hit{\{0\}}{\hcal^x}\\
\text{ and } \bcal\big(Z_{\hit{\{0\}}{\hcal^x}},R_{\hit{\{0\}}{\hcal^x}}\big)\cap \hcal^x \neq \emptyset\Big\},
\end{multline*}
in the knowledge that Theorem \ref{thm:geodesics} (i) ensures that $\P(\acal'(x))\le \eps$.

We similarly define
\begin{multline*}
\ucal'(x,A,B) := \Big\{ \forall x'\in[0,x],\,\, \forall j\in\{0,\ldots,\lceil Bx\rceil\}, \\
\hspace{15mm}\exists\mathbf{C} = ((X_s,Y_s),R_s)_{s\ge 0} \in \ccal^{((x',Ax^{1/2}))}(\Pi_{(j)}) \text{ with } (x',0)\in \mathbf{C}_{\alpha Ax^{1/2}-1}\\
\text{and } \exists\mathbf{C}' = ((X'_s,Y'_s),R'_s)_{s\ge 0} \in \ccal^{((x',-Ax^{1/2}))}(\Pi_{(j)}) \text{ with } (x',0)\in \mathbf{C}_{\alpha Ax^{1/2}-1}\Big\},
\end{multline*}
the event that for each integer $j\le Bx$ and each $x'\in[0,x]$ there are upwards and downwards chains of $\Pi_{(j)}$-reproduction events started from the top and bottom of the strip $\lcal^{(Ax^{1/2})}$ and finishing at $(x',0)$ at time $\alpha Ax^{1/2}-1$; here we recall that $\Pi_{(j)}$ is the time shift of $\Pi$ defined in \eqref{eq:pi_timeshift}. It is easy to show, by reversing the chains in Lemma \ref{lem:new_up_down_chains} and taking a union bound as in Corollary \ref{cor:new_up_down_chains}, that for any $\eps>0$ and any fixed $A$ and $B$, we can make $\P(\ucal'(x,A,B)^c)<\eps$ by taking $x$ sufficiently large.

We also know from Theorem \ref{thm:hitting_times} that $\P\left(\hit{\{0\}}{\hcal^x} > Bx\right)\le \eps$ for $B=B_\eps$ sufficiently large. On the event
\begin{equation}\label{eq:key_event_2}
\left\{\hit{\{0\}}{\hcal^x} \le B_\eps x\right\}\cap \acal'(x,A_\eps)\cap \ucal'(x,A_\eps,B_\eps+1),
\end{equation}
if $x$ is sufficiently large then the following all occur:
\begin{itemize}
\item There is a left-to-right chain, i.e.~a chain of $\Pi$-reproduction events from $0$ to $\hcal^x$ that remains within $\lcal^{(A_\eps\sqrt x)}$.
\item We have $\lceil\hit{\{0\}}{\hcal^x}\rceil\le B_\eps + 1$, and therefore for $x$ large, we know that
\[\lceil\hit{\{0\}}{\hcal^x}\rceil + \lceil A_\eps\alpha\sqrt{x}\rceil \le (B_\eps + 1)x.\]
\item For each $x'\in[0,x]$, there is a chain of $\Pi_{(\lceil\hit{\{0\}}{\hcal^x}\rceil)}$-reproduction events leading downwards from height $Ax^{1/2}$ to $(x',0)$, and another leading upwards from depth $-Ax^{1/2}$ to $(x',0)$, both in time $\alpha Ax^{1/2}-1$.
\end{itemize}
For each $x'\in[0,x]$, by concatenating the left-to-right chain with either the downwards or upwards chain at time $\lceil\hit{\{0\}}{\hcal^x}\rceil$, we obtain a chain of $\Pi$-reproduction events starting from $0$ and hitting $(x',0)$ by time $T':=\lceil\hit{\{0\}}{\hcal^x}\rceil + \lceil A_\eps\alpha\sqrt{x}\rceil$. By Corollary \ref{cor:forwards_St}, we have $(x',0)\in S^{\{0\}}_{T'}$. Since this holds for all $x'\in[0,x]$, we deduce that on the event \eqref{eq:key_event_2}, for sufficiently large $x$, we have $\bulk{\{0\}}{x,0} \le T'$. Since for sufficiently large $x$, the probability of \eqref{eq:key_event_2} is at least $1-3\eps$, we are able to conclude.
\end{proof}

\section{\texorpdfstring{Equivalence of the different definitions of the $\infty$-parent SLFV}{Equivalence of the different definitions of the infinite-parent SLFV}}\label{sec:equivalence_definitions}

The goal of this section is to provide a suitable condition under which we can move seamlessly between two representations of the $\infty$-parent SLFV: the set-valued process $(S_t)_{t\ge 0}$ defined earlier, and the measure-valued process as defined e.g.~in~\cite{louvet2023stochastic,louvet2023measurevalued}. This is non-trivial, since---as we have seen---the set-valued process started from a point will grow at linear speed, whereas the measure-valued process started from any set of zero Lebesgue measure will not grow at all. However, we are able to define a simple condition---which need be checked only for the initial state of the process---under which the two representations do agree for all time. In particular, under this condition, the results previously obtained on the measure-valued process are also true for the set-valued process.

\subsection{\texorpdfstring{Definition and persistence of condition ($\triangle$)}{Definition and persistence of condition triangle}}\label{sec:triangle}

For any measurable set $A \subseteq \rtwo$, we say that \textit{$A$ satisfies condition}~($\triangle$) if
\begin{equation*}
    \Vol\left(
\bcal(z,\eps) \cap A
    \right) > 0 \quad \text{ for all } z \in E \text{ and all } \eps > 0. 
\end{equation*}
For instance, any open set of $\rtwo$ satisfies condition~($\triangle$), while a set of null Lebesgue measure does not satisfy it. If the initial state $E$ of our $\infty$-parent SLFV does not satisfy condition~($\triangle$), then it is \emph{a priori} possible for the intersection of a reproduction event with $E$ to have zero volume, which would immediately lead to a difference between the set-valued version from Definition \ref{defn:infty_parent_slfv} and any measure-valued version. Thus something like condition~($\triangle$) is necessary in order to prove equality in distribution between the set-valued and measure-valued definitions. Our aim now is to show that condition~($\triangle$) is in fact sufficient for this purpose.

The first step is to prove Lemma \ref{lem:triangle_valued_process}, which said that a set-valued $\infty$-parent SLFV process started from a set satisfying condition ($\triangle$) will continue to satisfy condition ($\triangle$) for all times.

\theoremstyle{plain}
\newtheorem*{lemmatriangle}{Lemma~\ref{lem:triangle_valued_process}}
\begin{lemmatriangle}
Let $E \subseteq \rtwo$ be measurable, and let $(S^E_{t})_{t \geq 0}$ be the (set-valued) $\infty$-parent SLFV with initial condition $S_{0} = E$. If $E$ satisfies condition~($\triangle$), then $S^E_{t}$ satisfies condition~($\triangle$) for all $t \geq 0$.
\end{lemmatriangle}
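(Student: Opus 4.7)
My plan is to decompose $S_t^E$ explicitly as a union of balls (plus $E$ itself), and then argue pointwise that condition ($\triangle$) is inherited either from $E$ or from the positive-radius ball containing the point in question.

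First I would observe that, directly from Definition \ref{defn:infty_parent_slfv}, since the ancestral skeleton $B_t^{(t,w)}$ depends only on $w$ and $\Pi$ (not on $E$), for any measurable $E \subseteq \rtwo$ we have
\[w \in S_t^E \iff B_t^{(t,w)} \cap E \neq \emptyset \iff \exists\, z_0 \in E : w \in S_t^{\{z_0\}},\]
so that $S_t^E = \bigcup_{z_0 \in E} S_t^{\{z_0\}}$. Combining this with Corollary \ref{cor:forwards_St} gives the explicit description
\[S_t^E = \bigcup_{z_0 \in E} \bigcup_{(z', r') \in \ccal_t^{(z_0)}(\Pi)} \bcal(z', r').\]

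I would then fix $z \in S_t^E$ and $\eps > 0$ and split into two cases. If $z \in E$, then since $E \subseteq S_t^E$ and $E$ satisfies~($\triangle$),
\[\Vol\bigl(\bcal(z, \eps) \cap S_t^E\bigr) \ge \Vol\bigl(\bcal(z, \eps) \cap E\bigr) > 0.\]
Otherwise $z \notin E$, and the decomposition above yields $z_0 \in E$ and $(z^*, r^*) \in \ccal_t^{(z_0)}(\Pi)$ with $z \in \bcal(z^*, r^*) \subseteq S_t^E$. Since $\mu$ is supported on $(0, +\infty)$, the only chain element with radius $0$ is the initial state $(z_0, 0)$; but $(z^*, r^*) = (z_0, 0)$ would force $z = z_0 \in E$, contradicting $z \notin E$. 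Hence $r^* > 0$.

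It remains to verify $\Vol\bigl(\bcal(z, \eps) \cap \bcal(z^*, r^*)\bigr) > 0$ whenever $r^* > 0$ and $z \in \bcal(z^*, r^*)$. If $z$ lies in the interior of $\bcal(z^*, r^*)$, then a sufficiently small ball around $z$ is contained in the intersection. If instead $\|z - z^*\| = r^*$, then for any $\delta \in (0, \min(\eps, r^*))$ the point $p_\delta := z + (\delta/r^*)(z^* - z)$ satisfies $\|p_\delta - z^*\| = r^* - \delta < r^*$ and $\|p_\delta - z\| = \delta < \eps$, so $p_\delta$ lies strictly inside both $\bcal(z^*, r^*)$ and $\bcal(z, \eps)$, and hence a small neighborhood of $p_\delta$ lies in the intersection. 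The only real content of the argument is the structural decomposition of $S_t^E$ into balls, after which the geometric case analysis is routine; the hard part, therefore, is just recognising that the pointwise decomposition via forwards chains from each $z_0 \in E$ is exactly what is needed.
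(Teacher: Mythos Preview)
Your proof is correct and follows essentially the same two-case strategy as the paper: if $z\in E$ use condition~($\triangle$) for $E$ and monotonicity, and if $z\notin E$ locate a positive-radius reproduction ball containing $z$ inside $S_t^E$. The only difference is in how that ball is located: the paper simply takes the last reproduction event $(t',z',r')\in\Pi$ with $t'\le t$ and $z\in\bcal(z',r')$ and observes (implicitly via monotonicity of the ancestral skeleton) that $\bcal(z',r')\subseteq S_t^E$, whereas you route through the decomposition $S_t^E=\bigcup_{z_0\in E}S_t^{\{z_0\}}$ and Corollary~\ref{cor:forwards_St} to exhibit the ball explicitly as a term in the union. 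Your route is slightly longer but has the advantage that the inclusion $\bcal(z^*,r^*)\subseteq S_t^E$ is immediate from the union representation, while the paper's version leaves that step tacit.
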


\begin{proof}
Suppose that $E$ satisfies condition ($\triangle$). Let $t \geq 0$ and suppose that $z \in S^E_{t}$. We distinguish two cases. If $z \in S^E_0 = E$, as $(S^E_{s})_{s \geq 0}$ is an increasing set and by condition~($\triangle$), for all $\eps > 0$, 
\begin{equation*}
    \Vol\left(
\bcal(z,\eps) \cap S_{t}^{E}
    \right) \geq \Vol\left(
\bcal(z,\eps) \cap S_{0}^{E}
    \right) > 0.
\end{equation*}
Conversely, if $z \notin S^E_{0}$, let $(t',z',r') \in \Pi$, $0 \leq t' \leq t$ be the last reproduction event to affect~$z$; that is, such that
$z \in \bcal(z',r' )$. Since $z\in S^E_t$, we know that such a reproduction event exists. For all $\eps > 0$, 
\begin{equation*}
    \Vol\left(
\bcal(z,\eps) \cap S_{t}^{E}
    \right) \geq \Vol\left(
\bcal(z,\eps) \cap \bcal(z', r ')
    \right) > 0, 
\end{equation*}
which allows us to conclude. 
\end{proof}

Note that this result stays true if we construct $S^E$ using a Poisson point process on $\rmath \times Q \times (0,+\infty)$ with $Q \subseteq \rtwo$ a square, instead of $\rmath\times\rmath\times(0,+\infty)$. We will make use of this observation later. (There is nothing particularly special about squares here, but later we will need some well-behaved sequence of sets growing to cover the whole of $\mathbb{R}^2$, and squares are a convenient choice.)

We now show how to re-interpret $S^E$ as measure-valued process. Let $\widetilde{\mcal}_{\lambda}$ be the set of measures $M$ on $\rtwo \times \{0,1\}$ whose marginal distribution over $\rtwo$ is Lebesgue measure, or in other words, of the form
\begin{equation}\label{eqn:decomposition_M}
M(dz,A) = \left(\omega_M(z)\ind_{\{1\in A\}} + (1-\omega_M(z))\ind_{\{0\in A\}}\right)dz
\end{equation}
for $z\in\mathbb{R}^2$ and $A\subseteq\{0,1\}$, with $\omega_{M} : \rtwo \to [0,1]$ measurable. Note that the function $\omega_{M}$ is only defined up to a Lebesgue-null set.

 Let $\mcal_{\lambda} \subseteq \widetilde{\mcal}_{\lambda}$ be the set of all measures $M \in \widetilde{\mcal}_{\lambda}$ such that there exists a $\{0,1\}$-valued function~$\omega_{M}$ satisfying (\ref{eqn:decomposition_M}). We will refer to any such function~$\omega_{M}$ as a \textit{density of $M$}. We endow $\mcal_{\lambda}$ with the vague topology. We denote by $D_{\mcal_{\lambda}}[0,+\infty)$ the space of all c\`adl\`ag $\mcal_{\lambda}$-valued paths, endowed with the standard Skorokhod topology.

 We can represent $S^E$ by a measure-valued process living in $\mcal_{\lambda}$ by writing
 \[M^E_{t}(dz,A) := \left(
\mathds{1}_{S^E_{t}}(z) \ind_{\{1\in A\}} + \left(
1 - \mathds{1}_{S^E_{t}}(z)
\right) \ind_{\{0\in A\}}
\right)dz\]
for $z\in\mathbb{R}^2$ and $A\subseteq \{0,1\}$. It is straightforward to check (see \cite[Section 3.1]{louvet2023stochastic}, in particular Definition 3.4 and the proof that follows) that for any measurable initial condition $E$, we have $(M^E_t)_{t\ge 0} \in D_{\mcal_{\lambda}}[0,+\infty)$.

Our goal is to show that if $E$ satisfies condition ($\triangle$) then the process $(M^E_{t})_{t \geq 0}$ is precisely the measure-valued $\infty$-parent SLFV studied in \cite{louvet2023stochastic,louvet2023measurevalued}, implying that the set-valued and measure-valued $\infty$-parent SLFVs correspond to the same process up to a change of state space. 

\subsection{\texorpdfstring{Definition of the measure-valued $\infty$-parent SLFV}{Definition of the measure-valued infinite-parent SLFV}}
One of the ways to define the measure-valued $\infty$-parent SLFV 
studied in \cite{louvet2023stochastic,louvet2023measurevalued}
is as the unique solution to a well-posed martingale problem. One benefit of this construction is that it also provides a characterisation of the process, which we can use later to show that it equals the process $(M^E_t)_{t\ge 0}$ defined above. 

First we introduce the test functions over which the operator associated to the martingale problem is defined. Let $C_{c}(\rtwo)$ be the space of continuous and compactly supported functions $f \in \rtwo \to \rmath$. 
Let $C^{1}(\rmath)$ be the space of continuously differentiable functions $F : \rmath \to \rmath$. For all $M \in \mcal_{\lambda}$, we recall that $\omega_{M}$ denotes a density of $M$ chosen in an arbitrary way, and is $\{0,1\}$-valued. 

For all $f \in C_{c}(\rtwo)$ and $M \in \mcal_{\lambda}$, we set
\begin{equation*}
    \langle
M,f
    \rangle := \int_{\rtwo} f(z)\omega_{M}(z)dz. 
\end{equation*}
Note that if there exists a measurable set $S \subseteq \rtwo$ such that for all $z\in\rtwo$ and $A\subseteq\{0,1\}$,
\begin{equation}\label{eqn:measure_as_set}
M(dz,A) := \left(
\mathds{1}_{S}(z)\ind_{\{1\in A\}} + \left(
1-\mathds{1}_{S}(z)
\right)\ind_{\{0\in A\}}\right)dz,
\end{equation}
then
\begin{equation*}
    \langle
M,f
    \rangle := \int_{\rtwo} f(z)\mathds{1}_{S}(z)dz = \int_S f(z) dz. 
\end{equation*}
Therefore, for any measurable $S\subseteq\rtwo$ we also write
\begin{equation*}
    \langle
S,f \rangle :=  \int_S f(z) dz. 
\end{equation*}
The test functions that we will consider are of the form $\Psi_{F,f} : \mcal_{\lambda} \to \rmath$, for $f \in C_{c}(\rtwo)$ and $F \in C^{1}(\rmath)$, with 
\begin{equation*}
    \Psi_{F,f}(M) := F\left(
\langle M,f \rangle 
    \right)
\end{equation*}
for any $M \in \mcal_{\lambda}$. If $S$ is a measurable subset of $\rtwo$ then we also write
\begin{equation*}
    \Psi_{F,f}(S) := F\left(
\langle S,f \rangle
    \right);
\end{equation*}
this slight abuse of notation is consistent in the sense that if $M$ and $S$ satisfy~(\ref{eqn:measure_as_set}), then 
\begin{equation*}
    \Psi_{F,f}(M) = \Psi_{F,f}(S).
\end{equation*}
We will use this observation later when showing that $(M^E_t)_{t\ge 0}$ is a solution to the martingale problem characterising the measure-valued $\infty$-parent SLFV. 

For all $F \in C^{1}(\rmath)$, $f \in C_{c}(\rtwo)$ and $M \in \mcal_{\lambda}$, we define the operator $\lcal_\mu^\infty$ by setting
\begin{equation}\label{eq:Linftymudef}
\lcal_{\mu}^{\infty}\Psi_{F,f}(M) := \int_{0}^{\text{R}_{0}}\int_{\rtwo} \left(
\Psi_{F,f}(M[z,r]) - \Psi_{F,f}(M)\right) dz\, \mu(dr)
\end{equation}
where $M{[z,r]}\in\mcal_\lambda$ has density
\[\omega_{M{[z,r]}}(\cdot) = \begin{cases}
\omega_M(\cdot)\ind_{\bcal(z,r)^c}(\cdot) + \ind_{\bcal(z,r)}(\cdot) & \text{ if } \int_{\bcal(z,r)} \omega_{M}(z')dz' > 0\\
\omega_M(\cdot) & \text{ otherwise.}
\end{cases}\]

This operator is associated to a well-posed martingale problem, in the following sense. 
\begin{thm}\label{thm:measure_valued_slfv}
(\cite[Theorem 2.14]{louvet2023stochastic}) Let $\varphi \in \mcal_{\lambda}$. There exists a unique $D_{\mcal_{\lambda}}[0,+\infty)$-valued process $(\widetilde M_{t}^{\varphi})_{t \geq 0}$ such that $\widetilde M_{0}^{\varphi} = \varphi$ and, for all $F \in C^{1}(\rmath)$ and $f \in C_{c}(\rtwo)$, 
\begin{equation*}
\left(
\Psi_{F,f}\left(
\widetilde M_{t}^{\varphi}
\right) - \Psi_{F,f}\left(
\widetilde M_{0}^{\varphi}
\right) - \int_{0}^{t} \lcal_{\mu}^{\infty} \Psi_{F,f}\left(
\widetilde M_{s}^{\varphi}
\right)ds
\right)_{t \geq 0}
\end{equation*}
is a martingale. Moreover, the process $(\widetilde M_{t}^{\varphi})_{t \geq 0}$ is Markovian. 
\end{thm}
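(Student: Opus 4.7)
The plan is to establish the three assertions separately: existence of a solution to the martingale problem, uniqueness, and the Markov property.

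For existence, I would leverage the set-valued construction developed throughout the paper. When $\varphi = M^E$ for a set $E$ satisfying condition~($\triangle$), Lemma~\ref{lem:triangle_valued_process} guarantees $S^E_t$ retains condition~($\triangle$) for all $t$, which in turn ensures that whenever a reproduction event $\bcal(z,r)$ intersects $S^E_t$, the intersection almost surely has positive Lebesgue measure; hence the update rule for $M[z,r]$ matches the jump rule of the set-valued process. I would set $\widetilde{M}^{\varphi}_t := M^E_t$ and verify the martingale problem: for $f \in C_c(\rtwo)$ with compact support $K$, the functional $\langle M^E_t,f\rangle$ changes only at those reproduction events whose ball intersects $K$, a finite-rate collection. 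A direct compensator computation, essentially Dynkin's formula applied to this finite-rate pure jump process, should identify $\lcal_\mu^\infty$ as the correct generator when evaluated against $\Psi_{F,f}$. For general $\varphi \in \mcal_\lambda$, I would approximate $\varphi$ by measures arising from open sets $E_n$ (which automatically satisfy~($\triangle$)), establish tightness in $D_{\mcal_\lambda}[0,+\infty)$ (using the compactness of the ball radii and local control on jump rates seen through compactly supported $f$), and then pass to the limit in the martingale identity.

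For uniqueness, my strategy would be to exploit duality. Chains of $\Pi^{(t)}$-reproduction events, as introduced in Section~\ref{sec:backwards_chains}, provide a natural backward-in-time dual to the forward dynamics. Specifically, for suitable test functions of the form $F(\langle M,f\rangle)$—polynomials or exponentials—one can expand and express moments
\[
\esp\big[\Psi_{F,f}(\widetilde{M}^\varphi_t)\big] = \esp\bigg[F\Big(\int f(z)\ind\{B^{(t,z)}_t\cap \operatorname{supp}(\omega_\varphi)\neq \emptyset\}\,dz\Big)\bigg]
\]
via the $\infty$-parent ancestral skeleton of Definition~\ref{defn:infty_parent_ancestral_skeleton}. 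Because this dual object is uniquely determined by $\Pi$ (independently of which solution to the martingale problem we started with), the duality identity pins down all one-dimensional marginals of any solution. Extending to finite-dimensional distributions then follows either by iterating the duality or by combining it with the martingale characterization at intermediate times.

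For the Markov property, once uniqueness is in hand, the standard argument applies: for any $s\ge 0$, the shifted process $(\widetilde{M}^\varphi_{s+t})_{t\ge 0}$ solves the martingale problem with initial condition $\widetilde{M}^\varphi_s$, by the stationarity of $\Pi$ and the autonomy of $\lcal_\mu^\infty$; by uniqueness, conditional on $\fcal_s$ its law depends only on $\widetilde{M}^\varphi_s$.

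The main obstacle will be uniqueness. Although the operator $\lcal_\mu^\infty$ has a bounded jump kernel on test functions that depend only on $\langle M,f\rangle$ for $f$ of compact support, the total jump rate of the measure $\widetilde{M}$ is infinite (the integral over $\rtwo$ diverges), so one cannot simply quote classical well-posedness for finite-activity jump processes. Setting up the duality rigorously—identifying the correct dual process, verifying the generator duality identity, and handling the fact that ancestral skeletons can in principle cover arbitrarily large regions—is the delicate step. A localisation argument restricting attention to reproduction events whose balls intersect $\operatorname{supp}(f)$ enlarged by the skeleton footprint up to time $t$ should make the duality computation tractable, since within any bounded time horizon the ancestral skeleton is almost surely contained in a bounded region.
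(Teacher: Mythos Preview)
This theorem is not proved in the paper at all: it is quoted verbatim as \cite[Theorem~2.14]{louvet2023stochastic} and used as a black box. The paper's own contribution in Section~\ref{sec:equivalence_definitions} is the converse direction---showing that the set-valued process $(M^E_t)_{t\ge 0}$ solves the martingale problem (Proposition~\ref{prop:part_2_limit_slv_compact}) and then invoking the uniqueness part of the cited theorem to conclude Theorem~\ref{thm:equality_slfvs}. So there is no ``paper's own proof'' to compare against.

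That said, your outline has a genuine circularity in the uniqueness step. The duality identity you write,
\[
\esp\big[\Psi_{F,f}(\widetilde{M}^\varphi_t)\big] = \esp\Big[F\Big(\int f(z)\,\ind\{B^{(t,z)}_t\cap \operatorname{supp}(\omega_\varphi)\neq \emptyset\}\,dz\Big)\Big],
\]
already presupposes that the solution is given by the ancestral-skeleton construction, which is exactly what uniqueness is meant to establish. A correct duality proof must start from an \emph{arbitrary} solution of the martingale problem and derive the identity from the generator relation alone (e.g.\ by showing that $t\mapsto \esp[\Phi(\widetilde M_t, D_{T-t})]$ is constant for a suitable dual functional $\Phi$ and dual process $D$), without assuming the explicit form of $\widetilde M$. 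Moreover, $\operatorname{supp}(\omega_\varphi)$ is not well-defined for general $\varphi\in\mcal_\lambda$, since $\omega_\varphi$ is only determined up to Lebesgue-null sets; the dual relation in \cite{louvet2023stochastic} is formulated in terms of volumes, not set intersections. Your existence sketch for initial conditions satisfying~($\triangle$) is essentially the content of Lemma~\ref{lem:sequence_compact_slfv} through Proposition~\ref{prop:part_2_limit_slv_compact}, but the extension to arbitrary $\varphi\in\mcal_\lambda$ by approximation with open sets would need care: open-set initial conditions do not obviously approximate an arbitrary $\{0,1\}$-valued density in a topology strong enough to pass the martingale identity to the limit.
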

We refer to the process $\widetilde M^{\varphi} = (\widetilde M_{t}^{\varphi})_{t \geq 0}$ as the \textit{measure-valued $\infty$-parent SLFV} with initial condition~$\varphi$ and intensity~$\mu$. 

Our main goal for this section is to show the following result. 
\begin{thm}\label{thm:equality_slfvs}
Suppose that $E \subseteq \rtwo$ is measurable and satisfies condition~($\triangle$). Let $(S^E_{t})_{t \geq 0}$ be the set-valued SLFV with initial condition~$E$ and intensity~$\mu$, and as before, for all $t \geq 0$, $z\in\mathbb{R}^2$ and $A\subseteq\{0,1\}$, let
\[M^E_{t}(dz,A) := \left(
\mathds{1}_{S_{t}}(z)\ind_{\{1\in A\}} + \left(
1 - \mathds{1}_{S_{t}}(z)
\right)\ind_{\{0\in A\}}\right)dz.\]
Let $\varphi = M^E_0$. Then $(M^E_{t})_{t \geq 0}$ and $(\widetilde M^\varphi_{t})_{t \geq 0}$ are equal in distribution. 
\end{thm}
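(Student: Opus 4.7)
The plan is to exploit the uniqueness in Theorem \ref{thm:measure_valued_slfv}: it suffices to show that $(M^E_t)_{t \geq 0}$ solves the martingale problem for $\lcal_\mu^\infty$ with initial condition $\varphi = M^E_0$. The role of condition ($\triangle$) is to align the jump structure of the set-valued construction with that encoded in $\lcal_\mu^\infty$.

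I would begin with the case of compact $E$. Here $(S^E_t)_{t\ge 0}$ is almost surely bounded on any finite time interval and jumps at finite rate: a point $(t,z,r)\in\Pi$ triggers the transition $S^E_{t-}\mapsto S^E_{t-}\cup\bcal(z,r)$, i.e.\ $M^E_{t-}\mapsto M^E_{t-}[z,r]$, iff $\bcal(z,r)\cap S^E_{t-}\neq\emptyset$. The operator $\lcal_\mu^\infty$, by contrast, only registers jumps when $\Vol(\bcal(z,r)\cap S^E_{t-})>0$. By Lemma \ref{lem:triangle_valued_process}, $S^E_{t-}$ satisfies ($\triangle$) for all $t\ge 0$, and because $E$ is compact, $S^E_{t-}$ is almost surely $E$ together with finitely many closed balls. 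From this one can show that the ``bad set''
\[\{(z,r):\bcal(z,r)\cap S^E_{t-}\neq\emptyset\text{ and }\Vol(\bcal(z,r)\cap S^E_{t-})=0\}\]
has $dz\,\mu(dr)$-measure zero: it is contained in $\{(z,r):d(z,S^E_{t-})=r\}$, whose slice at each fixed $r$ lies in the topological boundary of an open set built from finitely many balls and a nice piece from $E$, and hence has Lebesgue measure zero. Consequently, set-valued and measure-valued jumps coincide almost surely, and Dynkin's formula for the compensated version of $\Pi$ yields
\[\Psi_{F,f}(M^E_t)-\Psi_{F,f}(M^E_0)-\int_0^t\lcal_\mu^\infty\Psi_{F,f}(M^E_s)\,ds\]
as a martingale for every $F\in C^1(\rmath)$ and $f\in C_c(\rtwo)$. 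Equality in distribution of $(M^E_t)_{t\ge 0}$ and $(\widetilde M^\varphi_t)_{t\ge 0}$ then follows from Theorem \ref{thm:measure_valued_slfv}.

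For general measurable $E$ satisfying ($\triangle$), I would use a truncation argument. For each open square $Q\subseteq\rtwo$ containing the origin, set $E_Q:=E\cap Q$; this is bounded and, since $Q$ is open, still satisfies ($\triangle$), so the compact case applies. Using the same Poisson point process $\Pi$ for every $Q$, the sets $S^{E_Q}_t$ are monotone in $Q$, and chasing through Definitions \ref{defn:infty_parent_ancestral_skeleton}--\ref{defn:infty_parent_slfv} gives $\bigcup_Q S^{E_Q}_t=S^E_t$. Hence $\Psi_{F,f}(M^{E_Q}_t)\to\Psi_{F,f}(M^E_t)$ pointwise by monotone convergence and continuity of $F$. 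On the measure-valued side, the construction in \cite{louvet2023stochastic} already obtains $(\widetilde M^\varphi_t)_{t\ge 0}$ as a suitable limit of its compactly supported counterparts $(\widetilde M^{\varphi^Q}_t)_{t\ge 0}$, so the equality in distribution passes to the limit.

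The main obstacle is the passage from compact to unbounded initial conditions: I need to verify that the monotone limit obtained on the set-valued side agrees in distribution with the process characterised by Theorem \ref{thm:measure_valued_slfv}, which requires tightness in the Skorokhod topology on $D_{\mcal_\lambda}[0,+\infty)$ and compatibility with the vague topology on $\mcal_\lambda$. The key observation facilitating this is that for each $f\in C_c(\rtwo)$, $\lcal_\mu^\infty\Psi_{F,f}(M)$ depends on $M$ only through its restriction to the $\text{R}_0$-neighbourhood of $\text{supp}(f)$, so that the dynamics relevant to any fixed test function and bounded time horizon are effectively of finite rate and can be localised to a bounded region.
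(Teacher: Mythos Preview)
Your architecture matches the paper's: show that $(M^E_t)_{t\ge 0}$ solves the martingale problem $(\lcal_\mu^\infty,\delta_\varphi)$ by first treating a bounded/localised version and then passing to a limit, concluding via the uniqueness in Theorem~\ref{thm:measure_valued_slfv}. The paper's localisation is slightly different from yours: it restricts both the initial condition \emph{and} the Poisson point process to squares $Q_n\uparrow\rtwo$, obtaining processes $S^{[n]}$ that solve a truncated operator $\lcal_\mu^{[n]}$; your localisation restricts only $E$ to $E_Q=E\cap Q$ (with $Q$ open), so that $M^{E_Q}$ solves $\lcal_\mu^\infty$ itself. Both truncations are reasonable; yours saves an operator-convergence step, while the paper's makes the fixed-time convergence very clean (for each $(t,z)$ the ancestral skeleton $B^{(t,z)}_t$ is a.s.\ bounded, hence eventually inside $Q_n$).

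There is a genuine gap in your ``bad set'' step. Under ($\triangle$) the bad set is indeed contained in $\{z:d(z,S)=r\}$, and this \emph{is} the topological boundary of the open set $\{d(\cdot,S)<r\}$; but boundaries of open sets can have positive Lebesgue measure, and for merely measurable $E$ satisfying ($\triangle$) you have no control on the regularity of $S^E_{t-}$ beyond ``$E$ union finitely many balls''. The paper's Lemma~\ref{lem:operator_condition_triangle} closes this with a short porosity argument valid for \emph{any} $S$ satisfying ($\triangle$): near each point of $\{d(\cdot,S)=r\}$ one finds a ball of comparable radius lying entirely outside that set, forcing zero Lebesgue measure. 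You should use this instead.

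For the limit, your plan to match $M^{E_Q}\to M^E$ with a convergence $\widetilde M^{\varphi_Q}\to\widetilde M^\varphi$ taken from \cite{louvet2023stochastic} does not work as stated: that reference obtains the $\infty$-parent SLFV as a limit in the selection strength, not over truncated initial data. The paper here avoids this by passing to the limit directly in the martingale identity (Proposition~\ref{prop:part_2_limit_slv_compact}), using exactly the observation you make in your last paragraph: for fixed $f\in C_c(\rtwo)$, the integrand in $\lcal_\mu^\infty\Psi_{F,f}$ vanishes outside $\text{Supp}_{\text{R}_0}(f)$, so once $Q_n\supseteq\text{Supp}_{2\text{R}_0}(f)$ the truncated and full operators agree on the relevant trajectories. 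Combined with vague convergence at fixed times (Lemma~\ref{lem:equivalent_lemma_3_9}) and Aldous--Rebolledo tightness (Lemma~\ref{lem:equivalent_lemma_3_10}), this yields that $M^E$ itself solves $(\lcal_\mu^\infty,\delta_\varphi)$. The same route works with your truncation $E_Q$.
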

In order to show this result, we proceed as follows. First we define a version of the set-valued $\infty$-parent SLFV on an increasing sequence of squares which converges to~$\rtwo$, whose initial condition converges to~$E$. We obtain a sequence $(\widetilde M^{[n]})_{n \geq 1}$ of measure-valued processes. We then show that this sequence converges in distribution towards both $(M_{t}^{E})_{t \geq 0}$ and $(\widetilde M_{t}^{\varphi})_{t \geq 0}$. 

\subsection{\texorpdfstring{The set-valued $\infty$-parent SLFV restricted to a compact set}{The set-valued infinite-parent SLFV restricted to a compact set}}
\subsubsection{Definition of the process}
If the set-valued $\infty$-parent SLFV is defined on a square $Q \subseteq \rtwo$ rather than on $\rtwo$, then we can directly define the set-valued $\infty$-parent SLFV as a solution to a martingale problem, due to the observation that~$Q$ is affected by reproduction events at a finite rate. Formally, let $S^{Q,E} = (S_{t}^{Q,E})_{t \geq 0}$ be the set-valued $\infty$-parent SLFV constructed as in Definition~\ref{defn:infty_parent_slfv}, but using only reproduction events $(t,z,r) \in \Pi$ such that $z \in Q$ (i.e., whose centre is in~$Q$), and only adding the part of the reproduction event that intersects $Q$. Since the process $S^{Q,E}$ is then a finite-rate Markov process, it is a solution to the martingale problem associated to the operator $\lcal_{\mu}^{Q}$ such that for all $F \in C^{1}(\rmath)$, $f \in C_{c}(\rtwo)$ and $S \subseteq \rtwo$ measurable, 
\[\lcal_{\mu}^{Q}\Psi_{F,f}(S) := \int_{0}^{\text{R}_{0}}
\int_{Q}\left(F(\langle G_Q(S,z,r),f\rangle) - F(\langle S,f\rangle )\right)dz\,\mu(dr)\]
where
\[G_Q(S,z,r) := \begin{cases}
    S\cup(\bcal(z,r)\cap Q) & \text{ if } S\cap\bcal(z,r)\cap Q \neq \emptyset\\
    S & \text{ otherwise}.
\end{cases}\]

We note that in this martingale problem, reproduction events that are disjoint from the current state of the process have no effect. On the other hand, in the martingale problem associated to $\lcal_\mu^\infty$ in Theorem \ref{thm:measure_valued_slfv}, no effect occurs for reproduction events whose intersection with the current state of the process has measure zero. This distinction is emblematic of the (potential) difference between the set-valued and measure-valued processes, and motivates the following alternative method for ``growing'' a set $S$ using a reproduction event centred at $z$ with radius $r$:
\[\widetilde G_Q(S,z,r) := \begin{cases}
    S\cup(\bcal(z,r)\cap Q) & \text{ if } \text{Vol}(S\cap\bcal(z,r)\cap Q) > 0\\
    S & \text{ otherwise.}
\end{cases}\]
We show below that when starting from an initial state satisfying condition ($\triangle$) and restricting to a compact set, the two alternatives $G_Q$ and $\widetilde G_Q$ give rise to the same operator.

\begin{lem}\label{lem:operator_condition_triangle}
Let $S \subseteq \rtwo$ be measurable. If $S$ satisfies condition~($\triangle$), then for all $F \in C^{1}(\rmath)$ and $f \in C_{c}(\rtwo)$,
\[\lcal_{\mu}^{Q}\Psi_{F,f}(S) = \int_{0}^{\text{R}_{0}}
\int_{Q}\left(F(\langle \widetilde G_Q(S,z,r),f\rangle) - F(\langle S,f\rangle )\right)dz\,\mu(dr).\]
In particular, if $E$ satisfies condition~($\triangle$), then for all $t \geq 0$,
\[\lcal_{\mu}^{Q}\Psi_{F,f}(S_t^{Q,E}) = \int_{0}^{\text{R}_{0}}
\int_{Q}\left(F(\langle \widetilde G_Q(S_t^{Q,E},z,r),f\rangle) - F(\langle S_t^{Q,E},f\rangle )\right)dz\,\mu(dr).\]
\end{lem}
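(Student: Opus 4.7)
The two operators built from $G_Q$ and $\widetilde G_Q$ produce identical integrands except on the \emph{bad} set
\[
A := \big\{(z, r) \in Q \times (0, \text{R}_0) : S \cap \bcal(z,r) \cap Q \neq \emptyset \text{ and } \Vol\big(S \cap \bcal(z,r) \cap Q\big) = 0\big\}.
\]
The plan is therefore to show that $A$ has $dz \otimes \mu(dr)$-measure zero whenever $S$ satisfies condition~($\triangle$); the claimed identity then follows by comparing the two integrals termwise, and the ``in particular'' statement reduces to the $Q$-restricted analogue of Lemma~\ref{lem:triangle_valued_process} noted in the remark after that lemma, which guarantees that $S_t^{Q,E}$ inherits condition~($\triangle$) for every $t \geq 0$.

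The first step is to bound the slice $A_r := \{z \in Q : (z,r) \in A\}$ inside a level set of a distance function. Using condition~($\triangle$), I would first observe that $\overline{S \cap Q} = \overline{S \cap \text{int}(Q)}$: indeed, for any $w \in S \cap \partial Q$, condition~($\triangle$) yields $\Vol(\bcal(w, \eps) \cap S) > 0$, which equals $\Vol(\bcal(w, \eps) \cap S \cap \text{int}(Q))$ since $\partial Q$ is Lebesgue-null, so $w \in \overline{S \cap \text{int}(Q)}$. Writing $F := \overline{S \cap Q}$, I then claim $A_r \subseteq \{z \in Q : d(z, F) = r\}$. Non-emptiness of $S \cap \bcal(z,r) \cap Q$ gives $d(z, F) \leq r$; conversely, if we had $d(z, S \cap \text{int}(Q)) < r$, some $w \in S \cap \text{int}(Q)$ with $\|z - w\| < r$ would lie in the interior of $\bcal(z,r) \cap Q$, so a small ball $\bcal(w, \delta) \subseteq \bcal(z,r) \cap Q$ would, by condition~($\triangle$), contribute positive $S$-volume, contradicting the defining zero-volume property of $A_r$; hence $d(z, F) = d(z, S \cap \text{int}(Q)) \geq r$.

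The second step is the general fact that for any closed $F \subseteq \rtwo$ and any $r > 0$, the level set $M_r := \{z \in \rtwo : d(z, F) = r\}$ has two-dimensional Lebesgue measure zero. I would argue by Lebesgue density: supposing otherwise, take a density point $z_0 \in M_r$; closedness of $F$ provides some $w_0 \in F$ with $\|z_0 - w_0\| = r$; every $z \in M_r$ satisfies $\|z - w_0\| \geq d(z, F) = r$, so $M_r$ is contained in the complement of the open disc $\{z : \|z - w_0\| < r\}$; but $z_0$ lies on the boundary of this disc, whose smoothness forces the Lebesgue density of the complement at $z_0$ to equal $1/2$, contradicting the density-one property. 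Combining the two steps and integrating in $r$ yields $\int_0^{\text{R}_0} \Vol(A_r)\, \mu(dr) = 0$, which completes the proof. The most delicate point is the level-set fact above; it is standard but deserves a careful write-up, particularly the computation showing that $\Vol(\bcal(z_0, \eps) \setminus \bcal(w_0, r))/\Vol(\bcal(z_0, \eps)) \to 1/2$ as $\eps \downarrow 0$.
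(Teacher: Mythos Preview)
Your overall strategy---identify the bad set where $G_Q$ and $\widetilde G_Q$ disagree and show it is Lebesgue-null---is the same as the paper's, though the paper argues via porosity whereas you embed the bad set in a distance-function level set $\{d(\cdot,F)=r\}$ and invoke Lebesgue density. Your density argument for the nullity of level sets is correct and close in spirit to the paper's porosity argument: both exploit that each point of the bad set sits on the boundary of an open ball the set must avoid.

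The gap is in your handling of the intersection with $Q$. The claim $\overline{S\cap Q} = \overline{S\cap \text{int}(Q)}$ is false: take $Q=[0,1]^2$ and $S=\bcal((-1,0),1)$, which satisfies $(\triangle)$; then $S\cap Q=\{(0,0)\}$ while $S\cap\text{int}(Q)=\emptyset$. Your justification ``$\Vol(\bcal(w,\eps)\cap S)=\Vol(\bcal(w,\eps)\cap S\cap\text{int}(Q))$ since $\partial Q$ is null'' overlooks the part of $S$ lying \emph{outside} $Q$. For this $S$ the bad slice is $A_r=\{z\in Q:\|z\|\le r\}$, which has positive area, so the first assertion of the lemma actually fails as stated; the paper's own proof, which works with the $Q$-free set $\hat S_r$, has the same blind spot. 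What saves the application is that one only ever applies the lemma to $S_t^{[n]}\subseteq Q_n$: under the extra hypothesis $S\subseteq Q$ your volume identity becomes valid (since then $\bcal(w,\eps)\cap S\subseteq Q$), your closure claim holds, and the rest of your argument goes through unchanged.
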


\begin{proof}
Suppose that $S$ satisfies condition~($\triangle$).
Our goal is to show that for all $0 < r \leq \text{R}_{0}$, the set
\begin{equation*}
    \hat{S}_{r} := \left\{
z' \in \rtwo : S\cap \bcal(z',r) \neq \emptyset \text{ but } \text{Vol}(S \cap \bcal(z',r)) = 0
    \right\}
\end{equation*}
is a porous set, and therefore has zero Lebesgue measure. We proceed via proving three claims.

\vspace{3mm}

\noindent
\textbf{Claim 1:} If $z\in \hat S_r$ and $w\in S$, then $\|z-w\|\ge r$.

\noindent
\textbf{Proof:} Take $w\in S$ and suppose that $z$ satisfies $\|z-w\|<r$. Then
\[\bcal\left(w,\tfrac{r-\|z-w\|}{2}\right) \subseteq \bcal(z,r),\]
so by condition ($\triangle$),
\[\text{Vol}\left(\bcal(z,r) \cap S
\right) \geq \text{Vol}\left(
\bcal\left(
w, \tfrac{r-\|z-w\|}{2}\right) \cap S
\right) > 0.\]
Thus $z\not\in\hat S_r$, establishing the claim.

\vspace{3mm}

\noindent
\textbf{Claim 2:} If $z\in\hat S_r$, then for any $\rho<r$, there exists $z'$ such that
\[\bcal(z',\rho/4) \subseteq \bcal (z,3\rho/4)\setminus \hat S_r.\]

\noindent
\textbf{Proof:} Take $z\in\hat S_r$. By definition there exists $z'_r \in S$ such that $z'_r\in B(z,r)$. (By Claim 1, we must in fact have $\|z'_r - z\|=r$.)

Given $\rho<r$, let $z' = z + \frac{\rho}{2r}(z'_r - z)$, and note that
\[z' - z'_r = (z-z'_r)\left(1-\frac{\rho}{2r}\right).\]
Thus, for any $w\in\bcal(z',\rho/4)$, we have
\begin{align*}
\|w-z'_r\| &\le \|w-z'\| + \|z'-z'_r\|\\
&\le \frac{\rho}{4} + \left(1-\frac{\rho}{2r}\right)\|z-z'_r\|\\
&= \frac{\rho}{4} + r - \frac{\rho}{2} < r.
\end{align*}
By Claim 1, $w\not\in \hat S_r$. Also
\[\|z-z'\| = \frac{\rho}{2r}\|z'_r - z\| = \frac{\rho}{2},\]
so $\bcal(z',\rho/4) \subseteq \bcal(z,3\rho/4)\setminus \hat S_r$ as required to complete the proof of Claim 2.

\vspace{3mm}

\noindent
\textbf{Claim 3:} If $z\not\in \hat S_r$, then for any $\rho<r$, there exists $z'$ such that 
\[\bcal(z',\rho/4) \subseteq \bcal (z,\rho)\setminus \hat S_r.\]

\noindent
\textbf{Proof:} Take $z\not\in\hat S_r$ and $\rho<r$. If $B(z,\rho/4)\cap \hat S_r = \emptyset$, then the proof is trivial: just take $z'=z$. So suppose that $B(z,\rho/4)\cap \hat S_r \neq \emptyset$. Take $\tilde z\in B(z,\rho/4)\cap \hat S_r$. By Claim 2, there exists $z'$ such that
\[\bcal(z',\rho/4)\subseteq \bcal(\tilde z,3\rho/4)\setminus \hat S_r \subseteq \bcal(z,\rho)\setminus \hat S_r.\]
This completes the proof of Claim 3. By combining Claims 2 and 3, we have shown that for \emph{any} $z$ and any $\rho<r$, we can find $z'$ such that
\[\bcal(z',\rho/4) \subseteq \bcal (z,\rho)\setminus \hat S_r,\]
which is precisely the statement that $\hat S_r$ is a porous set. We deduce that $\hat S_r$ has zero Lebesgue measure, completing the proof of the first part of the lemma. 
The second part is then a consequence of Lemma~\ref{lem:triangle_valued_process}.
\end{proof}

The lemma above will provide us with a way of moving between the set-valued and measure-valued $\infty$-parent SLFV \textit{on compacts}, as long as the initial state satisfies condition ($\triangle$). We now introduce a sequence of set-valued $\infty$-parent SLFVs, defined on an increasing sequence of compacts, with the aim of showing that the equivalence of the two definitions on compacts transfers to the whole of $\mathbb{R}^2$---again assuming that the initial state satisfies condition ($\triangle$).
Let $(Q_{n})_{n \geq 1}$ be an increasing sequence of squares in $\rtwo$ such that
\begin{equation*}
    \lim\limits_{n \to + \infty} Q_{n} = \rtwo. 
\end{equation*}
Let $E \subseteq \rtwo$ be a measurable set satisfying condition~($\triangle$). For all $n \geq 1$, let $S^{[n]} = (S_{t}^{[n]})_{t \geq 0}$ be the set-valued $\infty$-parent SLFV defined on the square~$Q_{n}$, with initial condition~$E\cap Q_n$. 

We now describe a sequence of measures and a sequence of martingale problems restricted to the sets $Q_n$, in line with what we have just seen; note in particular that the operator $\lcal^{[n]}_\mu$ defined below is identical to $\lcal^\infty_\mu$ defined in (\ref{eq:Linftymudef}), except for the restriction to $Q_n$. For all $n \geq 1$ and $t \geq 0$, set
\begin{equation*}
M_{t}^{[n]}(dz,A) := \left(
\mathds{1}_{S_{t}^{[n]}}(z) \ind_{\{1\in A\}} + \left(
1 - \mathds{1}_{S_{t}^{[n]}}(z)
\right)\ind_{\{0\in A\}}\right)dz, \,\,\,\, z\in\mathbb{R}^2,\, A\subseteq\{0,1\}.
\end{equation*}
For all $n \geq 1$, $F \in C^{1}(\rmath)$, $f \in C_{c}(\rtwo)$ and $M \in \mcal_{\lambda}$, set
\begin{equation*}
\lcal_{\mu}^{[n]}\Psi_{F,f}(M) := \int_{0}^{\text{R}_{0}}\int_{E_n} \left(
\Psi_{F,f}(M[n,z,r]) - \Psi_{F,f}(M)\right) dz\, \mu(dr)
\end{equation*}
where $M[n,z,r]\in\mcal_\lambda$ has density
\[\omega_{M^{[n]}_{z,r}}(\cdot) = \begin{cases}
\omega_M(\cdot)\ind_{(\bcal(z,r)\cap Q_n)^c}(\cdot) + \ind_{\bcal(z,r)\cap Q_n}(\cdot) & \text{ if } \int_{\bcal(z,r)\cap Q_n} \omega_{M}(z')dz' > 0\\
\omega_M(\cdot) & \text{ otherwise.}
\end{cases}\]
Thanks to Lemma~\ref{lem:operator_condition_triangle}, we have an easy task to show that the sequence $(M^{[n]})_{n \geq 1}$ satisfies the martingale problem associated to $\lcal_\mu^{[n]}$. 

\begin{lem}\label{lem:sequence_compact_slfv}
For all $n \geq 1$, $M^{[n]}$ is a solution to the well-posed martingale problem $(\lcal_{\mu}^{[n]}, \delta_{M_{0}^{[n]}})$. 
\end{lem}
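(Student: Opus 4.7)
The plan is to leverage that $S^{[n]}$, being confined to the bounded square $Q_n$, is a pure-jump Markov process with total jump rate bounded by $\text{Vol}(Q_n)\int_0^{\text{R}_0}\mu(dr) < \infty$. Standard theory for bounded-rate pure-jump processes then immediately gives that $S^{[n]}$ solves the martingale problem associated to its infinitesimal generator $\mathcal{G}^{Q_n}$, which acts on test functions $\Psi_{F,f}$ by
$$\mathcal{G}^{Q_n}\Psi_{F,f}(S) = \int_0^{\text{R}_0}\!\int_{Q_n}\!\left(F(\langle G_{Q_n}(S,z,r),f\rangle) - F(\langle S,f\rangle)\right) dz\, \mu(dr),$$
where $G_{Q_n}$ is the set-intersection update defined earlier in this section. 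In other words, for every $F\in C^1(\rmath)$ and $f\in C_c(\rtwo)$, the process $(\Psi_{F,f}(S_t^{[n]}) - \Psi_{F,f}(S_0^{[n]}) - \int_0^t \mathcal{G}^{Q_n}\Psi_{F,f}(S_s^{[n]})\, ds)_{t\ge 0}$ is a martingale.

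Next I would pass from the set-based generator $\mathcal{G}^{Q_n}$ applied to $S_t^{[n]}$ to the measure-based generator $\lcal_\mu^{[n]}$ applied to $M_t^{[n]}$ using condition ($\triangle$). Since $E\cap Q_n$ inherits ($\triangle$) from $E$, the compact-set analogue of Lemma~\ref{lem:triangle_valued_process} (noted in the remark following its proof) guarantees that $S_t^{[n]}$ satisfies ($\triangle$) for all $t\ge 0$. Applying Lemma~\ref{lem:operator_condition_triangle} pathwise, I may replace $G_{Q_n}$ by $\widetilde G_{Q_n}$ inside $\mathcal{G}^{Q_n}\Psi_{F,f}(S_t^{[n]})$ without changing its value. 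Under the identification $\omega_{M_t^{[n]}}(\cdot) = \mathds{1}_{S_t^{[n]}}(\cdot)$, the triggering condition $\text{Vol}(\bcal(z,r)\cap Q_n\cap S_t^{[n]}) > 0$ appearing in $\widetilde G_{Q_n}$ matches exactly the condition $\int_{\bcal(z,r)\cap Q_n}\omega_{M_t^{[n]}}(z')\, dz' > 0$ in the definition of $M_t^{[n]}[n,z,r]$, and $\widetilde G_{Q_n}(S_t^{[n]},z,r) = S_t^{[n]}\cup(\bcal(z,r)\cap Q_n)$ matches the indicator set of the updated density. Hence $\mathcal{G}^{Q_n}\Psi_{F,f}(S_t^{[n]}) = \lcal_\mu^{[n]}\Psi_{F,f}(M_t^{[n]})$ pointwise in $t$, showing that $M^{[n]}$ solves the martingale problem $(\lcal_\mu^{[n]},\delta_{M_0^{[n]}})$.

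For well-posedness I would note that $\lcal_\mu^{[n]}$ only produces updates $M\mapsto M[n,z,r]$, which both preserve $\{0,1\}$-valuedness of the density and only add mass (never remove it), so any solution stays in the subclass of $\mcal_\lambda$ corresponding to set-valued states contained in $Q_n$. Condition ($\triangle$) is therefore propagated along every solution, so by Lemma~\ref{lem:operator_condition_triangle} the measure-valued generator coincides with the set-valued $\mathcal{G}^{Q_n}$ under the identification; and the latter defines a bounded-rate Markov dynamics whose law is uniquely determined by its initial condition. The main delicate point---and the only step requiring real care---is this bijective correspondence between solutions on $\mcal_\lambda$ and set-valued solutions on $Q_n$: one must rule out that a putative measure-valued solution could escape the $\{0,1\}$-valued subclass or violate ($\triangle$) at some later time, but both issues are resolved by the explicit monotone additive form of the transition kernel.
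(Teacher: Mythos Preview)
Your proposal is correct and follows essentially the same route as the paper: use that the bounded-rate jump process $S^{[n]}$ solves the martingale problem for the set-based operator $\lcal_\mu^{Q_n}$, then invoke Lemma~\ref{lem:operator_condition_triangle} (via condition~($\triangle$) on $S_0^{[n]}$) to identify $\lcal_\mu^{Q_n}\Psi_{F,f}(S_t^{[n]})$ with $\lcal_\mu^{[n]}\Psi_{F,f}(M_t^{[n]})$. The paper's proof is terser and does not spell out the well-posedness argument you sketch in your final paragraph; otherwise the two are the same.
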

\begin{proof}
Let $n \geq 1$, $F \in C^{1}(\rmath)$ and $f \in C_{c}(\rtwo)$. We need to show that
\begin{equation*}
    \left(
\Psi_{F,f}\left(
M_{t}^{[n]}
\right) - \Psi_{F,f}\left(
M_{0}^{[n]}
\right) - \int_{0}^{t} \lcal_{\mu}^{[n]}\Psi_{F,f}\left(
M_{s}^{[n]}
\right)ds\right)_{t \geq 0}
\end{equation*}
is a martingale. But for all $t \geq 0$, 
\begin{align*}
&\Psi_{F,f}\left(
M_{t}^{[n]}
\right) - \Psi_{F,f}\left(
M_{0}^{[n]}
\right) - \int_{0}^{t}\lcal_{\mu}^{[n]}\Psi_{F,f}\left(
M_{s}^{[n]}
\right)ds \\
&\hspace{30mm}= \Psi_{F,f}\left(
S_{t}^{[n]}
\right) - \Psi_{F,f}\left(
S_{0}^{[n]}
\right) - \int_{0}^{t}\lcal_{\mu}^{[n]}\Psi_{F,f}\left(
S_{s}^{[n]}
\right)ds \\
&\hspace{30mm}= \Psi_{F,f}\left(
S_{t}^{[n]}
\right) - \Psi_{F,f}\left(
S_{0}^{[n]}
\right) - \int_{0}^{t}\lcal_{\mu}^{Q_{n}}\Psi_{F,f}\left(
S_{s}^{[n]}
\right)ds
\end{align*}
by Lemma~\ref{lem:operator_condition_triangle}, as $S_{0}^{[n]}$ satisfies Condition~($\triangle$). We conclude using the fact that $(S^{[n]}_t)_{t\ge 0} = (S^{Q_n}_t)_{t\ge 0}$ satisfies the martingale problem associated to $\lcal^{Q_n}_\mu$ with initial condition $S_0^{[n]}$.
\end{proof}

\subsubsection{\texorpdfstring{Convergence of $(M^{[n]}_t)_{t\ge 0}$ towards $(M^E_t)_{t\ge 0}$ as $n\to\infty$}{Convergence of the process on compacts to the infinite-volume limit}}

First we show that $(M^{[n]})_{n \geq 0}$ converges in distribution towards the 
set-valued $\infty$-parent SLFV up to a change of state space. 
\begin{prop}\label{prop:part_1_limit_slv_compact}
As in Section \ref{sec:triangle}, let $(S^E_{t})_{t \geq 0}$ be the set-valued SLFV with initial condition $E$ and intensity~$\mu$, and for each $t \geq 0$, let
\[M^E_{t}(dz,A) := \left(
\mathds{1}_{S_{t}}(z)\ind_{\{1\in A\}} + \left(
1 - \mathds{1}_{S_{t}}(z)
\right)\ind_{\{0\in A\}}\right)dz, \quad z \in \rtwo, A \subseteq \{0,1\}.\]
Then $(M^{[n]}_t)_{t\ge 0}$ converges to $(M^E_t)_{t\ge 0}$ in distribution as $n\to\infty$.
\end{prop}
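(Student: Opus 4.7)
The plan is to couple the processes on a single probability space using the common Poisson point process $\Pi$. Since the restricted process $S^{[n]}$ uses only reproduction events centred in $Q_n$ and starts from $E\cap Q_n$, a straightforward induction on the events of $\Pi$ shows that $S^{[n]}_t \subseteq S^{[n+1]}_t \subseteq S^E_t$ for all $t\ge 0$ almost surely; in particular $M^{[n]}$ is monotonically increasing in $n$ and dominated by $M^E$ under this coupling.

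For any $f \in C_c(\rtwo\times\{0,1\})$ with spatial support in a compact set $K$, a direct calculation using that both $M^E_t$ and $M^{[n]}_t$ have Lebesgue marginal on $\rtwo$ yields
\[\bigl|\langle M^E_t - M^{[n]}_t, f\rangle\bigr| \le 2\|f\|_\infty \Vol\bigl(K \cap (S^E_t \setminus S^{[n]}_t)\bigr).\]
By Fubini it suffices to show $\proba(z \in S^E_t \setminus S^{[n]}_t) \to 0$ uniformly in $z \in K$. I would use the ancestral skeleton representation (Definition \ref{defn:infty_parent_ancestral_skeleton} and Lemma \ref{lem:equivalence_dual_chain}): if the skeleton $B^{(t,z)}_s$ remains at distance at least $\text{R}_0$ from $\partial Q_n$ for every $s\in[0,t]$, then every reproduction event contributing to it is centred in $Q_n$ and its ball is entirely contained in $Q_n$, so the skeleton is identical in the full and the restricted processes, and hence $z\in S^E_t \Leftrightarrow z\in S^{[n]}_t$. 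It is therefore enough to bound $\proba(B^{(t,z)}_t \not\subseteq Q_n^{\mathrm{int}})$, where $Q_n^{\mathrm{int}} := \{z': d(z',Q_n^c) > \text{R}_0\}$. Applying Lemma \ref{lem:moment_lemma_1} to the tree representation (Section \ref{subsec:tree_representation}) of the backwards process $\Pi^{(t)}$, the number of generations of $B^{(t,z)}$ active by time $t$ has exponential tails, and since each generation enlarges the diameter of the skeleton by at most $\text{R}_0$, this gives that $B^{(t,z)}_t \subseteq \bcal(z, \theta t \text{R}_0)$ with probability at least $1 - O(\exp(-c\theta t))$ for $\theta$ large. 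Taking $n$ large enough that $Q_n^{\mathrm{int}} \supseteq \bcal(K, \theta t\text{R}_0)$ gives the required uniform bound, and dominated convergence then yields $\langle M^{[n]}_t, f\rangle \to \langle M^E_t, f\rangle$ in $L^1$.

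To upgrade this fixed-time convergence to convergence in distribution in $D_{\mcal_\lambda}[0,+\infty)$, I would exploit the monotone coupling. Since jumps of $S^{[n]}_\cdot$ occur only at times of $\Pi$ and form a subset of the jumps of $S^E_\cdot$, the real-valued càdlàg process $\langle M^{[n]}_\cdot, f\rangle$ can be compared to $\langle M^E_\cdot, f\rangle$ jump-by-jump; combined with the monotone $L^1$ convergence at each time and the uniform bound $\langle M^{[n]}_t, f\rangle \le \|f\|_\infty \Vol(K)$, this yields almost sure convergence in the Skorokhod topology $D_\rmath[0,+\infty)$ for each compactly supported $f$. The convergence of finite-dimensional distributions then follows, and tightness in $D_{\mcal_\lambda}[0,+\infty)$ follows from Aldous-type estimates together with the uniform volume bound. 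The main technical obstacle will be assembling this test-function-by-test-function convergence into genuine convergence of measure-valued càdlàg paths; however, since the coupling is explicit and the jumps of approximating paths are a subset of the jumps of the limit, the standard techniques should apply without further difficulty.
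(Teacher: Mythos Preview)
Your approach matches the paper's: fixed-time convergence via the ancestral skeleton eventually staying inside $Q_n$ (Lemmas~\ref{lem:prelim_to_equivalent_lemma_3_9}--\ref{lem:equivalent_lemma_3_9}), tightness via Aldous--Rebolledo (Lemma~\ref{lem:equivalent_lemma_3_10}), and Prokhorov to conclude. The attempted upgrade to almost sure Skorokhod convergence via the monotone coupling is neither needed nor fully justified---$\langle M^{[n]}_t,f\rangle$ is not monotone for signed $f$, and ``induction on the events of $\Pi$'' does not make sense when $E$ is unbounded---but since you also invoke finite-dimensional convergence plus tightness, the argument goes through without that detour.
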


For the proof we follow the (standard) strategy used in the proof of Theorem~2.10 in~\cite{louvet2023stochastic}:
\begin{enumerate}
    \item First we show that for each fixed $t \geq 0$, the measures $M_{t}^{[n]}$ converge almost surely as $n\to\infty$ to $M^E_{t}$ (which corresponds to~\cite[Lemma~3.9]{louvet2023stochastic}). 
    \item Then we show that the sequence $(M^{[n]})_{n \geq 1}$ is tight in $D_{\mcal_{\lambda}}[0,+\infty)$ (which corresponds to~\cite[Lemma~3.10]{louvet2023stochastic}). 
    \item We conclude using Prokhorov's theorem \cite{prokhorov1956convergence}. 
\end{enumerate}

\begin{lem}\label{lem:prelim_to_equivalent_lemma_3_9} For all $t \geq 0$ and $z \in \rtwo$, 
\begin{equation*}\mathds{1}_{S_{t}^{[n]}}(z) \xrightarrow[n \to + \infty]{} \mathds{1}_{S^E_{t}}(z) \quad \text{ almost surely.} 
\end{equation*}
\end{lem}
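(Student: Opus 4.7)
The plan is as follows. First I would observe, by a straightforward coupling using the common Poisson point process $\Pi$, that $S^{[n]}_t \subseteq S^{[n+1]}_t \subseteq S^E_t$ for all $n$ and all $t\ge 0$: both processes are built from the same events, but the restricted process only uses those with centres in $Q_n$ and only adds $\bcal(z,r)\cap Q_n$, so a simple induction on successive jump times yields the inclusions. Consequently, if $z\notin S^E_t$ then $z\notin S^{[n]}_t$ for every $n$, and both indicators equal $0$.

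The main work is to show that if $z\in S^E_t$, then $z\in S^{[n]}_t$ for all sufficiently large $n$. Fix such $z$ and $t$. By Definition~\ref{defn:infty_parent_slfv} we have $B_t^{(t,z)}\cap E\neq\emptyset$, so we may pick some $w\in B_t^{(t,z)}\cap E$. The ancestral skeleton $B^{(t,z)}$ is dominated by a Yule process branching at rate $\text{M}_0$, so almost surely it has only finitely many jumps on $[0,t]$, and $B_t^{(t,z)}$ is a bounded, finite union of balls. Moreover, for fixed $t$, almost surely no reproduction event of $\Pi$ occurs at time $t$. It follows that for all sufficiently large $n$ we have $B_t^{(t,z)}\cup\{z\}\subseteq Q_n$, and both the full and the restricted ancestral skeletons start from $\{z\}$.

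I would then show by induction on the jump index of $B^{(t,z)}$ that, whenever $B_t^{(t,z)}\subseteq Q_n$, the restricted ancestral skeleton (built using only events with centres in $Q_n$ and adding $\bcal(z',r')\cap Q_n$ at each step) agrees with $B^{(t,z)}$ throughout $[0,t]$. Indeed, if the two skeletons coincide at time $t-t_i$, then the most recent event $(t_{i+1},z_{i+1},r_{i+1})$ selected by the full skeleton satisfies $\bcal(z_{i+1},r_{i+1})\subseteq B_t^{(t,z)}\subseteq Q_n$, so in particular $z_{i+1}\in Q_n$; and no strictly more recent event with centre in $Q_n$ could intersect the common region, for otherwise the full skeleton would have picked it instead. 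Hence the restricted skeleton also selects $(t_{i+1},z_{i+1},r_{i+1})$, and because $\bcal(z_{i+1},r_{i+1})\cap Q_n=\bcal(z_{i+1},r_{i+1})$, the added regions coincide. Iterating over the finitely many jumps gives $B_t^{[n],(t,z)}=B_t^{(t,z)}$, and since $w\in B_t^{(t,z)}\subseteq Q_n$ we have $w\in E\cap Q_n$, so the restricted analogue of Definition~\ref{defn:infty_parent_slfv} yields $z\in S^{[n]}_t$.

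The main subtlety, and hence the main obstacle, lies in carefully handling the fact that the restricted process adds $\bcal(z',r')\cap Q_n$ rather than the full ball $\bcal(z',r')$: requiring only that the centres $z'$ lie in $Q_n$ is not quite enough for the two skeletons to agree at subsequent steps, which is why we impose the stronger condition $B_t^{(t,z)}\subseteq Q_n$ (valid for all large $n$ by boundedness). Once this distinction is kept clear throughout the induction, the remainder of the argument is routine.
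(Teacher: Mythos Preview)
Your proposal is correct and follows essentially the same approach as the paper: both arguments hinge on the observation that $B_t^{(t,z)}$ is almost surely bounded, so for all large $n$ we have $B_t^{(t,z)}\subseteq Q_n$, whence the restricted and full ancestral skeletons coincide and $\mathds{1}_{S_t^{[n]}}(z)=\mathds{1}_{S_t^E}(z)$. The paper's proof is considerably terser and does not spell out the inductive verification or the monotonicity $S_t^{[n]}\subseteq S_t^E$; your version simply fills in those details.
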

\begin{proof}
Let $t \geq 0$ and $z \in \rtwo$. Recall the definition of the set-valued $\infty$-parent SLFV started from $E$:
\[z \in S_{t} \Longleftrightarrow B_{t}^{(t,z)} \cap E \neq \emptyset\]
where $B_t^{(t,z)}$ is the $\infty$-parent ancestral skeleton from Definition \ref{defn:infty_parent_ancestral_skeleton}. Since $Q_n\uparrow \mathbb{R}^2$, there exists a $\nmath$-valued random variable $N^{(t,z)}$ such that for all $n \geq N^{(t,z)}$, 
\begin{equation*}
    B_{t}^{(t,z)} \subseteq Q_{n}. 
\end{equation*}
Therefore, for all $n \geq N^{(t,z)}$, 
\begin{equation*}
    \mathds{1}_{S_{t}^{[n]}}(z) = \mathds{1}_{S^E_{t}}(z). 
\end{equation*}
As $B_{t}^{(t,z)}$ is the union of an almost surely finite number of reproduction events of bounded radius, $N^{(t,z)}$ is almost surely finite, which allows us to conclude. 
\end{proof}

\begin{lem}\label{lem:equivalent_lemma_3_9} For each $t \geq 0$, $(M_{t}^{[n]})_{n \geq 1}$ converges vaguely to $M^E_{t}$ as $n \to + \infty$.
\end{lem}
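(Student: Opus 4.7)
The plan is to reduce vague convergence to pointwise convergence of the indicator functions of $S_t^{[n]}$, and then combine Lemma~\ref{lem:prelim_to_equivalent_lemma_3_9} with a Fubini argument and bounded dominated convergence. By the definition of the state space $\mcal_\lambda$, vague convergence of $M^{[n]}_t$ to $M^E_t$ is equivalent to showing that $\int g\, dM^{[n]}_t \to \int g\, dM^E_t$ for every continuous compactly supported $g : \rtwo \times \{0,1\} \to \rmath$. Decomposing $g$ into $g_0(\cdot) = g(\cdot,0)$ and $g_1(\cdot) = g(\cdot,1)$, and using the fact that both $M_t^{[n]}$ and $M_t^E$ have Lebesgue marginal on $\rtwo$, this reduces to showing that
\[\int_{\rtwo} f(z) \mathds{1}_{S_t^{[n]}}(z)\, dz \xrightarrow[n\to+\infty]{} \int_{\rtwo} f(z) \mathds{1}_{S_t^E}(z)\, dz\]
for every continuous compactly supported $f : \rtwo \to \rmath$.

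The next step is to transfer the pointwise almost-sure statement from Lemma~\ref{lem:prelim_to_equivalent_lemma_3_9} into an almost-sure Lebesgue-almost-everywhere statement. For each fixed $z \in \rtwo$, let $\Omega_z$ denote the full-probability event on which $\mathds{1}_{S_t^{[n]}}(z) \to \mathds{1}_{S_t^E}(z)$. Applying Fubini's theorem on the product of $\proba$ and Lebesgue measure to the indicator of $\{(\omega, z) : \omega \notin \Omega_z\}$, we obtain that almost surely, $\mathds{1}_{S_t^{[n]}}(z) \to \mathds{1}_{S_t^E}(z)$ for Lebesgue-almost-every $z \in \rtwo$.

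Finally, fix any continuous compactly supported $f : \rtwo \to \rmath$. Work on the almost-sure event produced above; then the integrand $f(z) \mathds{1}_{S_t^{[n]}}(z)$ converges Lebesgue-almost-everywhere to $f(z) \mathds{1}_{S_t^E}(z)$, and is dominated by the integrable function $|f|$, which is compactly supported. The bounded dominated convergence theorem yields the displayed convergence above, almost surely. Since the argument works for every continuous compactly supported $f$, we conclude that $(M_t^{[n]})_{n\ge 1}$ converges vaguely to $M_t^E$ almost surely, and in particular in distribution. There is no real obstacle here; the only subtle point is the Fubini step, which is needed because Lemma~\ref{lem:prelim_to_equivalent_lemma_3_9} provides a $z$-dependent null set rather than a single null set valid for all $z$ simultaneously.
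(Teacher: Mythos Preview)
Your proof is correct and follows essentially the same route as the paper: decompose a test function on $\rtwo\times\{0,1\}$ into its two slices, reduce to showing $\int f\,\mathds{1}_{S_t^{[n]}}\to\int f\,\mathds{1}_{S_t^E}$, and then invoke Lemma~\ref{lem:prelim_to_equivalent_lemma_3_9} together with dominated convergence. The only difference is that you make explicit the Fubini step needed to pass from ``for each $z$, almost surely'' to ``almost surely, for Lebesgue-almost-every $z$'', whereas the paper's proof simply cites dominated convergence and Lemma~\ref{lem:prelim_to_equivalent_lemma_3_9} together without spelling this out.
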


The proof is a direct adaptation of the one of Lemma~3.9 from~\cite{louvet2023stochastic}, but we include it for completeness. 
\begin{proof}
Let $t \geq 0$. Let $\tilde{f} \in C_{c}(\rtwo \times \{0,1\})$. Then there exist $f_{0},f_{1} \in C_{c}(\rtwo)$ such that for all pairs $(z,\kappa) \in \rtwo \times \{0,1\}$, 
\begin{equation*}
    \tilde{f}(z,\kappa) = f_{0}(z)\un{0}(\kappa) + f_{1}(z)\un{1}(\kappa). 
\end{equation*}
Then, for all $n \geq 1$, by the dominated convergence theorem and Lemma~\ref{lem:prelim_to_equivalent_lemma_3_9},
\begin{align*}
\int_{\rtwo \times \{0,1\}} \tilde{f}(z,\kappa) M_{t}^{[n]}(dz,d\kappa) 
&= \int_{\rtwo}f_{1}(z)\mathds{1}_{S_{t}^{[n]}}(z)dz + \int_{\rtwo}f_{0}(z)\left(
1 - \mathds{1}_{S_{t}^{[n]}}(z)
\right)dz \\ 
&\xrightarrow[n \to + \infty]{} \int_{\rtwo}f_{1}(z)\mathds{1}_{S^E_{t}}(z)dz
+ \int_{\rtwo} f_{0}(z)\left(
1 - \mathds{1}_{S^E_{t}}(z)
\right)dz \\
&= \int_{\rtwo \times \{0,1\}} \tilde{f}(z,\kappa)M^E_{t}(dz,d\kappa)
\end{align*}
which allows us to conclude. 
\end{proof}

\begin{lem}\label{lem:equivalent_lemma_3_10}
The sequence $(M^{[n]})_{n \geq 1}$ is tight in $D_{\mcal_{\lambda}}[0,+\infty)$.
\end{lem}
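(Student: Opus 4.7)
The plan is to apply a standard tightness criterion for measure-valued processes (in the style of Jakubowski, or equivalently Ethier--Kurtz), reducing tightness in $D_{\mcal_{\lambda}}[0,+\infty)$ to two ingredients: (i) a compact containment condition for the processes $(M^{[n]})_{n\ge 1}$, and (ii) tightness of the real-valued càdlàg processes $(\Psi_{F,f}(M^{[n]}))_{n\ge 1}$ for each $F\in C^{1}(\rmath)$ and $f\in C_{c}(\rtwo)$. Since $\{\Psi_{F,f}:F\in C^{1}(\rmath),\,f\in C_{c}(\rtwo)\}$ separates points of $\mcal_{\lambda}$ under the vague topology, this reduction is valid.

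For (i), compact containment is essentially automatic. Every measure $M^{[n]}_{t}$ has its $\rtwo$-marginal equal to Lebesgue measure (independently of $n$ and $t$), so all paths stay inside the set $\mathcal{K}:=\{M\in\mcal_{\lambda}:M(\cdot\times\{0,1\})=dz\}$, and $\mathcal{K}$ is vaguely compact as a subset of the space of Radon measures on $\rtwo\times\{0,1\}$ dominated by the fixed Radon measure $dz\otimes(\delta_{0}+\delta_{1})$.

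For (ii), I would verify Aldous's criterion. The paths $\Psi_{F,f}(M^{[n]}_{t})$ are uniformly bounded in $n$ and $t$, since $|\langle M^{[n]}_{t},f\rangle|\le\|f\|_{L^{1}}$ and $F$ is continuous on a compact interval. For the stopping-time control, Lemma \ref{lem:sequence_compact_slfv} supplies the martingale decomposition
\[\Psi_{F,f}(M^{[n]}_{t})=\Psi_{F,f}(M^{[n]}_{0})+N^{[n],F,f}_{t}+\int_{0}^{t}\lcal_{\mu}^{[n]}\Psi_{F,f}(M^{[n]}_{s})\,ds,\]
with $N^{[n],F,f}$ a martingale. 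The key observation is that $\Psi_{F,f}(M[n,z,r])-\Psi_{F,f}(M)=0$ whenever $\bcal(z,r)\cap\mathrm{supp}(f)=\emptyset$, so the $z$-integral defining $\lcal_{\mu}^{[n]}\Psi_{F,f}(M)$ is confined to a compact set whose size depends only on $\mathrm{supp}(f)$ and $\text{R}_{0}$, independently of $n$ and $M$. Combined with a local Lipschitz bound on $F$ and the estimate $|\langle M[n,z,r],f\rangle-\langle M,f\rangle|\le\pi\text{R}_{0}^{2}\|f\|_{\infty}$, this yields a constant $C(F,f)<\infty$ such that $|\lcal_{\mu}^{[n]}\Psi_{F,f}(M)|\le C(F,f)$ uniformly in $M\in\mcal_{\lambda}$ and $n\ge 1$. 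Aldous's criterion for the drift integral then follows immediately, while for the martingale part one estimates the expected quadratic variation increment by a similar uniform bound on the squared-jumps integral, again using the compact-support restriction.

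The main obstacle is conceptual rather than technical: we are working with the vague topology on a space of infinite-mass measures (since every $M\in\mcal_{\lambda}$ projects to the full Lebesgue measure on $\rtwo$), so one must be slightly careful that the standard Jakubowski/Aldous machinery applies, but no new probabilistic input beyond Lemma \ref{lem:sequence_compact_slfv} and the compact-support observation above is required. The bulk of this argument was in fact already carried out in the analogous Lemma~3.10 of \cite{louvet2023stochastic}, and the proof will transfer with only minor notational adaptation to the restricted operators $\lcal_{\mu}^{[n]}$.
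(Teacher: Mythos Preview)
Your proposal is correct and matches the paper's own proof essentially line for line: both reduce to tightness of the real-valued processes $(\Psi_{F,f}(M^{[n]}))_{n\ge 1}$, invoke the martingale decomposition from Lemma~\ref{lem:sequence_compact_slfv}, exploit the compact-support observation that $\Psi_{F,f}(M[n,z,r])-\Psi_{F,f}(M)=0$ for $z$ outside an $\text{R}_{0}$-neighbourhood of $\mathrm{supp}(f)$ to get uniform bounds on the drift and on the squared-jump integral, and conclude via the Aldous--Rebolledo criterion, explicitly deferring the routine details to Lemma~3.10 and Lemmas~6.3--6.4 of \cite{louvet2023stochastic}.
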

\begin{proof}
We follow the outline of the proof of Lemma~3.10 in~\cite{louvet2023stochastic}. 
First, by the same argument as in~\cite{louvet2023stochastic}, the result is equivalent to the relative compactness of the sequence $(\Psi_{F,f}(M^{[n]}))_{n \geq 1}$ for all $F \in C^{1}(\rmath)$ and $f \in C_{c}(\rtwo)$, where we recall that
\[\Psi_{F,f}(M) := F\left(
\langle M,f \rangle 
    \right).\]
Therefore, let $F \in C^{1}(\rmath)$ and $f \in C_{c}(\rtwo)$. For each $t \geq 0$, since $f$ has compact support, the sequence $(\Psi_{F,f}(M_{t}^{[n]}))_{n \geq 1}$ is bounded. Moreover, by the same argument as in the proofs of Lemmas~6.3 and~6.4 in~\cite{louvet2023stochastic}, there exist $C_{F,f}, \widetilde{C}_{F,f} > 0$ such that for all $M \in \mcal_{\lambda}$, 
\begin{equation*}
\left|
\lcal_{\mu}^{[n]}\Psi_{F,f}(M)
\right| \leq C_{F,f}
\end{equation*}
and for all $z \in \rtwo$ and $0 < r \leq \text{R}_{0}$, 
\begin{equation*}
\left|
F\left(\langle
S \cup \left(\bcal(z,r) \cap Q_{n}
\right),f
\rangle\right)-F\left(\langle
S,f
\rangle\right)
\right|^{2} \leq \widetilde{C}_{F,f}. 
\end{equation*}
Fix $T > 0$ and $\theta>0$, and let $(T_{k})_{k \geq 2}$ be a sequence of stopping times bounded from above by~$T$. Then by the bound above, for all $k \geq 2$ and $n \geq 1$, 
\begin{equation}\label{eqn:aldous_rebolledo_1}
\left|
\int_{T_{k}}^{T_{k}+\theta}
\lcal_{\mu}^{[n]} \Psi_{F,f}\left(
M_{s}^{[n]}
\right)ds
\right| \leq \theta C_{F,f}. 
\end{equation}
Moreover, defining
\begin{equation*}
\text{Supp}_{r}(f) := \left\{
z \in \rtwo : \exists z' \in \rtwo \text{ with } f(z') \neq 0 \text{ and }
\|z-z'\| \leq r \right\}, 
\end{equation*}
the set of all points within distance $r$ of the support of $f$, we note that when $z\not\in \text{Supp}_{r}(f)$ we have
\begin{equation*}
\left|
F\left(\langle
S \cup \left(\bcal(z,r) \cap Q_{n}
\right),f
\rangle\right)-F\left(\langle
S,f
\rangle\right)
\right|^{2} =0. 
\end{equation*}
Thus we obtain that 
\begin{align*}
&\int_{T_{k}}^{T_{k}+\theta}\int_{0}^{\text{R}_{0}}
\int_{Q_{n}} \left(
F\left(
\langle 
S_{t}^{[n]}\cup\left(\bcal(z,r)\cap Q_n\right), f
\rangle\right)-F\left(
\langle 
S_{t}^{[n]}, f
\rangle\right)
\right)^{2} dz\,\mu(dr)\,ds \\
&\leq \int_{T_{k}}^{T_{k}+\theta}\int_{0}^{\text{R}_{0}}\int_{\text{Supp}_{r}(f) \cap Q_{n}}\widetilde{C}_{F,f}\, dz\, \mu(dr)\, ds, 
\end{align*} 
which is constant for~$n$ large enough, and of the form $\theta \Bar{C}_{F,f}$ for some constant $\Bar{C}_{F,f}>0$. Therefore, we can apply the Aldous-Rebolledo criterion \cite{aldous1978stopping,rebolledo1980existence} to the sequence $(\Psi_{F,f}(M^{[n]}))_{n \geq 1}$ and conclude. 
\end{proof}

We can now show Proposition~\ref{prop:part_1_limit_slv_compact}. 

\begin{proof}[Proof of Proposition~\ref{prop:part_1_limit_slv_compact}]
By Lemmas~\ref{lem:equivalent_lemma_3_9} and~\ref{lem:equivalent_lemma_3_10}, we can use Prokhorov's theorem and conclude. 
\end{proof}

\subsubsection{Proof of Theorem~\ref{thm:equality_slfvs}}
To conclude the proof of Theorem~\ref{thm:equality_slfvs}, we now need to show that the set-valued $\infty$-parent SLFV~$M^{E}$ is a solution to the martingale problem~$(\lcal_{\mu}^{\infty},\delta_{\varphi})$ characterizing~$\widetilde{M}^{\varphi}$, the measure-valued $\infty$-parent SLFV started from~$\varphi = M_{0}^{E}$. 

\begin{prop}\label{prop:part_2_limit_slv_compact}
The process $(M_{t}^{E})_{t \geq 0}$ is a solution to the martingale problem $(\lcal_{\mu}^{\infty},\delta_{\varphi})$. 
\end{prop}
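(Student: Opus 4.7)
The plan is to derive the martingale problem for $M^E$ by passing $n\to\infty$ in the martingale problems satisfied by the compact-region approximations $M^{[n]}$ (Lemma \ref{lem:sequence_compact_slfv}), using the pointwise convergence of indicators from Lemma \ref{lem:prelim_to_equivalent_lemma_3_9} together with Lemma \ref{lem:operator_condition_triangle} to bridge the measure-valued and set-valued dynamics. Fix $F \in C^1(\rmath)$ and $f \in C_c(\rtwo)$; the goal is to show that
\[\Psi_{F,f}(M^E_t) - \Psi_{F,f}(M^E_0) - \int_0^t \lcal_\mu^\infty\Psi_{F,f}(M^E_s)\,ds\]
is a martingale with respect to the natural filtration of $M^E$.

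The first step is to observe that for $n$ sufficiently large, $\lcal_\mu^{[n]}\Psi_{F,f} \equiv \lcal_\mu^\infty\Psi_{F,f}$ on $\mcal_\lambda$. Since $f$ has compact support, one can pick $n_0$ such that $\text{Supp}_{2\text{R}_0}(f) \subseteq Q_{n_0}$. For $n \ge n_0$ and any $M \in \mcal_\lambda$, the $z$-integrand vanishes for $z \notin \text{Supp}_{\text{R}_0}(f)$ (there $\bcal(z,r)$ is disjoint from $\text{supp}(f)$), while for $z \in \text{Supp}_{\text{R}_0}(f)$ one has $\bcal(z,r) \subseteq Q_n$, so the operations ``update with $\bcal(z,r)$'' and ``update with $\bcal(z,r) \cap Q_n$'' coincide. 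Lemma \ref{lem:sequence_compact_slfv} then ensures that, for $n \ge n_0$, $M^{[n]}$ already satisfies the martingale property associated to $\lcal_\mu^\infty$ for this particular test function $\Psi_{F,f}$.

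The second step is to pass this identity to the limit. For any $0 \le s_1 < \cdots < s_k \le s < t$ and bounded continuous $g_1,\dots,g_k$ on $\mcal_\lambda$, one has (for $n \ge n_0$)
\[\E\left[\Big(\Psi_{F,f}(M_t^{[n]}) - \Psi_{F,f}(M_s^{[n]}) - \int_s^t \lcal_\mu^\infty\Psi_{F,f}(M_u^{[n]})\,du\Big)\prod_{i=1}^k g_i(M_{s_i}^{[n]})\right] = 0,\]
and the target identity is obtained by sending $n\to\infty$. By Lemma \ref{lem:prelim_to_equivalent_lemma_3_9}, for each $v \ge 0$ and $w \in \rtwo$ we have $\ind_{S^{[n]}_v}(w) \to \ind_{S^E_v}(w)$ almost surely; compact support of $f$ together with dominated convergence yields $\Psi_{F,f}(M^{[n]}_v) \to \Psi_{F,f}(M^E_v)$ a.s., and the same pointwise convergence combined with the argument of Lemma \ref{lem:equivalent_lemma_3_9} gives vague convergence $M^{[n]}_{s_i} \to M^E_{s_i}$ and hence $g_i(M^{[n]}_{s_i}) \to g_i(M^E_{s_i})$ a.s.

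The main obstacle is the drift term, since $M\mapsto \lcal_\mu^\infty\Psi_{F,f}(M)$ is not continuous in the vague topology: the ``volume threshold'' $\int_{\bcal(z,r)}\omega_M > 0$ appearing in the definition of $M[z,r]$ is discontinuous. Condition ($\triangle$) is decisive here: Lemma \ref{lem:triangle_valued_process} tells us that each $S^{[n]}_u$ and $S^E_u$ satisfies ($\triangle$), and Lemma \ref{lem:operator_condition_triangle} then equates the volume threshold with the geometric condition $\bcal(z,r) \cap S \ne \emptyset$ for a.e.\ $z$. The monotone inclusions $S^{[n]}_u \uparrow S^E_u$, which come from $E \cap Q_n \uparrow E$ and the growing pool of reproduction events used to build $S^{[n]}$, make this geometric condition stable under the limit for each fixed $(z,r)$: if $\bcal(z,r)\cap S^E_u \ne \emptyset$ one applies Lemma \ref{lem:prelim_to_equivalent_lemma_3_9} to any point $w$ in the intersection to conclude that $\bcal(z,r)\cap S^{[n]}_u \ne \emptyset$ for large $n$, while the converse is immediate by monotonicity. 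Combined with the uniform boundedness of the $(z,r)$-integrand on the compact set $\text{Supp}_{\text{R}_0}(f) \times [0,\text{R}_0]$, dominated convergence yields $\lcal_\mu^\infty\Psi_{F,f}(M^{[n]}_u) \to \lcal_\mu^\infty\Psi_{F,f}(M^E_u)$ a.s.\ for each $u$; since this limiting integrand is itself uniformly bounded, further applications of dominated convergence in $u \in [s,t]$ and then in the outer expectation close the argument.
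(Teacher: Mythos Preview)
Your proof follows the paper's strategy: use Lemma~\ref{lem:sequence_compact_slfv}, observe that $\lcal_\mu^{[n]}\Psi_{F,f} \equiv \lcal_\mu^\infty\Psi_{F,f}$ once $\text{Supp}_{2\text{R}_0}(f) \subseteq Q_n$, and pass the martingale identity to the limit via dominated convergence. The only substantive difference is in the drift convergence $\lcal_\mu^\infty\Psi_{F,f}(M^{[n]}_u) \to \lcal_\mu^\infty\Psi_{F,f}(M^E_u)$: the paper defers this to \cite[Lemma~5.4]{louvet2023stochastic}, whereas you supply a direct argument via monotonicity.

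That direct argument has a small gap. When $\bcal(z,r) \cap S^E_u \neq \emptyset$ you pick $w$ in the intersection and invoke Lemma~\ref{lem:prelim_to_equivalent_lemma_3_9}; but that lemma gives almost-sure convergence only for each \emph{deterministic} pair $(u,w)$, and your $w$ depends on the random set $S^E_u$. Relatedly, the assertion ``$S^{[n]}_u \uparrow S^E_u$'' as a pointwise set identity is stronger than what the lemma together with monotonicity actually delivers, namely agreement up to a Lebesgue-null set (apply Fubini in~$w$). The clean repair is to stay with volumes throughout---which is how $\lcal_\mu^\infty$ is defined anyway: from $S^{[n]}_u \subseteq S^{[n+1]}_u \subseteq S^E_u$ and the Fubini consequence that a.s.\ $\text{Vol}\big(S^E_u \setminus \bigcup_n S^{[n]}_u\big) = 0$, one obtains $\text{Vol}(\bcal(z,r) \cap S^{[n]}_u) \uparrow \text{Vol}(\bcal(z,r) \cap S^E_u)$ for every $(z,r)$, so the indicator of positive volume converges pointwise in $(z,r)$, after which dominated convergence closes the argument as you describe. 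With this rephrasing Lemma~\ref{lem:operator_condition_triangle} is not needed at this step; condition~($\triangle$) has already done its work through Lemma~\ref{lem:sequence_compact_slfv}.
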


\begin{proof}
Our goal is to show that for all $F \in C^{1}(\rmath)$, $f \in C_{c}(\rtwo)$, $t,s \geq 0$, $k \geq 0$, $0 \leq t_{1} < t_{2} < ... < t_{k} \leq t$ and $h_{1}, ..., h_{k} \in C_{b}(\mcal_{\lambda})$,
\begin{equation}\label{eqn:statement_martingale}
\esp\left[
\left(
\Psi_{F,f}\left(
M_{t+s}^{E}
\right) - \Psi_{F,f}\left(
M_{t}^{E}
\right) - \int_{t}^{t+s}\lcal_{\mu}^{\infty} \Psi_{F,f}\left(
M_{u}^{E}
\right)du
\right) \times \left(
\prod_{i = 1}^{k} h_{i}\left(
M_{t_{i}}^{E}
\right)\right)\right].
\end{equation}
To do so, let $F \in C^{1}(\rmath)$, $f \in C_{c}(\rtwo)$, $t,s \geq 0$, $k \geq 0$, $0 \leq t_{1} < t_{2} < ... < t_{k} \leq t$ and $h_{1}, ..., h_{k} \in C_{b}(\mcal_{\lambda})$. Since $\Psi_{F,f} \in C_{b}(\mcal_{\lambda})$ and $h_{1},...,h_{k} \in C_{b}(\mcal_{\lambda})$, by Proposition~\ref{prop:part_1_limit_slv_compact}, we have
\begin{align*}
\Psi_{F,f}\left(
M_{t+s}^{E}
\right) &= \lim\limits_{n \to + \infty} \Psi_{F,f}\left(
M_{t}^{[n]}
\right) \\
\Psi_{F,f}\left(
M_{t}^{E}
\right) &= \lim\limits_{n \to + \infty} \Psi_{F,f}\left(
M_{t}^{[n]}
\right) \\
\text{and } \forall 1 \leq i \leq k, h_{i}\left(
M_{t_{i}}^{E}
\right) &= \lim\limits_{n \to + \infty} h_{i}\left(
M_{t_{i}}^{[n]}
\right). 
\end{align*}
While we do not have $\lcal_{\mu}^{\infty} \Psi_{F,f} \in C_{b}(\mcal_{\lambda})$, by construction and by Lemma~\ref{lem:equivalent_lemma_3_9}, the sequence~$(M^{[n]})_{n \geq 1}$ and its limits~$M^{E}$ satisfy the assumptions of~\cite[Lemma~5.4]{louvet2023stochastic} (note that in \cite{louvet2023stochastic}, the densities encode the \textit{empty areas} rather than the occupied areas as in this article). Therefore, 
\begin{equation*}
\forall t \leq u \leq t+s, \lcal_{\mu}^{\infty} \Psi_{F,f}\left(
M_{u}^{E}
\right) = \lim\limits_{n \to + \infty} \lcal_{\mu}^{\infty} \Psi_{F,f}\left(
M_{u}^{[n]}
\right). 
\end{equation*}
Since $\Psi_{F,f}(\cdot)$ and $\lcal_{\mu}^{\infty}\Psi_{F,f}(\cdot)$ are bounded, by the dominated convergence theorem, (\ref{eqn:statement_martingale})
is equivalent to 
\begin{align*}
\lim\limits_{n \to + \infty} \esp\left[
\left(
\Psi_{F,f}\left(
M_{t+s}^{[n]}
\right) - \Psi_{F,f}\left(
M_{t}^{[n]}
\right) - \int_{t}^{t+s} \lcal_{\mu}^{\infty} \Psi_{F,f}\left(
M_{u}^{[n]}
\right)du
\right) \times \left(
\prod_{i = 1}^{k} h_{i}\left(
M_{t_{i}}^{[n]}
\right)
\right)
\right] = 0.
\end{align*}

Let $n \geq 1$. As $M^{[n]}$ is a solution to the martingale problem $(\lcal_{\mu}^{[n]},\delta_{M^{[n]}_{0}})$, 
\begin{align*}
&\esp\left[
\left(
\Psi_{F,f}\left(
M_{t+s}^{[n]}
\right) - \Psi_{F,f}\left(
M_{t}^{[n]}
\right) - \int_{t}^{t+s} \lcal_{\mu}^{\infty} \Psi_{F,f}\left(
M_{u}^{[n]}
\right)du
\right) \times \left(
\prod_{i = 1}^{k} h_{i}\left(
M_{t_{i}}^{[n]}
\right)
\right)
\right] \\
&= \esp\left[ 
\left(
\int_{t}^{t+s}\left(
\lcal_{\mu}^{[n]}\Psi_{F,f}\left(
M_{u}^{[n]}\right) - \lcal_{\mu}^{\infty} \Psi_{F,f}\left(
M_{u}^{[n]}
\right)
\right)ds\right) \times \left(
\prod_{i = 1}^{k} h_{i}\left(M_{t_{i}}^{[n]}\right)
\right)\right].
\end{align*}
Moreover, let $n_{0} \in \nmath \backslash \{0\}$ be such that $\text{Supp}_{2\text{R}_{0}}(f) \subseteq Q_{n_{0}}$. Then, as $(Q_{n})_{n \geq 1}$ is increasing, for all $n \geq n_{0}$, the operators $\lcal_{\mu}^{[n]}$ and $\lcal_{\mu}^{\infty}$ are equal, which allows us to conclude. 
\end{proof}

We can now show Theorem~\ref{thm:equality_slfvs}.

\begin{proof}[Proof of Theorem~\ref{thm:equality_slfvs}]
By Proposition~\ref{prop:part_2_limit_slv_compact}, $(M_{t}^{E})_{t \geq 0}$ is solution to the martingale problem~$(\lcal_{\mu}^{\infty},\delta_{\varphi})$. Moreover, by Theorem~\ref{thm:measure_valued_slfv}, this martingale problem is well-posed and characterizes the measure-valued SLFV~$(\widetilde{M}_{t}^{\varphi})_{t \geq 0}$. Therefore, $(M^E_t)_{t\ge 0}$ and $(\widetilde M^{\varphi}_t)_{t\ge 0}$ are equal in distribution.
\end{proof}

\section*{Acknowledgements}
Both authors would like to thank the Royal Society for their generous funding of grant URF\textbackslash R\textbackslash 211038.
AL acknowledges partial support from the chair program "Mathematical Modelling and Biodiversity" of Veolia Environment-Ecole Polytechnique-National Museum of Natural History-Foundation X. 

\bibliographystyle{plain}

\end{document}